\def\jobis#1{FF\fi
  \def\predicate{#1}%
  \edef\predicate{\expandafter\strip@prefix\meaning\predicate}%
  \edef\job{\jobname}%
  \ifx\job\predicate
}
\if\jobis{proposal}%
 \renewcommand{\mod}{\ \operatorname{mod}}
\DeclareMathOperator{\Supp}{Supp}
\DeclareMathOperator{\cent}{center}
\DeclareMathOperator{\mld}{mld}
\DeclareMathOperator{\lct}{lct}
 \numberwithin{equation}{section}
 \numberwithin{footnote}{section}
 \newtheorem{thm}{Theorem}[section]
 \newtheorem{cor}[thm]{Corollary}
 \newtheorem{lem}[thm]{Lemma}
 \newtheorem{prop}[thm]{Proposition}
 \newtheorem{conj}[thm]{Conjecture}
 \newtheorem{defn-prop}[thm]{Definition-Proposition}
 \newtheorem{defn-lem}[thm]{Definition-Lemma}
{
    \newtheoremstyle{upright}%
        {8pt plus2pt minus4pt}%
        {8pt plus2pt minus4pt}%
        {\upshape}%
        {}%
        {\bfseries\scshape}%
        {}%
        {1em}%
        {}%
\theoremstyle{upright}

 \newtheorem{defn}[thm]{Definition}
 
 \newtheorem{exa}[thm]{Example}

 \newtheorem{rem}[thm]{Remark}

}
 \newcommand{\C}{\mathbb C}
 \newcommand{\N}{\mathbb N}
 \newcommand{\Q}{\mathbb Q}
 \newcommand{\R}{\mathbb R}
 \newcommand{\Z}{\mathbb Z}
 \newcommand{\bM}{{\bf{M}}}
 \newcommand{\bN}{{\bf{N}}}
 \newcommand{\bB}{{\bf{B}}}
 \newcommand{\bD}{{\bf{D}}}
\title{Boundedness of klt complements on Fano fibrations over surfaces}
\author{Bingyi Chen}
\address{Bingyi Chen, Department of Mathematics,
Sun Yat-sen University,
Guangzhou, 510275, P. R. China.}
\email{chenby253@mail.sysu.edu.cn, chenby16@tsinghua.org.cn}
\begin{document}

\begin{abstract}
Let $(X,B)$ be an $\epsilon$-lc pair of dimension $d$ with  a closed point $x\in X$. Birkar and Shokurov conjectured that there is an effective Cartier divisor $H$ passing through $x$ such that $(X,B+tH)$ is lc near $x$, where $t$ is a positive real number depending only on $d,\epsilon$. We prove that this conjecture is equivalent to Shokurov's conjecture on boundedness of klt complements on Fano fibrations and we confirm it in dimension 2. As a corollary, we prove the boundedness of klt complements on Fano fibrations over surfaces.
\end{abstract}

\maketitle


\section{Introduction}

We work over the field of complex numbers $\C$. All varieties are assumed to be quasi-projective.

The theory of complements was introduced by Shokurov when he investigated log flips on threefolds \cite{Sh92}, which  
turns out to be a very powerful tool in birational geometry. It has played an important role in several breakthroughs in birational geometry, for example, the proof of the Borisov-Alexeev-Borisov conjecture (the boundedness of Fano varieties) \cite{Bi19,Bi21} and the openness of  K-semistability in families of log Fano pairs \cite{Xu20,BLX22}. See \cite{Sh00,Sh04,Liu18,HLS19,Sh20,Bi22,HL22,HLL22,CHL23} for more applications.


Shokurov \cite{Sh00,Sh04} proposed conjectures on the boundedness of lc (resp. klt) complements, i.e. the existence of lc (resp. klt) $n$-complements for some bounded natural number $n$. The boundedness of lc complements was proved by Birkar \cite{Bi19} for pairs on Fano type fibrations with boundary coefficients in a hyperstandard set, which was later generalized to more general boundary coefficients \cite{HLS19,Sh20}. On the other hand, the conjecture on the boundedness of klt complements is still wide open except some partial results (see Remark \ref{rem:1} below).




We now state precisely Shokurov's conjecture on the boundedness of klt complements for the case when boundary coefficients belong to a finite set of rational numbers.

\begin{conj}[Shokurov]\label{conj:shokurov}
Let $d$ be a natural number, $\epsilon$ be a positive real number and $\mathfrak R\subset [0,1]$ be a finite set of rational numbers. Then there is a natural number $n$ depending only on $d,\epsilon,\mathfrak R$ satisfying the following. Assume $(X,B)$ is a pair, $X\to Z$ is a contraction and $z\in Z$ is a closed point such that 
\begin{itemize}
\item $(X,B)$ is $\epsilon$-lc of dimension $d$,
\item the coefficients of $B$ are in $\mathfrak R$,
\item $X$ is of Fano type over $Z$, and
\item $-(K_X+B)$ is nef over $Z$.  
\end{itemize}
Then $K_X+B$ has a monotonic klt $n$-complement over $z$. That is, there is a $\Q$-divisor $\Lambda$ on $X$ such that over some neighbourhood of $z$ we have
\begin{itemize}
  \item $\Lambda\geq B$,
  \item $(X,\Lambda)$ is klt, and
  \item $n(K_X+\Lambda) \sim 0/Z$.
\end{itemize}
\end{conj}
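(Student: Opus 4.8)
The plan is to reduce Conjecture~\ref{conj:shokurov} to a local statement -- Birkar's conjecture: for an $\epsilon$-lc pair $(X,B)$ of dimension $d$ and a closed point $x\in X$ there is an effective Cartier divisor $H\ni x$ with $(X,B+tH)$ lc near $x$ for some $t=t(d,\epsilon)>0$ -- and then to prove that local statement by induction on $d$. Taking as black boxes Birkar's theorem on boundedness of lc $n$-complements on Fano type fibrations and the boundedness of $\epsilon$-lc Fano varieties, the argument splits into three parts: (a) the equivalence of the two conjectures in a fixed dimension; (b) the passage, granting Birkar's conjecture, from an lc complement to a klt one; and (c) the proof of Birkar's conjecture itself.

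\noindent\textbf{From lc to klt complements.} Let $(X,B)\to Z$ and $z\in Z$ be as in the statement. Since $X$ is of Fano type over $Z$ and $-(K_X+B)$ is nef over $Z$, it is semiample over $Z$; let $g\colon X\to W$ over $Z$ be the associated contraction, so $-(K_X+B)\sim_{\Q} g^{*}A$ with $A$ ample over $Z$. If $g$ is not birational, then along a general fibre $F$ one has an $\epsilon$-lc log Calabi--Yau pair $(F,B_F)$ of dimension $<d$, and the canonical bundle formula rewrites $K_X+B$ as the pullback of $K_W+B_W+M_W$ for a generalized Fano type pair $(W,B_W+M_W)$ over $Z$; a klt complement on $W$ supplied by induction, whose pullback stays klt by applying Birkar's conjecture on $W$, produces the desired $\Lambda$. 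If $g$ is birational, replacing $(X,B)$ by its relative anticanonical model -- still $\epsilon$-lc since the contraction is $(K_X+B)$-crepant -- we may assume $-(K_X+B)$ is ample over $Z$; enlarging the index from the lc-complements theorem by effective relative freeness, take $-n_0(K_X+B)$ basepoint free over $Z$ and set $\Lambda=B+\tfrac1{n_0}M$ for $M$ a general member of $|{-n_0(K_X+B)}/Z|$ through a chosen closed point over $z$. Then $\Lambda\ge B$ and $n_0(K_X+\Lambda)\sim 0/Z$ hold automatically; the one danger -- a non-klt locus of $(X,\Lambda)$ concentrated along the base locus of the system and the singular locus of $X$ -- is removed by invoking Birkar's conjecture on a suitable bounded local model of $(X\ni x,B)$, which controls the singularities of a general member near $x$ well enough that, after one further bounded enlargement of $n_0$, $(X,B+\tfrac1{n_0}M)$ is klt near $z$. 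This yields the monotonic klt $n$-complement. The reverse implication in (a), not needed here, follows by testing klt complements on cones or $\PP^1$-bundles built from $(X,B)$ and reading off $H$ from the restriction.

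\noindent\textbf{Birkar's conjecture.} Induct on $d$. For $d=1$ take $H=\{x\}$ and $t=1$. For $d=2$, pass to the minimal resolution: for fixed $\epsilon$ the $\epsilon$-lc surface germs lie in finitely many bounded families with exceptional dual graphs of bounded complexity, so a general hyperplane section through $x$ has log canonical threshold bounded below by an explicit function of $\epsilon$ -- combining the ACC for log canonical thresholds with the boundedness of the relevant surfaces and graphs. For $d\ge 3$ the plan is: pass to a $\Q$-factorial terminalization $(Y,B_Y)\to(X,B)$; use the boundedness of $\epsilon$-lc Fano varieties to place the pointed local models into bounded families; run a carefully chosen $(K_Y+B_Y)$-MMP over $X$ to control the divisors centred over $x$; and extract a uniform $t$ from the ACC for log canonical thresholds over the resulting bounded family.

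\noindent\textbf{Main obstacle.} The crux is the inductive step of Birkar's conjecture in dimension $\ge 3$. There is no classification of higher-dimensional $\epsilon$-lc singularities, and -- more seriously -- one needs boundedness of the \emph{pointed} pairs $(X\ni x,B)$, which is strictly stronger than the boundedness of the underlying varieties given by BAB, since a bounded family of varieties can still contain unboundedly singular points. Controlling how the prescribed point $x$ interacts with the divisors extracted by the terminalization and the MMP is the real difficulty, and it is precisely here that the argument presently works only for $d\le 2$. For Fano fibrations over a surface this is enough, since there the reduction invokes Birkar's conjecture only on the two-dimensional base, the positive-dimensional fibres being controlled directly by the boundedness of $\epsilon$-lc Fano varieties; hence Conjecture~\ref{conj:shokurov} follows unconditionally when $Z$ is a surface.
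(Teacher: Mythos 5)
Your high-level architecture matches the paper's: reduce Shokurov's conjecture to Birkar's local conjecture, construct klt complements from lc ones granting Birkar's conjecture, then close the loop by proving Birkar's conjecture in dimension $\le 2$. That is exactly the structure of Theorem~\ref{thm:main} together with Theorem~\ref{thm:surface} and Corollary~\ref{cor:dim2}. However, the two technical steps in your sketch diverge from what the paper does in ways that are not merely stylistic but leave genuine gaps.

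First, the lc-to-klt step. Your claim that a klt complement on the base $W$ supplied by the canonical bundle formula ``pulls back to stay klt by applying Birkar's conjecture on $W$'' is a non sequitur: Birkar's conjecture gives a lower bound for the Cartier-divisor lc threshold through a point, not preservation of klt-ness under pulling back a g-complement via the adjunction formula. In general an lc g-complement on $W$ pulls back only to an lc complement on $X$ (Lemma~\ref{lem:lc2}), and controlling the non-klt locus of the pullback is precisely the difficulty. In your birational case you also assume ``$-n_0(K_X+B)$ basepoint free over $Z$'' with $n_0$ bounded; that needs a bounded Cartier index for $K_X+B$ over $Z$, which when $\dim Z>0$ is exactly the kind of uniformity one cannot get for free, since $X$ itself is not bounded. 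The paper's actual construction (Section~4, following Birkar's \cite{Bi22}) is a hyperplane-section induction: take a general element $G=f^*H$ of a very ample linear system on $Z$, inductively produce a klt complement on the $(d-1)$-dimensional Fano type fibration $G\to H$, lift it to $X$ through the Kawamata--Viehweg vanishing surjectivity of the restriction map, and glue with an lc complement that is forced to acquire a non-klt center over a prescribed closed fiber (Proposition~\ref{prop:lc}, which uses the $\delta$-lc structure produced via Conjecture~\ref{conj:birkar1} through Lemma~\ref{lem:use}). The hard point, entirely missing from your outline, is making the complement index $n$ depend only on $d,\epsilon,\mathfrak R$ and not on the volume $r=A^{\dim Z}$; the paper splits the relation $n(K_X+\Lambda)\sim m f^*A$ into an $n$ controlled by $d,\mathfrak R,\epsilon$ and a separate $m$ that is allowed to depend on $r$, which is what allows the final localization to $n(K_X+\Lambda)\sim 0/Z$.

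Second, the dimension-two case of Birkar's conjecture. Your proposal to combine ``bounded families of $\epsilon$-lc germs'' with ``ACC for log canonical thresholds'' is plausible but undeveloped: ACC yields no lower bound without additionally pinning down an accumulation-free subset, and one also needs the divisor $H$ to be Cartier, not just $\Q$-Cartier, so the curve one must test is the fundamental cycle and not an arbitrary exceptional curve. The paper's Theorem~\ref{thm:surface} proceeds quite differently: it pulls $\mathfrak m_z$ back to the minimal resolution, where $\pi^*\mathfrak m_z=\mathcal O_W(-C_f)$ (Lemma~\ref{lem:fundamental}), bounds $C_f\le 6E$ by an explicit case analysis in the appendix, and then reduces to a one-curve lc-threshold estimate on a smooth surface (Lemma~\ref{lem:lct}), producing the fully explicit $t=\epsilon^2/48$. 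Your route, even if it could be completed, would give only an ineffective bound and skips the Cartier-index and fundamental-cycle issues entirely.

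In short: the reduction strategy and the final unconditional conclusion for $\dim Z\le 2$ are correctly identified, but both of the load-bearing arguments -- the klt-complement construction with $n$ independent of the base degree, and the dimension-two lower bound -- are not supplied and would not go through as sketched.
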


\begin{rem}\label{rem:1}
(1) It is expected that $K_X+B$ has not just a klt $n$-complement but also an $\epsilon$-lc $n$-complement, even for more general boundary coefficients. See \cite[Conjecture 5]{Sh20} and \cite[Conjecture 1.1]{CH21} for stronger versions of Conjecture \ref{conj:shokurov}.

(2) The condition in Conjecture \ref{conj:shokurov} that 
\begin{itemize}
\item $(X,B)$ is $\epsilon$-lc and $-(K_X+B)$ is nef over $Z$,
\end{itemize}
can be replaced by another equivalent condition that
\begin{itemize}
\item $K_X+B$ has an $\epsilon$-lc $\R$-complement over $Z$, that is, there is an $\epsilon$-lc pair $(X,B^+)$ such that $B^+\geq B$ and $K_X+B^+\sim_{\R} 0/Z$. 
\end{itemize}
Indeed, if the former holds, as $X$ is of Fano type over $Z$, $-(K_X+B)$ is semiample over $Z$ and hence the latter follows; if the latter holds, we can run an MMP on $-(K_X+B)$ over $Z$ to make $-(K_X+B)$ nef over $Z$ and in this process the $\epsilon$-lc property of $(X,B)$ will be preserved because of the existence of $\epsilon$-lc $\mathbb R$-complement.

(3) When $\dim Z=0$, Conjecture \ref{conj:shokurov} is equivalent to the Borisov-Alexeev-Borisov conjecture \cite{Bi19,Bi21}.

(4) When $\dim Z=1$, Conjecture \ref{conj:shokurov}  was confirmed by Birkar \cite[Corollary 1.4]{Bi23}.

(5) In the toric case, Conjecture \ref{conj:shokurov}  was proved by Ambro \cite[Theorem 1.2]{Am22}.

(6) When $\dim X=2$, an affirmative answer of Conjecture \ref{conj:shokurov} was given in \cite[Theorem 1.6]{Bi04} and in \cite[Theorem 1.1]{CH21}.
   
(7) When $X=Z$, $\dim X=3$, $\epsilon\geq 1$ and $X$ is terminal, Conjecture \ref{conj:shokurov} was proved in \cite[Theorem 1.4]{HLL22}.

      
(8) When $X=Z$ and $(X\ni z,B)$ has a $\delta$-plt blow-up (Definition \ref{defn:plt}) for some fixed $\delta>0$, Conjecture \ref{conj:shokurov} follows from \cite[Theorem 1.6]{HLS19}. In particular, Conjecture \ref{conj:shokurov} holds when $X=Z$ and $(X\ni z,B)$ is an exceptional singularity (see Definition \ref{defn:exc}).

(9) When $(X,\Gamma)\to Z$ is a $(d,r,\epsilon)$-Fano type fibration (see \cite[Definition 1.1]{Bi22}) and $B\leq \Gamma$ with $-(K_X+B)$ being big over $Z$, Conjecture \ref{conj:shokurov} was proved in \cite[Theorem 1.7]{Bi22}. Roughly speaking,  Conjecture \ref{conj:shokurov} holds when $Z$ belongs to a bounded family and the  ``degree" of $K_X+B$ is in some sense bounded.

(10) When $(X,B)\to Z$ is a semi-stable morphism (\cite[2.7]{FM18}) and $mK_Z$ is Cartier for some fixed $m$, Conjecture \ref{conj:shokurov} was proved in \cite[Theorem 1.4]{FM18}.

\end{rem}

To attack the above conjecture, Birkar and Shokurov proposed the following two conjectures. Birkar pointed out to me that these two conjectures are equivalent and can imply Conjecture \ref{conj:shokurov}.


\begin{conj}[Birkar-Shokurov]\label{conj:birkar1}
Let $d$ be a natural number and $\epsilon$ be a positive real number. Then there is a positive real number $t$ depending only on $d,\epsilon$ satisfying the following. Assume $(X,B)$ is a pair, $f:X\to Z$ is a contraction and $z\in Z$ is a closed point such that 
  \begin{itemize}
  \item $(X,B)$ is $\epsilon$-lc of dimension $d$ and $\dim Z>0$,
  \item $X$ is of Fano type over $Z$, and
  \item $-(K_X+B)$ is nef over $Z$.  
  \end{itemize}
Then there exists an effective Cartier divisor $H$ on some neighbourhood $U$ of $z\in Z$ such that $z\in \Supp H$ and $(X,B+tf^*H)$ is lc over $U$.
\end{conj}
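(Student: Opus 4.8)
The displayed statement is a conjecture, so I read the task as sketching how one proves what the paper establishes about it: that Conjecture~\ref{conj:birkar1} is equivalent to Shokurov's Conjecture~\ref{conj:shokurov}, and that it holds when $d=2$ — a uniform $t$ in all dimensions remaining the open core. Everything is organised around the identity $a_E(X,B+tf^*H)=a_E(X,B)-t\,\mult_E(f^*H)$: since $(X,B)$ is $\epsilon$-lc, $a_E(X,B)\ge\epsilon$ for every divisor $E$ over $X$, so once we produce an effective Cartier divisor $H\ni z$ for which $\mult_E(f^*H)$ is bounded by a constant $M=M(d,\epsilon)$ on the divisors relevant to lc-ness over $f^{-1}(z)$, the choice $t=\epsilon/M$ works. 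Thus the real problem is to find a Cartier divisor through $z$ whose pullback has bounded singularities along the central fibre, and the natural route is to descend to the base $Z$.

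\emph{The two reductions.} First, following Birkar's observation, Conjecture~\ref{conj:birkar1} is equivalent to its special case $f=\mathrm{id}_X$ — the second of his two conjectures recalled above, asserting that an $\epsilon$-lc pair $(X\ni x,B)$ carries an effective Cartier divisor $H\ni x$ with $(X,B+tH)$ lc near $x$. One direction is a special case. For the other, given $f\colon(X,B)\to Z$ with $\dim Z>0$, apply the canonical bundle formula: since $-(K_X+B)$ is nef over $Z$ and $X$ is of Fano type over $Z$, on a suitable birational model $K_X+B\sim_{\R}f^*(K_Z+B_Z+\bM_Z)$ for a generalised pair $(Z,B_Z+\bM_Z)$ whose boundary coefficients stay away from $1$ and whose nef part $\bM$ is controlled — here one uses boundedness of the general fibre, an $\epsilon$-lc weak Fano of dimension $d-\dim Z$, by~\cite{Bi19,Bi21}. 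Applying the $f=\mathrm{id}$ statement to $(Z,B_Z+\bM_Z)\ni z$ yields a Cartier divisor $H\ni z$ with $(Z,B_Z+\bM_Z+t'H)$ generalised lc near $z$; pulling back, and bounding $\mult_E(f^*H)$ by $\mult_{c_Z(E)}H$ times the (bounded) ramification of $f$ along $c_Z(E)$, transfers this to $(X,B+tf^*H)$. Second, the $f=\mathrm{id}$ statement implies Conjecture~\ref{conj:shokurov} (Birkar): given $H$ on the base of a Fano fibration, $(X,B+tf^*H)$ is lc near the central fibre, and after shrinking $Z$ around $z$ and a mild perturbation one applies the boundedness of \emph{lc} complements of Birkar~\cite{Bi19} (extended in~\cite{HLS19,Sh20}) to produce an lc $n$-complement, the slack $tf^*H$ being exactly what makes it klt. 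The converse — the new point — runs these steps backwards: a monotonic klt $n$-complement bounds the relevant log discrepancies in terms of $n$, and along the canonical bundle formula this leaves room to add a bounded multiple of a Cartier divisor through $z$.

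\emph{Dimension $2$.} For $d=2$ I would prove the $f=\mathrm{id}$ case directly. The germ $(X\ni x)$ is $\epsilon$-lc, hence lies in a bounded family (for fixed $\epsilon$ the dual graph and the self-intersections of its minimal resolution are bounded); taking $H$ a general hyperplane section through $x$, boundedness of the germ bounds the orders $\mult_E(g^*H)$ of the relevant divisors on a bounded log resolution $g\colon Y\to X$ of $(X,B+H)$, while $\epsilon$-lc-ness of $(X,B)$ prevents $B$ from consuming all of the available log-discrepancy room (high multiplicity of a component of $B$ would already violate $\epsilon$-lc-ness). A direct estimate then gives $\lct_x((X,B);H)\ge t(\epsilon)$, so $(X,B+tH)$ is lc near $x$ for $t=t(\epsilon)$. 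The case $\dim Z=1$ of the fibration statement in dimension $2$ follows from the reduction, or directly: $H$ is a point, $f^*H$ a single fibre of an $\epsilon$-lc rational curve fibration, whose component multiplicities are bounded by $\epsilon$ alone.

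\emph{The main obstacle.} The hard step throughout is the uniform bound on $\mult_E(f^*H)$: one must choose $H$ adapted to the stratification of the central fibre while simultaneously controlling the singularities of the base generalised pair $(Z,B_Z+\bM_Z)$ and the ramification of $f$. In dimension $2$, hence $\dim Z\le1$, both are supplied by boundedness of surface singularities and of fibres of relative dimension one, which is why the argument closes; for $\dim X\ge3$ one needs in addition boundedness of the moduli part $\bM$ together with effective control of the base generalised pair, and this is exactly what is open in general.
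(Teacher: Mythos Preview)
Your architecture is broadly right --- reduce Conjecture~\ref{conj:birkar1} to the case $f=\mathrm{id}$ via the canonical bundle formula, prove that case on surfaces, and then deduce Shokurov's conjecture --- but there are two genuine gaps and a misreading.

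First, the misreading: the paper does \emph{not} prove Conjecture~\ref{conj:birkar1} for $d=2$; it proves it for \emph{arbitrary} $d$ provided $\dim Z\le 2$ (Corollary~\ref{cor:dim2}). The whole point of the reduction via the canonical bundle formula is that once Conjecture~\ref{conj:birkar2} is known on the base $Z$, the total space $X$ can have any dimension. Your ``dimension $2$'' paragraph treats only $\dim X=2$, which is already known and is not what the paper contributes.

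Second, in the reduction $(5)\Rightarrow(3)$ you write that the generalised pair $(Z,B_Z+\bM_Z)$ has ``boundary coefficients staying away from $1$'' by boundedness of the general fibre. This is exactly the hard step, and it is \emph{not} supplied by boundedness of the fibre: it is Birkar's recent theorem \cite[Theorem~1.1]{Bi23} that $(Z,B_Z+\bM)$ is $\delta$-lc for some $\delta=\delta(d,\epsilon)$. The paper flags this explicitly (Proposition~\ref{prop:5to3}) as the key external input; without it the reduction does not close. Likewise, your sketch of Conjecture~\ref{conj:birkar1} $\Rightarrow$ Conjecture~\ref{conj:shokurov} (``apply boundedness of lc complements with the slack $tf^*H$ making it klt'') is far too quick: the paper needs the full machinery of \cite{Bi22} on $(d,r,\epsilon)$-Fano type fibrations (Propositions~\ref{prop:pp}, \ref{prop:lc} and Theorem~\ref{thm:klt}), with an induction on $\dim X$, vanishing, and restriction to a general hyperplane on the base, to get a \emph{klt} complement whose index is independent of $r$. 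The converse direction in the paper (Propositions~\ref{prop:2to4} and \ref{prop:4to5}) also proceeds differently from your ``run the steps backwards'': it goes through a plt blow-up and the boundedness of lc complements to manufacture the Cartier divisor $H$, rather than reading it off a klt complement directly.

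Finally, for the surface case your idea (bounded dual graph $\Rightarrow$ bounded $\mu_E(\pi^*\mathfrak m_z)$) is morally correct, but the paper makes this explicit and sharp: on the minimal resolution one has $\pi^*\mathfrak m_z=\mathcal O_Y(-C_f)$ with the fundamental cycle $C_f\le 6E$ (Proposition~\ref{prop:6E}, proved by the classification of quotient surface singularities in the appendix), and then a precise lct bound from \cite{Ch22} (Lemma~\ref{lem:lct}) gives the effective value $t=\epsilon^2/48$. Your general-hyperplane argument would give \emph{some} bound, but not this one, and you would still need to check it works at the nodes of the exceptional divisor, which is where the paper's convexity step $(W,\Delta_W+\tfrac{\epsilon^2}{8}(D+F))$ is used.
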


\begin{conj}[Birkar-Shokurov]\label{conj:birkar2}
Let $s$ be a natural number and $\epsilon$ be a positive real number. Then there is a positive real number $t$ depending only on $s,\epsilon$ satisfying the following. Assume $(Z,\Delta)$ is an $\epsilon$-lc pair of dimension $s$ with a closed point $z\in Z$. Then there exists an effective Cartier divisor $H$ on some neighbourhood of $z\in Z$ such that $z\in \Supp H$ and $(Z,\Delta+tH)$ is lc near $z$.
\end{conj} 

\begin{rem}\label{rem:new}
(1) Conjecture \ref{conj:birkar2} is a special case of Conjecture \ref{conj:birkar1} when $X=Z$.

(2) As in Remark \ref{rem:1}(2), in Conjecture \ref{conj:birkar1} the condition that $(X,B)$ is $\epsilon$-lc and $-(K_X+B)$ is nef over $Z$ can be replaced by another equivalent condition that $K_X+B$ has an $\epsilon$-lc $\R$-complement over $Z$.

(3) At a meeting of the COW seminar at City, University of London on 7th February 2018, Birkar raised Conjecture \ref{conj:birkar1} in the special case when $Z=\mathbb A^n$, $X\to Z$ is a weighted blow up at the origin and $B=0$ (see \cite[Conjecture 1.2]{SS21}). It was proved by G. Sankaran and F. Santos in \cite[Theorem 1.3]{SS21} that Conjecture \ref{conj:birkar1} holds in this case.

(4) Ambro had independently considered a version of these conjectures in private work. He \cite[Theorem 3.12]{Am22} proved Conjecture \ref{conj:birkar1} in the toric setting with an additional condition that the coefficients of $B$ belong to a fixed DCC set.

(5) When $\dim Z=1$, Conjecture \ref{conj:birkar1} was proved by Birkar in \cite[Theorem 1.1]{Bi23}. See \cite[Theorem 1.4]{Bi16} for another proof of Conjecture \ref{conj:birkar1} in the special case when $\dim Z=1$, $K_X+B\sim_{\R} 0/Z$ and $(F,\Supp B|_F)$ is log bounded where $F$ is a general fiber of $f$.
\end{rem}


\noindent\textbf{Relations among the above conjectures.} Our first main result is on the relations among Conjecture \ref{conj:shokurov}, \ref{conj:birkar1} and \ref{conj:birkar2}.

\begin{thm}\label{thm:main}
Let $\ell$ be a natural number. The following are equivalent:
\begin{enumerate}
\item Conjecture \ref{conj:shokurov} holds when $\dim Z \leq \ell$;
\item Conjecture \ref{conj:shokurov} holds when $X\to Z$ is birational, $B=0$ and $\dim Z \leq  \ell$.
\item Conjecture \ref{conj:birkar1} holds when $\dim Z \leq  \ell$.
\item Conjecture \ref{conj:birkar1} holds when $X\to Z$ is birational, $B=0$ and $\dim Z \leq  \ell$.
\item Conjecture \ref{conj:birkar2} holds when $\dim Z \leq  \ell$.
\end{enumerate}
\end{thm}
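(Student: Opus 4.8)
The plan is to establish the cycle of implications $(1)\Rightarrow(2)\Rightarrow(4)\Rightarrow(3)\Rightarrow(5)\Rightarrow(1)$, or some convenient reshuffling, relying mostly on the easy directions and concentrating effort on the one genuine implication. First, the trivial reductions: $(1)\Rightarrow(2)$ and $(3)\Rightarrow(4)$ are immediate since the latter are special cases of the former; $(5)$ is the special case $X=Z$ of $(3)$ (as noted in the Remark following Conjecture \ref{conj:birkar2}), so $(3)\Rightarrow(5)$ is also free. It remains to close the loop, and the natural route is to prove $(5)\Rightarrow(1)$ (i.e. Birkar's ``base'' conjecture for $\epsilon$-lc bases of dimension $\le\ell$ implies Shokurov's klt-complement conjecture for contractions with $\dim Z\le\ell$) and then to prove one of $(1)\Rightarrow(3)$ or $(2)\Rightarrow(4)\Rightarrow(3)\Rightarrow(5)$ to tie things together; since $(3)\Rightarrow(5)$ and $(5)\Rightarrow(1)$ are in hand, proving $(1)\Rightarrow(3)$, or directly $(2)\Rightarrow(3)$, completes everything. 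I expect the paper's actual argument to be: $(1)\Rightarrow(2)$ and $(3)\Rightarrow(4)$ and $(3)\Rightarrow(5)$ trivial, $(2)\Rightarrow(4)$ trivial, $(5)\Rightarrow(1)$ and $(4)\Rightarrow(3)$ the substance.

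For $(5)\Rightarrow(1)$, the heart of the matter: given $(X,B)\to Z$ with $(X,B)$ being $\epsilon$-lc of dimension $d$, $X$ of Fano type over $Z$, and $-(K_X+B)$ nef over $Z$, I would first run a canonical bundle formula / adjunction argument to produce a generalized pair structure $(Z,\Delta_Z+\bM_Z)$ on the base, with $\Delta_Z$ having coefficients in a set controlled by $d$ and $\mathfrak R$ (via boundedness of the relevant lc thresholds, using $\epsilon$-lc-ness on $X$) and $\bM_Z$ nef with bounded denominators. The key point is that the $\epsilon$-lc hypothesis upstairs should force the base pair to be $\delta$-lc for some $\delta=\delta(d,\epsilon)$ — this is where one needs a statement like boundedness of singularities of the base of an $\epsilon$-lc Fano fibration (Birkar-type results, or an ACC/lower-semicontinuity argument on the discrepancies of the fibration). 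Then applying $(5)$ — Conjecture \ref{conj:birkar2} in dimension $\le\ell$ — to the $\delta$-lc base gives an effective Cartier $H\ni z$ with $(Z,\Delta_Z+\bM_Z+tH)$ lc near $z$; pulling $H$ back and using inversion-of-adjunction-type control for the fibration converts this into $(X,B+tf^*H)$ being lc over a neighbourhood of $z$, i.e. Conjecture \ref{conj:birkar1}. Finally, one bootstraps from the lc-threshold statement to an actual bounded klt $n$-complement: near $z$ the divisor $-(K_X+B+\tfrac{t}{2}f^*H)$ is still nef over $Z$ and the pair is now $(\epsilon')$-lc with a big defect, and one invokes the known boundedness of \emph{lc} complements (Birkar, \cite{Bi19}, generalized in \cite{HLS19,Sh20}) on this auxiliary pair to produce an $n$-complement of $K_X+B$ that is automatically klt because of the room created by the $f^*H$ term — this last step is standard complement-theory bookkeeping.

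For $(4)\Rightarrow(3)$ (de-specializing from birational $B=0$ to arbitrary Fano-type contractions): here again the canonical bundle formula is the tool. Given a general Fano-type fibration $(X,B)\to Z$ with $-(K_X+B)$ nef over $Z$, I would pass to the base to get a generalized pair $(Z,\Delta_Z+\bM_Z)$, then take a sufficiently high resolution $Z'\to Z$ on which $\bM_{Z'}$ is base-point free and descends, and realize $\bM_{Z'}$ as (a positive multiple of) a divisor; a birational modification absorbing $\bM_{Z'}$ into the boundary, combined with the observation that an $\epsilon$-lc generalized pair structure yields a $\delta$-lc \emph{usual} pair on a suitable birational model, reduces the production of $H$ to the birational $B=0$ case on $Z'$ (or on a $\Q$-factorialization thereof), and one pushes the resulting $H$ back down to $Z$ noting Cartierness and the support condition are preserved up to a bounded multiple. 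The main obstacle throughout is the bounded-denominators / bounded-discrepancy control in the canonical bundle formula: one must ensure that both the boundary coefficients $\Delta_Z$ stay in a fixed finite (or DCC, then ACC-pinned) set and that $\bM_Z$ has an Iitaka-type boundedness so that ``absorbing'' it costs only a bounded factor — this is exactly the place where one leans hardest on Birkar-style boundedness results and on the effective adjunction conjecture being known in the ranges needed (in particular for $\dim Z\le 2$, which is what makes the dimension-2 case of the paper unconditional).
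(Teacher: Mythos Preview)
Your overall shape is close, but you misidentify which implications carry the weight and you have a genuine gap.

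First, $(2)\Rightarrow(4)$ is \emph{not} trivial. Statement $(2)$ gives a bounded klt $n$-complement, while $(4)$ asks for a Cartier divisor $H$ through $z$ with bounded lc threshold; there is no formal reason one implies the other. The paper proves $(2)\Rightarrow(4)$ by pushing the klt $n$-complement down to $(Z,\Lambda)$ with $n(K_Z+\Lambda)$ Cartier, taking a plt blow-up of $(Z\ni z,\Lambda)$, and then invoking boundedness of \emph{lc} complements \cite{Bi19} on the blow-up to manufacture an $\Omega\ge\Lambda$ with $m(K_Z+\Omega)$ Cartier and $(Z,\Omega)$ lc but not klt at $z$; then $H=mn(\Omega-\Lambda)$ works. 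This is short but not automatic.

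Second, and more seriously, your ``bootstrap'' from Conjecture \ref{conj:birkar1} to Conjecture \ref{conj:shokurov} is where your plan breaks. Applying lc complement theory to $(X,B+\tfrac{t}{2}f^*H)$ gives an lc $\Lambda$, not a klt one, and there is no reason the non-klt centres of $(X,\Lambda)$ lie inside $\operatorname{Supp} f^*H$ --- they could be horizontal or lie over other points of $Z$ --- so subtracting off $\tfrac{t}{2}f^*H$ does not make the pair klt. The paper's implication $(3)\Rightarrow(1)$ occupies its longest section: it compactifies to a projective $(d,r,\epsilon)$-Fano type fibration, constructs via Proposition~\ref{prop:pp} and Proposition~\ref{prop:lc} an lc complement $\Lambda\ge B+\tfrac{1}{e}f^*P$ whose non-klt locus is mapped to \emph{finitely many closed points} of $Z$ (this is the key structural control, obtained by induction on $\dim X$ via restriction to a general $f^*A$ and inversion of adjunction), and only then swaps $P$ for a general $Q\in|A|$ to kill the non-klt centres over $z$. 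None of this is standard bookkeeping.

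Your sketch of $(5)\Rightarrow(3)$ via the canonical bundle formula and \cite[Theorem 1.1]{Bi23} is correct and matches the paper's Proposition~\ref{prop:5to3}. But your $(4)\Rightarrow(3)$ plan is really the same argument --- it passes through a $\delta$-lc pair on $Z$, i.e.\ through $(5)$ --- and never uses $(4)$ as stated. The paper instead proves $(4)\Rightarrow(5)$ directly (Proposition~\ref{prop:4to5}): given $(Z,\Delta)$ $\epsilon$-lc, extract a divisor $T$ computing the $\tfrac{\epsilon}{2}$-lc threshold of $\mathfrak m_z$, run an MMP on $-K$ to produce a birational $X'\to Z$ with $-K_{X'}$ nef, apply $(4)$ to $X'$, and use the resulting $H$ to bound $\mu_T\mathfrak m_z$ from above. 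The actual cycle is $(1)\Rightarrow(2)\Rightarrow(4)\Rightarrow(5)\Rightarrow(3)\Rightarrow(1)$.
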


It is worth to mention that the proof of the above theorem relies on \cite[Theorem 1.1]{Bi23}, which relates the singularities on the base space of a Fano type fibration with the singularities on the total space.

\medskip

\noindent\textbf{The surface case.} We confirm Conjecture \ref{conj:birkar2} in dimension 2. As a corollary, we prove the boundedness of klt complements on Fano fibrations over surfaces.

\begin{thm}\label{thm:surface}
Let $(Z,\Delta)$ be a pair of dimension 2 with a closed point $z\in Z$ such that $\mld(Z\ni z, \Delta)\geq \epsilon$  with $0<\epsilon\leq 1$. Then there exists an effective Cartier divisor $H$ on some neighbourhood of $z\in Z$ such that $z\in \Supp H$ and $(Z,\Delta+tH)$ is lc near $z$, where $t=\epsilon^2/24$.
\end{thm}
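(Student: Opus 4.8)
The plan is to reduce to the case where $Z$ is smooth by taking a minimal resolution, and then to find $H$ by a direct construction on the surface using the structure of the exceptional divisor. First I would take a log resolution (or minimal resolution) $\pi\colon Y\to Z$ of the singularity $(Z\ni z,\Delta)$, write $K_Y+\Delta_Y=\pi^*(K_Z+\Delta)$, and record that every component $E_i$ of the exceptional locus over $z$ satisfies $a(E_i,Z,\Delta)=1-\operatorname{mult}_{E_i}\Delta_Y\geq\epsilon$, i.e. $\operatorname{mult}_{E_i}\Delta_Y\leq 1-\epsilon$. The condition that $(Z,\Delta+tH)$ is lc near $z$ translates, via $\pi$, into the condition that for each exceptional $E_i$ the coefficient $\operatorname{mult}_{E_i}\Delta_Y+t\,\operatorname{mult}_{E_i}\pi^*H\leq 1$, together with the condition that $(Y,\Delta_Y+t\pi^*H)$ is lc along the strict transform of $H$; so I need an effective Cartier $H\ni z$ whose pullback multiplicities $\operatorname{mult}_{E_i}\pi^*H$ are controlled from above by roughly $\epsilon/t$ along every exceptional curve.

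The key step is to exhibit such an $H$. On a surface, if $z$ is a smooth point I can simply take $H$ to be a general line (a general member of a very ample or of the maximal ideal $\mathfrak m_z$); then $\pi^*H$ has multiplicity $1$ along the single exceptional curve of the blow-up, and more generally a general curve through a smooth point has controlled multiplicities along the resolution, giving the bound with room to spare. When $z$ is a singular point, I would choose $H$ to be a general element of $\mathfrak m_z$ (or of an appropriate power), or better, use the classification/structure of surface singularities: the dual graph of the minimal resolution is a tree of $\PP^1$'s, and one can pick a general hyperplane section $H$ whose strict transform meets exactly one "end" of the graph transversally in one point. The multiplicities $\operatorname{mult}_{E_i}\pi^*H$ are then bounded in terms of the number of exceptional curves and the self-intersection numbers, which in turn are bounded because $\mld\geq\epsilon$ forces the discrepancies to stay away from $-1$ — this is where the explicit constant $t=\epsilon^2/48$ will come from, presumably after optimizing a chain/weighted-graph estimate. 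I would expect to need the bound on the length of chains in the resolution graph of an $\epsilon$-lc surface singularity, which is a classical fact (lengths of chains of $(-2)$-curves, etc., are $O(1/\epsilon)$), and to combine it with the observation that along a chain the pullback multiplicity of a general section through the "far" end grows at most linearly.

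The main obstacle, I expect, is controlling $\operatorname{mult}_{E_i}\pi^*H$ uniformly for \emph{all} $i$ simultaneously, not just along a single chain: a general surface singularity resolution graph can have several branches, and a section $H$ through $z$ will in general have positive multiplicity along components on every branch, so I must either (a) bound the total complexity of the graph in terms of $\epsilon$ and take any general $H$, or (b) be clever about which ideal $H$ is a general member of so that its strict transform avoids all but one branch. Option (a) requires an effective bound on the number of exceptional components and their self-intersections for $\epsilon$-lc surface singularities — this is known (e.g. via the classification of such singularities, or via volume/discrepancy estimates), and yields the quadratic dependence $\epsilon^2$ in $t$ after the linear-in-$1/\epsilon$ chain bound is fed into a linear-in-$1/\epsilon$ multiplicity bound. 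The bookkeeping to land exactly on $t=\epsilon^2/48$ rather than some other explicit constant will be the most delicate routine part, but it should follow from choosing $H$ general in $\mathfrak m_z$ and estimating $\operatorname{mult}_{E_i}\pi^* H \leq \operatorname{mult}_z H \cdot(\text{path length}) \leq 1\cdot O(1/\epsilon)$ against the room $1-\operatorname{mult}_{E_i}\Delta_Y \geq \epsilon$.
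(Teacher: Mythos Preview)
Your outline has the right opening move---pass to the minimal resolution and take $H$ general in $\mathfrak m_z$---but the heart of the argument is built on two claims that are both false, so the proposal does not go through.

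First, the ``classical fact'' that chain lengths in the resolution graph of an $\epsilon$-lc surface singularity are $O(1/\epsilon)$ is simply wrong: the $A_n$ singularity has a chain of $n$ $(-2)$-curves and is canonical (so $\epsilon=1$) for every $n$. Hence you cannot bound the complexity of the dual graph by $\epsilon$, and any estimate of the form $\mu_{E_i}\pi^*H\le (\text{path length})=O(1/\epsilon)$ collapses. What actually happens is subtler and better: the pullback of $\mathfrak m_z$ to the minimal resolution is $\mathcal O_W(-C_f)$ where $C_f$ is the \emph{fundamental cycle}, and the paper shows (via the classification of quotient surface singularities, since klt surface germs are quotient singularities) that $C_f\le 6E$---an absolute bound, independent of $\epsilon$. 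This is where the $48=6\cdot 8$ comes from.

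Second, your translation of ``$(Z,\Delta+tH)$ is lc near $z$'' into coefficient conditions on the exceptional curves is insufficient: knowing $\mu_{E_i}\Delta_Y+t\,\mu_{E_i}\pi^*H\le 1$ only controls log discrepancies \emph{at} the $E_i$, not at divisors lying over points of $E$. The paper does not try to verify lc by coefficient-checking; instead it reduces to showing $(W,\Delta_W+\tfrac{\epsilon^2}{8}E)$ is lc near $E$, and for that it invokes a genuine lct estimate (Lemma~\ref{lem:lct}, from \cite{Ch22}): on a smooth surface, if $\mld\ge\epsilon$, a smooth curve $D$ has $\mu_D\Delta\le 1-\epsilon$, and $(\Delta'\cdot D)_z\le 2$, then $(W,\Delta+\tfrac{\epsilon^2}{4}D)$ is lc. The adjunction identity $(\Delta_W-D)\cdot D=2$ supplies the intersection bound for each exceptional component, and a convexity argument at the nodes of $E$ halves $\epsilon^2/4$ to $\epsilon^2/8$. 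The $\epsilon^2$ thus comes from this lct lemma, not from any graph-counting, and your proposal is missing this ingredient entirely.
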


Here we denote by $\mld(Z\ni z, \Delta)$ the infimum of the log discrepancy of $E$ with respect to $(Z,\Delta)$ where $E$ runs over all prime divisors over $Z$ with $\cent_Z E=z$. Note that the condition $\mld(Z\ni z, \Delta)\geq \epsilon$ is weaker than that $(Z,\Delta)$ is $\epsilon$-lc near $z$.

\begin{cor}\label{cor:dim2}
Conjecture \ref{conj:shokurov}, \ref{conj:birkar1} and \ref{conj:birkar2} hold when $\dim Z\leq 2$.
\end{cor}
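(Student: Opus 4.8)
The plan is to take $H$ to be a sufficiently general curve through $z$ and to reduce the statement to a local estimate comparing, for each exceptional divisor $E$ over $z$ of a fixed resolution, the order of $H$ along $E$ with the log discrepancy $a(E;Z,\Delta)$.

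First I would make routine reductions: shrinking $Z$ we may assume it is affine with $z$ its only singular point and, since $\Delta$ is a boundary and the non-lc locus is a closed set avoiding $z$ (as $\mld(Z\ni z,\Delta)\ge\epsilon>0$), that $(Z,\Delta)$ is lc. Fix a log resolution $f\colon Y\to Z$ of $(Z,\Delta)$ which also dominates the blow-up of $\mathfrak m_z$ (automatic once $Y$ dominates the minimal resolution, since the singularity is rational), let $E_1,\dots,E_k$ be the $f$-exceptional prime divisors over $z$, write $\mathfrak m_z\mathcal O_Y=\mathcal O_Y(-W)$ with $W=\sum w_iE_i$, $w_i\ge 1$, and $K_Y+\Gamma_Y=f^*(K_Z+\Delta)$ with the coefficient of $E_i$ in $\Gamma_Y$ equal to $1-a_i$, where $a_i:=a(E_i;Z,\Delta)\ge\epsilon$. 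Taking $H$ a general member of $|\mathfrak m_z\otimes A|$ for a sufficiently ample line bundle $A$, Bertini-type arguments give $f^*H=\widetilde H+W$ with $\widetilde H$ smooth and meeting $\Supp\Gamma_Y\cup\bigcup_iE_i$ transversally at general points, $H$ smooth away from $z$ and transverse there to $\Delta$, and $H$ containing none of the finitely many non-klt curves of $(Z,\Delta)$. Then $(Z,\Delta+tH)$ is automatically lc over $Z\setminus\{z\}$, while near $z$ it is lc if and only if the snc pair $(Y,\Gamma_Y+t\widetilde H+tW)$ has all coefficients $\le 1$, that is, if and only if $tw_i\le a_i$ for all $i$. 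Since $\operatorname{ord}_{E_i}(H)=w_i$, the theorem reduces to the estimate
\[
w_i\ \le\ \frac{48}{\epsilon^2}\,a_i\qquad\text{for every }i.
\]

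To prove this I would factor $f$ as $Y\xrightarrow{\,h\,}Z_{\min}\xrightarrow{\,g\,}Z$, with $g$ the minimal resolution of $Z$ and $h$ a composition of blow-ups of points lying over $z$, and handle the two layers of exceptional divisors separately. For the $g$-exceptional curves $F_j$ — a tree of smooth rational curves with $F_j^2\le-2$ — one has $w_j=m_j$, the coefficient of $F_j$ in the fundamental cycle $Z_{\mathrm{fund}}$ (so $\mathfrak m_z\mathcal O_{Z_{\min}}=\mathcal O(-Z_{\mathrm{fund}})$), together with $a(F_j;Z,0)\ge a_j\ge\epsilon$; the classification of log terminal (equivalently, quotient) surface singularities then bounds the dual graph of $g$, in particular the $|F_j^2|$ and hence the multiplicities $m_j$, in terms of $\epsilon$, which gives the estimate for the $F_j$. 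For an $h$-exceptional $E_i$ one has $a(E_i;Z,\Delta)=a(E_i;Z_{\min},\Delta_{\min})\ge\epsilon$, where $\Delta_{\min}$ is the crepant transform of $\Delta$ to the smooth surface $Z_{\min}$, and $w_i=\operatorname{ord}_{E_i}(Z_{\mathrm{fund}})$; so the estimate for these $E_i$ follows from a local statement on the smooth surface $Z_{\min}$: for any divisor $E$ obtained by successively blowing up infinitely near points of a point $p\in g^{-1}(z)$, $\operatorname{ord}_E(\Sigma)\le\frac{C}{\epsilon}\,a(E;Z_{\min},\Delta_{\min})$, where $\Sigma$ is the reduced $g$-exceptional divisor. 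This last estimate I would prove by induction along the chain of infinitely near points defining $E$, using that $\operatorname{ord}(\Sigma)$, $\operatorname{ord}(\mathfrak m_p)$ and the log discrepancy transform by explicit combinatorial rules under a point blow-up, and that the hypothesis $a(\,\cdot\,;Z_{\min},\Delta_{\min})\ge\epsilon$ caps how long one may follow a single tangent direction before the log discrepancy would drop below $\epsilon$; the model case is the cuspidal curve $\{y^n=x^{n+1}\}$, for which $a(E_{1,1};Z,\Delta)\ge\epsilon$ already forces $n\lesssim 1/\epsilon$ whereas the essential divisor $E_{n,n+1}$ has $\operatorname{ord}_{E_{n,n+1}}(\mathfrak m_p)=n$.

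The step I expect to be the main obstacle is precisely this quantitative local estimate. Its subtlety is that neither $w_i$ nor $a_i$ is bounded individually — only their ratio is — so the argument cannot be reduced to boundedness of $\epsilon$-lc surface singularities but must be run through the resolution tree as a genuine interplay between the two quantities; extracting the clean absolute constant ($48$, via the two reductions above combined with the combinatorial induction) is then a matter of bookkeeping. A secondary point needing care is the reduction itself: that a general $H$ makes $(Z,\Delta+tH)$ lc over $Z\setminus\{z\}$ at no cost, and the Bertini arguments on the possibly singular surface $Z$.
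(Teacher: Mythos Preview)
Your proposal addresses only Conjecture~\ref{conj:birkar2} in dimension $2$ (equivalently, Theorem~\ref{thm:surface}). The Corollary also asserts Conjectures~\ref{conj:shokurov} and~\ref{conj:birkar1}; in the paper these are deduced from Conjecture~\ref{conj:birkar2} via the equivalence Theorem~\ref{thm:main}, and you should say so explicitly.

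On the substance, your outline for the surface case is close to the paper's but diverges in two places. First, the paper works on the minimal resolution $W\to Z$ rather than a full log resolution: it shows $\pi^*\mathfrak m_z=\mathcal O_W(-C_f)$ and then proves the \emph{absolute} bound $C_f\le 6E$ (Proposition~\ref{prop:6E}, verified in the appendix via the classification of quotient surface singularities and the monotonicity Lemma~\ref{lem:-2}). Your first-layer argument instead claims that $|F_j^2|$ is bounded in terms of $\epsilon$ and ``hence'' the fundamental-cycle multiplicities $m_j$ are; the first assertion is correct, but the ``hence'' is not: by Lemma~\ref{lem:-2} the $m_j$ are \emph{monotone decreasing} in $|F_j^2|$, so bounding $|F_j^2|$ from above does not bound $m_j$ --- the maximum of $m_j$ is attained at the du~Val end ($|F_j^2|=2$), which is exactly why the paper's bound is the absolute constant $6$ and not an $\epsilon$-dependent one. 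Second, for what you call the ``second layer'' the paper does not run an induction along the resolution tree; instead it stays on $W$ and invokes a ready-made local lct estimate (Lemma~\ref{lem:lct}, quoted from \cite{Ch22}): for each exceptional curve $D\subset W$ one has $(\Delta_W-\mu_D\Delta_W\cdot D)\cdot D<2$, whence $(W,\Delta_W+\tfrac{\epsilon^2}{4}D)$ is lc near $D$, and averaging over the at most two components of $E$ through any point gives $(W,\Delta_W+\tfrac{\epsilon^2}{8}E)$ lc; combined with $C_f\le 6E$ this yields $t=\epsilon^2/48$.

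Your proposed induction over infinitely near points is essentially a plan to reprove the input Lemma~\ref{lem:lct} from scratch, and you are right that this is where the genuine work lies. It can be made to go through (that is what \cite{Ch22} does), but the paper's route --- absolute bound on the fundamental cycle plus the cited lct lemma --- is shorter and avoids tracking the ratio $w_i/a_i$ through the blow-up tree. If you want the explicit constant $48$, you should use the absolute $C_f\le 6E$ rather than an $\epsilon$-dependent bound on $m_j$.
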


\begin{rem}
The following example indicates that the order $O(\epsilon^2)$ in Theorem \ref{thm:surface} is optimal.
\end{rem}

\begin{exa}\label{exa}
Let $Z=\mathbb A^2$ with coordinates $x,y$ and let $z\in Z$ be the origin. Let 
$$\Delta=\frac{2m-1}{m^2}\cdot (x^m+y^{m+1}=0)$$
where $m\in \mathbb N$.
By \cite[Lemma A.1]{Ch22} we have 
$$\mld(Z\ni z,\Delta)=\inf\{p_1+p_2-\min\{\lambda m p_1,\lambda (m+1) p_2\} \mid (p_1,p_2)\in \mathbb N^2 \}$$
where $\lambda=\frac{2m-1}{m^2}$. It is not hard to check that the above expression is equal to $\frac{1}{m}$. So we have 
$$\mld(Z\ni z,\Delta)=\frac{1}{m}.$$
Let $E$ be the exceptional divisor obtained by the weighted blow up at $z$ with $\textbf{wt}(x,y)=(m+1,m)$. Then
$a(E,Z,\Delta)=\frac{1}{m}$. Moreover, the vanishing order of the pullback of $x$ (resp. $y$) along $E$ is $m+1$ (resp. $m$). So for any curve $H$ on $Z$ passing through $z$, $(Z,\Delta+tH)$ is not lc near $z$ when $t>\frac{1}{m^2}$.
\end{exa}

\medskip

\noindent\textbf{The toric case.} As mentioned in Remark \ref{rem:new}, in the toric case Conjecture \ref{conj:birkar1} was proved by Ambro \cite[Theorem 3.12]{Am22} with an additional condition that the coefficients of $B$ belong to a fixed DCC set. In the following theorem we remove this additional condition.


\begin{thm}\label{thm:toric} Conjecture \ref{conj:birkar1} holds when $f:X\to Z$ is a toric contraction between toric varieties, $(X,B)$ is a toric pair and $z\in Z$ is a torus invariant closed point.
\end{thm}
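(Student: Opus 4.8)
The plan is to reduce the statement to a purely combinatorial problem on the fan of $Z$ and then to exhibit an explicit torus-invariant Cartier divisor $H$. Let $\Sigma_X$ and $\Sigma_Z$ be the fans of $X$ and $Z$ in lattices $N_X$ and $N_Z$, and let $\pi\colon N_X\to N_Z$ be the lattice map inducing $f\colon X\to Z$. Since $z$ is a torus invariant closed point, it corresponds to a maximal cone $\sigma\in\Sigma_Z$ of dimension $s=\dim Z$. First I would recall that any torus-invariant Cartier divisor $H$ on $Z$ whose support passes through $z$ is given, near the affine chart $U_\sigma$, by a single linear functional $u_\sigma\in M_Z=N_Z^\vee$ with $u_\sigma$ taking positive values on all primitive ray generators of $\sigma$; the condition $z\in\Supp H$ is exactly that $u_\sigma$ is not identically $1$ on those generators in the way that would make $H$ trivial at $z$, i.e. $H=\sum_\rho \langle u_\sigma,v_\rho\rangle D_\rho$ with some coefficient positive. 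Pulling back, $f^*H$ is the toric divisor on $X$ attached to $\pi^\vee u_\sigma\in M_X$. Thus the log discrepancy computation for $(X,B+tf^*H)$ along any torus-invariant valuation $v\in |\Sigma_X|$ becomes the affine-linear inequality $A_{X,B}(v)\ge t\,\langle \pi^\vee u_\sigma, v\rangle = t\,\langle u_\sigma,\pi(v)\rangle$, and one must also control valuations that are not torus-invariant — but by toric MMP / the cone theorem for toric pairs, lc-ness of a toric pair can be checked on torus-invariant divisors, so this reduces the problem to the toric valuations.

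The key step is then to bound, uniformly in terms of $s$ and $\epsilon$, how large $\langle u_\sigma,\pi(v)\rangle$ can be relative to $A_{X,B}(v)$ for a good choice of $u_\sigma$. I would argue as follows. The $\epsilon$-lc hypothesis on $(X,B)$ says that for every primitive ray generator $v_\rho$ of $\Sigma_X$ one has $A_{X,B}(v_\rho)\ge\epsilon$; by convexity of the function $v\mapsto A_{X,B}(v)$ on each cone (it is affine-linear on cones, being $1$ minus the coefficient on each ray and linearly interpolated) this gives a lower bound $A_{X,B}(v)\ge \epsilon\,\|v\|_{\Sigma_X}$ for a suitable piecewise-linear ``norm'' $\|\cdot\|_{\Sigma_X}$ adapted to the fan. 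On the other hand the fibers of $\pi$ over the cone $\sigma$ are cut out by the preimage of $\sigma$, and the crucial point is that the Fano-type hypothesis on $X/Z$ forces the relative fan $\Sigma_X\cap \pi^{-1}(\sigma)$ to be ``not too degenerate'': $-(K_X+B)$ nef over $Z$ means the support function of $K_X+B$ is convex on each fiber cone, which bounds the combinatorial complexity of $\Sigma_X$ over $\sigma$. Concretely, I expect to choose $u_\sigma$ to be (a rational multiple of) the functional that is $1$ on the primitive generators of $\sigma$, so that $\langle u_\sigma,\pi(v)\rangle$ is controlled by the ``height'' of $\pi(v)$ in $\sigma$; then $\langle u_\sigma,\pi(v)\rangle\le C(s)\cdot\|\pi(v)\|_{\Sigma_Z}\le C(s)\cdot\|v\|_{\Sigma_X}$ using that $\pi$ maps the relevant cones of $\Sigma_X$ into $\sigma$. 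Combining, $A_{X,B}(v)\ge \epsilon\|v\|_{\Sigma_X}\ge \tfrac{\epsilon}{C(s)}\langle u_\sigma,\pi(v)\rangle$, so $t=\epsilon/C(s)$ works.

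The main obstacle, as I see it, is precisely making the bound on $C(s)$ depend only on $s=\dim Z$ and not on the fan of $X$ — a priori $\Sigma_X$ can have rays very close to the boundary of $\pi^{-1}(\sigma)$, which would make $\langle u_\sigma,\pi(v)\rangle$ blow up relative to $\|v\|$. This is where the Fano-type-over-$Z$ hypothesis must really be used: it should imply, perhaps via the structure of the relative Cox ring or via running a relative toric MMP to a relative Mori fiber space, that one may replace $X$ by a model where $\Sigma_X$ over $\sigma$ has bounded complexity (bounded number of rays / bounded multiplicities), reducing to finitely many combinatorial types for each $s$, or at least to a family where the estimate is uniform. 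An alternative, cleaner route avoiding explicit combinatorics is to invoke Theorem~\ref{thm:main}: since Conjecture~\ref{conj:birkar1} in the toric setting is being claimed, and by Theorem~\ref{thm:main} it would follow from Conjecture~\ref{conj:birkar2} restricted to toric $Z$ (i.e. the case $X=Z$), one reduces immediately to producing $H$ on a toric $\epsilon$-lc variety $Z$ through a torus-fixed point — and there the needed bound $t=t(s,\epsilon)$ is exactly Ambro's toric boundedness input together with the observation that on a toric variety the relevant valuations are the toric ones, so the DCC-coefficients hypothesis in \cite[Theorem 3.12]{Am22} is vacuous once $B=0$ on $Z$. I would pursue this reduction first, falling back on the direct fan-theoretic estimate only to handle whatever part of the equivalence in Theorem~\ref{thm:main} is not purely formal in the toric category.
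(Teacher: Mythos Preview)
Your proposal contains two routes, and neither reaches the goal as written.

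The direct fan-theoretic estimate is not the paper's method, and you have correctly identified the gap: controlling $\langle u_\sigma,\pi(v)\rangle$ uniformly in $\Sigma_X$ is exactly the heart of the matter, and ``bounded complexity after a relative toric MMP'' is an assertion, not an argument. Nothing in the hypotheses prevents $\Sigma_X$ from having arbitrarily many rays over $\sigma$; the $\epsilon$-lc condition bounds log discrepancies, not the fan's combinatorics.

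The reduction via Theorem~\ref{thm:main} has two problems. First, the implications in Theorem~\ref{thm:main} are not proved to respect the toric category: $(5)\Rightarrow(3)$ (Proposition~\ref{prop:5to3}) passes through the canonical bundle formula and \cite[Theorem 1.1]{Bi23}, producing a generalized pair $(Z,\Delta+\bN)$ on the base that you have no reason to believe is toric. Second, even if you could reduce to Conjecture~\ref{conj:birkar2} for a toric pair $(Z,\Delta)$, you still have a nonzero $\Delta$ with arbitrary real coefficients, so Ambro's DCC hypothesis is \emph{not} vacuous; the phrase ``once $B=0$ on $Z$'' is unjustified.

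What the paper actually does is closer in spirit to the mechanism of $(4)\Rightarrow(5)$ (Proposition~\ref{prop:4to5}), but applied \emph{directly to the fibration} $X\to Z$ without ever reducing to $X=Z$. One lets $t$ be the threshold for $(X,B,f^*\mathfrak m_z^t)$ to be $\tfrac{\epsilon}{2}$-lc over $z$, picks a toric divisor $T$ computing this threshold, and observes that it suffices to bound $\mu_T f^*\mathfrak m_z$. To do this, extract $T$ on a toric $\Q$-factorial model $Y\to X$, then run a \emph{toric} MMP on $-K_Y$ over $Z$ to reach a model $Y'\to Z$ on which $-K_{Y'}$ is nef over $Z$ and $\mld(Y'/Z\ni z,0)\ge\tfrac{\epsilon}{2}$. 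Now Ambro's result (Lemma~\ref{lem:bu}) applies to $Y'\to Z$ with boundary $B=0$---and \emph{this} is where the DCC hypothesis becomes vacuous---yielding a Cartier $F\ni z$ with $(Y',u\cdot f'^*F)$ lc, hence $\mu_T f^*\mathfrak m_z\le 1/u$. The key idea you are missing is that the MMP on $-K$ lets you discard the original boundary $B$ entirely while staying toric, so that Ambro's theorem can be invoked with trivial coefficients on a model of $X$, not on $Z$.
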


\begin{cor}\label{cor:toric} Conjecture \ref{conj:birkar2} holds when $(Z,\Delta)$ is a toric pair and $z\in Z$ is a torus invariant closed point.
\end{cor}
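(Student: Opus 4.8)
\noindent\textbf{Proof strategy for Corollary \ref{cor:toric}.} The plan is to deduce the statement directly from Theorem \ref{thm:toric} by specializing the Fano fibration to an identity morphism. Recall, as noted in the remark following Conjecture \ref{conj:birkar2}, that this conjecture is exactly the case $X=Z$ of Conjecture \ref{conj:birkar1}. So it suffices to set $X=Z$, $f=\mathrm{id}_Z$ and $B=\Delta$, and to check that $(X,B)\to Z$ together with $z$ satisfies the hypotheses of Theorem \ref{thm:toric}. The morphism $\mathrm{id}_Z\colon Z\to Z$ is a toric contraction between toric varieties, $(X,B)=(Z,\Delta)$ is a toric pair by assumption, and $z$ is a torus invariant closed point; thus the ``toric'' part of the hypotheses is immediate, and in particular no constraint is needed on the coefficients of $\Delta$.

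It then remains to verify the three bulleted conditions of Conjecture \ref{conj:birkar1} in this case. First, $(Z,\Delta)$ is $\epsilon$-lc of dimension $s$ by hypothesis, and we may assume $s=\dim Z>0$, for otherwise $Z$ is a point, there is no effective Cartier divisor through $z$, and the statement is vacuous. Second, $Z$ is of Fano type over itself: the pair $(Z,\Delta)$ is klt (being $\epsilon$-lc with $\epsilon>0$), and $-(K_Z+\Delta)$ is trivially nef and big over $Z$ since the fibres of $\mathrm{id}_Z$ are points; this exhibits $Z$ as of Fano type over $Z$ with boundary $\Delta$. Third, $-(K_X+B)=-(K_Z+\Delta)$ is nef over $Z$ for the same reason.

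With the hypotheses in place, Theorem \ref{thm:toric} yields a positive real number $t$ depending only on $s$ and $\epsilon$, together with an effective Cartier divisor $H$ on $Z$ such that $z\in\Supp H$ and $(X,B+t\,f^*H)=(Z,\Delta+tH)$ is lc over some neighbourhood of $z$, i.e.\ lc near $z$. This is precisely the conclusion of Conjecture \ref{conj:birkar2}, so we are done. I expect no genuine obstacle in this reduction: it is purely formal, and all of the real content lies in Theorem \ref{thm:toric}, whose proof should proceed on the fan of $Z$, using the combinatorial description of log discrepancies of torus invariant valuations to locate a torus invariant Cartier divisor $H$ through $z$ along which the singularity is controlled, and then bounding the relevant multiplicity in terms of $s$ and $\epsilon$.
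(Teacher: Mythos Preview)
Your proposal is correct and matches the paper's approach: the paper's proof is the single sentence ``It is a special case of Theorem \ref{thm:toric},'' and your argument simply spells out the routine verification that taking $X=Z$, $f=\mathrm{id}_Z$, $B=\Delta$ places you in the setting of that theorem. Your closing speculation about how Theorem \ref{thm:toric} itself is proved is extraneous to the corollary (and not quite how the paper does it---the proof reduces to Ambro's result via an MMP rather than working directly on the fan), but this does not affect the validity of your deduction.
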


\begin{rem}

(1) See Theorem \ref{thm:toric2} for a stronger version of Theorem \ref{thm:toric}, which asserts that Conjecture \ref{conj:birkar1} holds in the toric case, even when we replace the condition that $(X,B)$ is $\epsilon$-lc by a weaker condition that $\mld(X/Z\ni z,B)\geq \epsilon$, that is, $a(E,X,B)\geq \epsilon$ for any prime divisor $E$ over $X$ with $f(\cent_X E)=z$.

(2) After this paper was completed, Ambro informed me that he 
had proved Theorem \ref{thm:toric}  with an effective estimate for $t$ (under the weaker condition that $\mld(X/Z\ni z,B)\geq \epsilon$). 
\end{rem}

\medskip

\noindent\textbf{The case for exceptional singularities.} 
We confirm Conjecture \ref{conj:birkar2} for singularities admitting $\delta$-plt blow-ups (Definition \ref{defn:plt}) for some fixed $\delta>0$. As a corollary, we prove the boundedness of klt complements over an exceptional singularity (Definition \ref{defn:exc}) $Z\ni z$ of dimension 3.

\begin{thm}\label{thm:exceptional}
Fix a positive real number $\delta$. Conjecture \ref{conj:birkar2} holds when $(Z\ni z,\Delta)$ has a $\delta$-plt blow-up.
\end{thm}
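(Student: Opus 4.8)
The plan is to use the given $\delta$-plt blow-up to transfer the problem to the log Fano pair induced on the exceptional divisor by adjunction, which is bounded by the Borisov--Alexeev--Borisov theorem, to produce there the restriction of the Cartier divisor we want, and then to push it down to $Z$. We may assume $s=\dim Z\ge 3$, since the cases $s\le 2$ are already known ($s=1$ is trivial and $s=2$ is Theorem \ref{thm:surface}). Let $g\colon Y\to Z$ be a $\delta$-plt blow-up of $(Z\ni z,\Delta)$ with exceptional divisor $S$; thus $g(S)=z$, $-S$ is ample over $Z$, and $(Y,S+\Delta_Y)$ is $\delta$-plt, where $\Delta_Y:=g^{-1}_*\Delta$. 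Writing $K_Y+S+\Delta_Y=g^*(K_Z+\Delta)+a_0S$ with $a_0:=a(S,Z,\Delta)$ and restricting to $S$, adjunction gives $K_S+\Delta_S\equiv a_0\,L$, where $\Delta_S:=\Diff_S(\Delta_Y)$ and $L:=-S|_S$; moreover $(S,\Delta_S)$ is $\delta'$-lc of dimension $s-1$ for some $\delta'=\delta'(\delta)>0$, and $(Y,\Delta_Y)$ is klt. The two facts I would exploit are: $a_0\ge\epsilon$, because $(Z,\Delta)$ is $\epsilon$-lc; and $L$ is ample on $S$, because $-S$ is ample over $Z$ and $g(S)$ is a point. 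Hence $-(K_S+\Delta_S)\equiv a_0L$ is ample, so $(S,\Delta_S)$ is a $\delta'$-lc log Fano pair of dimension $s-1$.

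Next I would invoke boundedness: by \cite{Bi19,Bi21}, the pairs $(S,\Delta_S)$ arising this way form a log bounded family. Since $L^{s-1}=a_0^{-(s-1)}\bigl(-(K_S+\Delta_S)\bigr)^{s-1}\le\epsilon^{-(s-1)}V$, where $V=V(s,\delta)$ bounds the anticanonical volume in that family, $L$ is an ample $\Q$-Cartier divisor of bounded degree on $S$; together with a uniform bound on the Cartier index of $L$ on the klt variety $S$, there is $n_0=n_0(s,\epsilon,\delta)$ such that $n_0L$ is a very ample, basepoint-free Cartier divisor on $S$. To build $H$, observe that $-(n_0+1)S\equiv_Z K_Y+\Delta_Y+(n_0+a_0)(-S)$ with $(Y,\Delta_Y)$ klt and $(n_0+a_0)(-S)$ ample over $Z$, so relative Kawamata--Viehweg vanishing gives $R^1g_*\mathcal{O}_Y(-(n_0+1)S)=0$ over the germ $Z\ni z$; hence the restriction map $H^0\bigl(Y,\mathcal{O}_Y(-n_0S)\bigr)\to H^0\bigl(S,\mathcal{O}_S(n_0L)\bigr)$ is surjective after shrinking $Z$. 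Lifting a general $D_S\in|n_0L|$ to a section of $\mathcal{O}_Y(-n_0S)$, its divisor is $\divi_Y(\varphi)-n_0S=:D_Y$ for some $\varphi\in k(Y)=k(Z)$, so $D_Y+n_0S=\divi_Y(\varphi)$ with $D_Y\not\supseteq S$ and $D_Y|_S=D_S$. Put $H:=\divi_Z(\varphi)$: this is an effective Cartier divisor on $Z$ with $z\in\Supp H$, and $g^*H=D_Y+n_0S$, so $\operatorname{ord}_S(g^*H)=n_0$.

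To finish, set $t:=\min\{1,\ \epsilon/(2n_0)\}$, which depends only on $s,\epsilon,\delta$. Pulling back,
\[
g^*(K_Z+\Delta+tH)=K_Y+\Delta_Y+(1-a_0+tn_0)S+tD_Y,
\]
where $1-a_0+tn_0\le 1-\epsilon+\epsilon/2<1$. Since lowering the coefficient of a component preserves the lc property, it suffices to show that $(Y,\,S+\Delta_Y+tD_Y)$ is lc near $S$; by inversion of adjunction this is equivalent to $\bigl(S,\Diff_S(\Delta_Y+tD_Y)\bigr)=(S,\Delta_S+tD_S)$ being lc. The latter holds because $D_S$ is a general member of the basepoint-free linear system $|n_0L|$ on the normal variety $S$ and $(S,\Delta_S)$ is lc, so $(S,\Delta_S+D_S)$ is lc and hence so is $(S,\Delta_S+tD_S)$ for $t\le 1$. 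Therefore $(Z,\Delta+tH)$ is lc near $z$, which is the conclusion of Conjecture \ref{conj:birkar2} in this case.

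The step I expect to be the main obstacle is controlling $n_0$ uniformly, that is, bounding the Cartier index of the integral Weil divisor $L=-S|_S$ on $S$ only in terms of $s,\epsilon,\delta$. The rest is comparatively soft: the anticanonical volume bound for $L$ is immediate from $a_0\ge\epsilon$ and log boundedness of $(S,\Delta_S)$, and---pleasantly---the lift of $D_S$ from $S$ to $Y$ needs \emph{no} unbounded twist, because $-(n_0+1)S$ is adjoint-type relative to the klt pair $(Y,\Delta_Y)$, so relative Kawamata--Viehweg applies for every $n_0$. The Cartier-index bound should follow from log boundedness of $(S,\Delta_S)$ together with finiteness and boundedness of the torsion of local class groups of klt varieties in bounded families, but this, as well as the quantitative adjunction statement ``$\delta$-plt on $Y$ implies $\delta'$-lc on $S$'', would need to be invoked or established with care.
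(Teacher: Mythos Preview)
Your route is different from the paper's and more elaborate than necessary. The paper works entirely on $Y$: after compactifying $Z$, it invokes \cite[Proposition 4.4 and Theorem A.1]{HLS19} to get a bound $n_1=n_1(s,\epsilon,\delta)$ such that $n_1S$ is Cartier on $Y$, then chooses a very ample $A$ on $Z$ with $\pi^*A-n_1S$ and $\pi^*A-(K_Y+C)$ ample (for a suitable klt $(Y,C)$), and applies effective base point freeness \cite{Ko93} on $Y$ to the line bundle $2n_2\pi^*A-n_1n_2S$. A general member $G$ gives $H:=\pi_*G$ with $\pi^*H=G+nS$, and one checks directly that $(Z,\Delta+\tfrac{\epsilon}{n}H)$ is lc. No adjunction to $S$, no BAB, no lifting of sections is needed. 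Your approach via restriction to $S$, log boundedness of $(S,\Delta_S)$, and lifting from $|n_0L|$ is a legitimate alternative and leads to the same kind of $H$, but it trades the single citation to \cite{HLS19} for several heavier inputs (BAB, uniform very ampleness in bounded families, inversion of adjunction).

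There is also a genuine gap in your argument as written. Your short exact sequence and the surjection $H^0\bigl(Y,\mathcal O_Y(-n_0S)\bigr)\to H^0\bigl(S,\mathcal O_S(n_0L)\bigr)$, as well as the identification $D_Y|_S=D_S$ and $\Diff_S(\Delta_Y+tD_Y)=\Delta_S+tD_S$, require $n_0S$ to be Cartier on $Y$ near $S$; otherwise $\mathcal O_Y(-n_0S)$ is only reflexive, the cokernel need not be $\mathcal O_S(n_0L)$, and $D_Y$ need not be Cartier. Bounding the Cartier index of $L=-S|_S$ on $S$ (via boundedness of $S$) does not imply a bound on the Cartier index of $S$ on $Y$: these live in different local class groups. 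The correct input here is precisely the Cartier-index bound for $S$ on $Y$ furnished by \cite{HLS19}, which is what the paper uses. Once you plug that in (choosing $n_0$ divisible by the paper's $n_1$), your lifting and inversion-of-adjunction steps go through, but at that point the detour through $S$ is no longer buying you anything beyond what the paper's direct argument on $Y$ already gives.
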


\begin{thm}\label{thm:exc2}
Fix a DCC set $\mathfrak D\subset [0,1]$. Conjecture \ref{conj:birkar2} holds when there is a boundary $\Omega$ on $Z$ such that 
\begin{itemize}
\item $\Omega\leq \Delta$,
\item $(Z\ni z,\Omega)$ is an exceptional singularity, and 
\item the coefficients of $\Omega$ are in $\mathfrak D$.
\end{itemize} 
\end{thm}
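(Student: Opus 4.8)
The plan is to reduce to Theorem \ref{thm:exceptional}. Writing $s:=\dim Z$ and taking $\epsilon\in(0,1]$ so that $(Z,\Delta)$ is $\epsilon$-lc (as in Conjecture \ref{conj:birkar2}), it suffices to show that $(Z\ni z,\Delta)$ itself admits a $\delta$-plt blow-up for some $\delta>0$ depending only on $s,\epsilon$ and $\mathfrak D$, and then to invoke Theorem \ref{thm:exceptional} for $(Z\ni z,\Delta)$.

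First I would produce a plt blow-up of $(Z\ni z,\Delta)$ coming from the $\Omega$-side. Since $(Z\ni z,\Omega)$ is exceptional of dimension $s$ with coefficients in the DCC set $\mathfrak D$, the theory of complements for exceptional singularities (cf. the references behind Remark \ref{rem:1}(7), in particular \cite{HLS19}, together with boundedness of exceptional singularities) provides a bounded $N=N(s,\mathfrak D)$, a monotonic $N$-complement $\Omega^+\ge\Omega$ of $K_Z+\Omega$ over $z$, and a $\delta_0$-plt blow-up $\phi\colon W\to Z$ extracting a prime divisor $E$ with $\cent_Z E=z$, where $\delta_0=\delta_0(s,\mathfrak D)>0$, $E$ is the unique lc place of $(Z,\Omega^+)$, and the germ $(W\supset E)$ lies in a bounded family. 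Now $(Z,\Delta)$ is $\epsilon$-lc, hence klt, so it admits some plt blow-up $\psi\colon W'\to Z$ extracting a prime divisor $E'$; since $\Omega\le\Delta$ we have $a(F,W',E'+\psi_*^{-1}\Omega)\ge a(F,W',E'+\psi_*^{-1}\Delta)\ge 0$ for every prime divisor $F$ over $W'$, with $E'$ still the only lc place, so $(W',E'+\psi_*^{-1}\Omega)$ is again plt and $\psi$ is a plt blow-up of $(Z\ni z,\Omega)$. By the exceptionality of $(Z\ni z,\Omega)$ its plt blow-up is unique, so $E'=E$ and we may take $\psi=\phi$; in particular $(W,E+\phi_*^{-1}\Delta)$ is plt, and by adjunction $(E,\Diff_E(\phi_*^{-1}\Delta))$ is klt.

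Next I would upgrade $\phi$ to a uniform $\delta$-plt blow-up of $(Z\ni z,\Delta)$. Off $\phi^{-1}(z)$ the pair $(W,E+\phi_*^{-1}\Delta)$ is isomorphic to $(Z,\Delta)$ and thus $\epsilon$-lc, so the issue is to bound $a(F,W,E+\phi_*^{-1}\Delta)$ from below over divisors $F\ne E$ whose centre meets $E$. The two inputs I would combine are: the boundedness of the germ $(W\supset E)$ (which fixes a log resolution of $(W\supset E)$ from a bounded family and bounds $a(E,Z,\Delta)\le a(E,Z,\Omega)$ from above in terms of $s,\mathfrak D$), and the $\epsilon$-lc-ness of $(Z,\Delta)$ (which forces the coefficients of $\phi_*^{-1}\Delta$ to be $\le 1-\epsilon$ and, through $a(F,W,E+\phi_*^{-1}\Delta)=a(F,Z,\Delta)-a(E,Z,\Delta)\,\mathrm{ord}_FE$, constrains the discrepancies). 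Transporting the $\epsilon$-lc bound to $E$ via adjunction makes $(E,\Diff_E(\phi_*^{-1}\Delta))$ a klt pair on the bounded variety $E$ with boundary coefficients bounded away from $1$; a quantitative inversion of adjunction, together with the bounded model, then gives $\mld\bigl(W,E+\phi_*^{-1}\Delta\bigr)\ge\delta$ off $E$ for some $\delta=\delta(s,\epsilon,\mathfrak D)>0$. Hence $\phi$ is a $\delta$-plt blow-up of $(Z\ni z,\Delta)$ and Theorem \ref{thm:exceptional} finishes the proof.

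The hard part will be this last step. Although $\Omega\le\Delta$ pins down the same plt blow-up $\phi$ for both pairs, the excess boundary $\phi_*^{-1}(\Delta-\Omega)$ is otherwise arbitrary subject only to $(Z,\Delta)$ being $\epsilon$-lc, so one cannot simply compare discrepancies with the $\delta_0$-plt pair $(W,E+\phi_*^{-1}\Omega)$: a divisor $F$ with $\mathrm{ord}_FE$ large may have $a(F,W,E+\phi_*^{-1}\Delta)$ small and positive. Ruling such $F$ out seems to require using the boundedness of the local model $(W\supset E)$ in an essential way, and that boundedness, which rests on exceptionality together with the DCC hypothesis (not on $\epsilon$-lc-ness of $\Delta$ alone), is the other point that would need to be set up with care.
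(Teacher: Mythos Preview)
Your first paragraph is essentially correct and matches the paper: take a plt blow-up $\phi:W\to Z$ of $(Z\ni z,\Delta)$, observe that since $\Omega\le\Delta$ it is also a plt blow-up of $(Z\ni z,\Omega)$, and then use exceptionality of $(Z\ni z,\Omega)$ with DCC coefficients (Lemma~\ref{lem:exc}) to conclude that $\phi$ is a $\delta_0$-plt blow-up of $(Z\ni z,\Omega)$ with $\delta_0=\delta_0(s,\mathfrak D)$.

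Where you diverge from the paper is in your second paragraph. You try to upgrade $\phi$ to a $\delta$-plt blow-up of $(Z\ni z,\Delta)$ itself, so that you can invoke Theorem~\ref{thm:exceptional}. The paper does not do this, and you correctly identify this step as the hard part: your appeal to ``quantitative inversion of adjunction together with the bounded model'' is not a complete argument, and it is not clear it can be made into one without substantial additional work, since the excess boundary $\phi_*^{-1}(\Delta-\Omega)$ is essentially unconstrained beyond the $\epsilon$-lc condition.

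The paper sidesteps this difficulty entirely by appealing not to Theorem~\ref{thm:exceptional} but to the more flexible technical result Theorem~\ref{thm:exptech}, whose hypotheses decouple the roles of $\Delta$ and $\Omega$: one only needs $\phi$ to be a plt blow-up of $(Z\ni z,\Delta)$ with $a(E,Z,\Delta)\ge\epsilon$, together with the existence of \emph{some} boundary $\Omega$ for which $\phi$ is $\delta$-plt. Inspecting the proof of Theorem~\ref{thm:exptech}, the $\delta$-plt condition on $\Omega$ is used solely to bound the Cartier index of $E$ (via \cite[Proposition 4.4 and Theorem A.1]{HLS19}); once a general $G\in|2n_2\pi^*A-nE|$ is chosen, the inequality $\widetilde{\Delta}_Y+\frac{\epsilon}{n}\pi^*H\le\Delta_Y+E+G$ and the lc conclusion require only that $(Y,\Delta_Y+E)$ be plt near $E$ and that $a(E,Z,\Delta)\ge\epsilon$, both of which you already have. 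So the entire ``hard part'' you flagged is unnecessary: once you have reached the end of your first paragraph, you are done by Theorem~\ref{thm:exptech}.
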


\begin{cor}\label{cor:exc1}
Conjecture \ref{conj:birkar1} and \ref{conj:birkar2} hold when $Z\ni z$ is an exceptional singularity.
\end{cor}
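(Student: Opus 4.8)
The plan is to deduce both conjectures from Theorem \ref{thm:exc2}, the point being that the zero boundary $\Omega=0$ is always available as the auxiliary sub-boundary, and that $(Z\ni z,0)$ is exceptional precisely when $Z\ni z$ is an exceptional singularity.

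\textbf{Conjecture \ref{conj:birkar2} for an exceptional germ $Z\ni z$.} Given an $\epsilon$-lc pair $(Z,\Delta)$ of dimension $s$ with $Z\ni z$ exceptional, I would take $\Omega=0$: then $\Omega\le\Delta$, the germ $(Z\ni z,\Omega)$ is exceptional by hypothesis, and the coefficients of $\Omega$ lie in the finite (hence DCC) set $\mathfrak D:=\{0\}$. Applying Theorem \ref{thm:exc2} with this $\mathfrak D$ produces an effective Cartier divisor $H$ with $z\in\Supp H$ and $(Z,\Delta+tH)$ lc near $z$; since $\mathfrak D=\{0\}$ is fixed once and for all, $t$ depends only on $s$ and $\epsilon$, which is exactly what Conjecture \ref{conj:birkar2} demands. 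Note $\Delta$ is allowed arbitrary real coefficients here, which is why we must route through $\Omega=0$ rather than through a DCC hypothesis on $\Delta$ itself.

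\textbf{Conjecture \ref{conj:birkar1} for an exceptional germ $Z\ni z$.} Let $f:X\to Z$ and $(X,B)$ be as in Conjecture \ref{conj:birkar1}, with $d=\dim X$ and $Z\ni z$ exceptional. I would invoke the part of the argument in the proof of Theorem \ref{thm:main} where, via \cite[Theorem 1.1]{Bi23}, one transfers the problem from the total space to the base: it replaces $(X,B)$ by a $\delta$-lc pair $(Z,\Delta_Z)$ on the \emph{same} base $Z$, near the \emph{same} point $z$, with $\delta=\delta(d,\epsilon)>0$, in such a way that $(Z,\Delta_Z+tH)$ lc near $z$ forces $(X,B+tf^*H)$ lc near $f^{-1}(z)$. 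Because $Z\ni z$ is exceptional, the instance of Conjecture \ref{conj:birkar2} needed on the base is precisely the case settled in the previous paragraph (with $\epsilon$ there replaced by $\delta$ and $s=\dim Z\le d$); unwinding the dependencies gives a final $t$ depending only on $d$ and $\epsilon$. (Conjecture \ref{conj:birkar2} is then also recovered as the case $X=Z$, consistently with the previous paragraph.)

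The step I expect to require the most care is this base-transfer. One must confirm that it genuinely consumes only exceptionality of the given germ $Z\ni z$ — rather than of some auxiliary germ created after a birational modification of $Z$ — and that the moduli part of the canonical bundle formula is absorbed into an honest pair $(Z,\Delta_Z)$ near $z$ exactly as in the proof of Theorem \ref{thm:main}, so that invoking Theorem \ref{thm:exc2} with $\Omega=0$ is legitimate. Granting that bookkeeping, which is already present in that proof, the argument is complete.
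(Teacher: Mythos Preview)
Your proposal is correct and matches the paper's proof: the paper deduces Conjecture \ref{conj:birkar2} from Theorem \ref{thm:exc2} by taking $\Omega=0$, and then obtains Conjecture \ref{conj:birkar1} by invoking the proof of Proposition \ref{prop:5to3} (the $(5)\Rightarrow(3)$ step of Theorem \ref{thm:main}), which is exactly the base-transfer via \cite[Theorem 1.1]{Bi23} that you describe. Your bookkeeping concern is handled there: in Proposition \ref{prop:gene} the moduli part is absorbed into an honest pair $(Z,\Delta+E)$ on the \emph{original} base, and Conjecture \ref{conj:birkar2} is applied at the original point $z\in Z$, so only exceptionality of $Z\ni z$ is used.
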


\begin{cor}\label{cor:exc2}
  Conjecture \ref{conj:shokurov} holds when $Z\ni z$ is an exceptional singularity and $\dim Z=3$.
\end{cor}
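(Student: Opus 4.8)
The plan is to re‑run the proof of Theorem~\ref{thm:main} for the given fibration, feeding it the two ingredients that are already available: Conjecture~\ref{conj:birkar1} for $f\colon X\to Z$ (supplied by Corollary~\ref{cor:exc1}, since $Z\ni z$ is exceptional) and Conjecture~\ref{conj:shokurov} in dimension $\le 2$ (supplied by Corollary~\ref{cor:dim2}). The key point is that, although Theorem~\ref{thm:main} is stated for all bases of a fixed dimension, the implication ``Conjecture~\ref{conj:birkar1}$\ \Rightarrow\ $Conjecture~\ref{conj:shokurov}'' in its proof can be carried out locally over $z\in Z$: for a fixed contraction it calls only on Conjecture~\ref{conj:birkar1} for that very contraction and on Conjecture~\ref{conj:shokurov} for Fano type fibrations over bases of dimension $\le\dim Z-1$. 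With $\dim Z=3$ those auxiliary bases are surfaces and curves, where Conjecture~\ref{conj:shokurov} is known by Corollary~\ref{cor:dim2}; this is precisely why $\dim Z=3$ is assumed.

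In more detail, I would first apply Conjecture~\ref{conj:birkar1} to produce an effective Cartier divisor $H$ on $Z$ with $z\in\Supp H$ such that $(X,B+tf^{*}H)$ is lc over a neighbourhood of $z$, with $t=t(d,\epsilon)$, and then enlarge $t$ to the log canonical threshold $t_{0}\ge t$ of $f^{*}H$ with respect to $(X,B)$ over $z$. Since $(X,B)$ is $\epsilon$-lc, every lc place $E$ of $(X,B+t_{0}f^{*}H)$ has $t_{0}\mult_{E}(f^{*}H)=a(E,X,B)\ge\epsilon>0$, so its centre on $X$ lies in $\Supp f^{*}H=f^{-1}(\Supp H)$ and its centre on $Z$ lies in $\Supp H$, a set of dimension $\le 2$. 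Extracting such an $E$ with centre on $Z$ passing through $z$ and performing divisorial adjunction produces a pair $(E,B_{E})$ with a contraction $E\to E'$, $z\in E'\subseteq\Supp H$, $\dim E'\le 2$, and, after the standard preliminary modifications and canonical bundle formula computations, $(E,B_{E})\to E'$ is a pair on a Fano type fibration with $-(K_{E}+B_{E})$ nef over $E'$. Corollary~\ref{cor:dim2} then furnishes a bounded complement on $(E,B_{E})$ over $z$, which I would lift to $X$ via Kollár's gluing theory and the standard vanishing theorems, as in Birkar's proof of boundedness of lc complements, obtaining a complement $(X,\Lambda)$ with $\Lambda\ge B$ and $n(K_{X}+\Lambda)\sim 0/Z$ over a neighbourhood of $z$ for some $n=n(d,\epsilon,\mathfrak R)$.

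The hard part will be to make the resulting complement \emph{klt} rather than merely lc: the lifting step naturally returns an lc complement (the centre $E$ enters the boundary with coefficient $1$), whereas Conjecture~\ref{conj:shokurov} demands a klt complement, and upgrading an lc complement to a klt one of bounded index amounts essentially to an effective base‑point‑freeness statement for $-(K_{X}+B)$ over $Z$ near $z$, which is open in general. This is exactly where exceptionality of $Z\ni z$ is indispensable: by the boundedness of exceptional singularities underlying Remark~\ref{rem:1}(7), $Z\ni z$ admits a $\delta$-plt blow‑up with $\delta$ bounded below in terms of the data, so the geometry near $z$ --- and hence near the central fibre of $f$ and near the lc centre $E$ --- is bounded, which makes the needed effectivity (equivalently, the bounded klt perturbation of the lc complement) available. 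Alternatively --- and this may be the cleanest route to the same end --- one can pass from $(X,B)/Z$ to its discriminant pair on the base via the canonical bundle formula: complements of $(X,B)/Z$ near $z$ correspond to generalized complements of the base pair near $z$, klt corresponds to klt under this correspondence, and since $Z\ni z$ is exceptional the $3$-dimensional base pair is again exceptional, so the exceptional case of the theory of complements (via \cite{HLS19}) delivers a bounded klt complement downstairs, which pulls back to the desired klt $n$-complement on $X$.

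The remaining points are routine bookkeeping: verifying that $(E,B_{E})$ (or, in the alternative route, the base pair) genuinely satisfies the hypotheses of Corollary~\ref{cor:dim2} (respectively of the exceptional case of the complement theory) with all constants depending only on $d,\epsilon,\mathfrak R$ --- in particular that its boundary coefficients lie in a fixed finite set of rationals, which follows from adjunction together with the lower bound for $t$ obtained in the first step --- that the nef and Fano type conditions descend correctly from $X$ to $E$ and then to $E'$, and that the $n$-complement, constructed over a neighbourhood of $z$, extends to all of $X$ with $n(K_{X}+\Lambda)\sim 0/Z$ near $z$. Putting these together completes the argument.
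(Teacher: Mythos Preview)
Your opening paragraph is exactly the paper's proof: re-run the implication $(3)\Rightarrow(1)$ of Theorem~\ref{thm:main} (i.e.\ the proofs of Theorems~\ref{thm:klt} and~\ref{thm:tech}), using Corollary~\ref{cor:exc1} to supply Conjecture~\ref{conj:birkar1} at the point $z$ and Corollary~\ref{cor:dim2} to supply the induction step over bases of dimension $\le 2$. If you had stopped there you would be done.

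The gap is in your ``In more detail'' description, which is not what the proof of $(3)\Rightarrow(1)$ actually does. In Theorem~\ref{thm:klt}, Conjecture~\ref{conj:birkar1} is invoked only in Step~3 (via Lemma~\ref{lem:use}) to produce a hyperplane $P\in|A|$ with $z\in\Supp P$ so that $(X,\Gamma+\tfrac{1}{e}f^*P)$ remains $\tfrac{\epsilon}{2}$-lc; this divisor is later subtracted (Step~4) to turn an lc complement into a klt one. The dimension drop happens not by extracting an lc centre over $z$, but by restricting to a \emph{general} $G=f^*H$ with $H\in|A|$ (Steps~8--9): one gets a $(d-1,r',\epsilon)$-Fano type fibration $(G,\Gamma_G)\to H$ with $\dim H=\dim Z-1\le 2$, applies the already-known case (Corollary~\ref{cor:dim2}) to obtain a klt complement on $G$, and lifts it to $X$ by Kawamata--Viehweg vanishing. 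The resulting complement is klt near $G$, hence its non-klt locus lies over finitely many closed points, and Step~4 finishes.

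Your route --- take the lc threshold $t_0$ of $f^*H$, extract an lc place $E$ over $z$, do adjunction to $(E,B_E)\to E'$ with $\dim E'\le 2$, and lift --- is the machinery behind Propositions~\ref{prop:pp} and~\ref{prop:lc}, and as you yourself note it produces only an \emph{lc} complement. Your two proposed fixes do not close this gap: the appeal to ``bounded geometry from the $\delta$-plt blow-up'' is not a mechanism for upgrading lc to klt here, and the canonical-bundle-formula route has real problems (you need $K_X+B\sim_{\R}0/Z$ to even set it up, the base pair is generalized with a moduli part, \cite{HLS19} treats ordinary pairs, and a klt complement of $(Z,B_Z+\bM)$ does not pull back to a klt $n$-complement of $(X,B)$ in the way you suggest). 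The paper sidesteps all of this: the klt-ness comes for free from the general-hyperplane induction, not from any upgrade of an lc complement.
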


\subsection*{Acknowledgements} I would like to express my sincere gratitude to Caucher Birkar for suggesting this question, and for lots of helpful discussions and insightful suggestions. 
I am also grateful to Junpeng Jiao, Xiaowei Jiang, Minzhe Zhu and  Florin Ambro for many useful discussions. This work was mainly done at Tsinghua University and was revised at Sun Yat-sen University. I am supported by the start-up fund from Sun Yat-sen University.

\section{Preliminaries}
We will freely use the standard notations and definitions in \cite{KM98,BCHM}. A contraction $f:X\rightarrow Z$ is a projective morphism of normal varieties with $f_*\mathcal{O}_X=\mathcal{O}_Z$. 

\subsection{Divisors and ideals}
Let $X$ be a normal variety, $D$ be an $\R$-divisor on $X$ and $\mathfrak a$ be a non-zero ideal sheaf on $X$. For a prime divisor $T$ on $X$, we denote by $\mu_T D$ the coefficient of $T$ in $D$ and denote by $\mu_T \mathfrak a$ the smallest vanishing order of $s$ along $T$, where $s$ runs over all sections of $\mathfrak a$ at the generic point of $T$. For a prime divisor $T$ over $X$, i.e. a prime divisor on some birational model $\pi:Y\to X$, by $\mu_T \mathfrak a$ we mean  $\mu_T \pi^*\mathfrak a$ and by $\mu_T D$ we mean $\mu_T \pi^*D$ in the case when $D$ is an $\mathbb R$-Cartier $\mathbb R$-divisor on $X$.

\subsection{Pairs and singularities} 

\begin{defn}
A sub-pair $(X,B)$ consists of a normal variety $X$ and an $\R$-divisor $B$ on $X$ such that $K_X+B$ is $\R$-Cartier. A sub-pair $(X,B)$ is called a pair if $B$ is effective. If $(X,B)$ is a pair and the coefficients of $B$ are at most $1$, we call $B$ a boundary on $X$.

\end{defn}

\begin{defn} 
Let $(X,B)$ be a sub-pair and $E$ be a prime divisor on some birational model $\pi:Y\to X$. Denote by $\cent_X E$ the center of $E$ on $X$, i.e. the image of $E$ on $X$ under the morphism $\pi$.
We may write $K_Y+B_Y=\pi^*(K_X+B)$ for some uniquely determined $B_Y$. Then the log discrepancy $a(E,X,B)$ of $E$ with respect to $(X,B)$ is defined as $1-\mu_E B_Y$.
\end{defn}

\begin{defn}
Let $\epsilon$ be a non-negative real number. 
A sub-pair $(X,B)$ is said to be $\epsilon$-lc (resp. $\epsilon$-klt, lc, klt) if $a(E,X,B)\geq \epsilon$ (resp. $>\epsilon$,~$\geq 0$,~$> 0$) for any prime divisor $E$ over $X$. In the case when $B = 0$, we also say $X$ is $\epsilon$-lc (resp. $\epsilon$-klt, lc, klt). A pair $(X,B)$ is said to be plt (reps. $\epsilon$-plt) if $a(E,X,B)>0$ (resp. $>\epsilon$) for any exceptional prime divisor $E$ over $X$.
\end{defn}

\begin{defn}
A germ $(X\ni x,B)$ consists of a pair $(X,B)$ and a closed point $x\in X$. We say $(X\ni x,B)$ is an $\epsilon$-lc (resp. $\epsilon$-klt, lc, klt) germ if $(X,B)$ is $\epsilon$-lc (resp. $\epsilon$-klt, lc, klt) near $x$. When $B=0$, we use $X\ni x$ instead of $(X\ni x,0)$.
\end{defn}

\begin{defn}\label{defn:try}
Let $(X,B)$ be a sub-pair, $f:X\to Z$ be a projective morphism and $z\in Z$ be a (not necessarily closed) point. We say $(X,B)$ is $\epsilon$-lc (or $\epsilon$-klt, lc, klt) over $z$ if $(X,B)$ is so over some neighbourhood of $z$. We call $E$ a prime divisor over $X/Z\ni z$ if $E$ is a prime divisor over $X$ with $f(\cent_X E)=\overline{z}$ and we denote
\begin{align*}
  \mld(X/Z\ni z,B):=\inf\{a(E,X,B) \mid ~ &\text{$E$ is a prime divisor over $X/Z\ni z$}\}.
\end{align*}
In the case when $Z=X$, $z=x$ and $f$ is the identity morphism, we call $E$ a prime divisor over $X\ni x$ if $E$ is a prime divisor over $X$ with $\cent_X E=\overline{x}$ and we use $\mld(X\ni x,B)$ instead of $\mld(X/Z\ni z,B)$. 
\end{defn}

We remark that $\mld(X/Z\ni z,B)\geq 0$ if and only if $(X,B)$ is lc over $z$ (see \cite[Lemma 2.8]{HJL22}). However, $\mld(X/Z\ni z,B)\geq \epsilon$ is weaker than $(X,B)$ is $\epsilon$-lc over $z$.

\subsection{b-divisors}
Let $X$ be a normal variety. A b-divisor $\textbf{D}$ over $X$ is a collection of $\R$-divisors $\textbf{D}_Y$ on $Y$, where $Y$ runs over all birational models over $X$, such that $\sigma_*\textbf{D}_{Y_1}=\textbf{D}_{Y_2}$ for any birational morphism $\sigma:Y_1\rightarrow Y_2/X$.  

Let $\bD$ be a b-divisor over $X$ and $Y_0$ be a birational model over $X$. We say $\bD$ descends on $Y_0$ if $\textbf{D}_{Y_0}$ is an $\R$-Cartier $\R$-divisor and $\textbf{D}_{Y}=\sigma^* \textbf{D}_{Y_0}$ for any birational morphism $\sigma:Y\rightarrow Y_0/X$.

Let $X\rightarrow U$ be a projective morphism. We say that a b-divisor $\textbf{D}$ over $X$ is b-nef$/U$ if $\bD$ descends on some birational model $Y_0$ over $X$ and $\textbf{D}_{Y_0}$ is nef$/U$. 

We denote by $\textbf{0}$ the b-divisor $\bD$ such that $\textbf{D}_Y=0$ for every birational model $Y$ over $X$.

\subsection{Generalized pairs}

\begin{defn} 
A generalized sub-pair (g-sub-pair for short) $(X,B+\bM)/U$ consists of a normal variety $X$ associated with a projective morphism $X\rightarrow U$, an $\R$-divisor $B$ on $X$, and a  b-nef$/U$ b-divisor $\bM$ over $X$.
A g-sub-pair $(X,B+\bM)/U$ is called a generalized pair (g-pair for short) if $B$ is effective.

We may drop $U$ when we emphasize the structures of $(X,B+\bM)$ that are independent of the choice of $U$, for example, the singularities of $(X,B+\bM)$.
\end{defn}

\begin{defn} \label{sing}
Let $(X,B+\bM)$ be a g-sub-pair and $E$ be a prime divisor on some birational model $\pi:Y\to X$. We can write 
$$K_Y+B_Y+\bM_Y=\pi^*(K_X+B+\bM_X)$$
for some uniquely determined $B_Y$. Then the log discrepancy of $E$ with respect to $(X,B+\bM)$ is defined as $1-\mu_E B_Y$ and denoted by $a(E,X,B+\bM)$.
\end{defn}

\begin{defn}
Let $\epsilon$ be a non-negative real number. 
A g-sub-pair $(X,B+\bM)$ is said to be $\epsilon$-lc (resp. $\epsilon$-klt, lc, klt) if $a(E,X,B+\bM)\geq \epsilon$ (resp. $>\epsilon$,~$\geq 0$,~$> 0$) for any prime divisor $E$ over $X$. 
\end{defn}

In the case that $\bM=\textbf{0}$, all definitions of g-pairs coincide with those of usual pairs.

\subsection{Canonical bundle formula}
Let $f:X\rightarrow Z$ be a contraction between normal varieties  with $\dim Z>0$ and let $Z\to U$ be a projective morphism. Let $(X,B)$ be a pair such that $K_X+B\sim_{\R} 0/Z$ and $(X,B)$ is lc over the generic point of $Z$. Then  $K_X+B\sim_{\R} f^* L$ for some $\R$-Cartier $\R$-divisor $L$ on $Z$. 
  
For any prime divisor $D$ on $Z$, denote by $t_D$ the largest number such that $(X,B+t_Df^*D)$ is lc over the generic point of $D$. This makes sense even if $D$ is not $\Q$-Cartier because we only need the pullback of $D$ over the generic point of $D$ where $Z$ is smooth. We set $B_Z=\sum_D (1-t_D)D$ where $D$ runs over all prime divisors on $Z$
and set $M_Z=L-K_Z-B_Z$. The former is called the discriminant divisor and the latter is called the moduli divisor. By construction, we have the so called canonical bundle formula
$$K_X+B\sim_{\R} f^*(K_Z+B_Z+M_Z).$$
  
Let $\sigma: Z'\rightarrow Z$ be a birational morphism from a normal variety $Z'$ and let $X'$ be a resolution of the main component of $X\times_Z Z'$ with induced morphisms $\tau:X'\rightarrow X$ and $f':X'\rightarrow Z'$. Write $K_{X'}+B'=\tau^*(K_X+B)$, then $K_{X'}+B'\sim_{\R} f'^* \sigma^* L$. Similarly we can define the discriminant divisor $B_{Z'}$ and the moduli divisor $M_{Z'}$ for the contraction $(X',B')\rightarrow Z'$. One can check that $\sigma_* B_{Z'}=B_Z$ and
$\sigma_* M_{Z'}=M_Z$. Hence there exist b-divisors $\bB,\bM$ over $Z$ such that $\bB_{Z'}=B_{Z'}$ and $\bM_{Z'}=M_{Z'}$ for any birational model $Z'$ over $Z$, which are called the discriminant b-divisor and the moduli b-divisor respectively. 

It was shown by Kawamata \cite{Ka97,Ka98} and Ambro \cite{Am04,Am05} that $\bM$ is a b-nef$/U$ b-divisor over $Z$ (see \cite[Section 3]{Hu21} for $\R$-coefficients). Thus we can regard $(Z,B_Z+\bM)/U$ as a g-pair. We call $(Z,B_Z+\bM)/U$ the g-pair induced by the canonical bundle formula for $(X,B)\rightarrow Z/U$.


\begin{lem}\label{lem:lc1}
  Keep the notation and assumptions as above. If $(Z,B_Z+\bM)$ is an lc g-pair, then $(X,B)$ is lc.
\end{lem}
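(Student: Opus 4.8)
The plan is to prove the contrapositive: assuming $(X,B)$ is not lc, I will exhibit a divisorial valuation over $Z$ witnessing that $(Z,B_Z+\bM)$ is not lc. The key mechanism is the precise behaviour of the canonical bundle formula under birational base change, together with the way log canonical thresholds over generic points of divisors on the base are encoded in the discriminant divisor $B_Z$. Concretely, suppose $a(E,X,B)<0$ for some prime divisor $E$ over $X$; let $z_0$ be the generic point of $f(\cent_X E)$ on $Z$ and let $D_0$ denote $\overline{z_0}$.

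First I would reduce to the case where $E$ is a divisor over the base, i.e.\ $f(\cent_X E)$ is a divisor on $Z$. If $f(\cent_X E)$ has codimension $\geq 2$, I would blow up $Z$ along (a model of) this center to extract a prime divisor $D'$ on some $Z'\to Z$ with $\cent_Z D' = f(\cent_X E)$, pass to the associated resolution $X'\to X$ of the main component of $X\times_Z Z'$ as in the canonical bundle formula discussion above, and replace $(X,B)\to Z$ by $(X',B')\to Z'$. Since $K_{X'}+B'=\tau^*(K_X+B)$, the pair $(X',B')$ is still not lc (the discrepancies only drop or stay the same along divisors over $X$), and the g-pair induced on $Z'$ is the crepant pullback $(Z',\bB_{Z'}+\bM)$ of $(Z,B_Z+\bM)$, so it suffices to prove the statement for the new data. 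Iterating, or choosing $Z'$ appropriately from the start, I may assume $f(\cent_X E)=D_0$ is a prime divisor on $Z$.

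Now the heart of the argument: because $a(E,X,B)<0$ and $\cent_X E$ dominates $D_0$, the pair $(X,B)$ is not lc over the generic point of $D_0$; but by the very definition of $t_{D_0}$ we have that $(X, B + t_{D_0} f^*D_0)$ \emph{is} lc over the generic point of $D_0$ and this is sharp. I want to extract from this the statement $\mu_{D_0} B_Z = 1 - t_{D_0} \geq 1$, forcing $(Z,B_Z)$ — hence $(Z,B_Z+\bM)$, since adding the nef b-part $\bM$ only decreases log discrepancies at divisors over $Z$ that compute it, or rather: $a(D_0, Z, B_Z+\bM_Z) \le 1 - \mu_{D_0}B_Z \le 0$ — to be non-lc at $D_0$. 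The subtlety is that $a(E,X,B)<0$ only tells us the coefficient of some divisor in $B+t f^*D_0$ with $t$ slightly negative already exceeds $1$ along a valuation dominating $D_0$, i.e.\ $t_{D_0} < 0$ might a priori fail to be the obstruction if $E$ is vertical but not equal to $D_0$'s strict transform. I would handle this by working on a log resolution of $(X,B)$ and $f^{-1}(D_0)$ and tracking coefficients: the condition that $(X,B)$ is not lc over the generic point of $D_0$ is exactly $t_{D_0}<0$ in the extended sense, equivalently $\mu_{D_0}B_Z>1$; combined with $\bM$ being b-nef (so $-\bM$ has no positive-coefficient contributions that could rescue log canonicity — more precisely one uses that for a b-nef b-divisor, $a(D_0,Z,B_Z+\bM)\le 1-\mu_{D_0}B_Z$), we conclude $(Z,B_Z+\bM)$ is not lc.

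I expect the main obstacle to be the bookkeeping in this last step — namely making rigorous the claim that $(X,B)$ being non-lc over the generic point of a divisor $D_0\subset Z$ translates exactly into $\mu_{D_0}B_Z \ge 1$ (rather than just $>$ something smaller), and ensuring the b-nef moduli part cannot improve singularities at $D_0$. This likely requires invoking that $\bM$ descends on some model and a local computation of log discrepancies there, or a clean citation of the inversion-type property of the canonical bundle formula (e.g.\ from \cite{Am05} or \cite{JLX22}). The reduction to divisorial centers on the base and the crepant-pullback compatibility of the induced g-pair are routine given the setup already spelled out in the excerpt.
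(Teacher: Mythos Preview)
Your strategy---contrapositive, exhibit a non-lc place on the base from a vertical non-lc place of $(X,B)$---is exactly the paper's. The paper's proof, however, is three lines: observe that a non-lc place $S$ of $(X,B)$ must be vertical over $Z$ (since $(X,B)$ is lc over the generic point of $Z$ by hypothesis), then invoke \cite[Lemma~3.7]{Bi19} as a black box to produce a prime divisor $T$ over $Z$ with $a(T,Z,B_Z+\bM)<0$. What you have written is essentially a sketch of the proof of that cited lemma.

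Two comments on your sketch. First, your worry about the moduli part is misplaced: once $D_0$ is a prime divisor on the model $Z'$ you are working on, the definition of log discrepancy for generalized pairs gives $a(D_0,Z',B_{Z'}+\bM)=1-\mu_{D_0}B_{Z'}$ directly, with no contribution from $\bM$ and no appeal to b-nefness; so $\mu_{D_0}B_{Z'}>1$ is already the contradiction. Second, the step you treat as routine---arranging by base change that the image of the non-lc place $E$ is a divisor on some $Z'$---is the genuinely delicate point. Blowing up $Z$ along $f(\cent_X E)$ and iterating need not obviously terminate with the image of $E$ becoming a divisor; one has to argue (e.g.\ via the restricted valuation on $k(Z)$, or by a flattening-type choice of $Z'$) that such a $Z'$ exists. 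This is precisely what \cite[Lemma~3.7]{Bi19} packages, and it is the reason the paper cites it rather than redoing the computation.
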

\begin{proof}
Suppose that $(X,B)$ is not lc. Since $(X,B)$ is lc over the generic point of $Z$ by assumption, there is a prime divisor $S$ over $X$ such that $a(S,X,B)<0$ and $S$ is vertical over $Z$. By \cite[Lemma 3.7]{Bi19}, there is a prime divisor $T$ over $Z$ such that $a(T,Z,B_Z+\bM)<0$. This contradicts the hypothesis that $(Z,B_Z+\bM)$ is an lc g-pair.
\end{proof}

\begin{lem}\label{lem:lc2}
Keep the notation and assumptions as above. Let $t$ be a non-negative number and $H$ be an effective $\R$-Cartier divisor on $Z$. If $(Z,B_Z+tH+\bM)$ is an lc g-pair, then $(X,B+tf^*H)$ is lc.
\end{lem}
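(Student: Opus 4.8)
The plan is to reduce this to Lemma \ref{lem:lc1}: I would run the canonical bundle formula for the pair $(X,B+tf^*H)\to Z$ and show that the induced g-pair is exactly $(Z,B_Z+tH+\bM)$. First note that $(X,B+tf^*H)\to Z$ satisfies the hypotheses of the canonical bundle formula: $B+tf^*H$ is effective (as $B$ is effective, $t\ge 0$ and $H$ is effective), $K_X+B+tf^*H$ is $\R$-Cartier with $K_X+B+tf^*H\sim_{\R}f^*(L+tH)\sim_{\R}0/Z$, and since $H$ does not contain the generic point $\eta_Z$ of $Z$ we have $f^*H=0$ over a neighbourhood of $\eta_Z$, so $(X,B+tf^*H)$ is lc over $\eta_Z$ because $(X,B)$ is. Hence the formula produces a discriminant b-divisor $\bB'$ and a moduli b-divisor $\bM'$ over $Z$.

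Next I would compute $\bB'$ and $\bM'$ on an arbitrary birational model $\sigma:Z'\to Z$ with induced $\tau:X'\to X$ and $f':X'\to Z'$, using $f'^*\sigma^*=\tau^*f^*$ so that $K_{X'}+B'+tf'^*\sigma^*H=\tau^*(K_X+B+tf^*H)$. For a prime divisor $D'$ on $Z'$, shrink $Z'$ around $\eta_{D'}$ so that $D'$ is the only component of $\sigma^*H$ meeting this neighbourhood; there $f'^*\sigma^*H=(\mu_{D'}\sigma^*H)\,f'^*D'$, so $(X',B'+tf'^*\sigma^*H+s\,f'^*D')=(X',B'+(s+t\mu_{D'}\sigma^*H)\,f'^*D')$ over it, and since lc-ness over $\eta_{D'}$ is unaffected by such shrinking, the discriminant exponent of $D'$ for $(X,B+tf^*H)\to Z$ is $t_{D'}-t\mu_{D'}\sigma^*H$, where $t_{D'}$ is the one for $(X,B)\to Z$. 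Hence the coefficient of $D'$ in $\bB'_{Z'}$ is $(1-t_{D'})+t\mu_{D'}\sigma^*H$, i.e. $\bB'_{Z'}=\bB_{Z'}+t\sigma^*H$ on every $Z'$, so in particular $\bB'_Z=B_Z+tH$. Since $K_{X'}+B'+tf'^*\sigma^*H\sim_{\R}f'^*\sigma^*(L+tH)$, the moduli divisor on $Z'$ is $\sigma^*(L+tH)-K_{Z'}-\bB'_{Z'}=\sigma^*L-K_{Z'}-\bB_{Z'}=\bM_{Z'}$, so $\bM'=\bM$. Thus $(Z,B_Z+tH+\bM)$ is precisely the g-pair induced by the canonical bundle formula for $(X,B+tf^*H)\to Z$, and as it is lc by hypothesis, Lemma \ref{lem:lc1} applied to $(X,B+tf^*H)\to Z$ gives that $(X,B+tf^*H)$ is lc.

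I expect the only real subtlety to be the discriminant computation in the second step: one must isolate $D'$ among the components of $\sigma^*H$ by passing to a suitable neighbourhood of $\eta_{D'}$ and observe that this changes neither $\sigma^*H$ locally nor whether the relevant sub-pairs are lc over $\eta_{D'}$. One should also note that no positivity of the coefficients of $B_Z+tH$ is needed for this identification of g-pairs — it is a purely formal consequence of the canonical bundle formula — positivity entering only through the lc hypothesis on $(Z,B_Z+tH+\bM)$.
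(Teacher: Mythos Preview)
Your proof is correct and follows the same strategy as the paper: identify $(Z,B_Z+tH+\bM)$ as the g-pair induced by the canonical bundle formula for $(X,B+tf^*H)\to Z$, then apply Lemma~\ref{lem:lc1}. The only difference is that the paper obtains this identification by citing \cite[Lemma 7.4(iii)]{PS09}, whereas you carry out the discriminant/moduli computation explicitly; your version is more self-contained but otherwise the same argument.
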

\begin{proof}
By \cite[Lemma 7.4(iii)]{PS09}, $(Z,B_Z+tH+\bM)$ is the g-pair induced by the canonical bundle formula for $(X,B+tf^*H)\rightarrow Z$. Thus the lemma follows from Lemma \ref{lem:lc1}.
\end{proof}

\subsection{Fano type fibrations} Let $X\to Z$ be a contraction between normal varieties. We say $X$ is of Fano type over $Z$ if one of the following three equivalent conditions holds (\cite[Lemma-Definition 2.6]{PS09}):
\begin{itemize}
  \item there is a klt pair $(X,C)$ such that $-(K_X+C)$ is ample over $Z$,
  \item there is a klt pair $(X,C)$ such that $-(K_X+C)$ is big and nef over $Z$, and
  \item $-K_X$ is big over $Z$ and there is a klt pair $(X,C)$ such that $K_X+C\sim_{\R} 0/Z$.
\end{itemize}

Assume $X$ is of Fano type over $Z$ and $D$ is an $\R$-Cartier $\R$-divisor on $X$. By the base point free theorem, if $D$ is nef over $Z$, then $D$ is semi-ample over $Z$. In particular, if $D\equiv 0/Z$, then $D\sim_{\R} 0/Z$. By \cite{BCHM}, one can run an MMP on $D$ over $Z$ and the MMP ends with a good minimal model or a Mori fiber space over $Z$.

\begin{lem}\cite[Lemma 2.8]{PS09} \label{lem:preserve}
Suppose that $X$ is of Fano type over $Z$.

{\rm{(1)}} Suppose $X \dashrightarrow X'/Z$ is a birational map whose inverse does not contract any divisor. Then $X'$ is of Fano type over $Z$.

{\rm{(2)}} Suppose $X\to Y/Z$ is a contraction of varieties over $Z$. Then $Y$ is of Fano type over $Z$.

{\rm{(3)}} Let $(X,B)$ be an lc pair such that $-(K_X+B)$ is nef over $Z$. Let $Y$ be a normal variety with a projective birational morphism $Y\to X$ and let $K_Y+B_Y$ be the pullback of $K_X+B$. If every exceptional$/X$ component of $B_Y$ has positive coefficient, then $Y$ is of Fano type over $Z$.
\end{lem}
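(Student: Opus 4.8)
The proof breaks into the three assertions, and throughout I would use the equivalent descriptions of ``Fano type over $Z$'' recalled above, especially the third one: $X$ is of Fano type over $Z$ exactly when $-K_X$ is big over $Z$ and there is a klt pair $(X,C)$ with $K_X+C\sim_{\R}0/Z$. Note that then $-K_X\sim_{\R}C/Z$, so such a $C$ is automatically big over $Z$ once $-K_X$ is. I also use freely that on a Fano type variety over $Z$ any divisor nef over $Z$ is semi-ample over $Z$, and that MMP's over $Z$ can be run and terminate.

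\emph{Part (1).} Choose a klt pair $(X,C)$ with $K_X+C\sim_{\R}0/Z$ and $C$ big over $Z$. Since $K_X+C\equiv 0/Z$ while $X\dashrightarrow X'$ is an MMP over $Z$, every curve contracted by a step of the MMP is $(K_X+C)$-trivial; as the relevant extremal contractions are over $Z$, the morphism to $Z$ factors through each of them, so $K_X+C$ is $\R$-linearly trivial over the target of each step and each step is $(K_X+C)$-crepant. Hence, writing $C'$ for the birational transform of $C$ on $X'$, the pair $(X',C')$ is klt and $K_{X'}+C'\sim_{\R}0/Z$. Finally $C'$ is big over $Z$: pushing forward a section $s$ with $\divi(s)+mC\geq 0$ gives $\divi(s)+mC'\geq 0$, so the relative $h^{0}$ of the effective divisor $C$ does not drop under the birational contraction $X\dashrightarrow X'$. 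Now $-K_{X'}\sim_{\R}C'/Z$ is big over $Z$ and $(X',C')$ is klt with $K_{X'}+C'\sim_{\R}0/Z$, so $X'$ is of Fano type over $Z$ by the third characterization.

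\emph{Part (3).} By the first characterization fix a klt pair $(X,C)$ with $-(K_X+C)$ ample over $Z$, and for $0<\epsilon\ll 1$ put $\Delta_{\epsilon}:=(1-\epsilon)B+\epsilon C$. Since log discrepancies are affine in the boundary, $a(E,X,\Delta_{\epsilon})\geq (1-\epsilon)\cdot 0+\epsilon\, a(E,X,C)>0$, so $(X,\Delta_{\epsilon})$ is klt, and
$$-(K_X+\Delta_{\epsilon})=(1-\epsilon)\bigl(-(K_X+B)\bigr)+\epsilon\bigl(-(K_X+C)\bigr)$$
is ample over $Z$, being a positive combination of a nef and an ample divisor over $Z$. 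Pulling back along $\pi\colon Y\to X$ and writing $K_Y+\Theta_{\epsilon}=\pi^{*}(K_X+\Delta_{\epsilon})$ with $\Theta_{\epsilon}=(1-\epsilon)B_Y+\epsilon C_Y$, the sub-pair $(Y,\Theta_{\epsilon})$ is sub-klt and $-(K_Y+\Theta_{\epsilon})=\pi^{*}\bigl(-(K_X+\Delta_{\epsilon})\bigr)$ is big and nef over $Z$. It remains to see $\Theta_{\epsilon}\geq 0$ for $\epsilon$ small: a non-$\pi$-exceptional component of $\Theta_{\epsilon}$ is the birational transform of an effective divisor, while for a $\pi$-exceptional prime divisor $E$ one has $\mu_E\Theta_{\epsilon}=(1-\epsilon)\mu_E B_Y+\epsilon\,\mu_E C_Y>0$ for $\epsilon$ small, because $\mu_E B_Y>0$ by hypothesis and there are only finitely many such $E$ (so $\mu_E B_Y$ is bounded below while $\mu_E C_Y$ is bounded). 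Thus $(Y,\Theta_{\epsilon})$ is a klt pair with $-(K_Y+\Theta_{\epsilon})$ big and nef over $Z$, so $Y$ is of Fano type over $Z$ by the second characterization; this is exactly the role of the positivity assumption on the coefficients of $B_Y$.

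\emph{Part (2), and the main difficulty.} Let $g\colon X\to Y/Z$ be the contraction and take a klt pair $(X,C)$ with $K_X+C\sim_{\R}0/Z$ and $-K_X\sim_{\R}C/Z$ big over $Z$, as in part (1). Since $K_X+C\sim_{\R}0/Z$ forces $K_X+C\sim_{\R}0/Y$, I would apply the canonical bundle formula to $(X,C)\to Y$ to obtain the induced generalized pair $(Y,C_Y+\bM)/Z$; as $(X,C)$ is klt this generalized pair is generalized klt, $\bM$ is b-nef over $Z$, and pushing forward the relation $K_X+C\sim_{\R}g^{*}(K_Y+C_Y+\bM_Y)$ together with $K_X+C\sim_{\R}0/Z$ gives $K_Y+C_Y+\bM_Y\sim_{\R}0/Z$. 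To conclude that $Y$ is of Fano type over $Z$ one must still (i) show that $-K_Y$ is big over $Z$, which should follow from the bigness of $-K_X$ over $Z$ together with the fact that the general fibers of $g$ are themselves of Fano type, and (ii) dispose of the moduli b-divisor: pass to a birational model on which $\bM$ descends and absorb a suitable member of its (nef, over $Z$) trace into the boundary, so as to replace the generalized-pair structure on $Y$ by an honest klt pair $(Y,D)$ with $-(K_Y+D)$ big and nef over $Z$. I expect step (2) — and within it the treatment of the moduli part of the canonical bundle formula — to be the main obstacle: parts (1) and (3) are essentially discrepancy bookkeeping, whereas here one genuinely leaves the category of usual pairs and must return to it, which is the delicate point and the reason this assertion is credited to \cite{PS09}.
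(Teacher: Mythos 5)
The paper does not prove this lemma; it cites \cite[Lemma~2.8]{PS09} directly, so there is no internal argument to compare against. Assessed on its own terms, your attempt is correct for parts (1) and (3) but genuinely incomplete for part (2), and you yourself say as much.

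Parts (1) and (3) are fine. In (1) the crepance of each MMP step follows from $K_X+C\equiv 0/Z$, and bigness of $C'=-K_{X'}$ over $Z$ is preserved under birational pushforward; this is the standard argument. In (3) the convex combination $\Theta_\epsilon=(1-\epsilon)B_Y+\epsilon C_Y$ works, but it is worth noting that your argument needs the strong reading of the hypothesis, namely $\mu_E B_Y>0$ for \emph{every} $\pi$-exceptional prime divisor $E$ (equivalently, no $\pi$-exceptional $E$ has $\mu_E B_Y\le 0$); if one reads ``component'' to merely mean prime divisors already in $\Supp B_Y$, the hypothesis only rules out negative coefficients and does not forbid $\mu_E B_Y=0$ with $\mu_E C_Y<0$, in which case $\Theta_\epsilon$ is not effective. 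The strong reading is the one under which the lemma is applied in this paper (to dlt models, where every exceptional divisor has coefficient $1$), so your proof is correct under that reading.

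Part (2) has a genuine gap, in both of the two sub-steps you flag. For the bigness of $-K_Y$ over $Z$, the phrase ``the general fibers of $g$ are of Fano type'' does not yield it; the actual mechanism is to \emph{choose} the klt pair $(X,\Delta)$ with $K_X+\Delta\sim_{\R} 0/Z$ so that $\Delta\ge \epsilon g^*A_Y$ for some effective ample$/Z$ divisor $A_Y$ on $Y$ (possible because $-(K_X+C)$ is ample$/Z$, so $-(K_X+C)-\epsilon g^*A_Y$ is still ample$/Z$ for small $\epsilon$); then the discriminant $B_Y$ in the canonical bundle formula satisfies $B_Y\geq \epsilon A_Y$, which is big$/Z$, and $-K_Y\sim_{\R} B_Y+\bM_Y/Z$ is big because $\bM_Y$ is pseudo-effective over $Z$. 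Your sketch does not identify this choice, and without it $B_Y$ need not be big. For absorbing the moduli b-divisor, ``absorb a suitable member of its (nef, over $Z$) trace into the boundary'' silently requires the trace $\bM_{Y'}$ to be semiample$/Z$, not merely nef$/Z$; b-semiampleness of $\bM$ is a hard open problem in general, and invoking base point freeness on $Y'$ would require knowing $Y'$ (or $Y$) is of Fano type over $Z$ first, which is exactly what is being proved. So as written this step is circular. Prokhorov--Shokurov's argument navigates around this; the version you have sketched does not, and fixing it is the real content of part (2).
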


\subsection{Complements}

\begin{defn}
Let $X\to Z$ be a contraction and $(X,B)$ be a pair. A monotonic lc (resp. klt) $n$-complement of $K_X+B$ over a point $z\in Z$ is of the form $K_X+\Lambda$ such that over some neighbourhood of $z$ we have
\begin{itemize}
  \item $\Lambda\geq B$,
  \item $(X,\Lambda)$ is lc (resp. klt), and
  \item $n(K_X+\Lambda) \sim 0/Z$.
\end{itemize}
\end{defn}

\subsection{Triples}

\begin{defn}
A triple $(X,B,\mathfrak a^t)$ consists of a pair $(X,B)$ and a non-zero ideal sheaf $\mathfrak a$ on $X$ with a non-negative exponent $t$. 
\end{defn}
  
\begin{defn} 
  Let $(X,B,\mathfrak a^t)$ be a triple and $E$ be a prime divisor on some birational model $\pi:Y\to X$. We may write $K_Y+B_Y=\pi^*(K_X+B)$ for some uniquely determined $B_Y$. The log discrepancy $a(E,X,B,\mathfrak a^t)$ of $E$ with respect to $(X,B,\mathfrak a^t)$ is defined by
  $$a(E,X,B,\mathfrak a^t):=1-\mu_E B_Y-t \cdot \mu_E \pi^*\mathfrak a.$$ 
\end{defn}
  
\begin{defn}
Let $\epsilon$ be a non-negative real number. 
A triple $(X,B,\mathfrak a^t)$ is said to be $\epsilon$-lc (resp. $\epsilon$-klt, lc, klt) if $a(E,X,B,\mathfrak a^t)\geq \epsilon$ (resp. $>\epsilon$,~$\geq 0$,~$> 0$) for any prime divisor $E$ over $X$. 
\end{defn}
  
\begin{defn}
Let $(X,B,\mathfrak a^t)$ be a triple, $f:X\rightarrow Z$ be a projective morphism and $z\in Z$ be a (not necessary closed) point. We say $(X,B,\mathfrak a^t)$ is $\epsilon$-lc (or $\epsilon$-klt, lc, klt) over $z$ if $(X,B,\mathfrak a^t)$ is so over some neighbourhood of $z$. We denote
\begin{align*}
    \mld(X/Z\ni z,B,\mathfrak a^t):=\inf\{a(E,X,B,\mathfrak a^t) \mid ~ &\text{$E$ is a prime divisor over $X/Z \ni z$}\}.
\end{align*}
(Recall that a prime divisor $E$ over $X/Z\ni z$ means a prime divisor $E$ over $X$ with $f(\cent_X E)=\overline{z}$, see Definition \ref{defn:try}.) In the case that $Z=X$, $z=x$ and $f$ is the identity morphism, we will use  $\mld(X\ni x,B,\mathfrak a^t)$ instead of $\mld(X/Z\ni z,B,\mathfrak a^t)$.
\end{defn}

In the case that $\mathfrak a=\mathcal O_X$, all definitions of triples coincide with those of usual pairs.

\begin{lem}\label{lem:linearsystem}
Let $f:X\to Z$ be a contraction with $\dim Z>0$, $(X,B)$ be a pair, $z\in Z$ be a closed point and $t$ be a non-negative number. Denote by $\mathfrak m_{z}$ the ideal sheaf of the closed point $z\in Z$. Let $V_z$ be a linear system on $Z$ such that 
\begin{itemize}
  \item all the elements of $V_z$ contain $z$,
  \item the local defining equations of elements of $V_z$ generate $\mathfrak m_{z}$ near $z$, and
  \item $V_z$ is base point free outside $z$.
\end{itemize}
Then the following holds.
\begin{enumerate}
\item Let $T$ be a prime divisor over $X/Z\ni z$. Then 
$$a(T,X,B,f^*\mathfrak m_{z}^t)=a(T,X,B+tf^*H)$$
for a general element $H\in V_z$.
\item If $t\leq 1$, then
$$\mld(X/Z\ni z,B,f^*\mathfrak m_{z}^t)=\mld(X/Z\ni z,B+tf^*H)$$
for a general element $H\in V_z$. 
\item Let $\epsilon$ be a real number in $[0,1]$. If $(X,B,f^*\mathfrak m_z^t)$ is $\epsilon$-lc and $t\leq 1-\epsilon$, then $(X,B+tf^*H)$ is $\epsilon$-lc for a general element $H\in V_z$.
\end{enumerate}
\end{lem}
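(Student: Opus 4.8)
The plan is to reduce everything to a statement about general members of a linear system and the behavior of log discrepancies under adding a pullback of such a member, then to exploit the compatibility between the ideal $f^*\mathfrak m_z^t$ and $f^*H$ for $H\in V_z$ general.

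\textbf{Step 1: Comparing $f^*\mathfrak m_z^t$ with $f^*H$ on a fixed model.} First I would fix a log resolution $\pi:Y\to X$ of $(X,B)$ together with the ideal $f^*\mathfrak m_z$, so that $\pi^*f^*\mathfrak m_z=\mathcal O_Y(-F)$ for some effective divisor $F$ with $\pi$-exceptional-plus-strict-transform support in simple normal crossings. The key local fact is that for a \emph{general} $H\in V_z$, the strict transform of $f^*H$ on $Y$ together with $F$ still has the expected multiplicities: concretely, for every prime divisor $T$ on $Y$ we have $\mu_T\pi^*f^*H\geq \mu_T F=\mu_T\pi^*f^*\mathfrak m_z$, and moreover equality holds for every $T$ lying over $z$ (i.e. with $f(\cent_X T)=z$). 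This is because the sections of $\mathfrak m_z$ generate $\mathfrak m_z$ near $z$, hence generate the ideal sheaf $\mathcal O_Y(-F)$ near the fibre over $z$, so a general section vanishes exactly to order $\mu_T F$ along each such $T$ while possibly vanishing more along components not lying over $z$. I expect this comparison to be the technical heart of the argument; it is essentially Bertini combined with the fact that blowing up $\mathfrak m_z$ makes the linear system base-point free near the relevant locus.

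\textbf{Step 2: Deriving (1).} Given Step 1, for $T$ over $X/Z\ni z$ and a general $H\in V_z$ we compute directly from the definitions
$$a(T,X,B+tf^*H)=1-\mu_T B_Y-t\,\mu_T\pi^*f^*H=1-\mu_T B_Y-t\,\mu_T\pi^*f^*\mathfrak m_z=a(T,X,B,f^*\mathfrak m_z^t),$$
where $B_Y$ is defined by $K_Y+B_Y=\pi^*(K_X+B)$; this uses that $\mu_T\pi^*f^*H=\mu_T\pi^*f^*\mathfrak m_z$ for such $T$, which is the equality part of Step 1. Note only finitely many $H$-independent conditions are needed here since the model $Y$ can be chosen to extract any given finite set of divisors, and for the full statement one takes $Y$ to dominate an arbitrary such $T$.

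\textbf{Step 3: Deriving (2) and (3).} For (2), the inequality $\mld(X/Z\ni z,B+tf^*H)\leq \mld(X/Z\ni z,B,f^*\mathfrak m_z^t)$ is immediate from (1) (infimum over a set of divisors that each satisfy the equality, together with the fact that $\mu_T\pi^*f^*H\ge\mu_T\pi^*f^*\mathfrak m_z$ in general so adding $tf^*H$ can only decrease discrepancies of divisors not over $z$). For the reverse inequality one must rule out that some prime divisor $T$ \emph{not} lying over $z$ becomes the minimizer for $(X,B+tf^*H)$: but such a $T$ has $\cent_X T$ not dominated to $z$, and since we only care about $\mld$ over $z$, such $T$ are not in the relevant index set — here one uses $t\le 1$ so that $(X,B+tf^*H)$ has no worse-than-lc behavior forced along the generic fibre directions, and more precisely that a general $H$ does not pass through the generic points of the relevant centers other than over $z$. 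I would phrase this as: the set of prime divisors over $X/Z\ni z$ is the same for both, and on that set the two log discrepancy functions agree by (1), hence the infima agree. The ``in particular'' clause follows by taking $\mld\ge 0$. For (3), the identical argument gives that every prime divisor $E$ over $X$ with $f(\cent_X E)=z$ has $a(E,X,B+tf^*H)=a(E,X,B,f^*\mathfrak m_z^t)\ge\epsilon$, while for $E$ \emph{not} over $z$ we use that $(X,B)$ is automatically $\epsilon$-lc there (as $(X,B,\mathcal O_X^t)$ agrees with $(X,B)$ away from $z$ and is $\ge\epsilon$-lc), and adding $tf^*H$ with $H$ general and $t\le 1-\epsilon$ drops the discrepancy of such $E$ by at most $t\cdot\mu_E\pi^*f^*H$; combined with $a(E,X,B)\ge 1$ along components where $H$ could concentrate... — more carefully, one restricts attention to divisors $E$ with $\cent_X E\subset\Supp f^*H$, notes $\mu_E\pi^*f^*H\le$ (something controlled), and checks $a(E,X,B)-t\,\mu_E\pi^*f^*H\ge\epsilon$ using $a(E,X,B)\ge\epsilon$ plus the bound $t\le 1-\epsilon$ when $\mu_E\pi^*f^*H\le 1$. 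The cleanest route is: a general $H\in V_z$ can be chosen so that $\Supp f^*H$ away from the fibre over $z$ is a general hyperplane section, hence $(X,B+tf^*H)$ near those points is obtained from an $\epsilon$-lc pair by adding $t$ times a general (hence snc with everything) Cartier divisor, and such an operation preserves $\epsilon$-lc when $t\le 1-\epsilon$ by a direct discrepancy computation on a common log resolution.

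\textbf{Main obstacle.} The delicate point is Step 1, specifically the \emph{equality} $\mu_T\pi^*f^*H=\mu_T\pi^*f^*\mathfrak m_z$ for all $T$ over $z$ simultaneously on a fixed model, and the control in Step 3 of divisors whose center meets $\Supp f^*H$ but does not lie over $z$. Both are Bertini-type statements, but one has to be careful that ``general $H$'' is chosen after fixing the finitely many divisors (or the finitely many resolutions) that matter; since $\mld$ and lc-ness are each detected on a single log resolution, fixing that resolution first and then choosing $H$ general relative to it resolves the logical order.
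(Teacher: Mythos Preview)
Your approach is correct and follows the same overall strategy as the paper. The differences are in organization and cleanliness rather than substance.

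For (2), the paper makes your Step 1 explicit by introducing the residual linear system $L_z := \{\pi^*f^*H - F : H \in V_z\}$ on the resolution and proving directly that $L_z$ is base-point-free over a neighbourhood of $z$ (this is precisely your ``blowing up $\mathfrak m_z$ makes the linear system base-point free''). The mld equality then follows in one line: $\mld(W/Z\ni z, B_W+tF+tR)=\mld(W/Z\ni z, B_W+tF)$ because $R$ is general in a base-point-free system and $t\le 1$. This sidesteps both your digression about divisors not lying over $z$ (which is irrelevant---both mld's are infima over the \emph{same} index set of divisors mapping to $z$, so there is nothing to ``rule out'') and the logical-order worry you flag in your Main obstacle: once $L_z$ is base-point-free, a single general $H$ handles all divisors at once, including those on higher models, without needing to fix a log resolution first. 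Note also that the paper does not require $W$ to be a log resolution of $(X,B)$, only that $\pi^*f^*\mathfrak m_z$ be locally principal.

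For (3), the paper's argument is much cleaner than your sketch: after shrinking $Z$ so that $V_z$ is base-point-free outside $z$ and $(X,B,f^*\mathfrak m_z^t)$ is $\epsilon$-lc everywhere, one gets $(X,B+tf^*H)$ $\epsilon$-lc outside $f^{-1}z$ directly (general member of a base-point-free system, coefficient $t\le 1-\epsilon$), and over $z$ the mld equality from (2) gives $\ge\epsilon$. This replaces your ``something controlled'' estimate with a one-line reduction.
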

\begin{rem}\label{rem}
A linear system satisfying the conditions in Lemma \ref{lem:linearsystem} always exists. Indeed, since $Z$ is assumed to be quasi-projective, we can find a compactification $\bar{Z}$ of $Z$ which is projective. Let $A$ be a very ample divisor on $\bar{Z}$ and let $V_z$ be the sub-linear system of $A|_Z$ consisting of elements containing $z$. Then $V_z$ satisfies the conditions.
\end{rem}
\begin{proof}
(1) As the defining equations of elements in $V_z$ generate $\mathfrak m_{z}$ near $z$, there is $H\in V_z$ such that $\mu_T f^*H=\mu_T f^*\mathfrak m_z$ and thus the first assertion follows.

(2) Let $\pi:W\to X$ be a resolution of $X$ such that $\pi^*f^*\mathfrak m_z$ is a locally principle ideal sheaf on $W$, that is, there exists an effective divisor $F$ on $W$ such that $$\pi^*f^*\mathfrak m_{z}=\mathcal O_W(-F).$$
For any element $H\in V_z$, we have $\pi^*f^* H\geq F$.
Write $K_W+B_W=\pi^*(K_X+B)$. Then 
$$\mld(W/Z\ni z, B_W+tF)=\mld(X/Z\ni z, B, \mathfrak m_z^t).$$

Let $L_z$ be the linear system on $W$ defined by
$$L_z:=\{\pi^*f^*H-F\mid H\in V_z \}.$$
We claim that $L_z$ is base point free. Indeed, for any closed point $p\in \pi^{-1}f^{-1} z=\Supp F$, there is an element $P\in V_z$ such that the local defining equation of $\pi^*f^*P$ generates $\mathcal O_W(-F)$
in a neighbourhood of $p$, because the local defining equations of  elements in $V_z$ generate $\mathfrak m_{z}$ near $z$ and because $\pi^*f^*\mathfrak m_{z}=\mathcal O_W(-F)$. Thus $\pi^*f^*P=F$ near $p$, which implies that $\pi^*f^*P-F$ does not contain $p$. So $L_z$ is base point free on a neighbourhood of $p$ for any closed point $p\in \pi^{-1}f^{-1} z$, which implies that $L_z$ is base point free over a neighbourhood of $z$. As $V_z$ is base point free outside $z$, $L_z$ is base point free everywhere.

Let $H$ be a general element of $V_z$. Then $R:=\pi^*f^*H-F$ is a general element of $L_z$. We have
\begin{align*} 
&\mld(X/Z \ni z, B+tf^*H)=\mld(W/Z\ni z, B_W+t\pi^*f^*H)\\
=&\mld(W/Z\ni z, B_W+tF+tR)=\mld(W/Z\ni z, B_W+tF)\\
=&\mld(X/Z\ni z, B, \mathfrak m_z^t).
\end{align*}
The second equality in the second low holds because $t\leq 1$ and $L_z$ is base point free.

(3) Let $H$ be a general element of $V_z$. Since $(X,B,f^*\mathfrak m_z^t)$ is $\epsilon$-lc, $(X,B)$ is $\epsilon$-lc. It follows that $(X,B+tf^*H)$ is $\epsilon$-lc outside $f^{-1}z$  as $V_z$ is base point free outside $z$ and as $t\leq 1-\epsilon$. By (2) we have 
$$\mld(X/Z\ni z,B+tf^*H)=\mld(X/Z\ni z,B,f^*\mathfrak m_{z}^t)\geq \epsilon.$$
Therefore, for any prime divisor $T$ over $X$, no matter its image on $Z$ is $z$ or not, we have $a(T,X,B+tf^*H)\geq \epsilon$. So $(X,B+tf^*H)$ is $\epsilon$-lc.
\end{proof}

\begin{cor}\label{cor:idealdiv}
Let $f:X\to Z$ be a contraction with $\dim Z>0$, $(X,B)$ be a pair, $z\in Z$ be a closed point and $t$ be a non-negative number. Denote by $\mathfrak m_{z}$ the ideal sheaf of $z\in Z$. The following are equivalent.
\begin{enumerate}
\item There is an effective Cartier divisor $H$ on some 
neighbourhood of $z\in Z$  such that $z\in \Supp H$ and $(X,B+tf^*H)$ is lc over $z$.
\item There is an effective Cartier divisor $H$ on $Z$ such that $z\in \Supp H$ and $(X,B+tf^*H)$ is lc over $z$.
\item $(X,B,f^*\mathfrak m_{z}^t)$ is lc over $z$.
\end{enumerate}
\end{cor}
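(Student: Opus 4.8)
The plan is to prove the cycle of implications $(2)\Rightarrow(1)\Rightarrow(3)\Rightarrow(2)$; throughout I read $(3)$ with the usual abuse of notation, so that $(X,B,\mathfrak m_z^{t})$ denotes the triple $(X,B,(f^*\mathfrak m_z)^{t})$ with $f^*\mathfrak m_z=\mathfrak m_z\cdot\mathcal O_X$. The implication $(2)\Rightarrow(1)$ is immediate: being lc over $z$ depends only on a neighbourhood of $z$, so the restriction to any neighbourhood of $z$ of the divisor provided by $(2)$ still has the required properties.

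For $(1)\Rightarrow(3)$ I would compare valuations. Let $H$ be as in $(1)$, an effective Cartier divisor on a neighbourhood $U$ of $z$ with $z\in\Supp H$. Since $z\in\Supp H$, the local equation of $H$ at $z$ lies in $\mathfrak m_z$, so $\mathcal O_Z(-H)\subseteq\mathfrak m_z$ after shrinking $U$; pulling back under $f$ gives $\mathcal O_X(-f^*H)\subseteq f^*\mathfrak m_z$ over $U$. Hence for every prime divisor $E$ over $X$ whose centre maps into $U$ we get $\mu_E f^*\mathfrak m_z\le\mu_E f^*H$, and therefore
\[
a(E,X,B,\mathfrak m_z^{t})=a(E,X,B)-t\,\mu_E f^*\mathfrak m_z\ \ge\ a(E,X,B)-t\,\mu_E f^*H=a(E,X,B+tf^*H)\ \ge\ 0,
\]
the last inequality because $(X,B+tf^*H)$ is lc over $z$. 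Shrinking $U$ a little more so this holds for all $E$ centred over $U$, we conclude that $(X,B,\mathfrak m_z^{t})$ is lc over $z$; no restriction on $t$ enters here.

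For $(3)\Rightarrow(2)$ I would feed the linear system of Remark \ref{rem} into Lemma \ref{lem:linearsystem}(2). By Remark \ref{rem} there is a linear system $V_z$ on $Z$, all of whose members are effective Cartier divisors containing $z$ and whose local defining equations generate $\mathfrak m_z$ near $z$ (concretely, one restricts to $Z$ a very ample complete linear system on a projective compactification of $Z$ and takes the members through $z$). Since $(X,B,(f^*\mathfrak m_z)^{t})$ is lc over $z$, Lemma \ref{lem:linearsystem}(2) applies and yields that $(X,B+tf^*H)$ is lc over $z$ for a general $H\in V_z$; as $H$ is then an effective Cartier divisor on $Z$ with $z\in\Supp H$, this is exactly $(2)$.

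I do not expect any step to be a serious obstacle. The content of $(1)\Rightarrow(3)$ is just unwinding the containment $\mathcal O_X(-f^*H)\subseteq f^*\mathfrak m_z$ and getting the resulting inequality of vanishing orders the right way round, while $(3)\Rightarrow(2)$ rests entirely on the previously established Lemma \ref{lem:linearsystem}(2), and is thus carried out within the range $t\le1$ covered by that lemma. The one thing to watch is that the members of $V_z$ in Remark \ref{rem} genuinely all pass through $z$, since that is precisely what makes the divisor produced in $(3)\Rightarrow(2)$ satisfy $z\in\Supp H$.
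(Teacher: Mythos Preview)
Your proof is correct and follows exactly the same route as the paper: the cycle $(2)\Rightarrow(1)\Rightarrow(3)\Rightarrow(2)$, with $(3)\Rightarrow(2)$ resting on Lemma~\ref{lem:linearsystem}(2) together with the linear system of Remark~\ref{rem}. You spell out $(1)\Rightarrow(3)$ more carefully than the paper (which simply calls both $(2)\Rightarrow(1)$ and $(1)\Rightarrow(3)$ ``obvious''), and you correctly flag that Lemma~\ref{lem:linearsystem}(2) requires $t\le 1$; the paper's own proof carries exactly the same implicit restriction, which is harmless since every application of this corollary in the paper has $t\le 1$.
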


\begin{proof}
It is obvious that (2) implies (1) and (1) implies (3). So it suffices to show that (3) implies (2). Assume that $(X,B,\mathfrak m_{z}^t)$ is lc over $z$. By Lemma \ref{lem:linearsystem}(2) and Remark \ref{rem}, there is an effective Cartier divisor $H$ on $Z$ such that $z\in \Supp H$ and 
$$\mld(X/Z\ni z,B+tf^*H)=\mld(X/Z\ni z,B,\mathfrak m_{z}^t)\geq 0.$$
Thus $(X,B+tf^*H)$ is lc over $z$.
\end{proof}

\subsection{Rational surface singularities}
\begin{defn}
Let $Z$ be a normal variety with a closed point $z$. We say $Z\ni z$ is a rational singularity if for any resolution $\pi:Y\to Z$ the following holds near $z$:
$$R^i\pi_* \mathcal O_Y=0, \quad \text{for any } i>0.$$
Moreover, if $\dim Z=2$, we say $Z\ni z$ is a rational surface singularity.
\end{defn}

\begin{lem}\label{lem:good}\cite[Lemma 1.3]{Br68}
Let $Z\ni z$ be a rational surface singularity and let $Y\to Z$ be a resolution. Denote by $E$ the exceptional divisor on $Y$ over $Z\ni z$. Then $E$ is a simple normal crossing divisor and each component of $E$ is a smooth rational curve.
\end{lem}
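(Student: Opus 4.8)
The plan is to derive everything from the numerical characterization of rational surface singularities and then to feed the individual exceptional components — and pairs and triples of them — into it. Concretely, I would first establish the following consequence of the hypothesis: \emph{if $Z\ni z$ is a rational surface singularity and $\pi\colon Y\to Z$ is a resolution, then every effective divisor $D>0$ supported on $E:=\pi^{-1}(z)_{\mathrm{red}}$ satisfies $p_a(D):=1-\chi(\mathcal O_D)\le 0$.} Since a normal surface has isolated singularities, $E$ is $1$-dimensional, and $R^1\pi_*\mathcal O_Y=0$ near $z$ by assumption; by the theorem on formal functions $\varprojlim_n H^1(\mathcal O_{Y_n})=0$, where $Y_n\subset Y$ are the infinitesimal neighbourhoods of the fibre over $z$. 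The transition maps $H^1(\mathcal O_{Y_{n+1}})\to H^1(\mathcal O_{Y_n})$ are surjective, because their kernel is a coherent sheaf supported on the $1$-dimensional scheme $Y_1$, so it has no $H^2$; hence a Mittag--Leffler argument forces $H^1(\mathcal O_{Y_n})=0$ for every $n$. Finally, any effective $D$ on $E$ is a quotient $\mathcal O_{Y_n}\twoheadrightarrow\mathcal O_D$ for $n\gg 0$ with kernel again supported in dimension $1$, so $H^1(\mathcal O_D)=0$ and $\chi(\mathcal O_D)=h^0(\mathcal O_D)\ge 1$, i.e.\ $p_a(D)\le 0$. (Alternatively one may simply quote Artin's numerical criterion; cf.\ \cite{Br68}.)

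Next I would apply this bound to a single reduced component $D=E_i$. Writing $\nu\colon\widetilde E_i\to E_i$ for the normalization and $\delta_i\ge 0$ for the length of the skyscraper sheaf $\nu_*\mathcal O_{\widetilde E_i}/\mathcal O_{E_i}$, additivity of $\chi$ gives $\chi(\mathcal O_{E_i})=\chi(\mathcal O_{\widetilde E_i})-\delta_i=1-g_i-\delta_i$, where $g_i=h^1(\widetilde E_i,\mathcal O)\ge 0$; hence $p_a(E_i)=g_i+\delta_i$. The bound $p_a(E_i)\le 0$ forces $g_i=\delta_i=0$, so $E_i$ is smooth and its normalization — hence $E_i$ itself — is a smooth projective rational curve, i.e.\ $E_i\cong\mathbb P^1$.

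For the crossing conditions I would use the additivity formula $p_a(D_1+D_2)=p_a(D_1)+p_a(D_2)+D_1\cdot D_2-1$, which follows from the exact sequence $0\to\mathcal O_Y(-D_1)\otimes\mathcal O_{D_2}\to\mathcal O_{D_1+D_2}\to\mathcal O_{D_1}\to 0$ together with Riemann--Roch on the curve $D_2$. Applying the genus bound to $D=E_i+E_j$ with $i\ne j$ gives $0\ge p_a(E_i+E_j)=E_i\cdot E_j-1$, so $E_i\cdot E_j\le 1$: any two distinct components are disjoint or meet transversally in a single point. If three distinct components $E_i,E_j,E_k$ passed through one point, then each pairwise intersection number would be $\ge 1$, and the bound applied to $D=E_i+E_j+E_k$ would give $p_a(D)=E_i\cdot E_j+E_i\cdot E_k+E_j\cdot E_k-2\ge 1>0$, a contradiction; hence no three components meet at a point. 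Together with the previous step this says precisely that $E$ is a simple normal crossing divisor whose components are smooth rational curves.

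I expect the only real obstacle to be the first step — extracting the inequality $p_a(D)\le 0$ from the sheaf-cohomological definition of rationality (the formal-functions-plus-Mittag--Leffler argument), or, equivalently, deciding to black-box it by citing Artin's criterion. Once that input is in place, the remaining steps are routine adjunction and Riemann--Roch bookkeeping for curves on the smooth surface $Y$.
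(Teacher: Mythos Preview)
Your argument is correct and is precisely the classical proof due to Artin and Brieskorn: one first extracts the inequality $p_a(D)\le 0$ for every effective exceptional cycle $D>0$ from the vanishing of $R^1\pi_*\mathcal O_Y$, and then feeds in $D=E_i$, $E_i+E_j$, $E_i+E_j+E_k$ to force smoothness, rationality, transversality, and the absence of triple points. The paper does not supply its own proof of this lemma but simply quotes it from \cite{Br68}, so there is nothing further to compare; your write-up is exactly what lies behind that citation.
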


\begin{defn-lem}\cite[pp. 131-132]{Ar66}
Let $Z$ be a normal surface with a closed point $z$. Let $\pi:Y\to Z$ be a resolution with the exceptional divisor $E=\cup_{i=1}^r E_i$ over $Z\ni z$. Denote by $\mathcal S$ the set
$$\{C=\sum_{i=1}^r a_i E_i\mid a_i\in \Z^{\geq 0}, ~C\neq 0, ~C\cdot E_k\leq 0 \text{ for } k=1,\cdots r\}.$$
Then $\mathcal S$ has a minimum element, that is, an element $C_f\in \mathcal S$ such that $C\geq C_f$ for any $C\in \mathcal S$.
We call this minimum element the fundamental cycle of $Z\ni z$ on $Y$.
\end{defn-lem}

\begin{rem}
It is obvious that the fundamental cycle is determined by the intersection matrix $(E_i \cdot E_j)^r_{i,j=1}$.
\end{rem}

\begin{rem}\label{rem:laufer}
Laufer \cite[Proposition 4.1]{La72} developed an algorithm to compute the fundamental cycle $C_f$ as follows.

(1) Begin with any of the components of the exceptional divisor  $C_1:=E_{i_0}.$\\
Assume that $C_l$ is already defined for some $l>0$. There are two cases.
  
(2) If $C_l\cdot E_k\leq 0$ for any $k$, then we stop and $C_l$ is the fundamental cycle.
  
(3) If there is a component $E_{i_l}$ such that $C_l\cdot E_{i_l}>0$, then set $C_{l+1}:=C_l+E_{i_l}$ and repeat the above process.
\end{rem}

\begin{lem}\label{lem:fundamental}\cite[Theorem 4]{Ar66}
    Let $Z\ni z$ be a rational surface singularity. Let $\pi:Y\to Z$ be a resolution and denote by $\mathfrak m_z$  the ideal sheaf of the closed point $z\in Z$. Then 
    $$\pi^*\mathfrak m_{z}=\mathcal O_Y(-C_f),$$
where $C_f$ is the fundamental cycle of $Z\ni z$ on $Y$.
\end{lem}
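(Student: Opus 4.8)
This is Artin's theorem \cite[Theorem 4]{Ar66}; here is how I would organise the argument. The assertion is local around $z$, and since $\mathfrak m_z\mathcal O_Y$ and $\mathcal O_Y(-C_f)$ agree with $\mathcal O_Y$ away from the exceptional locus, I may assume $Z$ is affine and small around $z$. Write $Z_0:=C_f$ and $E=\bigcup_{i=1}^r E_i$ for the exceptional divisor of $\pi$ over $z$; by Lemma \ref{lem:good} it is simple normal crossing with each $E_i\cong\PP^1$, and the intersection form $(E_i\cdot E_j)$ is negative definite.

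I would use two classical cohomological consequences of the rationality of $Z\ni z$, both essentially due to Artin: (i) $h^1(Z_0,\mathcal O_{Z_0})=0$ and $p_a(Z_0)=0$, so that $h^0(Z_0,\mathcal O_{Z_0})=1$; and (ii) $H^1(Y,\mathcal O_Y(-D))=0$, equivalently $R^1\pi_*\mathcal O_Y(-D)=0$, for \emph{every} effective divisor $D$ supported on $E$. Both are obtained by induction over effective cycles on $E$, peeling off one component at a time via the sequences $0\to\mathcal O_Y(-D'-E_i)\to\mathcal O_Y(-D')\to\mathcal O_Y(-D')|_{E_i}\to0$ and the vanishing $H^1(\PP^1,\mathcal O(d))=0$ for $d\ge-1$; making the induction run requires choosing the order of the components carefully (a computation sequence in the spirit of Remark \ref{rem:laufer}), and this is the place where rationality — in the form $p_a\le0$ for every effective cycle on $E$ — is genuinely used. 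I expect (i)--(ii) to be the real content of the theorem; everything below is formal.

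Granting (i) and (ii), I would finish as follows. First, $\pi_*\mathcal O_Y(-Z_0)=\mathfrak m_z$: pushing forward $0\to\mathcal O_Y(-Z_0)\to\mathcal O_Y\to\mathcal O_{Z_0}\to0$ and using $\pi_*\mathcal O_Y=\mathcal O_Z$ together with $R^1\pi_*\mathcal O_Y(-Z_0)=0$ from (ii) yields a short exact sequence $0\to\pi_*\mathcal O_Y(-Z_0)\to\mathcal O_Z\to\pi_*\mathcal O_{Z_0}\to0$, whose last term is the skyscraper $\C$ at $z$ by (i). The ideal $\pi_*\mathcal O_Y(-Z_0)$ lies in $\mathfrak m_z$, because a function whose pull-back lies in $\mathcal O_Y(-Z_0)$ vanishes along every $E_i$ (each has coefficient $\ge1$ in $Z_0$) hence vanishes at $z$; being of colength $1$, it therefore equals $\mathfrak m_z$. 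Second, $\mathcal O_Y(-Z_0)$ is generated over $Z$ by its global sections: this is clear away from $E$, and for each component $E_j$ one has $\deg\bigl(\mathcal O_Y(-Z_0)|_{E_j}\bigr)=-Z_0\cdot E_j\ge0$ since $Z_0$ belongs to the set $\mathcal S$ cut out by the inequalities $C\cdot E_k\le0$, so $\mathcal O_Y(-Z_0)|_{E_j}$ is globally generated on $E_j\cong\PP^1$, while the restriction $H^0(Y,\mathcal O_Y(-Z_0))\to H^0(E_j,\mathcal O_Y(-Z_0)|_{E_j})$ is surjective because $H^1(Y,\mathcal O_Y(-Z_0-E_j))=0$ by (ii) applied to the effective cycle $Z_0+E_j$; hence $\mathcal O_Y(-Z_0)$ is generated by global sections at every point of $E$, and so everywhere. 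Combining the two steps, the surjection $\pi^*\pi_*\mathcal O_Y(-Z_0)\twoheadrightarrow\mathcal O_Y(-Z_0)$ identifies $\mathcal O_Y(-Z_0)$ with the ideal of $\mathcal O_Y$ generated by the pull-backs of the sections of $\pi_*\mathcal O_Y(-Z_0)=\mathfrak m_z$, namely $\mathfrak m_z\mathcal O_Y$. Therefore $\pi^*\mathfrak m_z=\mathcal O_Y(-C_f)$.
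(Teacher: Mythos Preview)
The paper does not supply its own proof of this lemma; it simply records the statement as a citation to \cite[Theorem 4]{Ar66}. Your sketch is a faithful outline of Artin's original argument --- the identification of $\pi_*\mathcal O_Y(-C_f)$ with $\mathfrak m_z$ via the push-forward of $0\to\mathcal O_Y(-C_f)\to\mathcal O_Y\to\mathcal O_{C_f}\to0$ together with $h^0(\mathcal O_{C_f})=1$, followed by global generation of $\mathcal O_Y(-C_f)$ using $-C_f\cdot E_j\ge0$ and the $H^1$-vanishing. Your point (ii), that $R^1\pi_*\mathcal O_Y(-D)=0$ for \emph{every} effective exceptional cycle $D$, is indeed a genuine theorem for rational surface singularities (it is equivalent, via the sequence above, to the surjectivity of $\mathcal O_Z\to\pi_*\mathcal O_D$, and follows from Artin's characterisation $h^1(\mathcal O_D)=0$ by an inductive argument of exactly the type you describe). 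So the proposal is correct, and since the paper offers no alternative argument there is nothing further to compare.
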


\subsection{Toric varieties}

A toric variety is a normal variety $X$ of dimension $d$ that contains a torus $T_X$ (that is, isomorphic to $(\C^*)^d$) as a dense open subset, together with an action $T_X\times X\to X$ of $T_X$ on $X$ that extends the natural action of $T_X$ on itself.

A toric morphism is a morphism $f:X\to Y$ between toric varieties such that $f$ maps the torus $T_X\subset X$ into $T_Y\subset Y$ and $f|_{T_X}$ is a group homomorphism.

A toric contraction  is a contraction $f:X\to Y$ so that $f$ is a toric morphism. We remark that if $f:X\to Y$ is a toric contraction, then $X$ is of Fano type over $Y$.

A toric pair is a pair  $(X,B)$ so that $X$ is a toric variety and $B$ is invariant under the torus action of $T_X$.

We refer to \cite{Fu93} 
or \cite{CLS11} for the general theory of toric varieties. 

\subsection{Exceptional singularities}
\begin{defn}\label{defn:plt}
Let $(Z\ni z,\Delta)$ be a klt germ. A plt (resp. $\delta$-plt) blow-up of $(Z\ni z,\Delta)$ is a birational morphism $\pi:Y\to Z$ such that
\begin{itemize}
\item $Y$ has a prime divisor $E$ such that $\cent_Z E=z$ and $-E$ is ample over $Z$ (in particular, $\text{Ex}(\pi)=E$, where $\text{Ex}(\pi)$ is the exceptional locus of $\pi$);
\item $(Y,E+\Delta_Y)$ is plt (resp. $\delta$-plt) near $E$, where $\Delta_Y$ is the birational transform of $\Delta$ on $Y$.
\end{itemize}
\end{defn}

\begin{lem}\cite[Lemma 3.12]{HLS19}\label{lem:has}
Any klt germ $(Z\ni z,\Delta)$ with $\dim Z\geq 2$ has a plt blow-up.
\end{lem}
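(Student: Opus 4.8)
The plan is to realise a plt blow-up as the divisorial extraction of the \emph{unique} log canonical place of a carefully chosen auxiliary boundary through $z$; the hypothesis $\dim Z\ge 2$ enters precisely because a divisorial valuation with center the point $z$ is then automatically exceptional over $Z$.

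\emph{Step 1 (a unique lc place centered at $z$).} Shrinking $Z$ around $z$, we may assume $Z$ is affine and $(Z,\Delta)$ is klt everywhere. Fix a very ample divisor $A$ on $Z$ and, for $m\gg k\gg 0$, let $D$ be a general member of the sub-linear system of $|mA|$ consisting of divisors whose local equation at $z$ lies in $\mathfrak m_z^k$. Then $D$ is an effective Cartier divisor, $\mu_F D\ge k$ for every prime divisor $F$ over $Z$ with $\cent_Z F=z$, and $D$ is general (hence smooth, transverse to $\Supp\Delta$ and to any fixed log resolution) away from $z$. Put $c:=\lct_z(Z,\Delta;D)$; taking $F$ to compute $\mld(Z\ni z,\Delta)$ gives $c\le \mld(Z\ni z,\Delta)/k$, so $c$ is small, and therefore $(Z,\Delta+cD)$ is lc, is not klt at $z$, and — by the generality of $D$ — has non-klt locus exactly $\{z\}$. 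Hence $(Z,\Delta+cD)$ has only finitely many lc places, all with center $z$; a standard tie-breaking argument (replace $cD$ by $(1-\eta)cD+\eta D'$ for a suitable general effective $\Q$-divisor $D'\ge0$ through $z$, then renormalise to stay lc) lets us assume $(Z,\Delta+cD)$ has a \emph{unique} lc place $E$, with $\cent_Z E=z$.

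\emph{Step 2 (extracting $E$).} Choose a log resolution $\mu\colon W\to Z$ of $(Z,\Delta+cD)$ on which $E$ is a divisor, with remaining $\mu$-exceptional prime divisors $F_1,\dots,F_m$. The log smooth (hence dlt) pair $(W,\,E+\sum_j F_j+\Delta_W+cD_W)$ satisfies, over $Z$, $K_W+E+\sum_j F_j+\Delta_W+cD_W\equiv \sum_j a(F_j,Z,\Delta+cD)\,F_j$, and the right-hand side is effective, $\mu$-exceptional and supported away from $E$ precisely because $E$ is the unique lc place (so each $a(F_j,Z,\Delta+cD)>0$). Running the corresponding dlt MMP over $Z$ thus contracts exactly $F_1,\dots,F_m$ and leaves $E$; let $\pi\colon Y\to Z$ be the output, so $\operatorname{Ex}(\pi)=\Supp E$. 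The output $(Y,E+\Delta_Y+cD_Y)$ is crepant to $(Z,\Delta+cD)$, hence lc with $E$ its only lc divisor, i.e.\ plt; a fortiori $(Y,E+\Delta_Y)$ is plt near $E$ since $cD_Y\ge0$, where $\Delta_Y$ is the birational transform of $\Delta$. Finally, arranging the MMP so that $\rho(Y/Z)=1$ (the number of contracted $\mu$-exceptional divisors being one less than $\rho(W/Z)$ when $Z$ is $\Q$-factorial), the non-zero effective $\pi$-exceptional divisor $E$ cannot be $\pi$-nef by the negativity lemma, so the extremal ray of $\NEbar(Y/Z)$ is negative against $E$ and $-E$ is $\pi$-ample. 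Hence $\pi$ is a plt blow-up of $(Z\ni z,\Delta)$.

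The step I expect to be the real obstacle is the tie-breaking in Step 1: one must shrink the set of lc places to a single element \emph{while simultaneously} keeping the non-klt locus equal to the single point $z$ and ensuring the surviving place still lies over $z$, without the perturbation creating new non-klt points. Step 2 is then essentially formal, apart from the routine point that, in order to get $\rho(Y/Z)=1$ and hence $-E$ $\pi$-ample, one should first reduce to $Z$ being $\Q$-factorial, or invoke the divisorial-extraction results of \cite{BCHM}.
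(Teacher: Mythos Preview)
The paper does not prove this lemma; it is quoted from \cite[Lemma 3.12]{HLS19}. Your outline is the standard construction (tie-breaking to isolate a unique lc place over $z$, then divisorial extraction via \cite{BCHM}) and is essentially correct.

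Your two flagged concerns are the right ones, and both are standard. The tie-breaking is routine once the non-klt locus is the single point $z$: all lc places already have center $z$, so perturbing by a general effective $\Q$-Cartier divisor through $z$ and rescaling separates their log discrepancies without creating new non-klt points. The more genuine subtlety is making $-E$ ample. The phrase ``arranging the MMP so that $\rho(Y/Z)=1$'' is not accurate, since after the dlt MMP $\rho(Y/Z)$ can exceed $1$ when $Z$ is not $\Q$-factorial, and the MMP does not let you prescribe the relative Picard number. The clean fix is to observe that $K_Y+\Delta_Y+cD_Y\equiv_Z -E$ with $(Y,\Delta_Y+cD_Y)$ klt, and run this MMP over $Z$: each step is $E$-positive, so $E$ cannot be divisorially contracted (the contracted ray would have to satisfy $E\cdot R<0$), hence only flips occur and they are crepant for the plt pair $(Y,E+\Delta_Y+cD_Y)$; termination gives $-E$ nef, hence semiample over $Z$ by base-point-freeness. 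Passing to the ample model of $-E$ yields the desired $\pi$, and $E$ survives there since otherwise the pushforward pair would be both plt and crepant to $(Z,\Delta+cD)$, forcing $a(E,Z,\Delta+cD)>0$, a contradiction.
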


\begin{defn}\label{defn:exc}
Let $(Z\ni z,\Delta)$ be an lc germ. We say $(Z\ni z,\Delta)$ is an exceptional singularity, if for any effective $\R$-divisor $G$ on $Z$ so that $(Z\ni z,\Delta+G)$ is lc, there exists at most one prime divisor $E$ over $Z$ such that $a(E,Z,\Delta+G)=0$ and $z\in \cent_Z E$. Moreover, if  $(Z\ni z,\Delta)$ is a klt germ, we say $(Z\ni z,\Delta)$ is a klt exceptional singularity.
In the case when $\Delta=0$, we use  $Z\ni z$ instead of $(Z\ni z,0)$.
\end{defn}

\begin{lem}\cite[Lemma 3.16]{HLS19}\label{lem:unique}
Let $(Z\ni z,\Delta)$ be a klt exceptional singularity with $\dim Z\geq 2$. Then $(Z\ni z,\Delta)$ has a unique plt blow-up.
\end{lem}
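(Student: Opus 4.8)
The plan is: existence is immediate, and for uniqueness I will show that the divisorial valuation extracted by a plt blow-up is intrinsic to $(Z\ni z,\Delta)$. Existence of a plt blow-up is Lemma~\ref{lem:has} (recall $\dim Z\ge 2$). For uniqueness, let $\pi_i\colon Y_i\to Z$ ($i=1,2$) be plt blow-ups with exceptional prime divisors $E_i$. Writing $K_{Y_i}+E_i+\Delta_{Y_i}=\pi_i^*(K_Z+\Delta)+a(E_i,Z,\Delta)E_i$, and using $a(E_i,Z,\Delta)>0$ together with $-E_i$ ample over $Z$, one sees $-(K_{Y_i}+(1-\eta)E_i+\Delta_{Y_i})$ is ample over $Z$ for small $\eta>0$, so $Y_i$ is of Fano type over $Z$; then $-E_i$, being nef over $Z$, is semiample over $Z$. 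Hence it suffices to prove that $E_1$ and $E_2$ are the same valuation over $Z$: indeed, then $Y_1\bir Y_2$ over $Z$ is an isomorphism in codimension one and both $Y_i$ are the ample model over $Z$ of the common valuation, so $Y_1\cong Y_2$ over $Z$.

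The main tool is the base complement of a plt blow-up $\pi\colon Y\to Z$ with divisor $E$. By the displayed computation, $-(K_Y+E+\Delta_Y)$ is ample over $Z$; shrinking $Z$ around $z$, write it as a positive $\R$-combination of Cartier divisors ample over $Z$, take general members, and apply Bertini to obtain an effective $G\sim_\R -(K_Y+E+\Delta_Y)/Z$ with $(Y,E+\Delta_Y+G)$ plt over $z$ and $K_Y+E+\Delta_Y+G\sim_\R 0/Z$. Set $\Theta:=\Delta+\pi_*G$. Then $\pi^*(K_Z+\Theta)-(K_Y+E+\Delta_Y+G)$ is $\pi$-exceptional, supported on $E$, and numerically trivial over $Z$, hence vanishes by the negativity lemma. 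Therefore $(Z\ni z,\Theta)$ is log canonical (crepant to the plt pair), $\Theta\ge\Delta$, and $a(E,Z,\Theta)=0$, so $E$ is an lc place of $(Z,\Theta)$ with centre $z$. By exceptionality of $(Z\ni z,\Delta)$ applied with $G=\Theta-\Delta$, the divisor $E$ is the unique lc place of $(Z,\Theta)$ whose centre on $Z$ equals $z$.

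To finish, suppose $E_1\neq E_2$ and form the base complement $\Theta_1$ of $\pi_1$. Pull it back to $Y_2$: the pair $(Y_2,\Theta_{1,Y_2})$ is log canonical, crepant to $(Z,\Theta_1)$, with $K_{Y_2}+\Theta_{1,Y_2}\sim_\R 0/Z$, and, by the previous paragraph, its only lc place lying over the point $z$ is $E_1$, which — as $E_1\neq E_2$ — is not a divisor on $Y_2$ and has centre contained in $E_2=\pi_2^{-1}(z)$. Since $(Y_2,E_2+\Delta_{Y_2})$ is plt with $-E_2$ ample over $Z$, adjunction makes $(E_2,\Diff_{E_2}(\Delta_{Y_2}))$ a klt log Fano of dimension $\dim Z-1$; the plan is to transfer complements through $\pi_2$ so that $\Theta_1$ induces on $E_2$ an $\R$-complement of this log Fano for which $E_1$ gives a non-klt place in addition to the one coming from the boundary, contradicting the exceptionality of $(E_2,\Diff_{E_2}(\Delta_{Y_2}))$ — which in turn should follow from the exceptionality of $(Z\ni z,\Delta)$ by induction on dimension. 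This forces $E_1=E_2$ and completes the proof.

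The hard part is the transfer step in the last paragraph: relating lc places and $\R$-complements of the germ $(Z\ni z,\Delta)$ to those of the klt log Fano slice $(E_2,\Diff_{E_2}(\Delta_{Y_2}))$ cut out by the plt blow-up, and showing that exceptionality passes between them. A more self-contained alternative — directly producing a single boundary $\Theta\ge\Delta$ on $Z$ that is log canonical near $z$ with two distinct lc places centred at $z$ by passing to a common $\Q$-factorial model and complementing there — is obstructed by the fact that forcing coefficient $1$ on two exceptional divisors destroys the anti-ampleness over $Z$ needed to run the base-complement construction, so the model and coefficients would have to be chosen much more carefully.
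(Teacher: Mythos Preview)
The paper does not prove this lemma; it simply cites \cite[Lemma 3.16]{HLS19}. So there is no in-paper proof to compare against.

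Your first two paragraphs are correct and are exactly the right setup: the base-complement construction and the deduction that $E_1$ is the unique lc place of $(Z,\Theta_1)$ over $z$ are standard and work as you say. (In fact pltness of $(Y_1,E_1+\Delta_{Y_1}+G_1)$ already makes $E_1$ the only lc place of $(Z,\Theta_1)$ after shrinking $Z$, without invoking exceptionality --- but this is harmless.)

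The genuine gap is the third paragraph, as you yourself acknowledge in the fourth. Your proposed route --- restrict to $E_2$, claim the slice $(E_2,\Diff_{E_2}(\Delta_{Y_2}))$ is an exceptional Fano ``by induction on dimension'', and contradict that --- is not a proof: the passage from exceptional germ to exceptional Fano slice requires a lifting-of-complements argument that is no easier than what you are trying to establish, and the slice is a projective Fano rather than a germ, so it is unclear what inductive statement you would invoke. Moreover, the phrase ``a non-klt place in addition to the one coming from the boundary'' does not parse: the slice is klt, so there is no such place to begin with. The ``self-contained alternative'' you dismiss is actually close to the real argument, and the obstruction you cite is illusory because one never needs coefficient $1$ on two exceptional divisors simultaneously.

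Here is how to finish directly on $Y_2$, with no adjunction and no induction. Set $D:=(\Theta_1)_{Y_2}-\Delta_{Y_2}\ge 0$ (no $E_2$-component). Then $(Y_2,E_2+\Delta_{Y_2})$ is plt, while $(Y_2,E_2+\Delta_{Y_2}+D)=(Y_2,E_2+(\Theta_1)_{Y_2})$ is not lc: you computed $a(E_1,Y_2,cE_2+(\Theta_1)_{Y_2})=0$ with $c=1-a(E_2,Z,\Theta_1)<1$, and since $\cent_{Y_2}E_1\subsetneq E_2$ gives $\mu_{E_1}E_2>0$, raising $c$ to $1$ forces $a(E_1,\cdot)<0$. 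Let $t\in(0,1)$ be the lc threshold of $D$ with respect to $(Y_2,E_2+\Delta_{Y_2})$; at $t$ there is a new lc place $F\neq E_2$. The crucial observation is that $\cent_{Y_2}F\subset E_2$: if not, then $\mu_F E_2=0$ and $a(F,Y_2,E_2+\Delta_{Y_2}+tD)=a(F,Z,(1-t)\Delta+t\Theta_1)$, but after shrinking $Z$ the pair $(Z,(1-t)\Delta+t\Theta_1)$ is klt for $t<1$ (it is a convex combination of the klt pair $(Z,\Delta)$ and the plt-pushforward $(Z,\Theta_1)$, whose unique lc place $E_1$ has log discrepancy $(1-t)a(E_1,Z,\Delta)>0$ in the combination), a contradiction. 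Now
\[
-(K_{Y_2}+E_2+\Delta_{Y_2}+tD)\equiv_Z\bigl((1-t)\,a(E_2,Z,\Delta)+t\,a(E_2,Z,\Theta_1)\bigr)(-E_2)
\]
is ample over $Z$; choose a general $H\sim_\R -(K_{Y_2}+E_2+\Delta_{Y_2}+tD)/Z$ preserving the lc places, and push forward to get $\Omega:=\Delta+t(\Theta_1-\Delta)+\pi_{2*}H\ge\Delta$ with $(Z\ni z,\Omega)$ lc and two distinct lc places $E_2,F$ both centred at $z$. This contradicts exceptionality and forces $E_1=E_2$.
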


\begin{lem}\cite[Lemma 3.22]{HLS19}\label{lem:exc}
Let $s\geq 2$ be an integer and $\mathfrak D\subset [0,1]$ be a DCC set. Then there exists a positive real number $\delta$ depending only on $s$ and $\mathfrak D$ satisfying the following. Let $(Z\ni z,\Delta)$ be a klt exceptional singularity so that $\dim Z=s$ and the coefficients of $\Delta$ are in $\mathfrak D$.
Let $f:Y\to Z$ be the unique plt blow-up of $(Z\ni z, \Delta)$. Then $f:Y\to Z$ is a $\delta$-plt blow-up of $(Z\ni z,\Delta)$.
\end{lem}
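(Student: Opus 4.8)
The plan is to reduce, by adjunction along the exceptional divisor $E$ of the unique plt blow-up $f\colon Y\to Z$, to a boundedness statement for exceptional log Fano pairs of dimension $s-1$, and then to lift the resulting uniform estimate back to $Y$. Concretely, by Lemma \ref{lem:unique} the klt exceptional germ $(Z\ni z,\Delta)$ has a unique plt blow-up $f\colon Y\to Z$; let $E$ be its prime exceptional divisor, so $-E$ is ample$/Z$ and $(Y,E+\Delta_Y)$ is plt near $E$. From $K_Y+E+\Delta_Y=f^*(K_Z+\Delta)+a(E,Z,\Delta)E$, the klt condition $a(E,Z,\Delta)>0$, and the $f$-ampleness of $-E$, we get that $-(K_Y+E+\Delta_Y)$ is ample$/Z$. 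Put $(E,\Delta_E):=(E,\Diff_E(\Delta_Y))$, so $K_E+\Delta_E=(K_Y+E+\Delta_Y)|_E$. Then $(E,\Delta_E)$ is klt (since $(Y,E+\Delta_Y)$ is plt near $E$), $-(K_E+\Delta_E)$ is ample (restrict the $f$-ample divisor $-(K_Y+E+\Delta_Y)$ and use $f(E)=z$), and a standard computation of the different shows the coefficients of $\Delta_E$ lie in a DCC set $\mathfrak D'=\mathfrak D'(\mathfrak D)$. Hence $(E,\Delta_E)$ is a klt log Fano of dimension $s-1$ with boundary coefficients in $\mathfrak D'$.

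\textbf{Exceptionality descends.} I claim $(E,\Delta_E)$ is an exceptional Fano pair, i.e.\ every $\R$-divisor $\Gamma_E\geq\Delta_E$ with $(E,\Gamma_E)$ lc and $K_E+\Gamma_E\sim_{\R}0$ is klt. If not, fix such a $\Gamma_E$ with an lc place $S$; its center $W=\cent_E S$ is a proper subvariety of $E$. Since $-(K_Y+E+\Delta_Y)$ is $f$-ample and restricts to $-(K_E+\Delta_E)\sim_{\R}\Gamma_E-\Delta_E\geq0$, relative vanishing lets us lift $\Gamma_E-\Delta_E$ to an effective $\R$-divisor $G$ on a neighbourhood of $E$, not containing $E$, with $G|_E=\Gamma_E-\Delta_E$ and $K_Y+E+\Delta_Y+G\sim_{\R}0/Z$; then $\Diff_E(\Delta_Y+G)=\Gamma_E$. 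By inversion of adjunction $(Y,E+\Delta_Y+G)$ is lc on a neighbourhood $U$ of $E$ and $W$ is an lc centre of $(Y,E+\Delta_Y+G)$, so there is an lc place $\tilde S$ with $\cent_Y\tilde S=W\subsetneq E$, hence $\tilde S\neq E$. Because $K_Y+E+\Delta_Y+G\sim_{\R}0/Z$, pushing forward gives $K_Y+E+\Delta_Y+G=f^*(K_Z+\Delta+f_*G)$, an exact log pullback; as $f|_{U\setminus E}$ is an isomorphism onto $f(U)\setminus\{z\}$, it follows that $(Z,\Delta+f_*G)$ is lc on $f(U)$, with two distinct lc places $E$ (note $a(E,Z,\Delta+f_*G)=0$ by exactness) and $\tilde S$, both centred at $z$. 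This contradicts that $Z\ni z$ is exceptional.

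\textbf{Boundedness and lifting back.} By Birkar's theorems on boundedness of complements and of exceptional Fano varieties \cite{Bi19,Bi21}, exceptional klt log Fano pairs of dimension $s-1$ with coefficients in $\mathfrak D'$ form a bounded family and admit klt $n$-complements for a fixed $n=n(s,\mathfrak D)$. Applied to $(E,\Delta_E)$: pick a klt $n$-complement $(E,\Delta_E^+)$. Using the $f$-ampleness of $-(K_Y+E+\Delta_Y)$ and the normality of $E$, lift it to an $n$-complement $(Y,E+\Delta_Y^+)$ of $(Y,E+\Delta_Y)$ with $\Diff_E(\Delta_Y^+)=\Delta_E^+$; since $(E,\Delta_E^+)$ is klt, inversion of adjunction gives that $(Y,E+\Delta_Y^+)$ is plt near $E$, and its boundary coefficients lie in the finite set $\{1\}\cup(\tfrac1n\Z\cap[0,1))$. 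Moreover, since $(E,\Delta_E)$ varies in a bounded family and $-E|_E\sim_{\R}a(E,Z,\Delta)^{-1}\bigl(-(K_E+\Delta_E)\bigr)$ is an ample polarization controlled by that family, the germ of $(Y,E+\Delta_Y^+)$ along $E$ varies in a bounded family; a bounded family of plt pairs with boundary coefficients in a fixed finite set is $\delta$-plt for a uniform $\delta=\delta(s,\mathfrak D)>0$ (semicontinuity and constructibility of minimal log discrepancies in such families). Finally $\Delta_Y\leq\Delta_Y^+$ yields $a(F,Y,E+\Delta_Y)\geq a(F,Y,E+\Delta_Y^+)>\delta$ for every prime divisor $F\neq E$ over $Y$ with $\cent_Y F\subseteq E$, so $(Y,E+\Delta_Y)$ is $\delta$-plt near $E$ and $f$ is a $\delta$-plt blow-up.

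\textbf{Main obstacle.} The delicate points all lie in the last step: first the black-box input from \cite{Bi19,Bi21}, and, more seriously, the passage from boundedness of the single log Fano $(E,\Delta_E)$ to boundedness of a neighbourhood of $E$ inside $Y$ — a ``boundedness of plt blow-ups'' statement in which the conormal data $-E|_E$ must be controlled uniformly. By comparison, the descent of exceptionality is soft once one exploits the $f$-ampleness of $-(K_Y+E+\Delta_Y)$ to arrange an \emph{exact} log pullback, which automatically promotes $E$ to a log canonical place on $Z$.
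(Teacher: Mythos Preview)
The paper does not prove this lemma; it is quoted verbatim from \cite[Lemma~3.22]{HLS19} without argument, so there is no internal proof to compare against. Your overall strategy---restrict to $E$ by adjunction, show $(E,\Delta_E)$ is an exceptional log Fano with DCC coefficients, invoke Birkar's boundedness, then lift back to $Y$---is indeed the shape of the argument in \cite{HLS19}, and your proof that exceptionality descends to $E$ is correct.

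The gap is exactly where you flag it, and what you write does not close it. Your assertion that ``the germ of $(Y,E+\Delta_Y^+)$ along $E$ varies in a bounded family'' is unjustified: you yourself record that $-E|_E\sim_{\R}a(E,Z,\Delta)^{-1}\bigl(-(K_E+\Delta_E)\bigr)$, but nothing you have proved bounds $a(E,Z,\Delta)$ away from zero, so the conormal polarization is \emph{not} controlled and the formal neighbourhood of $E$ in $Y$ can be arbitrarily complicated even with $(E,\Delta_E)$ fixed. The argument in \cite{HLS19} avoids this entirely. One first uses boundedness of exceptional log Fanos to get that $(E,\Delta_E)$ is $\epsilon'$-lc for a uniform $\epsilon'=\epsilon'(s,\mathfrak D)>0$, and then invokes a separate inversion-type statement (established earlier in \cite{HLS19}, ultimately resting on ACC for log canonical thresholds and the explicit form of the different): if $(Y,E+\Delta_Y)$ is plt, the coefficients of $\Delta_Y$ lie in a fixed DCC set, and $(E,\Diff_E(\Delta_Y))$ is $\epsilon'$-lc, then $(Y,E+\Delta_Y)$ is $\delta$-plt near $E$ for a $\delta$ depending only on $\dim Y$, $\epsilon'$, and the DCC set. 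The lift from $E$ back to $Y$ is thus a statement about singularities and coefficients, not about boundedness of the ambient germ; your attempt to bound the germ itself is both harder and unnecessary.
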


\section{Proof of (5) $\Rightarrow$  (3) in Theorem \ref{thm:main}}
The goal of this section is to show that Conjecture \ref{conj:birkar2} implies Conjecture \ref{conj:birkar1}. Theorem 1.1 in \cite{Bi23} plays a key role in the proof. We begin with
 a generalized version of Conjecture \ref{conj:birkar2}.
\begin{conj}\label{conj:generalized}
Let $s$ be a natural number and $\epsilon$ be a positive real number. Then there is a positive number $t$ depending only on $s,\epsilon$ satisfying the following. Assume $(Z,\Delta+\bN)$ is an $\epsilon$-lc g-pair of dimension $s$ with a closed point $z\in Z$. Then there exists an effective Cartier divisor $H$ on some neighbourhood of $z\in Z$ such that $z\in \Supp H$ and $(Z,\Delta+tH+\bN)$ is lc near $z$.
\end{conj}
\begin{prop}\label{prop:gene}
Assume Conjecture \ref{conj:birkar2} holds in dimension $\leq \ell$ for some natural number $\ell$. Then Conjecture \ref{conj:generalized} holds in dimension $\leq \ell$.
\end{prop}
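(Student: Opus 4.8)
The plan is to reduce to a lower bound for the log canonical threshold of the maximal ideal, and then to import the pair case, Conjecture \ref{conj:birkar2}, after passing to a birational model on which $\bN$ descends.

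\textbf{Step 1 (reformulation).} After extending the formalism of triples to generalized triples, I would record the g-pair analogues of Lemma \ref{lem:linearsystem} and Corollary \ref{cor:idealdiv}: for an $\epsilon$-lc g-pair $(Z,\Delta+\bN)$, a closed point $z$ and $t\le 1$, a general member $H$ of the linear system $V_z$ of Remark \ref{rem} satisfies that $(Z,\Delta+tH+\bN)$ is lc near $z$ if and only if $\mld(Z/Z\ni z,\Delta+\bN,\mathfrak m_{z}^t)\ge 0$. These extensions are routine, since a generalized triple sees $\bN$ only through the formula $a(E,Z,\Delta+\bN,\mathfrak m_{z}^t)=a(E,Z,\Delta+\bN)-t\,\mu_E\mathfrak m_{z}$. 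Thus Conjecture \ref{conj:generalized} in dimension $\le\ell$ is equivalent to the assertion that there is $t=t(s,\epsilon)>0$ with $\mld(Z/Z\ni z,\Delta+\bN,\mathfrak m_{z}^t)\ge 0$ for every $\epsilon$-lc g-pair $(Z,\Delta+\bN)$ of dimension $s\le\ell$ with a closed point $z$, while Conjecture \ref{conj:birkar2} is the same assertion with $\bN=\textbf{0}$.

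\textbf{Step 2 (descending $\bN$).} Take a log resolution $\phi\colon Z'\to Z$ on which $\bN$ descends and $\mathfrak m_{z}\mathcal O_{Z'}=\mathcal O_{Z'}(-F)$ for an effective divisor $F$ with $\Supp F=\phi^{-1}(z)$, and write $K_{Z'}+\Delta'+\bN_{Z'}=\phi^*(K_Z+\Delta+\bN_Z)$. Because $\bN$ descends on $Z'$, every prime divisor $E$ over $Z'$ has $a(E,Z',\Delta')=a(E,Z,\Delta+\bN)\ge\epsilon$; in particular $\Delta'$ is a sub-boundary with all coefficients $\le 1-\epsilon$, so its effective part $\Delta'_+$ defines an \emph{honest} pair $(Z',\Delta'_+)$, which is $\epsilon$-lc (its support is simple normal crossing on the smooth $Z'$ with coefficients $\le 1-\epsilon$) of dimension $s\le\ell$. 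For any prime divisor $E$ over $Z'$ with $\cent_{Z'}E\subseteq\phi^{-1}(z)$ one has $a(E,Z,\Delta+\bN,\mathfrak m_{z}^t)=a(E,Z',\Delta'+tF)\ge a(E,Z',\Delta'_++tF)$, so it suffices to make $(Z',\Delta'_++tF)$ lc over a neighbourhood of $\phi^{-1}(z)$. Since $Z'$ is smooth and $\Delta'_++tF$ has simple normal crossing support there, this holds exactly when all of its coefficients near $\phi^{-1}(z)$ are at most $1$, that is, when $t\,\mu_{E_j}\mathfrak m_{z}\le a(E_j,Z,\Delta+\bN)$ for every component $E_j$ of $F$. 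Hence the whole problem comes down to bounding the orders $\mu_{E_j}\mathfrak m_{z}$ in terms of $s$ and $\epsilon$ — equivalently, to bounding
\[
\lct(Z\ni z,\Delta+\bN;\mathfrak m_{z})=\inf_{E}\frac{a(E,Z,\Delta+\bN)}{\mu_E\mathfrak m_{z}}
\]
from below, where $E$ runs over prime divisors over $Z$ with $\cent_Z E=z$.

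\textbf{Step 3 (invoking the pair case).} This bound I would deduce from Conjecture \ref{conj:birkar2} for pairs applied to the honest $\epsilon$-lc pair $(Z',\Delta'_+)$, of dimension $\le\ell$, at suitably chosen closed points of $\phi^{-1}(z)$, using that $\phi^*\mathfrak m_{z}$ is generated by global sections over $Z$ to compare it, near those points, with their maximal ideals; together with Step 2 this is expected to yield the required $t=t(s,\epsilon)$. I expect this comparison to be the real obstacle: Conjecture \ref{conj:birkar2} controls the maximal ideal of a point of $Z'$, whereas near $\phi^{-1}(z)$ the ideal $\phi^*\mathfrak m_{z}$ is ``deeper'' than any maximal ideal, so one must choose the model $Z'$ — and the points at which Conjecture \ref{conj:birkar2} is invoked — carefully enough that the orders $\mu_{E_j}\mathfrak m_{z}$ are genuinely controlled; this is where the bookkeeping between the point $z\in Z$ and its divisorial preimage $\phi^{-1}(z)$ on $Z'$ has to be carried out with care.
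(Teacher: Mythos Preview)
Your Step 3 is not a proof but a hope, and the obstacle you name is real and unresolved: applying Conjecture \ref{conj:birkar2} at a closed point $z'\in\phi^{-1}(z)$ bounds $\lct(Z'\ni z',\Delta'_+;\mathfrak m_{z'})$, which says nothing about the orders $\mu_{E_j}\mathfrak m_{z}$ along the exceptional components $E_j$. These orders depend on the global geometry of the resolution over $z$, not on any single point of the fibre, and they can be large even when $Z'$ is smooth with $\Delta'_+$ mild; there is no mechanism in your outline that converts pointwise information on $Z'$ into a bound on $\mu_{E_j}\phi^*\mathfrak m_z$.

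The paper avoids this entirely by never leaving $Z$. The key observation you are missing is that on the model $Z'$ where $\bN$ descends, the divisor $\bN_{Z'}$ is nef over $Z$, hence (since $\pi$ is birational) big and nef over $Z$. One can therefore choose an effective $\R$-divisor $E'\sim_{\R}\bN_{Z'}/Z$ such that $(Z',\Delta'+E')$ is still $\frac{\epsilon}{2}$-lc. Because $K_{Z'}+\Delta'+E'\sim_{\R}0/Z$, pushing forward gives an honest $\frac{\epsilon}{2}$-lc pair $(Z,\Delta+E)$ on $Z$ itself, with $E=\pi_*E'$. Now Conjecture \ref{conj:birkar2} applies directly at $z\in Z$ to produce $H$ with $(Z,\Delta+E+tH)$ lc near $z$, and the chain
\[
a(D,Z,\Delta+tH+\bN)=a(D,Z',\Delta'+t\pi^*H)\ge a(D,Z',\Delta'+E'+t\pi^*H)=a(D,Z,\Delta+E+tH)\ge 0
\]
finishes the argument. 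The whole point is to trade the b-divisor $\bN$ for an effective divisor at the cost of halving $\epsilon$, so that the conjecture for pairs applies on the original base rather than on a blow-up.
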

\begin{proof}
Let $(Z,\Delta+\bN)$ be an $\epsilon$-lc g-pair of dimension $s\leq \ell$. Take a high enough birational model $\pi:Z'\to Z$ such that $\bN$ descends on $Z'$ and $\bN_{Z'}$ is nef over $Z$. Write 
$$K_{Z'}+\Delta'+\bN_{Z'}=\pi^*(K_Z+\Delta+\bN_{Z}).$$ Then $(Z',\Delta')$ is an $\epsilon$-lc sub-pair. Since $\bN_{Z'}$ is big and nef over $Z$, there exists an effective $\R$-divisor $E'$ on $Z'$ such that $E'\sim_{\R}\bN_{Z'}/Z$ and $(Z',\Delta'+E')$ is an $\frac{\epsilon}{2}$-lc sub-pair. As $K_{Z'}+\Delta'+E'\sim_{\R}0/Z$, we have
$$K_Z'+\Delta'+E'=\pi^*(K_Z+\Delta+E),$$
where $E=\pi_*E'$. So $(Z,\Delta+E)$ is an $\frac{\epsilon}{2}$-lc pair. By the assumption that Conjecture \ref{conj:birkar2} holds, for any closed point $z\in Z$, there exists an effective Cartier divisor $H$ on some neighbourhood of $z\in Z$ such that $z\in \Supp H$ and $(Z,\Delta+E+tH)$ is lc near $z$, where $t>0$ only depends on $s,\epsilon$. For any prime divisor $D$ over $Z$ with $z\in \cent_Z D$, we have
\begin{align*}
a(D,Z,\Delta+tH+\bN)&=a(D,Z',\Delta'+t\pi^*H)\geq a(D,Z',\Delta'+E'+t\pi^*H)\\
&=a(D,Z,\Delta+E+tH)\geq 0.
\end{align*}
Hence the g-pair $(Z,\Delta+tH+\bN)$ is lc near $z$.
\end{proof}

\begin{prop}\label{prop:5to3}
Assume Conjecture \ref{conj:birkar2} holds in dimension $\leq \ell$ for some natural number $\ell$. Then Conjecture \ref{conj:birkar1} holds when $\dim Z\leq \ell$.
\end{prop}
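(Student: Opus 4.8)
The plan is to descend the problem from the fibration $(X,B)\to Z$ to a generalized pair on the base $Z$ via the canonical bundle formula, to control the singularities of that g-pair with \cite[Theorem 1.1]{Bi23}, then to feed the resulting $\delta$-lc g-pair on $Z$ into the generalized version of Conjecture \ref{conj:birkar2} (available by Proposition \ref{prop:gene}), and finally to transfer the divisor produced on $Z$ back to $X$ using Lemma \ref{lem:lc2}.

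In more detail: let $(X,B)\to Z$ and $z\in Z$ be as in Conjecture \ref{conj:birkar1}, set $d=\dim X$, and assume $\dim Z\le\ell$. First I would shrink $Z$ around $z$, which is harmless by the equivalence (1)$\Leftrightarrow$(2) of Corollary \ref{cor:idealdiv}. Since $X$ is of Fano type over $Z$ and $-(K_X+B)$ is nef over $Z$, it is semiample over $Z$, so after shrinking one can pick a sufficiently general effective divisor $G$ on $X$ with $K_X+B+G\sim_{\R}0/Z$ and $(X,B+G)$ lc. Because $B+G\ge B$, it then suffices to produce an effective Cartier divisor $H$ on $Z$ with $z\in\Supp H$ and $(X,B+G+tf^*H)$ lc over $z$. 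Applying the canonical bundle formula to $(X,B+G)\to Z$ yields the induced g-pair $(Z,\Theta+\bM)$, with $K_X+B+G\sim_{\R}f^*(K_Z+\Theta+\bM_Z)$, where $\bM$ is the moduli b-divisor, which is b-nef.

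Next comes the key input. Since $(X,B)$ is $\epsilon$-lc, $X$ is of Fano type over $Z$, and $-(K_X+B)$ is nef over $Z$, \cite[Theorem 1.1]{Bi23}, which bounds the singularities of the base of a Fano type fibration in terms of those of the total space, provides a $\delta=\delta(d,\epsilon)>0$ such that, for $G$ chosen suitably general above, the g-pair $(Z,\Theta+\bM)$ is $\delta$-lc. As $\dim Z\le\ell$ and Conjecture \ref{conj:birkar2} is being assumed in dimension $\le\ell$, Proposition \ref{prop:gene} gives Conjecture \ref{conj:generalized} in dimension $\le\ell$; applying it to the $\delta$-lc g-pair $(Z,\Theta+\bM)$ and the point $z$ produces an effective Cartier divisor $H$ on $Z$ with $z\in\Supp H$ and $(Z,\Theta+tH+\bM)$ lc near $z$, where $t=t(\dim Z,\delta)$, hence $t$ depends only on $d$ and $\epsilon$ (take the minimum over $\dim Z\le d$). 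Finally, Lemma \ref{lem:lc2} applied to $(X,B+G)\to Z$ shows $(X,B+G+tf^*H)$ is lc over $z$, and therefore so is $(X,B+tf^*H)$; this is the conclusion of Conjecture \ref{conj:birkar1}.

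The substance of the argument sits entirely in \cite[Theorem 1.1]{Bi23}: granting it, everything else is bookkeeping with the canonical bundle formula (Lemmas \ref{lem:lc1} and \ref{lem:lc2}). The one point that requires care is the reduction to the case $K_X+B\sim_{\R}0/Z$ — one must take $G$ to be a general member of a suitable relatively base point free linear system, general enough both to keep $(X,B+G)$ lc and to be in the situation to which \cite[Theorem 1.1]{Bi23} applies, and one works in a neighbourhood of $z$ where such $G$ exists, restoring a divisor on all of $Z$ at the end via Corollary \ref{cor:idealdiv}.
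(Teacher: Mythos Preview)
Your proposal is correct and follows essentially the same route as the paper: complete to $K_X+B+G\sim_{\R}0/Z$ using semiampleness, apply the canonical bundle formula, invoke \cite[Theorem 1.1]{Bi23} to get a $\delta$-lc g-pair on $Z$, apply Proposition~\ref{prop:gene}, and lift back via Lemma~\ref{lem:lc2}. Two small points where the paper is cleaner: it chooses $E\sim_{\R}-(K_X+B)/Z$ general enough that $(X,B+E)$ remains $\epsilon$-lc (not just lc), so that \cite[Theorem 1.1]{Bi23} applies directly to $(X,B+E)$ without further justification; and since $-(K_X+B)$ is semiample over all of $Z$, no shrinking around $z$ is needed and the detour through Corollary~\ref{cor:idealdiv} can be dropped.
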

\begin{proof}
Let $(X,B)$ be a pair, $f:X\to Z$ be a contraction with $\dim Z\leq \ell$ and $z\in Z$ be a closed point satisfying the conditions in Conjecture \ref{conj:birkar1}. Then $-(K_X+B)$ is semi-ample over $Z$ as it is nef over $Z$ and as $X$ is of Fano type over $Z$. Hence there exists an effective $\R$-divisor $E$ on $X$ such that $E\sim_{\R} -(K_X+B)/Z$ and $(X,B+E)$ is still $\epsilon$-lc (here we may assume that $\epsilon<1$). Then $K_X+B+E\sim_{\R} 0/Z$. Let $(Z,\Delta+\bN)/Z$ be the g-pair induced by the canonical bundle formula for $f:(X,B+E)\to Z$. By \cite[Theorem 1.1]{Bi23}, $(Z,\Delta+\bN)$  is $\delta$-lc, where $\delta>0$ only depends on $d,\epsilon$. By Proposition \ref{prop:gene}, after shrinking $Z$ around $z$, there exists an effective Cartier divisor $H$ on $Z$ such that $z\in \Supp H$ and $(Z,\Delta+tH+\bN)$ is lc, where $t>0$ only depends on $\dim Z$ and $\delta$. Thus $t$ only depends on $d,\epsilon$ as $\dim Z\leq \dim X=d$ and as $\delta$ only depends on $d,\epsilon$. By Lemma \ref{lem:lc2}, $(X,B+E+tf^*H)$ is lc, which implies that $(X,B+tf^*H)$ is lc.
\end{proof}

\section{Proof of (3) $\Rightarrow$ (1) in Theorem \ref{thm:main}}
The goal of this section is to prove that Conjecture \ref{conj:birkar1} implies Conjecture \ref{conj:shokurov}. Many techniques used in this section come from \cite{Bi22}. We begin with a definition introduced by Birkar in \cite{Bi22}.

\begin{defn}
Let $d,r$ be natural numbers and $\epsilon$ be a positive real number. A $(d,r,\epsilon)$-Fano type fibration consists of a pair $(X,\Gamma)$ and  a contraction $f:X\to Z$ such that 
\begin{itemize}
  \item $(X,\Gamma)$ is  a projective $\epsilon$-lc pair of dimension $d$,
  \item $K_X+\Gamma\sim_{\R} f^*L$ for some $\R$-Cartier $\R$-divisor $L$ on $Z$,
  \item $-K_X$ is big over $Z$, i.e. $X$ is of Fano type over $Z$,
  \item $A$ is a very ample divisor on $Z$ with $A^{\dim Z}\leq r$, and 
  \item $A-L$ is ample.
\end{itemize}
\end{defn}

The following proposition is the same as \cite[Theorem 1.9]{Bi21} except that we remove the condition that $M-(K_X+B)$ is big. We follow the argument in the proof of \cite[Theorem 1.9]{Bi21} with small modifications.

\begin{prop}\label{prop:pp} Let $d,p$ be natural numbers. Then there exists a natural number $n$ depending only on $d,p$ satisfying the following. Assume 
\begin{itemize}
  \item $(X,B)$ is a projective lc pair of dimension $d$,
  \item $pB$ is integral,
  \item $f:X\to Z$ is a contraction,
  \item $M$ is the pullback of an ample Cartier divisor on $Z$,
  \item $X$ is of Fano type over $Z$,
  \item $M-(K_X+B)$ is nef, and 
  \item $S$ is a non-klt center of $(X,B)$ whose image on $Z$ is a closed point.
\end{itemize}
Then there is a $\Q$-divisor $\Lambda\geq B$ such that 
\begin{itemize}
  \item $(X,\Lambda)$ is lc over a neighbourhood of $z:=f(S)$, and 
  \item $n(K_X+\Lambda)\sim (n+2)M$.
\end{itemize}
\end{prop}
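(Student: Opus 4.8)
The plan is to run the argument of \cite[Theorem 1.9]{Bi21} essentially unchanged, the key point being that there the hypothesis "$M-(K_X+B)$ big" enters only through one application of Kawamata--Viehweg vanishing used to lift a section from $S$, and that since the non-klt center $S$ is contracted to the closed point $z$ this vanishing can be replaced by a relative statement over $Z$. First I would reduce to the case of a minimal non-klt center: pass to a $\Q$-factorial dlt modification $\phi\colon(X',B')\to(X,B)$ with $K_{X'}+B'=\phi^{*}(K_X+B)$; the coefficients of $B'$ still lie in the hyperstandard set determined by $p$ (every $\phi$-exceptional divisor with coefficient $1$), $X'$ is of Fano type over $Z$ by Lemma~\ref{lem:preserve}(3), $M':=\phi^{*}M$ is again the pullback from $Z$ of our ample Cartier divisor, and $M'-(K_{X'}+B')=\phi^{*}(M-(K_X+B))$ is nef. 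The non-klt center $S$ over $z$ is dominated by a stratum of $\lfloor B'\rfloor$; after contracting superfluous divisors and applying tie-breaking I may assume $\lfloor B'\rfloor$ has a component $S'$ with $f'(S')=z$ that is a minimal non-klt center and the unique non-klt place of $(X',B')$ over $z$. A complement on $(X',B')$ pushes forward to one on $(X,B)$ with the same $n$ and the same class $(n+2)M$, so it suffices to work with $(X',B',S')$, which I rename $(X,B,S)$.

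Next, adjunction and a complement downstairs on $S$. Divisorial adjunction gives an lc pair $(S,B_S)$ with $K_S+B_S=(K_X+B)|_S$ whose coefficients, by adjunction for hyperstandard coefficients, lie in a hyperstandard set depending only on $p$; since $f(S)=z$ is a point and $M$ is pulled back from $Z$, $M|_S\sim_{\Q}0$ and hence $-(K_S+B_S)=(M-(K_X+B))|_S$ is nef, and $S$ is of Fano type (it maps to a point and is a minimal lc center of a dlt pair whose total space is of Fano type over $Z$). By the boundedness of lc complements \cite{Bi19} (with base a point) there is $n_0$ depending only on $\dim S\le d$ and $p$, and a monotonic lc $n_0$-complement $K_S+\Lambda_S$ with $\Lambda_S\ge B_S$, $(S,\Lambda_S)$ lc, and $n_0(K_S+\Lambda_S)\sim 0$. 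Fix $n$ a suitable bounded multiple of $n_0$ (also divisible by $p$), depending only on $d,p$.

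Finally, I would lift the complement. Set $N:=(n+2)M-n(K_X+B)$, which is nef since $N=2M+n(M-(K_X+B))$, and $G_S:=n(\Lambda_S-B_S)\ge 0$, an integral divisor on $S$ with $G_S\sim N|_S$. The goal is an effective $G\sim N$ on $X$ with $G|_S=G_S$ and $(X,B+\tfrac1nG)$ lc near $f^{-1}(z)$; then $\Lambda:=B+\tfrac1nG$ works, lc-ness following from inversion of adjunction near $S$ (using that $(S,\Lambda_S)$ is lc and $S$ is the only non-klt place over $z$) together with a general choice of $G$ to keep the pair lc on the rest of $f^{-1}(z)$, where $(X,B)$ is already klt. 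To produce $G$ one lifts the section of $\mathcal O_S(\lceil N\rceil|_S)$ cutting out $G_S$ through
\[ 0\to\mathcal O_X(\lceil N\rceil-S)\to\mathcal O_X(\lceil N\rceil)\to\mathcal O_S(\lceil N\rceil|_S)\to 0, \]
after writing $\lceil N\rceil-S\equiv K_X+\Delta+P$ with $(X,\Delta)$ sub-klt away from $S$, $S\not\le\Delta$, and $P$ nef (essentially $(n+1)(M-(K_X+B))+M$ minus a small ample piece absorbed into $\Delta$).

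The hard part will be exactly this last step. Where \cite{Bi21} concludes $H^1(X,\mathcal O_X(\lceil N\rceil-S))=0$ from $P$ being nef and big, here $P$ is only nef, so instead I would argue relatively: relative Kawamata--Viehweg/Nadel vanishing over $Z$ gives $R^1f_{*}\mathcal O_X(\lceil N\rceil-S)=0$, whence $f_{*}\mathcal O_X(\lceil N\rceil)\to f_{*}\mathcal O_S(\lceil N\rceil|_S)$ is a surjection of coherent sheaves on $Z$ whose target is supported at $z$, and the two spare copies of $M=f^{*}A_Z$ inside $N$ provide precisely the twist by a pullback from $Z$ needed — after replacing $Z$ by a projective compactification as in Remark~\ref{rem} — to lift the relevant global section of the target to $X$. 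Reproving the section-lifting step of \cite[Theorem 1.9]{Bi21} with only nefness of $M-(K_X+B)$ is the real content, and the genuinely delicate point is the degenerate range where $M-(K_X+B)$ is trivial over $Z$, where relative bigness over $Z$ also fails and the positivity has to be extracted from the copies of $M$ and the geometry of the contraction $f$. All the remaining ingredients — the dlt modification, the bookkeeping of hyperstandard coefficients through adjunction, the application of \cite{Bi19} on $S$, and the inversion of adjunction — are routine.
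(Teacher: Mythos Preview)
Your overall shape (dlt model, adjunction on $S$, bounded complement on $S$ via \cite{Bi19}, then lift) is exactly that of \cite[Proposition~4.1]{Bi21}, but you have misidentified where the bigness of $M-(K_X+B)$ is used in Birkar's argument, and this leads you to attack the wrong step. The paper's key observation is that bigness is \emph{not} used in the proof of \cite[Proposition~4.1]{Bi21} at all: once one has an auxiliary boundary $\Gamma$ with $(X,\Gamma)$ plt, $S=\lfloor\Gamma\rfloor$, and $-(K_X+\Gamma)$ ample over $Z$, the Kawamata--Viehweg vanishing needed to lift from $S$ uses $M-(K_X+\Gamma)$, which is automatically ample since $M=f^*(\text{ample on }Z)$ and $-(K_X+\Gamma)$ is ample over $Z$. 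Bigness of $M-(K_X+B)$ never enters. In \cite[Theorem~1.9]{Bi21} the bigness hypothesis is used only in the \emph{reduction} to this plt situation, and accordingly the entire content of the paper's proof is a new reduction argument that avoids bigness: after a dlt model one runs an MMP on $-(K_X+S)$ over the base $U$ defined by $-(K_X+B)$, passes to the anticanonical model to make $-(K_X+S)$ ample over $U$, forms $\Delta=(1-b)B+bS$ so that $-(K_X+\Delta)$ is ample over $Z$, and then applies \cite[Lemma~2.7]{Bi21} to extract a plt blow-up with exceptional divisor $T$, yielding the desired $\Gamma=(1-c)\Delta_Y+cT$.

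Your alternative, lifting directly from the dlt model via relative vanishing over $Z$, has a genuine gap in the case you yourself flag as delicate: when $K_X+B\sim_\R 0/Z$, your $P\sim (n+1)(M-(K_X+B))+M$ restricts to $\R$-trivial over $Z$, so it is neither big over $Z$ nor does relative Kawamata--Viehweg give $R^1f_*=0$; and even granting the $R^1f_*$ vanishing, passing from surjectivity of $f_*\mathcal O_X(\lceil N\rceil)\to f_*\mathcal O_S(\lceil N\rceil|_S)$ to surjectivity on global sections needs control on $H^1$ of the kernel on $Z$ that two copies of $M$ do not obviously provide. The paper's construction of $\Gamma$ sidesteps all of this.
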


\begin{proof}
By the proof of \cite[Proposition 4.1]{Bi21}, Proposition \ref{prop:pp} holds under an additional assumption that there is a boundary $\Gamma$ on $X$ such that
\begin{itemize}
  \item $(X,\Gamma)$ is plt with $S=\lfloor \Gamma \rfloor$, and 
  \item $-(K_X+\Gamma)$ is ample over $Z$.
\end{itemize}
Note that the condition that $M-(K_X+B)$ is big has not been used in the proof of \cite[Proposition 4.1]{Bi21}.
So it suffices to construct a boundary $\Gamma$ satisfying the above conditions. 

Replacing $(X,B)$ with a $\Q$-factorial dlt model, we may assume that $X$ is $\Q$-factorial and $S$ is a component of $\lfloor B\rfloor$. The Fano type property of $X$ over $Z$ is preserved by Lemma \ref{lem:preserve} since $-(K_X+B)$ is nef over $Z$.
As $-K_X$ is big over $Z$ and as $S$ is vertical over $Z$, $-(K_X+S)$ is big over $Z$.

As $-(K_X+B)$ is nef over $Z$ and as $X$ is of Fano type over $Z$, $-(K_X+B)$ is semi-ample over $Z$.
Let $X\to U/Z$ be the contraction defining by $-(K_X+B)$ over $Z$. Then $K_X+B\sim_{\Q} 0/U$. As $-(K_X+S)$ is big over $Z$, it is also big over $U$. Run an MMP on $-(K_X+S)$ over $U$ and let $X'$ be the resulting model. In this process $S$ is not contracted because
$$B-S\sim_{\Q} -(K_X+S)/U$$
and because $S$ is not a component of $B-S$. Let $X'\to X''/U$ be the contraction defined by $-(K_{X'}+S')$ over $U$, where $S'$ is the pushdown of $S$ to $X'$. Then $X'\to X''$ is birational as $-(K_X+S)$ is big over $U$. We claim that $S'$ is not contracted over $X''$. Indeed, if not, since $K_{X'}+S'\sim_{\Q} 0/X''$, the pair $(X'',0)$ would not be klt. As $X''$ is of Fano type over $Z$ by Lemma \ref{lem:preserve}, there is a klt pair on $X''$, which leads to a contradiction.

Assume that there exist $n\in \N$ and $\Lambda''\geq B''$ such that $(X'',\Lambda'')$ is lc over $z$ and 
$$n(K_{X''}+\Lambda'')\sim (n+2)M'',$$
where $B'',M''$ are the pushdowns of $B,M$ to $X''$. Since $K_X+B\sim_{\Q} 0/U$, taking the crepant pullback of $K_{X''}+\Lambda''$ to $X$, we get $\Lambda\geq B$ such that $(X,\Lambda)$ is lc over $z$ and
$$n(K_{X}+\Lambda)\sim (n+2)M.$$
Thus we can replace $X$ with $X''$ and assume that $-(K_X+S)$ is ample over $U$. The $\Q$-factorial property of $X$ may be lost but we will not use it any more. Every non-klt center of $(X,S)$ is contained in $S$ because $X$ admits a klt pair as $X$ is of Fano type over $Z$.

Let $\Delta=(1-b)B+bS$ for a sufficiently small real number $b>0$. Then any non-klt place of $(X,\Delta)$ is also a non-klt place of $(X,S)$, and hence its center on $X$ is contained in $S$ and is mapped to $z$. Since $-(K_X+S)$ is ample over $U$ and $-(K_X+B)$ is $\Q$-linear equivalent to the pullback of an ample$/Z$ $\Q$-divisor on $U$, we have 
$$-(K_X+\Delta)=-(1-b)(K_X+B)-b(K_X+S)$$
is ample over $Z$.

By \cite[Lemma 2.7]{Bi21} applied to $(X,\Delta)$, there is a prime divisor $T$ over $X$ and a birational morphism $Y\to X$ such that
\begin{itemize}
  \item either $Y\to X$ is small or it contracts $T$ but no other divisors,
  \item $(Y,T)$ is plt,
  \item $-(K_Y+T)$ is ample over $X$, and
  \item $a(T,X,\Delta)=0$.
\end{itemize}
In particular, $T$ is mapped to $z$ since it is a non-klt place of $(X,\Delta)$. Since $\Delta \leq B$, we have $a(T,X,B)=0$.

Let $K_Y+\Delta_Y$ be the pullback of $K_X+\Delta$ to $Y$ and let
$$\Gamma=(1-c)\Delta_Y+cT$$
for some sufficiently small $c>0$. Since $-(K_X+\Delta)$ is ample over $Z$ and $-(K_Y+T)$ is ample over $X$, we have  
$$-(K_Y+\Gamma)=-(1-c)(K_Y+\Delta_Y)-c(K_Y+T)$$
is ample over $Z$.
Moreover, $(Y,\Gamma)$ is plt and $T=\lfloor \Gamma \rfloor$ is mapped to $z$.

Let $K_Y+B_Y$, $M_Y$ be the pullbacks of $K_X+B$, $M$ to $Y$. Replace $(X,B),M,S$ with $(Y,B_Y),M_Y,T$. Then there is a boundary $\Gamma$ on $X$ such that $(X,\Gamma)$ is plt with $S=\lfloor \Gamma \rfloor$ and $-(K_X+\Gamma)$ is ample over $Z$.
\end{proof}

The following proposition is the same as \cite[Proposition 4.15]{Bi22} except that we remove the condition that $-(K_X+B)$ is big over $Z$ and we emphasize that $n$ is independent of $r,\epsilon$. The notation is also slightly different; $B,\Gamma$ in the following proposition correspond to $\Delta, B$ in \cite[Proposition 4.15]{Bi22} respectively. We follow the argument in the proof of \cite[Proposition 4.15]{Bi22} with small modifications.

\begin{prop}\label{prop:lc}
Let $d,r$ be natural numbers, $\epsilon$ be a positive real number and $\mathfrak R\subset[0,1]$ be a finite set of rational numbers. Then there exist a natural number $n$ depending on $d,\mathfrak R$ and a natural number $m$ depending on $d,r,\epsilon,\mathfrak R$ satisfying the following (note that $n$ is independent of $r,\epsilon$). Assume that 
\begin{itemize}
  \item $(X,\Gamma)\to Z,A$ is a $(d,r,\epsilon)$-Fano type fibration, 
  \item $(X,B)$ is a pair with $B\leq \Gamma$, and
  \item the coefficients of $B$ are in $\mathfrak R$.
\end{itemize}
Then for any closed point $z\in Z$, there is a $\Q$-divisor $\Lambda$ on $X$ such that 
\begin{itemize}
  \item $\Lambda \geq B$,
  \item $(X,\Lambda)$ is lc over $z$, and
  \item $n(K_X+\Lambda)\sim mf^*A$.
\end{itemize}
\end{prop}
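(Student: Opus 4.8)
The plan is to follow the argument of \cite[Proposition 4.15]{Bi22}, adjusting for the absence of the bigness hypothesis on $-(K_X+B)$ and for the desire to keep $n$ independent of $r,\epsilon$. First I would use \cite{BCHM} and the Fano type hypothesis to run an MMP: since $X$ is of Fano type over $Z$, I may replace $(X,\Gamma)$ by a dlt model and then reduce, via a minimal model program, to the situation where there is a non-klt center $S$ of a suitable auxiliary pair whose image in $Z$ is the prescribed point $z$. Concretely, I would first pass from $B$ to a boundary with coefficients close to $1$ along a chosen divisor over $z$, so as to produce a non-klt center over $z$; the existence of such a divisor over $z$ with controlled log discrepancy is where the $(d,r,\epsilon)$-Fano type fibration structure enters (through $A$ being very ample with bounded volume and $A-L$ ample, which bounds $L=$ the pushforward class).

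Next I would invoke Proposition \ref{prop:pp} with $M$ taken to be the pullback $f^*A$ (or an appropriate multiple): Proposition \ref{prop:pp} already gives, under the hypotheses ``$M$ is the pullback of an ample Cartier divisor'', ``$X$ of Fano type over $Z$'', ``$M-(K_X+B)$ nef'', and ``$S$ a non-klt center over $z$'', a $\Q$-divisor $\Lambda\ge B$ with $(X,\Lambda)$ lc over $z$ and $n(K_X+\Lambda)\sim (n+2)M$, with $n$ depending only on $d,p$ where $pB$ is integral — hence only on $d,\mathfrak R$. So the key point is to arrange the hypotheses of Proposition \ref{prop:pp}: the integrality of $pB$ comes from $\mathfrak R$ being finite rational; the nefness of $M-(K_X+B)=f^*A-(K_X+B)$ requires a boundedness statement, namely that $mf^*A-(K_X+B)$ is nef (indeed semiample) over nothing — i.e. globally nef — for some $m$ depending on $d,r,\epsilon,\mathfrak R$. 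This is exactly the role of the analogue of \cite[Proposition 4.2]{Bi22} (the commented-out statement in the excerpt): using that $A-L$ is ample and $K_X+\Gamma\sim_\R f^*L$, together with $B\le\Gamma$ and the $\epsilon$-lc, Fano type, bounded-volume hypotheses, one shows $mf^*A-(K_X+B)$ is globally nef for bounded $m$. Then I apply Proposition \ref{prop:pp} with $M=mf^*A$, absorbing the factor $(n+2)m$ into the $m$ of the present statement, so that $n$ depends only on $d,\mathfrak R$ while $m$ depends on $d,r,\epsilon,\mathfrak R$.

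Finally I would check that the non-klt center $S$ over $z$ can always be created without destroying any of the hypotheses: here I would use the $\epsilon$-lc property of $(X,\Gamma)$ to guarantee room to raise coefficients, and use a tie-breaking / perturbation argument (as in the proof of Proposition \ref{prop:pp} itself, via \cite[Lemma 2.7]{Bi21}) to replace the center by a divisorial one on a birational model. The crepant descent of the complement $\Lambda$ back down, and the verification that lc-ness over $z$ and the linear equivalence $n(K_X+\Lambda)\sim mf^*A$ are preserved under these birational modifications, is routine given Lemma \ref{lem:preserve}.

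\medskip

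The main obstacle I expect is the effective global nefness input — the analogue of \cite[Proposition 4.2]{Bi22} — i.e. bounding $m$ so that $mf^*A-(K_X+B)$ is nef. This requires controlling the ``horizontal'' positivity of $-(K_X+B)$ over $Z$ uniformly in the family, which is where the boundedness machinery for $(d,r,\epsilon)$-Fano type fibrations (effective birationality, bounds on volumes of fibers, etc.) is genuinely needed; everything else is a matter of correctly bookkeeping the dependence of constants and of transporting the non-klt center and the complement through the birational reductions.
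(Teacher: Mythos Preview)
Your overall strategy---create a non-klt center over $z$, then invoke Proposition~\ref{prop:pp} with $M$ a bounded multiple of $f^*A$, using \cite[Proposition~4.2]{Bi22} for the global nefness---matches the paper. But there is a genuine gap in how you produce the non-klt center, and it is not where you think the difficulty lies.

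Suppose you extract a divisor $T$ over $X$ whose center maps to $z$ and set its coefficient to $1$ in an auxiliary boundary $\hat\Theta$. To apply Proposition~\ref{prop:pp} on the model $X'$ obtained by running an MMP on $-(K_{\hat X}+\hat\Theta)$ over $Z$, you need $(X',\Theta')$ to be \emph{lc}. Nothing in your outline guarantees this: raising the coefficient of $T$ from $1-a(T,X,\Gamma)$ to $1$ can make the pair worse than lc along other divisors. The paper closes this gap with the ACC for lc thresholds \cite[Theorem~1.1]{HMX14}: one first fixes $e>0$, depending only on $d,\mathfrak R$, with the property that if $(Y,C+(1-e)D)$ is lc with coefficients of $C$ in $\mathfrak R$ then so is $(Y,C+D)$. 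Then one adds to $\Gamma$ a suitable average of general members of $|f^*A|$ passing through $f^{-1}z$, takes the $\epsilon'$-lc threshold with $\epsilon'=\tfrac12\min\{e,\epsilon\}$, and extracts a divisor $T$ with $a(T,X,\Gamma)=\epsilon'$. After the MMP one has $\Theta'-\epsilon' T'\le\Gamma'$, so $(X',\Theta'-\epsilon' T')$ is klt; since $\epsilon'\le e$, the ACC choice forces $(X',\Theta')$ to be lc. This same step is what keeps the coefficients of $\Theta'$ in $\mathfrak R\cup\{1\}$, hence keeps $n$ from Proposition~\ref{prop:pp} depending only on $d,\mathfrak R$.

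Two smaller corrections. First, the $(d,r,\epsilon)$-Fano fibration structure is \emph{not} what produces $T$: the divisor arises purely from adding general sections of $f^*A$ through the fiber and taking a threshold, which works for any very ample $A$. The bounded volume and the ampleness of $A-L$ enter only through \cite[Proposition~4.2]{Bi22}, as you correctly identify. Second, contrary to your closing paragraph, that global nefness step is the routine part; the ACC argument above is the missing idea.
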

\begin{proof}

When $\dim Z=0$, the proposition follows from \cite[Theorem 1.7]{Bi19} (we first run an MMP on $-(K_X+B)$ to make $-(K_X+B)$ nef). So we may suppose that $\dim Z>0$.

Let $V_z$ be the sub-linear system of $|f^*A|$ consisting of elements containing the fiber $f^{-1}{z}$. Then $V_z$ is base point free outside the fiber $f^{-1}z$ as $A$ is very ample. Let $p$ be a natural number such that $\frac{1}{p}<1-\epsilon$ (we may assume that $\epsilon<1$). Take distinct general elements $M_1,\cdots,M_{p(d+1)}$ in $V_z$  and let
$$M=\frac{1}{p}(M_1+\cdots+M_{p(d+1)}).$$
Then $(X,\Gamma+M)$ is $\epsilon$-lc outside $f^{-1} z$ and is not lc at any point of $f^{-1} z$ by \cite[Theorem 18.22]{Ko92}.

By the ACC for lc thresholds \cite[Theorem 1.1]{HMX14}, there is a positive real number $e$ depending only on $d, \mathfrak R$ satisfying the following: for any pair $(Y,C)$ of dimension $d$ with the coefficients of $C$ in $\mathfrak R$ and for any prime divisor $D$ on $Y$ , if $(Y,C+(1-e)D)$ is lc, then $(Y,C+D)$ is lc.

Let $\epsilon'=\frac{1}{2}\min\{e,\epsilon\}$ and  let $u$ be the largest number such that $(X,\Gamma+uM)$ is $\epsilon'$-lc. Then there is a prime divisor $T$ over $X$ such that $a(T,X,\Gamma+uM)=\epsilon'$. Since $(X,\Gamma+M)$ is not lc near $f^{-1}{z}$, we have $u<1$. As $(X,\Gamma+uM)$ is $\epsilon$-lc outside $f^{-1}z$ and as $\epsilon'< \epsilon$, the center of $T$ on $X$ is contained in $f^{-1}z$. 
Note that 
$$K_X+\Gamma +uM\sim_{\R}f^*(L+u(d+1)A).$$
Replacing $\epsilon$ with $\epsilon'$ and replacing $\Gamma$ with $\Gamma+uM$ (and replacing $A,r$ with $(d+2)A$, $(d+2)^dr$ respectively),
we can assume that $\epsilon\leq e$ and that there exists a prime divisor $T$ over $X$ mapped to $z$ with $a(T,X,\Gamma)=\epsilon$. 

Let $\pi:\hat{X}\to X$ be a $\Q$-factorialisation extracting $T$ but no other divisors (if $T$ is not exceptional over $X$, then $\pi$ is a small $\Q$-factorialisation). Write $K_{\hat{X}}+\hat{\Gamma}=\pi^*(K_X+\Gamma)$. Let $\hat{\Theta}$ be the same as $\pi^*B$ except that we replace the coefficient of $T$ by 1. Then the coefficients of $\hat{\Theta}$ are in $\mathfrak R$ (we may suppose that $1\in\mathfrak R$).
By construction, $\hat{\Gamma}-\hat{\Theta}\geq -\epsilon T$, so
$$-(K_{\hat X}+\hat{\Theta})\geq -(K_{\hat X}+\hat{\Gamma})-\epsilon T.$$
Thus $-(K_{\hat X}+\hat{\Theta})$ is pesudo-effective over $Z$ as $K_{\hat X}+\hat{\Gamma}\sim_{\R} 0/Z$ and as $T$ is vertical over $Z$. Run an MMP on $-(K_{\hat{X}}+\hat{\Theta})$ over $Z$ and let $X'$ be the resulting model with the induced morphism $f':X'\to Z$. Denote the pushdowns of $\hat{\Gamma},\hat{\Theta},T$ to $X'$ by $\Gamma',\Theta',T'$. Then $-(K_{X'}+\Theta')$ is nef over $Z$. 
By construction, $\Gamma'-\Theta'\geq -\epsilon T'$, so $f'^*A+\Gamma'-\Theta'$ is pseudo-effective as $T$ is mapped to a closed point. By \cite[Proposition 4.2]{Bi22}, there is a natural number $l$ depending only on $d,r,\epsilon$ such that $lf'^*A-(K_{X'}+\Theta')$ is nef globally.

Since $(\hat{X},\hat{\Gamma})$ is $\epsilon$-lc and $K_{\hat{X}}+\hat{\Gamma}\sim_{\R}0/Z$, $(X',\Gamma')$ is also $\epsilon$-lc. Hence $(X',\Theta'-\epsilon T')$ is klt as $\Theta'-\epsilon T'\leq \Gamma'$. Since $\epsilon\leq e$, we deduce that $(X',\Theta')$ is lc. We claim that $T$ is not contracted over $X'$ (i.e. $T'\neq 0$). Indeed, if not, since $\hat{X}\dashrightarrow X'$ is an MMP on $-(K_{\hat{X}}+\hat{\Theta})$, we have
$$0=a(T,\hat{X},\hat{\Theta})> a(T,X',\Theta'),$$
which contradicts the fact that $(X',\Theta')$ is lc.

Applying Proposition \ref{prop:pp} (by taking $B=\Theta'$, $M=lf'^*A$ and $S=T'$), there exist $n\in \N$ depending only on $d,\mathfrak R$ and $\Lambda'\geq \Theta'$ such that $(X',\Lambda')$ is lc over $z$ and
$$n(K_{X'}+\Lambda')\sim (n+2)lf'^*A.$$
Since $\hat{X}\dashrightarrow X'$ is an MMP on $-(K_{\hat{X}}+\hat{\Theta})$, taking $K_{\hat{X}}+\hat{\Lambda}$ to be the crepant pullback of $K_{X'}+\Lambda'$ to $\hat{X}$, we get $\hat{\Lambda}\geq \hat{\Theta}$. Taking $\Lambda$ to be the pushdown of $\hat{\Lambda}$ to $X$, we get $\Lambda \geq B$. Moreover, $(X,\Lambda)$ is lc over $z$ and 
$$n(K_X+\Lambda)\sim (n+2)lf^*A.$$
Finally we take $m=(n+2)l$ and it depends only on $d,r,\epsilon,\mathfrak{R}$.
\end{proof}

The following theorem is similar to \cite[Theorem 1.7]{Bi22} but with three differences. First, we add the assumption that Conjecture \ref{conj:birkar1} holds. Second, we remove the condition that $-(K_X+B)$ is big over $Z$. Third, we make $n$ independent of $r$. The notation is also slightly different; $B,\Gamma$ in the following theorem correspond to $\Delta, B$ in \cite[Theorem 1.7]{Bi22} respectively. We follow the arguments in the proofs of \cite[Theorem 1.7 and Proposition 4.16]{Bi22} with some modifications.

\begin{thm}\label{thm:klt}
  Assume Conjecture \ref{conj:birkar1} holds when $\dim Z\leq \ell$ for some natural number $\ell$.
  Let $d,r$ be natural numbers, $\epsilon$ be a positive real number and $\mathfrak R\subset[0,1]$ be a finite set of rational numbers. Then there exist a natural number $n$ depending on $d,\epsilon,\mathfrak R$ and a natural number  $m$ depending on $d,r,\epsilon,\mathfrak R$ satisfying the following (note that $n$ is independent of $r$). Assume that 
  \begin{itemize}
    \item $(X,\Gamma)\to Z,A$ is a $(d,r,\epsilon)$-Fano type fibration with $\dim Z\leq \ell$, and 
    \item $(X,B)$ is a pair such that $B\leq \Gamma$ and the coefficients of $B$ are in $\mathfrak R$.
  \end{itemize}
  Then there is a $\Q$-divisor $\Lambda$ on $X$ such that 
  \begin{itemize}
    \item $\Lambda \geq B$,
    \item $(X,\Lambda)$ is klt, and
    \item $n(K_X+\Lambda)\sim mf^*A$.
  \end{itemize}
\end{thm}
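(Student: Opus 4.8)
The plan is to first produce an lc complement via Proposition~\ref{prop:lc} and then upgrade it to a klt one; the hypothesis that Conjecture~\ref{conj:birkar1} holds in dimension $\le\ell$ will be used only through Lemma~\ref{lem:use}, and it is precisely what makes the upgrade possible globally with a bound independent of $r$.

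First I would treat $\dim Z=0$ separately: then $X$ is of Fano type over a point and, after running an MMP on $-(K_X+B)$ to make it nef over $Z$, one needs a klt $\Q$-divisor $\Lambda\ge B$ with $n(K_X+\Lambda)\sim 0$, which is the existence of klt complements on $\epsilon$-lc Fano type varieties of bounded dimension; this follows by combining \cite[Theorem~1.7]{Bi19} with the boundedness of such varieties, and here the index picks up a dependence on $\epsilon$. So assume $\dim Z>0$, hence $0<\dim Z\le\ell$ and Lemma~\ref{lem:use} is available. Running the construction in the proof of Proposition~\ref{prop:lc} then yields a $\Q$-divisor $\Lambda_0\ge B$ with $(X,\Lambda_0)$ lc over $z$ and $n_0(K_X+\Lambda_0)\sim m_0 f^*A$, where $n_0$ depends on $d,\mathfrak R$ and $m_0$ on $d,r,\epsilon,\mathfrak R$; we are free to enlarge $m_0$ so that $m_0\ge n_0$.

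The divisor $\Lambda_0$ is in general only lc over $z$ and is not klt, and the task is to repair both defects. For the first, I would apply Lemma~\ref{lem:use} to the $\epsilon$-lc fibration $(X,\Gamma)\to Z$ --- note $-(K_X+\Gamma)\equiv 0/Z$ is nef over $Z$ --- to get $H\in|A|$ with $z\in\Supp H$ and $(X,\Gamma+tf^*H)$ lc globally, $t=t(d,\epsilon)$; carrying $tf^*H$ through the construction (it is harmless away from $f^{-1}z$, where $|f^*A|$ is base point free) one arranges $(X,\Lambda_0)$ to be lc everywhere, with non-klt locus contained in $f^{-1}z$. For the second, observe that $(X,B)$ is itself $\epsilon$-lc, since $B\le\Gamma$ and log discrepancy is monotone in the boundary. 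Since $A-L$ is ample and $m_0\ge n_0$, the class $\tfrac{m_0}{n_0}A-L$ is ample, and $-(K_X+B)$ is $\R$-linearly equivalent over $Z$ to the effective divisor $\Gamma-B$; hence one may pick a general effective $\R$-divisor $G\ge 0$ with $K_X+B+G\sim_\R\tfrac{m_0}{n_0}f^*A$ such that $(X,B+G)$ is $\tfrac\epsilon2$-lc over $z$ and klt elsewhere. For a small rational $s\in(0,1)$ set $\Lambda=(1-s)\Lambda_0+s(B+G)$. Since the log discrepancy of a prime divisor is affine in the boundary, $a(E,X,\Lambda)=(1-s)\,a(E,X,\Lambda_0)+s\,a(E,X,B+G)$ for every prime divisor $E$ over $X$, which is positive wherever $\Lambda_0$ is lc; so $(X,\Lambda)$ is klt, $\Lambda\ge B$, and $K_X+\Lambda\sim_\R\tfrac{m_0}{n_0}f^*A$. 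It remains to convert this $\R$-linear equivalence into an honest $\Q$-linear equivalence with bounded index: this is done, as in the proof of \cite[Proposition~4.16]{Bi22}, by perturbing $\Lambda$ to a $\Q$-divisor and invoking boundedness of the Fano type fibration to control denominators, producing $n$ depending only on $d,\epsilon,\mathfrak R$ (so independent of $r$) and $m$ depending on $d,r,\epsilon,\mathfrak R$ with $n(K_X+\Lambda)\sim m f^*A$.

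The main obstacle is the passage from ``lc over $z$'' to ``lc everywhere'' with a coefficient on $f^*H$ bounded in terms of $d,\epsilon$ only: this is exactly what Conjecture~\ref{conj:birkar1} provides through Lemma~\ref{lem:use}, and without it Proposition~\ref{prop:lc} controls the complement only over $z$, which is not enough for the affine-combination trick to yield a globally klt divisor. The remaining technical point, following \cite[Theorem~1.7 and Proposition~4.16]{Bi22}, is keeping the index $n$ rational and free of $r$, which must be tracked carefully through Proposition~\ref{prop:lc} and the perturbation step.
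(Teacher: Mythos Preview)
Your sketch has two genuine gaps, and together they skip exactly the hard part of the theorem.

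First, the affine-combination step needs $(X,\Lambda_0)$ lc \emph{everywhere}, not just over $z$. Proposition~\ref{prop:lc} gives no control on $\Lambda_0$ away from $f^{-1}z$: the divisor $n_0(\Lambda_0-B)$ is just some member of $|m_0f^*A-n_0(K_X+B)|$, and it can be arbitrarily singular elsewhere. Your phrase ``carrying $tf^*H$ through the construction'' does not repair this; adding $tf^*H$ to the boundary before running Proposition~\ref{prop:lc} still yields only an lc-over-$z$ statement, and Lemma~\ref{lem:use} says nothing about $\Lambda_0$ outside the fiber. If $(X,\Lambda_0)$ is not lc at some point, then for every $s\in(0,1)$ close to $0$ the pair $(X,(1-s)\Lambda_0+s(B+G))$ fails to be lc there; and taking $s$ close to $1$ destroys the integrality of $n_0\Lambda$, so there is no good choice of $s$.

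Second, and more seriously, the last paragraph (``perturbing $\Lambda$ to a $\Q$-divisor and invoking boundedness of the Fano type fibration'') does not produce $n$ independent of $r$. Boundedness of $(d,r,\epsilon)$-Fano type fibrations bounds Cartier indices only in terms of $d,r,\epsilon$; you would get $n=n(d,r,\epsilon,\mathfrak R)$, which is exactly what the theorem forbids. The paper's proof handles both issues at once by \emph{induction on $d$}: one restricts to a general $G\in|f^*A|$, obtaining a $(d-1,2^{d-1}r,\epsilon)$-Fano type fibration $(G,\Gamma_G)\to H$; the inductive hypothesis gives a klt complement on $G$ with index $p=p(d-1,\epsilon,\mathfrak R)$ independent of $r$; Kawamata--Viehweg vanishing then lifts a general member of the relevant linear system from $G$ to $X$, and inversion of adjunction shows the lifted $(X,B+R)$ is klt near $G$, hence its non-klt locus maps to finitely many points of $Z$. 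The klt-over-$z$ step is achieved not by convex combination but by first enlarging $B$ to $\widetilde B=B+\tfrac1e f^*P$ (with $P\ni z$ coming from Lemma~\ref{lem:use}), complementing $\widetilde B$, and then swapping $P$ for a general $Q\in|A|$. None of this induction-and-lifting mechanism appears in your proposal, and without it there is no route to an $r$-independent $n$.
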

\begin{proof}
\emph{Step 1.} When $\dim Z=0$,  $(X,B)$ is log bounded by \cite[Corollary 1.4]{Bi21}. Applying \cite[Lemma 2.4]{Bi19}, the Cartier index of $K_X+B$ depends only on $d,\epsilon,\mathfrak R$. 
By the effective base point freeness \cite{Ko93}, there is a natural number $n$ depending only on $d, \epsilon,\mathfrak R$ such that $-n(K_X+B)$ is base point free. Let $\Lambda=B+R$ where $nR$ is a general element of $|-n(K_X+B)|$. Then $n(K_X+\Lambda)\sim 0$ and $(X,\Lambda)$ is klt. So we may suppose that $\dim Z>0$.

\emph{Step 2.} It suffices to show that, for any closed point $z\in Z$, there is a boundary $\Lambda\geq B$ such that $(X,\Lambda)$ is klt over some neighbourhood $U_z$ of $z$ and $n(K_X+\Lambda)\sim mf^*A$, where $n$ only depends on $d,\epsilon,\mathfrak R$ and $m$ only depends on $d,r,\epsilon,\mathfrak R$. Indeed, if this is true, we can find finitely many closed points $z_1,\cdots,z_k$ in $Z$ such that the corresponding open sets $U_{z_i}$ cover $Z$. For each $z_i$, let $\Lambda_i$ be the corresponding boundary as above. Then
$$n(\Lambda_i-B)\in |mf^*A-n(K_X+B)|.$$
Therefore, if we let $\Lambda=B+R$ where $nR$ is a general element of $|mf^*A-n(K_X+B)|$, then $n(K_X+\Lambda)\sim mf^*A$ and $(X,\Lambda)$ is klt over each $U_{z_i}$. As the open sets $U_{z_i}$ cover $X$, it follows that $(X,\Lambda)$ is klt everywhere and we are done. From now on we fix  a  closed point $z\in Z$.

\emph{Step 3.} 
Let $V_z$ be the sub-linear system of $|A|$ consisting of elements containing $z$. Then $V_z$ satisfies conditions in Lemma \ref{lem:linearsystem}. 
By the assumption that Conjecture \ref{conj:birkar1} holds and by Corollary \ref{cor:idealdiv}, there is a natural number $e$ depending on $d,\epsilon$ such that $(X,\Gamma,f^*{\mathfrak m}_z^{2/e})$ is lc over $z$. It follows that $(X,\Gamma,f^*{\mathfrak m}_z^{2/e})$ is lc everywhere since $(X,\Gamma)$ is lc everywhere.
Applying Lemma \ref{lem:linearsystem}(3), there is an element $P\in V_z$ such that $(X,\Gamma+\frac{2}{e}f^*P)$ is lc.
It follows that  $(X,\Gamma+\frac{1}{e}f^*P)$ is $\frac{\epsilon}{2}$-lc. Moreover, we have 
$$K_X+\Gamma+\frac{1}{e} f^*P\sim_{\R} f^*(L+\frac{1}{e}A).$$
Replacing $\Gamma$ with $\Gamma+\frac{1}{e} f^*P$ and replacing $\epsilon$ with $\frac{\epsilon}{2}$ (and replacing $A,r$ with $2A,2^dr$ respectively), we may suppose that 
$$\Gamma\geq \widetilde{B}:=B+\frac{1}{e}f^*P.$$ 
Since $f^*P$ is integral and $e$ depends only on $d,\epsilon$, the coefficients of $\widetilde{B}$ belong to a finite set depending on $d,\epsilon,\mathfrak R$, so expanding $\mathfrak R$ we can assume they  belong to $\mathfrak R$.

\emph{Step 4.} In this step, we reduce the theorem to the existence of a special lc complement. Assume that there are $n\in \N$ depending on $d,\epsilon,\mathfrak R$ and $m\in \N$ depending on $d,r,\epsilon,\mathfrak R$ and there is a $\Q$-divisor $\Lambda$ on $X$ such that
\begin{enumerate}
  \item $\Lambda\geq \widetilde{B}$ 
  \item $(X,\Lambda)$ is lc over $z$,
  \item the non-klt locus of $(X,\Lambda)$ is mapped to a finite set of closed points of $Z$, and 
  \item $n(K_X+\Lambda)\sim mf^*A$.
\end{enumerate}
Let $Q$ be a general element of $|A|$ and let 
$$\Lambda':=\Lambda-\frac{1}{e}f^*P+\frac{1}{e}f^*Q.$$
By (2), any non-klt center of $(X,\Lambda)$ intersecting $f^{-1}z$ is contained in $f^{-1}z$. Thus $(X,\Lambda')$ is klt over $z$ since $f^*P$ contains $f^{-1}{z}$. Moreover, $\Lambda'\geq B$ and after replacing $n,m$ with $en,em$, we have 
$$n(K_X+\Lambda')=n(K_X+\Lambda-\frac{1}{e}f^*P+\frac{1}{e}f^*Q)\sim n(K_X+\Lambda)\sim mf^*A.$$
Therefore, it suffices to find $n,m,\Lambda$ satisfying (1)-(4). From now on we replace $B$ with $\widetilde{B}$.

\emph{Step 5.} After running an MMP on $-(K_X+B)$ over $Z$, we may suppose that $-(K_X+B)$ is nef over $Z$. Applying \cite[Proposition 4.2]{Bi22}, there is $l\in \N$ depending on $d,r,\epsilon$ such that $lf^*A-(K_X+B)$ is nef globally. By Proposition \ref{prop:lc}, there exist $n\in \N$ depending on $d,\mathfrak R$ and  $m\in \N$ depending on $d,\mathfrak R,\epsilon,r$ and there is a $\Q$-divisor $\Lambda\geq B$ such that $(X,\Lambda)$ is lc over $z$ and $n(K_X+\Lambda)\sim mf^*A$. Then 
$$n(\Lambda-B)\in |mf^*A-n(K_X+B)|.$$
Multiplying $n,m$ by a bounded natural number depending on $\mathfrak R$, we may suppose that $nB$ is integral.

By adding a general element of $|(l+2)f^*A|$ to $\Lambda$ and replacing $m$ by $m+2n+nl$ to preserve $n(K_X+\Lambda)\sim mf^*A$, we may assume that $m-3\geq nl$. Hence
$$(m-3)f^*A-n(K_X+B)$$
is nef.

\emph{Step 6. In this step we show that there is a boundary $\Delta$ on $X$ such that $(X,\Delta)$ is klt and $2f^*A-(K_X+\Delta)$ is ample.} Since $X$ is of Fano type over $Z$, there is a klt pair $(X,\Delta)$ such that $-(K_X+\Delta)$ is ample over $Z$.
Then there exists a real number $\alpha>0$ such that $\alpha f^*A-(K_X+\Delta)$ is ample. It follows that
\begin{align*}
&(1-t+t\alpha)f^*A-(K_X+(1-t)\Gamma+t\Delta)\\
=&(1-t)\big(f^*A-(K_X+\Gamma)\big)+t\big(\alpha f^*A-(K_X+\Delta)\big)
\end{align*}
is ample for any $t\in(0,1)$. So replacing $\Delta$ with $(1-t)\Gamma+t\Delta$ for some sufficiently small $t>0$, we can replace $\alpha$ with some number in $(0,2)$. Therefore 
$$2f^*A-(K_X+\Delta)$$ is ample. 

\emph{Step 7.} Let $H$ be a general element of $|A|$ and let $G=f^*H$. Denote by $g$ the induced morphism $G\to H$. Then 
\begin{align*}
&mf^*A-n(K_X+B)-G\\
\sim ~ & K_X+\Delta+\big(2f^*A-(K_X+\Delta)\big)+\big((m-3)f^*A-n(K_X+B)\big).
\end{align*}
Hence 
$$H^1(X, mf^*A-n(K_X+B)-G)=0$$
by the Kawamata-Viehweg vanishing theorem, which implies that the restriction map
$$H^0\big(X,mf^*A-n(K_X+B)\big)\to H^0\big(G,(mf^*A-n(K_X+B))|_G\big)$$
is surjective. Note that for any integral $\Q$-Cartier divisor $D$ on $X$, by \cite[Lemma 2.42]{Bi19} and by the choice of $G$ we have 
$$\mathcal O_X(D)\otimes \mathcal O_G\cong \mathcal O_G(D|_G),$$
which is used to obtain the above surjectivity (see \cite[2.41]{Bi19}).

\emph{Step 8.} Define 
$$K_G+\Gamma_G=(K_X+\Gamma+G)|_G$$
and
$$K_G+B_G=(K_X+B+G)|_G.$$
By the choice of $G$, we have $\Gamma_G=\Gamma|_G$ and $B_G=B|_G$. Then
\begin{itemize}
  \item $(G,\Gamma_G)$ is $\epsilon$-lc as $(X,\Gamma)$ is so,
  \item $-K_G$ is big over $H$ as $-K_G=-(K_X+G)|_G\sim -K_X|_G$ over $H$,
  \item $K_G+\Gamma_G\sim_{\R} (f^*L+G)|_G\sim_{\R} g^*(L+A)|_H$, and
  \item $2A|_H-(L+A)|_H$ is ample as $A-L$ is ample.
\end{itemize}
Thus $g:(G,\Gamma_G)\to H,2A|_H$ is a $(d-1,2^{d-1}r,\epsilon)$-Fano type fibration. Moreover, $B_G\leq \Gamma_G$ and the coefficients of $B_G$ are in $\mathfrak R$. Since $\dim G=d-1$, by induction there exist $p\in \N$ depending on $d,\epsilon,\mathfrak R,$ and $q\in \N$ depending on $d,r,\epsilon,\mathfrak R$ and there is a $\Q$-divisor $\Lambda'_G\geq B_G$ such that $(G,\Lambda'_G)$ is klt and 
$$p(K_G+\Lambda'_G)\sim q g^*A|_H.$$
Replacing both $n$, $p$ with $np$ and replacing $m$, $q$ with $mp$, $nq$ respectively, we may suppose that $n=p$. If $q<m+n$, we increase $q$ to $m+n$ by adding $\frac{1}{p}D_G$ to $\Lambda'_G$ where $D_G$ is a general element of $|(m+n-q)g^*A|_H|$; if $q>m+n$, we increase $m$ to $q-n$ by adding $\frac{1}{n}D$ to $\Lambda$ where $D$ is a general element of $|(q-n-m)f^*A|$. So we may suppose that $q=m+n$. Thus now we have
$$n(K_G+\Lambda'_G)\sim (m+n)g^*A|_H.$$
Note that in this process, the inequality $m-3\geq nl$ in Step 5 is preserved. Hence the surjectivity in Step 7 still holds.

\emph{Step 9.} By construction,
$$n(\Lambda'_G-B_G)\in |(m+n)g^*A|_H-n(K_G+B_G)|$$
and the pair $(G,B_G+\Lambda'_G-B_G)$
is klt. Therefore, if $nR_G$ is a general element of
$$|(m+n)g^*A|_H-n(K_G+B_G)|,$$
the pair $(G,B_G+R_G)$ is klt. On the other hand,
\begin{align*}
  (m+n)g^*A|_H-n(K_G+B_G)&=\big((m+n)f^*A-n(K_X+B+G)\big)|_G\\
  &\sim\big(mf^*A-n(K_X+B)\big)|_G.
\end{align*}
Let $nR$ be  a general element  of $|mf^*A-n(K_X+B)|$. By the surjectivity in Step 7, it restricts to a general element 
$$nR_G\in|(m+n)g^*A|_H-n(K_G+B_G)|.$$
Note that
$$K_G+B_G+R_G=(K_X+B+R+G)|_G,$$
by inversion of adjunction \cite[Theorem 5.50]{KM98}, we deduce that $(X,B+R+G)$ is plt near $G$, which implies that $(X,B+R)$ is klt near $G$. Since $G$ is an ample divisor on $Z$, the non-klt locus of $(X,B+R)$ is mapped to  a finite set of closed points of $Z$. Note that $(X,B+R)$ is lc over $z$ because $(X,\Lambda)$ is lc over $z$, $n(\Lambda -B)\in |mf^*A-n(K_X+B)|$ and $nR$ is a general element of $|mf^*A-n(K_X+B)|$. Replacing $\Lambda$ by $B+R$, the conditions in Step 4 are satisfied and we can finish the proof.
\end{proof}

\begin{lem}\label{lem:projective}
Let $f:X\to Z$ be a contraction such that $X$ is of Fano type over $Z$ and $X$ is $\Q$-factorial. Let $(X,B)$ be an $\epsilon$-lc pair with $0< \epsilon<1$ such that $-(K_X+B)$ is nef over $Z$ and the coefficients of $B$ are in a set $\mathfrak R$ of rational numbers. Then there exist a pair $(\bar{X},\bar{B})$ and a contraction $\bar{f}:\bar{X}\to \bar{Z}$ such that
\begin{itemize}
  \item $\bar{X}, \bar{Z}$ are projective,
  \item $\bar{X}$ is $\Q$-factorial,
  \item $\bar{X}$ is of Fano type over $\bar{Z}$
  \item $(\bar{X},\bar{B})$ is $\epsilon$-lc, 
  \item the coefficients of $\bar{B}$ are in $\mathfrak R$,
  \item $-(K_{\bar{X}}+\bar{B})$ is nef over $\bar{Z}$,
  \item $Z$ is an open subset of $\bar{Z}$, and 
  \item $(X,B)=(\bar{X},\bar{B})\times_{\bar{Z}} Z$.
\end{itemize}
\end{lem}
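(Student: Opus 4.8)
The plan is to compactify in stages: first the base, then the total space so as to keep it of Fano type over the base, then fix up the boundary divisor and the singularities over the newly added fibres. First I would choose a projective variety $\bar Z$ containing $Z$ as a dense open subset (this exists since $Z$ is quasi-projective). Since $f\colon X\to Z$ is projective, $X$ is a closed subscheme of some $\PP^N_Z$, so the normalisation of the closure of $X$ in $\PP^N_{\bar Z}$ is a normal projective $\bar X_1$ with a projective morphism $\bar X_1\to\bar Z$ and $\bar X_1\times_{\bar Z}Z\cong X$; however $\bar X_1$ need be neither $\Q$-factorial nor of Fano type over $\bar Z$. So instead I would build $\bar X_0$ from the Fano type structure: fix a klt $(X,C)$ with $-(K_X+C)$ ample over $Z$, so that $X$ is the relative $\Proj$ over $Z$ of the finitely generated algebra $\bigoplus_m f_*\mathcal O_X(\lfloor -m(K_X+C)\rfloor)$; extending this to a finitely generated graded $\mathcal O_{\bar Z}$-algebra restricting to it over $Z$, taking relative $\Proj$ and normalising, and — with some care to the behaviour over the boundary (shrinking the ample contribution and adding a small effective divisor supported over $\bar Z\setminus Z$ to $C$) — I obtain $\bar X_0\to\bar Z$ projective with $\bar X_0\times_{\bar Z}Z\cong X$ and $\bar X_0$ of Fano type over $\bar Z$. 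Replacing $\bar X_0$ by a small $\Q$-factorialisation (which exists because $\bar X_0$ is of klt type over $\bar Z$, and which is an isomorphism over the $\Q$-factorial variety $X$), I may assume $\bar X_0$ is $\Q$-factorial.

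Next I would set $\bar B_0$ to be the closure of $B$ in $\bar X_0$; its coefficients are exactly those of $B$, hence lie in $\mathfrak R$ and, since $(X,B)$ is $\epsilon$-lc, are all $\le 1-\epsilon$. I would then run an MMP on $-(K_{\bar X_0}+\bar B_0)$ over $\bar Z$, which is legitimate since $\bar X_0$ is of Fano type over $\bar Z$. The key observation is that this MMP is an isomorphism over $Z$: each step acts on a $(K_{\bar X_0}+\bar B_0)$-positive extremal ray, while over $Z$ the divisor $-(K_X+B)$ is nef over $Z$, so no curve in such a ray can lie over a point of $Z$; by induction the whole MMP is an isomorphism over $Z$, whence $\bar X\times_{\bar Z}Z\cong X$ for the output $\bar X$. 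Moreover the MMP cannot terminate with a Mori fibre space, since that fibration would again be an isomorphism over $Z$ by the same nefness argument, forcing its base to have dimension $\dim X$, which is impossible. So we reach a minimal model, i.e. $-(K_{\bar X}+\bar B)$ is nef — in fact semiample, by the base point free theorem together with the Fano type property — over $\bar Z$, where $\bar B$ is the pushforward of $\bar B_0$; also $\bar X$ is $\Q$-factorial and of Fano type over $\bar Z$, $\bar B$ has coefficients in $\mathfrak R$, and $\bar B|_X=B$.

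What remains, and where the real difficulty lies, is that $(\bar X,\bar B)$ has so far been shown $\epsilon$-lc only over $Z$: the MMP above is run against $K+\bar B$, so over the added fibres $\bar f^{-1}(\bar Z\setminus Z)$ it can lower log discrepancies and destroy the $\epsilon$-lc bound. To control this I would arrange the model over the boundary before running the MMP: replace $\bar X_0$ by a higher model — still an isomorphism over $X$, hence still $\Q$-factorial — that is a log resolution of $(\bar X_0,\bar B_0)$ over the open set $\bar Z\setminus Z$; there $(\bar X_0,\bar B_0)$ becomes log smooth with boundary coefficients $\le 1-\epsilon$, hence $\epsilon$-lc, so $(\bar X_0,\bar B_0)$ is $\epsilon$-lc everywhere. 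One then has to recover the Fano type property and the nefness of $-(K_{\bar X}+\bar B)$ over $\bar Z$ by a relative MMP organised so as not to spoil the $\epsilon$-lc property: using that $-(K_X+B)$ is semiample over $Z$ with relative ample model $\phi\colon X\to V/Z$, over which $K_X+B\sim_{\R}0/V$ — so that an MMP on $K_{\bar X_0}+\bar B_0$ over a fixed compactification of $V$ is a genuine (discrepancy-non-decreasing) MMP and is trivial over $V$ — one runs this, followed by a further MMP over $\bar Z$; alternatively, one reduces the estimate over the boundary to a lower-dimensional case of the present statement for the generalised pair $(V,B_V+\bM)$ produced by the canonical bundle formula for $(X,B)\to V$, which is $\delta$-lc for some $\delta=\delta(d,\epsilon)$ by \cite[Theorem 1.1]{Bi23}, and concludes by induction on $\dim X$. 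Verifying that the singularity bound and the nefness can be achieved simultaneously over the (necessarily bounded) boundary fibres is the heart of the argument.
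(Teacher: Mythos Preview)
Your outline has the right large-scale shape---compactify, restore the Fano type property, then run an anti-canonical MMP that is an isomorphism over $Z$---and you correctly identify the real obstruction: the MMP on $-(K_{\bar X_0}+\bar B_0)$ can decrease log discrepancies over $\bar Z\setminus Z$, so $\epsilon$-lc is not automatic. But the remedy you propose in the last paragraph is not complete. Passing to a log resolution over the boundary makes $(\bar X_0,\bar B_0)$ $\epsilon$-lc, but it destroys Fano type; and running an MMP on $K_{\bar X_0}+\bar B_0$ over a compactification of the ample model $V$ of $-(K_X+B)$ gives you $K+\bar B$ nef over $\bar V$, not $-(K+\bar B)$ nef over $\bar Z$---the two are only compatible because $K+B\sim_{\Q}0/V$, but over the added fibres there is no such relation, so the ``further MMP over $\bar Z$'' you allude to is again an anti-canonical MMP with no control on discrepancies. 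You end by conceding that ``verifying that the singularity bound and the nefness can be achieved simultaneously \ldots\ is the heart of the argument''; that is the gap.

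The paper closes it with a single device you did not use: an auxiliary boundary $\Gamma\ge B$ with $(X,\Gamma)$ $\epsilon$-lc and $K_X+\Gamma\sim_{\Q}0/Z$ (which exists since $-(K_X+B)$ is semiample over $Z$). One takes an arbitrary projective compactification $X^c\to Z^c$, a log resolution $W^c$ of $(X^c,\Gamma^c\cup (f^c)^{-1}(Z^c\setminus Z))$, and runs the log canonical model / MMP for $K_{W^c}+\Gamma_W^c$ over $Z^c$. Because this is an MMP in the \emph{positive} direction, discrepancies do not drop, so $\epsilon$-lc persists; because $K_X+\Gamma\sim_{\Q}0/Z$, the whole process is an isomorphism over $Z$ and ends with $K_{\bar X}+\bar\Gamma\sim_{\Q}0/\bar Z$, which immediately gives Fano type over $\bar Z$. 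Only then does one run the MMP on $-(K_{\bar X}+\bar B)$ over $\bar Z$: since $\bar B\le\bar\Gamma$ and $K_{\bar X}+\bar\Gamma\sim_{\Q}0/\bar Z$, this MMP is \emph{crepant} for $(\bar X,\bar\Gamma)$, so $(X',\Gamma')$ stays $\epsilon$-lc, and hence so does $(X',B')$. This single trick simultaneously handles both issues you flagged---the Fano type extension (no need for your Proj construction, which as written is also not justified) and the preservation of $\epsilon$-lc through the final anti-canonical MMP.
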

\begin{proof}
As $X$ is of Fano type over $Z$ and as $-(K_X+B)$ is nef over $Z$, $-(K_X+B)$ is semi-ample over $Z$. Thus there is a $\Q$-boundary $\Gamma\geq B$ such that $(X,\Gamma)$ is $\epsilon$-lc and $K_X+\Gamma\sim_{\Q} 0/Z$. 

Since $X,Z$ are assumed to be quasi-projective, we can find compactifications $X\subset X^{c}$ and $Z\subset Z^{c}$ such that $X^c,Z^c$ are projective and the morphism $f:X\to Z$ can be extended to $f^c:X^c\to Z^c$. Let $\Gamma^c$ be the closure of $\Gamma$ in $X^c$ and let $\pi^c:W^c\to X^c$ be a log resolution of $(X^c,\Gamma^c\cup (f^c)^{-1}(Z^c\setminus Z))$. Denote $W=(\pi^c)^{-1}(X)$ and $\pi=\pi^c\mid_W$. Write
$$K_W+\Gamma_W-F_W=\pi^*(K_X+\Gamma)$$
where $\Gamma_W,F_W$ are effective $\Q$-divisors without common components.  Let $\Gamma_W^c$ be the closure of $\Gamma_W$ in $W^c$. Then $(W^c,\Gamma_W^c)$ is an $\epsilon$-lc pair. By \cite{BCHM}, $(W^c,\Gamma_W^c)$ has a log canonical model over $X^c$, which we denote by $(\hat{X},\hat{\Gamma})$. Then $(\hat{X},\hat{\Gamma})$ is an $\epsilon$-lc pair and $(\hat{X},\hat{\Gamma})\times_{Z^{c}} Z=(X,\Gamma)$. Let $\widetilde{X}$ be a small $\Q$-factorialization of $\hat{X}$. Since $X$ is $\Q$-factorial, we have $\widetilde{X}\times_{Z^c} Z=X$. Replacing $\hat{X}$ with $\widetilde{X}$, we may assume that $\hat{X}$ is $\Q$-factorial.
  
By \cite[Theorem 5.2]{Bi12}, we can run an MMP on $K_{\hat{X}}+\hat{\Gamma}$ over $Z^c$ and the MMP ends with a good minimal model over $Z^c$, which we denote by $(\bar{X},\bar{\Gamma})$. Then $K_{\bar{X}}+\bar{\Gamma}$ is semi-ample over $Z^c$. Moreover, $(\bar{X},\bar{\Gamma})$ is $\epsilon$-lc and $(\bar{X},\bar{\Gamma})\times_{Z^{c}} Z=(X,\Gamma)$. Let $\bar{f}:\bar{X}\to \bar{Z}/Z^c$ be the contraction induced by $K_{\bar{X}}+\bar{\Gamma}$ over $Z^c$. Then $\bar{Z}\times_{Z^c} Z=Z$ and the restriction of $\bar{f}$ over $Z$ coincides with $f:X\to Z$. 
By now we have constructed a contraction $\bar{X}\to \bar{Z}$ between projective varieties and an $\epsilon$-lc pair $(\bar{X},\bar{\Gamma})$ such that $\bar{X}$ is $\Q$-factorial, $Z$ is an open subset of $\bar{Z}$, $K_{\bar{X}}+\bar{\Gamma}\sim_{\Q}0/\bar{Z}$ and $(\bar{X},\bar{\Gamma})\times_{\bar{Z}} Z=(X,\Gamma)$. As $-K_X$ is big over $Z$, $-K_{\bar{X}}$ is also big over $\bar{Z}$, which implies that $\bar{X}$ is of Fano type over $\bar{Z}$.

Denote by $\bar{B}$ the closure of $B$ in $\bar{X}$. Then the coefficients of $\bar{B}$ are in $\mathfrak R$ and $\bar{B}\leq \bar{\Gamma}$. Run an MMP on $-(K_{\bar{X}}+\bar{B})$ over $\bar{Z}$ and let $X'$ be the resulting model. Then $X'\times_{\bar{Z}} Z=X$ as $-(K_X+B)$ is nef over $Z$. Denote by $B',\Gamma'$ the pushdowns of $B,\Gamma$ to $X'$. Then $-(K_{X'}+B')$ is nef over $\bar{Z}$. Moreover, $(X',\Gamma')$ is $\epsilon$-lc as $(\bar{X},\bar{\Gamma})$ is $\epsilon$-lc  and as $K_{\bar{X}}+\bar{\Gamma}\sim_{\Q} 0/\bar{Z}$, which implies that $(X',B')$ is also $\epsilon$-lc. Finally we replace $(\bar{X},\bar{B})$ by $(X',B')$.
\end{proof}

\begin{thm}\label{thm:tech}
  Assume Conjecture \ref{conj:birkar1} holds when $\dim Z\leq \ell$ for some natural number $\ell$. Let $d$ be a natural number, $\epsilon$ be a positive real number and $\mathfrak R\subset [0,1]$ be a finite set of rational numbers. Then there is a natural number $n$ depending only on $d,\epsilon,\mathfrak R$ satisfying the following. Assume $(X,B)$ is a pair and $X\to Z$ is a contraction such that 
\begin{itemize}
\item $(X,B)$ is $\epsilon$-lc of dimension $d$ and $\dim Z\leq \ell$,
\item the coefficients of $B$ are in $\mathfrak R$,
\item $X$ is of Fano type over $Z$, and 
\item $-(K_X+B)$ is nef over $Z$.  
\end{itemize}
Then there is a $\Q$-divisor $\Lambda$ on $X$ such that 
  \begin{itemize}
    \item $\Lambda \geq B$,
    \item $(X,\Lambda)$ is klt, and
    \item $n(K_X+\Lambda)\sim 0/Z$.
  \end{itemize}
\end{thm}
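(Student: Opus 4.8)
The plan is to deduce the statement from Theorem~\ref{thm:klt} by passing to a suitable projective compactification of $X\to Z$ and exploiting that the bound $n$ there does not depend on $r$. First, since an $\epsilon$-lc pair is $\epsilon'$-lc for every $\epsilon'\le\epsilon$, I may shrink $\epsilon$ and assume $0<\epsilon<1$. Next I reduce to the case where $X$ is $\Q$-factorial: as $X$ is of Fano type over $Z$ it is klt, so it admits a small $\Q$-factorialisation $g\colon X'\to X$; letting $B'$ be the birational transform of $B$, the pair $(X',B')$ is again $\epsilon$-lc with coefficients in $\mathfrak R$, $X'$ is of Fano type over $Z$ by Lemma~\ref{lem:preserve}(3) (there are no $g$-exceptional divisors), and $-(K_{X'}+B')=-g^*(K_X+B)$ is nef over $Z$. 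If we produce on $X'$ a $\Q$-divisor $\Lambda'\ge B'$ with $(X',\Lambda')$ klt and $n(K_{X'}+\Lambda')$ linearly equivalent to the pullback of a Cartier divisor from $Z$, then pushing $\Lambda'$ down to $\Lambda$ on $X$ gives $n(K_X+\Lambda)$ linearly equivalent to that same Cartier pullback; in particular $n(K_X+\Lambda)$ is Cartier, so $K_X+\Lambda$ is $\Q$-Cartier, $(X,\Lambda)$ is crepant to $(X',\Lambda')$ over $X$ (they agree outside codimension $\ge2$), hence klt, and $n(K_X+\Lambda)\sim 0/Z$. Thus it suffices to treat the $\Q$-factorial case.

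Assuming now $X$ is $\Q$-factorial, I apply Lemma~\ref{lem:projective} to obtain a projective contraction $\bar f\colon\bar X\to\bar Z$ with $\bar X$ $\Q$-factorial and of Fano type over $\bar Z$, an $\epsilon$-lc pair $(\bar X,\bar B)$ with coefficients in $\mathfrak R$ and $-(K_{\bar X}+\bar B)$ nef over $\bar Z$, such that $Z\subset\bar Z$ is open and $(X,B)=(\bar X,\bar B)\times_{\bar Z}Z$. Since $\bar X$ is of Fano type over $\bar Z$, $-(K_{\bar X}+\bar B)$ is semi-ample over $\bar Z$, so there is a $\Q$-boundary $\bar\Gamma\ge\bar B$ with $(\bar X,\bar\Gamma)$ $\epsilon$-lc and $K_{\bar X}+\bar\Gamma\sim_{\Q}0/\bar Z$; consequently $K_{\bar X}+\bar\Gamma\sim_{\Q}\bar f^*L$ for some $\Q$-Cartier $L$ on $\bar Z$. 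Choosing a very ample divisor $A$ on $\bar Z$ with $A-L$ ample and setting $r=A^{\dim\bar Z}$, the data $(\bar X,\bar\Gamma)\to\bar Z,A$ form a $(d,r,\epsilon)$-Fano type fibration, while $(\bar X,\bar B)$ is a pair with $\bar B\le\bar\Gamma$ and coefficients in $\mathfrak R$.

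Now I invoke Theorem~\ref{thm:klt}, which is available because Conjecture~\ref{conj:birkar1} is assumed in dimension $\le\ell$ and $\dim\bar Z=\dim Z\le\ell$: it produces a natural number $n$ depending only on $d,\epsilon,\mathfrak R$ — crucially \emph{independent of $r$} — a natural number $m$, and a $\Q$-divisor $\bar\Lambda\ge\bar B$ with $(\bar X,\bar\Lambda)$ klt and $n(K_{\bar X}+\bar\Lambda)\sim m\bar f^*A$. Restricting to the open set $X=\bar f^{-1}(Z)$ and putting $\Lambda=\bar\Lambda|_X$, we obtain $\Lambda\ge B$, $(X,\Lambda)$ klt, and $n(K_X+\Lambda)\sim mf^*(A|_Z)$; since $A|_Z$ is Cartier on $Z$, the right-hand side is a pullback from $Z$, so $n(K_X+\Lambda)\sim 0/Z$. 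Combined with the descent in the first paragraph, this proves the theorem (and a fortiori the complement over any fixed closed point $z\in Z$ demanded in Conjecture~\ref{conj:shokurov}).

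The only genuinely nontrivial point at this stage is that the compactification $\bar Z$ and the very ample divisor $A$ — forced to be large so that $A-L$ is ample — are in no way bounded in terms of $d,\epsilon,\mathfrak R$, so $r$ is completely uncontrolled; the argument goes through precisely because Theorem~\ref{thm:klt} was arranged to make $n$ independent of $r$ (with only $m$, which is discarded after restriction to $Z$, absorbing the dependence on $r$). Everything else — the semi-ampleness used to manufacture $\bar\Gamma$, and the bookkeeping needed to descend the linear equivalence through the small $\Q$-factorialisation and the restriction to $Z\subset\bar Z$, including the verification that $K_X+\Lambda$ is $\Q$-Cartier downstairs — is routine.
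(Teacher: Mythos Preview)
Your proof is correct and follows essentially the same approach as the paper: reduce to the $\Q$-factorial case via a small $\Q$-factorialisation, compactify via Lemma~\ref{lem:projective}, use semi-ampleness of $-(K_{\bar X}+\bar B)$ to produce $\bar\Gamma$ and hence a $(d,r,\epsilon)$-Fano type fibration, and then apply Theorem~\ref{thm:klt}, exploiting that $n$ there is independent of $r$. The paper's proof is terser (it simply replaces $X,Z$ by their projective compactifications rather than keeping both and restricting at the end), but the logical content is identical, and you correctly isolate the key point that the uncontrolled $r$ only enters through $m$, which is discarded upon restriction to $Z$.
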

\begin{proof}
We may assume that $\epsilon<1$. Taking a small $\Q$-factorialization, we may assume that $X$ is $\Q$-factorial. Applying Lemma \ref{lem:projective}, we can assume that $X,Z$ are projective. As $X$ is of Fano type over $Z$ and as $-(K_X+B)$ is nef over $Z$,  $-(K_X+B)$ is semi-ample over $Z$. 
Thus there is a $\Q$-boundary $\Gamma\geq B$ such that $(X,\Gamma)$ is $\epsilon$-lc and $K_X+\Gamma\sim_{\Q} 0/Z$. That is, $K_X+\Gamma\sim_{\Q}f^*L$ for some $\Q$-Cartier $\Q$-divisor $L$ on $Z$. Let $A$ be a very ample divisor on $Z$ such that $A-L$ is ample and denote $r=A^{\dim Z}$ (note that $r$ may not depend on $d,\epsilon, \mathfrak R$). 
Then $f:(X,\Gamma)\to Z, A$ is a $(d,r,\epsilon)$-Fano type fibration.
By Theorem \ref{thm:klt}, there is $n\in \N$ depending on $d,\epsilon, \mathfrak R$ and $m\in \N$ depending on $d,r,\epsilon,\mathfrak R$ and there is a boundary $\Lambda\geq B$ such that $(X,\Lambda)$ is klt and $n(K_X+\Lambda)\sim mf^*A$. Thus $n(K_X+\Lambda)\sim 0/Z$.
\end{proof}

\begin{cor}\label{cor:3to1}
Assume Conjecture \ref{conj:birkar1} holds when $\dim Z\leq \ell$ for some natural number $\ell$. Then Conjecture \ref{conj:shokurov} holds when $\dim Z\leq \ell$.
\end{cor}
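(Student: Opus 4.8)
The plan is to read off Corollary \ref{cor:3to1} directly from Theorem \ref{thm:tech}, which is already a (slightly stronger, global) form of Conjecture \ref{conj:shokurov}. Suppose Conjecture \ref{conj:birkar1} holds whenever $\dim Z\leq \ell$. Fix $d$, $\epsilon$ and a finite set $\mathfrak R\subset[0,1]$ of rational numbers as in the statement of Conjecture \ref{conj:shokurov}, and let $n$ be the natural number --- depending only on $d,\epsilon,\mathfrak R$ --- supplied by Theorem \ref{thm:tech}. I claim this $n$ works for Conjecture \ref{conj:shokurov} when $\dim Z\leq \ell$.

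Indeed, let $(X,B)$, a contraction $f\colon X\to Z$, and a closed point $z\in Z$ be as in Conjecture \ref{conj:shokurov} with $\dim Z\leq \ell$. The four bulleted hypotheses there --- $(X,B)$ is $\epsilon$-lc of dimension $d$, the coefficients of $B$ lie in $\mathfrak R$, $X$ is of Fano type over $Z$, and $-(K_X+B)$ is nef over $Z$ --- are precisely the hypotheses of Theorem \ref{thm:tech}. (Note that Theorem \ref{thm:tech} does not assume $X$ or $Z$ projective, or $X$ $\Q$-factorial; those reductions are carried out inside its proof via Lemma \ref{lem:projective} and a small $\Q$-factorialization.) Hence Theorem \ref{thm:tech} produces a $\Q$-divisor $\Lambda\geq B$ on $X$ with $(X,\Lambda)$ klt and $n(K_X+\Lambda)\sim 0/Z$. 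Then $K_X+\Lambda$ is a monotonic klt $n$-complement of $K_X+B$ over $Z$; restricting to any neighbourhood of $z$, it is in particular a monotonic klt $n$-complement of $K_X+B$ over $z$, which is exactly the conclusion of Conjecture \ref{conj:shokurov}.

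In short, the content of this step is entirely formal: Theorem \ref{thm:tech} already gives a complement that is klt on all of $X$ and $n$-torsion over all of $Z$, which is strictly stronger than the ``over a neighbourhood of $z$'' conclusion demanded by Conjecture \ref{conj:shokurov}. Consequently there is essentially no obstacle here --- the real difficulty has been absorbed into Theorem \ref{thm:tech}, and through it into Theorem \ref{thm:klt}, Proposition \ref{prop:lc}, Proposition \ref{prop:pp}, Lemma \ref{lem:use} and Lemma \ref{lem:projective}. Together with Corollary \ref{cor:idealdiv} and the Propositions of the previous two sections, this completes the chain $(5)\Rightarrow(3)\Rightarrow(1)$ of Theorem \ref{thm:main}.
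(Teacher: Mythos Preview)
Your proposal is correct and matches the paper's own proof, which simply says ``It follows from Theorem \ref{thm:tech} directly.'' You have merely spelled out in detail why Theorem \ref{thm:tech} immediately yields the conclusion of Conjecture \ref{conj:shokurov}, which is exactly the intended argument.
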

\begin{proof}
It follows from Theorem \ref{thm:tech} directly.
\end{proof}

\section{Proof of (2) $\Rightarrow$  (4) in Theorem \ref{thm:main}}

\begin{prop}\label{prop:2to4}
Assume Conjecture \ref{conj:shokurov} holds when $X\to Z$ is birational, $B=0$ and $\dim Z\leq \ell$ for some natural number $\ell$. Then Conjecture \ref{conj:birkar1} holds when $X\to Z$ is birational, $B=0$ and $\dim Z\leq \ell$.
\end{prop}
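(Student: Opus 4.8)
\textbf{Reduction to an estimate on $X$.} The plan is to feed the situation into the assumed special case of Conjecture~\ref{conj:shokurov}, extract a complement of bounded index, and turn it into the required divisor. Write $\mathfrak m_z$ for the ideal sheaf of $z\in Z$. By Corollary~\ref{cor:idealdiv} it suffices to find $t=t(d,\epsilon)>0$ with $(X,0,(f^*\mathfrak m_z)^t)$ lc over $z$; equivalently, by Lemma~\ref{lem:linearsystem}(2), with $(X,tf^*H)$ lc over $z$ for a general element $H$ of a linear system $V_z$ as in Remark~\ref{rem}. Since $(X,0)$ is $\epsilon$-lc one has $a(E,X,0)\ge\epsilon$ for every prime divisor $E$ over $X$, so the task reduces to bounding $\mu_E(f^*\mathfrak m_z)$ from above, by a constant depending only on $d,\epsilon$, along a prime divisor $E$ over $X/Z\ni z$ that computes $\lct(X\ni z,0;f^*\mathfrak m_z)$.

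\textbf{A complement of bounded index.} With $B=0$, the data $(X,0)\to Z$ and $z$ satisfy the hypotheses of Conjecture~\ref{conj:shokurov} for $\mathfrak R=\{0\}$, so by the standing assumption I get $n=n(d,\epsilon)$ and a monotonic klt $n$-complement $K_X+\Lambda$ over $z$: near $z$, $\Lambda\ge 0$, $(X,\Lambda)$ is klt, and $n(K_X+\Lambda)\sim 0/Z$. Since $f$ is birational, after shrinking $Z$ around $z$ one can write $n(K_X+\Lambda)=f^*\divi(\psi)$ for a rational function $\psi$ on $Z$ with $\divi(\psi)=n(K_Z+\Lambda_Z)$ near $z$, where $\Lambda_Z=f_*\Lambda$. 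Pulling back to any model $\pi:W\to X$ carrying $E$, the divisor $n(K_W+\Lambda_W)=\divi(\psi\circ f\circ\pi)$ is integral, hence so is $n\Lambda_W$, and therefore $a(E,X,\Lambda)=1-\mu_E\Lambda_W\in 1+\tfrac1n\Z$. As $(X,\Lambda)$ is klt this forces $a(E,X,\Lambda)\ge\tfrac1n$ for every prime divisor $E$ over $X$ whose centre lies over a neighbourhood of $z$; i.e. $(X,\Lambda)$ is $\tfrac1n$-lc over $z$, and in particular $\mu_E(\Lambda)\le a(E,X,0)-\tfrac1n$ in the pulled-back sense, for the computing divisor $E$.

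\textbf{Making the estimate effective.} Here the Fano-type hypothesis must be used. Since $-K_X$ is nef and big over $Z$ and $X$ is of Fano type over $Z$, $-K_X$ is semiample over $Z$; I would replace $X$ by the birational model over $Z$ that $-K_X$ defines, which is crepant — hence preserves the $\epsilon$-lc property, the complement $\Lambda$, and the quantity $\lct(X\ni z,0;f^*\mathfrak m_z)$ — so that afterwards $-K_X$ is ample over $Z$. Now $\Lambda\ge 0$ is $\Q$-linearly equivalent over $Z$ to the ample$/Z$ divisor $-K_X$, the pair $(X,\Lambda)$ is $\tfrac1n$-lc over $z$, and $\Lambda=f^*(K_Z+\Lambda_Z)-K_X$. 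The idea is then to combine the bound $\mu_E(\Lambda)\le a(E,X,0)-\tfrac1n$ with the relative ampleness of $-K_X$: the latter rigidifies the exceptional geometry of $f$ over $z$ and controls the multiplicities of $f^*\mathfrak m_z$ along exceptional divisors by intersection numbers against $-K_X$, which in turn are bounded because the bounded-index, $\tfrac1n$-lc complement $\Lambda$ is bounded against $-K_X$. This should give $\mu_E(f^*\mathfrak m_z)\le N(d,\epsilon)$, hence $\lct(X\ni z,0;f^*\mathfrak m_z)\ge\epsilon/N(d,\epsilon)=:t(d,\epsilon)$, finishing the proof.

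I expect this last step to be the main obstacle: converting the \emph{existence} of a bounded-index klt complement into an \emph{effective} lower bound for $\lct(X\ni z,0;f^*\mathfrak m_z)$. A klt, or even $\tfrac1n$-lc, complement does not by itself bound the singularities of the base $(Z,\Lambda_Z)$ — were it otherwise, Conjecture~\ref{conj:birkar2} in this dimension would be immediate — so the argument must genuinely exploit that $X\to Z$ is a relatively Fano $\epsilon$-lc modification (i.e.\ $-K_X$ nef, or after the reduction ample, over $Z$) to rule out divisors along which $f^*\mathfrak m_z$ has large multiplicity relative to its log discrepancy.
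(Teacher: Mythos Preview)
Your first two steps are exactly right and match the paper: extract a monotonic klt $n$-complement $(X,\Lambda)$ over $z$ with $n=n(d,\epsilon)$, push it down to $(Z,\Lambda_Z)$ with $n(K_Z+\Lambda_Z)$ Cartier, and observe that $(Z,\Lambda_Z)$ is klt (in particular $\tfrac1n$-lc). The gap is precisely where you yourself locate it: your third step never actually bounds $\mu_E(f^*\mathfrak m_z)$. The sentence ``the relative ampleness of $-K_X$ rigidifies the exceptional geometry \dots\ and controls the multiplicities of $f^*\mathfrak m_z$ along exceptional divisors by intersection numbers against $-K_X$'' is not an argument; there is no mechanism here that converts intersection numbers with $-K_X$ into a bound on $\mu_E(f^*\mathfrak m_z)$, and as you note, the $\tfrac1n$-lc complement alone cannot do it.

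The missing idea is a \emph{second} application of complement theory, this time the boundedness of \emph{lc} complements. After obtaining $(Z,\Lambda_Z)$ as above, take a plt blow-up $\pi:Y\to Z$ of $(Z\ni z,\Lambda_Z)$ with exceptional divisor $E$ (Lemma~\ref{lem:has}); then $(Y,E+\Lambda_{Z,Y})$ is plt, $-(K_Y+E+\Lambda_{Z,Y})$ is ample over $Z$, $Y$ is of Fano type over $Z$, and crucially $n(E+\Lambda_{Z,Y})$ is integral. Now apply \cite[Theorem~1.8]{Bi19} to get $m=m(d,n)$ and an lc complement $\Omega_Y\ge E+\Lambda_{Z,Y}$ with $m(K_Y+\Omega_Y)\sim 0/Z$. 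Pushing down, $(Z,\Omega)$ is lc, $m(K_Z+\Omega)$ is Cartier, $\Omega\ge\Lambda_Z$, and $(Z,\Omega)$ is \emph{not} klt near $z$ (since $E$ is an lc place). Hence $H:=mn(\Omega-\Lambda_Z)$ is an effective Cartier divisor through $z$, and with $t=\tfrac1{mn}$ the pair $(Z,\Lambda_Z+tH)=(Z,\Omega)$ is lc near $z$; pulling back through $f$ gives $(X,tf^*H)$ lc over $z$. So the divisor $H$ is constructed \emph{directly} as the difference of an lc and a klt complement of bounded index, rather than by estimating $\lct$ of the maximal ideal.
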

\begin{proof}
Let $f:X\to Z$ be a birational contraction with $\dim X=d$ and $\dim Z\leq \ell$ such that $X$ is of Fano type over $Z$, $X$ is $\epsilon$-lc and $-K_X$ is nef over $Z$. Let $z\in Z$ be a closed point. By the assumption that Conjecture \ref{conj:shokurov} holds in this special case, there is a natural number $n$ depending only on $d,\epsilon$ such that, after shrinking $Z$ around $z$ there is a boundary $\Lambda_X$ on $X$ such that $(X,\Lambda_X)$ is klt and $n(K_X+\Lambda_X)\sim 0/Z$. Let $\Lambda$ be the pushdown of $\Lambda_X$ on $Z$. Then $(Z,\Lambda)$ is klt and $n(K_Z+\Lambda)$ is Cartier. It suffices to show that there are $t>0$ depending only on $d,\epsilon$ and an effective Cartier divisor $H$ on a neighbourhood of $z\in Z$ such that $z\in \Supp H$ and $(Z,\Lambda+tH)$ is lc near $z$. Indeed, if $(Z,\Lambda+tH)$ is lc near $z$, $(X,\Lambda_X+tf^*H)$ is lc over $z$, which implies that $(X,tf^*H)$ is so.

Let $\pi:Y\to Z$ be a plt blow-up of $(Z\ni z,\Lambda)$ with the exceptional divisor $E$ (its existence is guaranteed by Lemma \ref{lem:has}). Then $(Y,E+\Lambda_Y)$ is plt and $-E$ is ample over $Z$, where $\Lambda_Y$ is the birational transform of $\Lambda$ on $Y$. Then
$$K_Y+E+\Lambda_Y=\pi^*(K_Z+\Lambda)+a(E,Z,\Lambda)\cdot E,$$
which implies that $-(K_Y+E+\Lambda_Y)$ is ample over $Z$.
We claim that $Y$ is of Fano type over $Z$. Indeed, $(Y,(1-e)E+\Lambda_Y)$ is klt and $-(K_Y+(1-e)E+\Lambda_Y)$ is ample over $Z$ for a sufficiently small $e>0$. Moreover, $n(E+\Lambda_Y)$ is an integral divisor. By the boundedness of lc complements \cite[Theorem 1.8]{Bi19}, there is a natural number $m$ depending only on $d,n$ such that, after shrinking $Z$ around $z$ there is a boundary $\Omega_Y\geq E+\Lambda_Y$ on $Y$ such that $(Y,\Omega_Y)$ is lc and $m(K_Y+\Omega_Y)\sim 0/Z$. Let $\Omega$ be the pushdown of $\Omega_Y$ on $Z$. By construction, we have $(Z,\Omega)$ is lc, $m(K_Z+\Omega)$ is Cartier, $\Omega\geq \Lambda$, and $(Z,\Omega)$ is not klt near $z$. Thus $z\in \Supp (\Omega-\Lambda)$. Let $H=mn(\Omega-\Lambda)$ and $t=\frac{1}{mn}$, then $H$ is Cartier and $(Z,\Lambda+tH)$ is lc.
\end{proof}

\section{Proof of (4) $\Rightarrow$  (5) in Theorem \ref{thm:main}}
\begin{prop}\label{prop:4to5}
Assume that Conjecture \ref{conj:birkar1} holds when $X\to Z$ is birational, $B=0$ and $\dim Z\leq \ell$ for some natural number $\ell$. Then Conjecture \ref{conj:birkar2} holds when $\dim Z\leq \ell$.
\end{prop}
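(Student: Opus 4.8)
The plan is to reduce Conjecture \ref{conj:birkar2} for $(Z,\Delta)$ to a single application of Conjecture \ref{conj:birkar1} in the birational, $B=0$ case on a well-chosen birational model of $Z$. We may assume $\epsilon<1$, and since $\dim Z\le 1$ is immediate (take $H$ a general hyperplane through $z$ and $t=\epsilon$), that $s:=\dim Z\ge 2$; I would argue by induction on $s$, so that Conjecture \ref{conj:birkar2} (equivalently, by Corollary \ref{cor:idealdiv}, the corresponding triple statement) may be assumed in dimension $<s$. Note that $(Z,\Delta)$ being $\epsilon$-lc makes $Z$ itself $\epsilon$-lc and klt.

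\textbf{The boundary-free core.} Suppose first $z\notin\Supp\Delta$ (in particular if $\Delta=0$). Since $(Z,\Delta)$ is klt there are divisors over $z$ with log discrepancy $\ge 1$ over $(Z,\Delta)$ (e.g. iterated blow-ups), and by \cite{BCHM} one such divisor $E$ is extracted by a $\Q$-factorial birational contraction $f:X\to Z$ with $\mathrm{Ex}(f)=\Supp E$ and $-E$ ample over $Z$. Then $X$ is $\epsilon$-lc as a variety (for $F$ exceptional over $X$, $a(F,X,0)=a(F,Z,0)\ge a(F,Z,\Delta)\ge\epsilon$, while $a(E,X,0)=1$), $-K_X\equiv (a(E,Z,0)-1)(-E)$ is nef over $Z$ because $a(E,Z,0)\ge a(E,Z,\Delta)\ge 1$, and $X$ is of Fano type over $Z$: $-K_X$ is big over the birational base $Z$ automatically, and either $-K_X$ is ample over $Z$ or $-K_X\equiv 0/Z$ and a trivial klt boundary works. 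Applying Conjecture \ref{conj:birkar1} (birational, $B=0$, $\dim Z\le\ell$) gives $t=t(s,\epsilon)>0$ and an effective Cartier divisor $H$ on $Z$ with $z\in\Supp H$ and $(X,tf^*H)$ lc over a neighbourhood of $z$. Writing $K_X+\Delta_X=f^*(K_Z+\Delta)$, near $f^{-1}(z)=\Supp E$ we have $\Delta_X=(1-a(E,Z,\Delta))E\le 0$, since the strict transform of $\Delta$ misses $f^{-1}(z)$. Hence adding the anti-effective $\Delta_X$ only raises log discrepancies along divisors whose centres meet $f^{-1}(z)$, so $(X,\Delta_X+tf^*H)$ is sub-lc over $z$; by the crepant identity $K_X+\Delta_X+tf^*H=f^*(K_Z+\Delta+tH)$ this is exactly $(Z,\Delta+tH)$ lc near $z$.

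\textbf{Reducing to the core.} For general $(Z,\Delta)$ with $z\in\Supp\Delta$, I would take a plt blow-up $\pi:Y\to Z$ of $(Z\ni z,\Delta)$ (Lemma \ref{lem:has}) with exceptional prime $S$, $\cent_Z S=z$, $-S$ ample over $Z$, and $(Y,S+\Delta_Y)$ plt near $S$. As $\pi^{-1}(z)=S$, controlling $(Z,\Delta+tH)$ near $z$ amounts — via the crepant identity on $Y$ and inversion of adjunction along $S$ — to controlling $(S,\Delta_S+t\,\widetilde H|_S)$ near $S$, where $K_S+\Delta_S=(K_Y+S+\Delta_Y)|_S$ and $\widetilde H$ is the strict transform of a general $H$ through $z$; here $(S,\Delta_S)$ is a log Fano pair of dimension $s-1$ (indeed $-(K_S+\Delta_S)\sim_{\R}a(S,Z,\Delta)\,(-S)|_S$ is ample). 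One then invokes the inductive hypothesis (Conjecture \ref{conj:birkar2} in dimension $s-1$) at finitely many points covering $S$, and transfers back.

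\textbf{Main obstacle.} The delicate points are: (1) in the boundary-free core, reconciling ``$-K_X$ nef over $Z$'' (which restricts which divisors over $z$ may be extracted) with ``$X$ uniformly $\epsilon'$-lc'' and ``$\Delta_X\le 0$ near $f^{-1}(z)$'' — this is clean only when $z\notin\Supp\Delta$; and (2) in the reduction, guaranteeing that the pair $(S,\Delta_S)$ produced by adjunction from a \emph{plt} (a priori not $\delta$-plt) blow-up is $\epsilon'$-lc with $\epsilon'=\epsilon'(s,\epsilon)$, and that the bound transfers across $S$ uniformly. An alternative to (2) is a direct estimate of $\lct(Z,\Delta;\mathfrak m_z)$: combining $a(E,Z,0)\ge t_0\,\mu_E\mathfrak m_z$ from Conjecture \ref{conj:birkar1} applied to $(Z,0)$ with $a(E,Z,\Delta)=a(E,Z,0)-\mathrm{ord}_E(f^*\Delta)$ and the coefficient bound $\le 1-\epsilon$, one is reduced to bounding how much of $\Delta$ the divisors of small log discrepancy over $z$ can see; this is precisely where the quadratic loss $t\sim\epsilon^2$ of Example \ref{exa} enters, and is the heart of the matter.
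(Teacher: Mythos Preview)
Your proposal contains a genuine error and, more importantly, misses the key mechanism that makes the argument work.

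In the ``boundary-free core'' the equality $a(F,X,0)=a(F,Z,0)$ is false. Writing $K_X=f^*K_Z+(a(E,Z,0)-1)E$, one has
\[
a(F,X,0)=a(F,Z,0)-\bigl(a(E,Z,0)-1\bigr)\mu_F(\pi^*E),
\]
so when $a(E,Z,0)>1$ the discrepancies on $X$ are \emph{smaller} than on $Z$, not equal; there is no reason $X$ should be $\epsilon$-lc. (Concretely: blow up a point on the exceptional curve of the minimal resolution of a $\tfrac{1}{3}(1,1)$ point and contract the old curve; the new $X$ has a $\tfrac{1}{4}(1,1)$ point, strictly worse than $Z$.) In any case this step is unnecessary: when $z\notin\Supp\Delta$ you may apply the hypothesis directly with $X=Z$, $B=0$.

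The real content is your ``reduction to the core'', and as you yourself note, both routes you sketch are incomplete: the plt blow-up gives no uniform $\epsilon'$ for $(S,\Delta_S)$, and the lct comparison does not bound $\mathrm{ord}_E(\Delta)$ against $\mu_E\mathfrak m_z$ for the relevant $E$.

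The paper's argument avoids all of this with a single clean trick. Let $t$ be the maximal number with $(Z,\Delta,\mathfrak m_z^t)$ $\tfrac{\epsilon}{2}$-lc, and let $T$ be a divisor over $z$ computing this threshold, so $t\cdot\mu_T\mathfrak m_z\ge\tfrac{\epsilon}{2}$. Extract $T$ via $f:X\to Z$; replacing $\mathfrak m_z^t$ by a general Cartier divisor $tD$ through $z$ (Lemma~\ref{lem:linearsystem}) makes the crepant pullback $(X,\Gamma)$ of $(Z,\Delta+tD)$ an honest $\tfrac{\epsilon}{2}$-lc pair with $K_X+\Gamma\sim_{\R}0/Z$, so $X$ is of Fano type over $Z$. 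Now run an MMP on $-K_X$ over $Z$ to reach $X'$ with $-K_{X'}$ nef$/Z$; crucially $(X',\Gamma')$ stays $\tfrac{\epsilon}{2}$-lc (the MMP is crepant for $K_X+\Gamma$), hence $X'$ itself is $\tfrac{\epsilon}{2}$-lc. Apply the hypothesis to $X'\to Z$ with $B=0$: one gets $u=u(s,\epsilon)>0$ and $F\ni z$ with $(X',uf'^*F)$ lc, whence $(X,uf^*F)$ lc (discrepancies for $(\,\cdot\,,0)$ only decrease along a $(-K)$-MMP), so $\mu_T\mathfrak m_z\le\mu_T f^*F\le 1/u$. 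This bounds $t\ge \epsilon u/2$.

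The point you were missing is that one should not try to compare $(Z,\Delta)$ and $(X,0)$ crepantly; instead, use the threshold divisor $T$ together with an auxiliary $\Gamma$ to certify that the MMP output $X'$ is uniformly $\tfrac{\epsilon}{2}$-lc, and then read off the bound on $\mu_T\mathfrak m_z$ rather than on log discrepancies directly.
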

\begin{proof}
Let $(Z,\Delta)$ be an $\epsilon$-lc pair with a closed point $z\in Z$. We may assume that $\epsilon<1$. Denote by $\mathfrak m_z$ the ideal sheaf of $z\in Z$. Let $t$ be largest number such that the triple $(Z,\Delta,\mathfrak m_z^t)$ is $\frac{\epsilon}{2}$-lc.  By Corollary \ref{cor:idealdiv}, it suffices to show that $t$ is bounded from below by a positive constant depending only on $\dim Z, \epsilon$. If $t>1-\epsilon$, we are done. So we may assume that $t\leq 1-\epsilon$.

There exists a prime divisor $T$ over $Z$ such that 
$$a(T,Z,\Delta,\mathfrak m_{z}^t)=\frac{\epsilon}{2}.$$
Since $(Z,\Delta)$ is $\epsilon$-lc, we have
$$t \cdot \mu_T \mathfrak m_{z}\geq \frac{\epsilon}{2}.$$
So it suffices to show that $\mu_T \mathfrak m_{z}$ is bounded from above by a constant depending only on $\dim Z, \epsilon$.

Let $f:X\to Z$ be a $\Q$-factorialisation extracting $T$ but no other divisors (if $T$ is not exceptional over $Z$, then $f$ is a small $\Q$-factorialisation). As $(Z,\Delta,\mathfrak m_z^t)$ is $\frac{\epsilon}{2}$-lc and as $a(T,Z,\Delta,\mathfrak m_z^t)=\frac{\epsilon}{2}$, by Lemma \ref{lem:linearsystem} and Remark \ref{rem}, 
there exists an effective Cartier divisor $D$ on $Z$ such that $z\in \Supp D$, $(Z,\Delta+tD)$ is $\frac{\epsilon}{2}$-lc and
$$a(T,Z,\Delta+tD)=\frac{\epsilon}{2}.$$
Let $K_X+\Gamma$ be the pullback of $K_Z+\Delta+tD$ to $X$. Then $\Gamma$ is effective as $\mu_T \Gamma=\frac{\epsilon}{2}\geq 0$. Moreover, $(X,\Gamma)$ is $\frac{\epsilon}{2}$-lc and $K_X+\Gamma\sim_{\R} 0/Z$. Hence $X$ is of Fano type over $Z$.

Run an MMP on $-K_X$ over $Z$ and let $X'$ be the resulting model with the induced morphism $f':X'\to Z$. Then $-K_{X'}$ is nef over $Z$. As $(X,\Gamma)$ is $\frac{\epsilon}{2}$-lc and as $K_X+\Gamma\sim_{\R} 0/Z$, $(X',\Gamma')$ is also $\frac{\epsilon}{2}$-lc, where $\Gamma'$ is the pushdown of $\Gamma$ to $X'$. Thus $X'$ is $\frac{\epsilon}{2}$-lc. By  assumption Conjecture \ref{conj:birkar1} holds for $X'\to Z$, that is, there are a positive number $u$ depending on $\dim Z,\epsilon$ and an effective Cartier divisor $F$ on some neighbourhood of $z\in Z$ such that $z\in \Supp F$ and $(X',u\cdot f'^*F)$ is lc over $z$. Then $(X,u\cdot f^*F)$ is lc over $z$, as $X\dashrightarrow X'$ is an MMP on $-K_X$. Thus 
$$\mu_T \mathfrak m_{z}\leq \mu_T f^*F\leq \frac{1}{u},$$
where $u$ only depends on $\dim Z,\epsilon$.
\end{proof}
\begin{proof}[Proof of Theorem \ref{thm:main}]
It follows from Proposition \ref{prop:5to3}, \ref{prop:2to4}, \ref{prop:4to5} and Corollary \ref{cor:3to1}.
\end{proof}

\section{The surface case}
The goal of this section is to prove Conjecture \ref{conj:birkar2} in dimension 2. We begin with some properties of klt germs.
\begin{lem}\cite[Theorem 5.22]{KM98}\label{lem:rational}
  A klt germ $Z\ni z$ is a rational singularity.
\end{lem}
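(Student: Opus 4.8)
The statement is local around $z$ and, since the rationality of $Z\ni z$ does not depend on the chosen resolution, the plan is to fix one log resolution $\pi\colon Y\to Z$ of $(Z\ni z,0)$ — available because a klt variety is $\mathbb Q$-Gorenstein — and prove that $R^i\pi_*\mathcal O_Y=0$ for all $i>0$ over a neighbourhood of $z$; normality of $Z$ already gives $\pi_*\mathcal O_Y=\mathcal O_Z$. Everything is then supposed to follow from relative Kawamata–Viehweg vanishing after a discrepancy computation, in the classical way.

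Write $K_Y=\pi^*K_Z+\sum_i a_iE_i$, where the $E_i$ are the $\pi$-exceptional prime divisors and $a_i>-1$ by the klt hypothesis. I would set $F:=\lceil\sum_i a_iE_i\rceil=\sum_i\lceil a_i\rceil E_i$: this is an effective, integral, $\pi$-exceptional divisor, because $a_i>-1$ forces $\lceil a_i\rceil\ge 0$. Let $P:=F-\sum_i a_iE_i$; its coefficients lie in $[0,1)$ and its support is contained in the simple normal crossing divisor $\mathrm{Exc}(\pi)$, so $P$ has snc support and $\lfloor P\rfloor=0$. A direct computation gives
\[
F-K_Y-P=\sum_i a_iE_i-K_Y=-\pi^*K_Z,
\]
which is $\pi$-numerically trivial, hence $\pi$-nef, and automatically $\pi$-big because $\pi$ is birational. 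Rewriting this as $F=K_Y+P+(-\pi^*K_Z)$ and applying relative Kawamata–Viehweg vanishing, one gets $R^i\pi_*\mathcal O_Y(F)=0$ for all $i>0$.

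It then remains to descend from $\mathcal O_Y(F)$ to $\mathcal O_Y$. I would do this by the usual dévissage: induct on the coefficients of $F$, at each step removing one exceptional prime divisor $E$ via the exact sequence $0\to\mathcal O_Y(G-E)\to\mathcal O_Y(G)\to\mathcal O_E(G|_E)\to 0$ and establishing the needed vanishing of $R^j\pi_*\mathcal O_E(G|_E)$ through the adjunction $K_E=(K_Y+E)|_E$ together with Kawamata–Viehweg vanishing on the smooth divisor $E$; for this one must order the components so that the divisor being removed is relatively contractible, so that $-E$ restricts to a $\pi$-nef class on $E$. Reaching $G=0$ then yields $R^i\pi_*\mathcal O_Y=0$ for $i>0$, i.e. $Z\ni z$ is a rational singularity. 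The discrepancy identity and the vanishing in the second paragraph are routine; I expect the bookkeeping in this last dévissage — tracking adjunction and the relative positivity on the successive exceptional strata — to be the main obstacle. (For the two-dimensional case needed later, one could instead invoke the classification of klt surface singularities as quotient singularities, which are visibly rational.)
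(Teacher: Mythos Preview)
The paper gives no proof of this lemma; it is simply quoted from \cite[Theorem~5.22]{KM98}. Your first two paragraphs correctly reproduce the standard opening of that argument: the discrepancy computation and relative Kawamata--Viehweg vanishing do yield $R^i\pi_*\mathcal O_Y(F)=0$ for $i>0$ with $F=\lceil\sum_i a_iE_i\rceil$ effective and $\pi$-exceptional.

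The d\'evissage in your final paragraph, however, does not close. From $0\to\mathcal O_Y(G-E)\to\mathcal O_Y(G)\to\mathcal O_E(G|_E)\to 0$ together with the inductive hypothesis $R^i\pi_*\mathcal O_Y(G)=0$ for $i>0$, the obstruction to $R^1\pi_*\mathcal O_Y(G-E)=0$ lives in the cokernel of $\pi_*\mathcal O_Y(G)\to\pi_*\mathcal O_E(G|_E)$, i.e.\ in an $H^0$ on $E$. Kawamata--Viehweg only kills higher cohomology, and no ordering of the $E_i$ forces this $H^0$ to vanish or that restriction map to surject in general, so the induction stalls at $i=1$. The usual way to finish bypasses any such induction: since $F$ is effective and $\pi$-exceptional, normality of $Z$ gives $\pi_*\mathcal O_Y(F)=\mathcal O_Z$, so $R\pi_*\mathcal O_Y(F)\simeq\mathcal O_Z$ in the derived category, and the inclusion $\mathcal O_Y\hookrightarrow\mathcal O_Y(F)$ furnishes a left inverse to $\mathcal O_Z\to R\pi_*\mathcal O_Y$; one then concludes via a splitting criterion for rational singularities, or, as in \cite{KM98}, by combining the Cohen--Macaulay property of klt pairs with Grauert--Riemenschneider vanishing and duality. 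Your parenthetical remark is well placed: the paper only invokes this lemma for $\dim Z=2$, where Lemma~\ref{lem:kltquo} already identifies klt surface germs with quotient singularities, which are visibly rational.
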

  
  \begin{lem}
  \cite[Theorem 4-6-18(iii)]{Ma02}\label{lem:kltquo}
  A klt germ $Z\ni z$ of dimension 2 is analytically isomorphic to $(\mathbb C^2\ni 0)/G$, where $G\subset GL(2,\C)$ is a finite subgroup.
  \end{lem}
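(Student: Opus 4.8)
The plan is to reduce to the Gorenstein (canonical) case by passing to the index-one cover and then to reconstruct the germ from its local fundamental group; throughout I would work analytically, replacing $Z$ by a small Stein neighbourhood of $z$.

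First I would record the structure of a klt surface germ. Since $Z$ is normal of dimension $2$ it is regular in codimension $1$, so $z$ is an isolated singularity and $U:=Z\setminus\{z\}$ is smooth; by Lemma~\ref{lem:rational} the germ $Z\ni z$ is rational, hence Cohen--Macaulay. As $K_Z$ is $\Q$-Cartier, let $m$ be its index near $z$ and let $\pi\colon Z'\to Z$ be the index-one cyclic cover: it is finite of degree $m$ with Galois group $\mu_m$, étale over $U$, and satisfies $K_{Z'}=\pi^*K_Z$. Then $Z'$ is normal ($S_2$ by construction and $R_1$ since $\pi$ is étale over $U$), $K_{Z'}$ is Cartier, and $Z'$ is klt since $\pi$ is crepant and étale in codimension~$1$; being rational it is Cohen--Macaulay, hence Gorenstein. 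On the minimal resolution of $Z'$ the discrepancies are $\le 0$, and being $>-1$ and integral they all vanish, so $Z'\ni z'$ is a canonical, i.e.\ Du Val, singularity. By the classical ADE classification, $Z'\ni z'$ is analytically $(\C^2\ni 0)/\Gamma$ for a finite subgroup $\Gamma\subset SL(2,\C)$ acting freely on $\C^2\setminus\{0\}$.

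Next I would run the covering-space bookkeeping. Writing $U'=Z'\setminus\{z'\}\cong(\C^2\setminus\{0\})/\Gamma$, the map $\C^2\setminus\{0\}\to U'$ is an unramified Galois cover with group $\Gamma$; since $\C^2\setminus\{0\}$ is simply connected it is the universal cover of $U'$, hence also of $U$ (because $U'\to U$ is finite étale). Thus $\C^2\setminus\{0\}\to U$ is Galois with deck group $G:=\pi_1(U)$, in which $\Gamma$ sits as a normal subgroup with $G/\Gamma\cong\mu_m$; in particular $G$ is finite of order $m\,|\Gamma|$ and acts freely on $\C^2\setminus\{0\}$. I would then extend this action across the origin: by Hartogs' theorem each $g\in G$, a biholomorphism of a punctured neighbourhood of $0\in\C^2$, extends holomorphically over $0$, and applying this to $g^{-1}$ as well shows the extension is a biholomorphism fixing $0$. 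Hence $G$ acts by biholomorphisms on a small ball $B\ni 0$ with $0$ its only fixed point. By Cartan's linearization lemma (averaging coordinates over $G$) this action is conjugate to a linear one, giving $G\hookrightarrow GL(2,\C)$ together with a $G$-equivariant biholomorphism of $B$ with a ball carrying the linear action. Passing to quotients gives $U\cong(\C^2\setminus\{0\})/G$, and since $Z\ni z$ and $(\C^2/G)\ni 0$ are both normal surface germs this isomorphism extends across the puncture (sections of the structure sheaf over the punctured germ of a normal surface extend), yielding the analytic isomorphism $(Z\ni z)\cong(\C^2\ni 0)/G$.

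The hard part is the input to the first step: the classification of Gorenstein canonical surface singularities as Du Val singularities, together with Klein's description of these as $\C^2/\Gamma$ with $\Gamma\subset SL(2,\C)$ finite. Once that classical fact is granted, the remaining pieces---the index-one cover, the exact sequence $1\to\Gamma\to G\to\mu_m\to 1$, and the Hartogs/Cartan extension-and-linearization---are routine. An alternative route would bypass the canonical cover by classifying the dual graph of the minimal resolution directly from the constraint that all discrepancies lie in $(-1,0]$ and recognizing the resulting (taut) graphs as those of quotient singularities, but the argument above is cleaner.
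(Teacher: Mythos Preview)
The paper does not supply a proof of this lemma; it simply quotes \cite[Theorem 4-6-18(iii)]{Ma02}. So there is nothing to compare your argument against in the paper itself.

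Your route---index-one cover to reduce to the Gorenstein canonical case, Klein/Du~Val classification giving $Z'\cong\C^2/\Gamma$ with $\Gamma\subset SL(2,\C)$, then covering-space bookkeeping plus Hartogs and Cartan linearization to recover $Z$ as $\C^2/G$---is the standard one and is essentially what Matsuki does. The logic is sound. One small point worth tightening: when you extend a deck transformation $g$ across the origin by Hartogs, you should justify that the extension actually fixes $0$. This follows from the fact that the finite map $p\colon B\to Z$ (the composite $B\to B/\Gamma\cong Z'\to Z$) satisfies $p\circ g=p$ on $B\setminus\{0\}$ and $p^{-1}(z)=\{0\}$, so properness of $p$ forces $\tilde g(0)=0$; only then can Cartan linearization be applied. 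Apart from this routine detail your proposal is correct.
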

  
  \begin{prop}\label{prop:6E}
    Let $Z\ni z$ be a klt germ of dimension 2 and let $\pi:Y\to Z$ be the minimal resolution with the exceptional divisor $E$. Denote by $C_f$ the fundamental cycle of the singularity $Z\ni z$ on $Y$. Then $C_f\leq 6E$.
  \end{prop}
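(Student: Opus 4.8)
The plan is to compare the given singularity with the Du~Val (canonical) singularity sitting on the same resolution graph, and then to invoke the classical fact that the highest root of an ADE root system has all coefficients at most~$6$ (with $6$ attained only in type $E_8$).

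\textbf{Step 1: the resolution graph is of ADE type.} By Lemma \ref{lem:rational} and Lemma \ref{lem:good}, each component $E_i$ of $E$ is a smooth rational curve and $E$ is simple normal crossing; since $\pi$ is the \emph{minimal} resolution and the intersection matrix is negative definite, $E_i^2\le -2$ for every $i$. By Lemma \ref{lem:kltquo}, $Z\ni z$ is analytically a quotient singularity, so by the classification of quotient surface singularities (Brieskorn) the dual graph of $E$ is a Dynkin diagram of type $A_n$, $D_n$, $E_6$, $E_7$ or $E_8$. Write $M=\bigl((E_i\cdot E_j)\bigr)_{i,j}$, and let $M^{(2)}$ be the matrix obtained from $M$ by replacing every diagonal entry by $-2$. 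Then $M^{(2)}$ is the negative of the Cartan matrix of the corresponding ADE type, hence is negative definite, and is realized by the minimal resolution of the Du~Val singularity of that type.

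\textbf{Step 2: monotonicity of the fundamental cycle.} Let $\mathcal S$ and $\mathcal S^{(2)}$ be the sets of the Definition-Lemma above attached to $M$ and to $M^{(2)}$, with minimal elements $C_f$ (our fundamental cycle) and $C_f^{(2)}$; the latter exists because $M^{(2)}$ is negative definite. I claim $\mathcal S^{(2)}\subseteq \mathcal S$. Indeed, for $C=\sum a_iE_i\in\mathcal S^{(2)}$ and every index $k$,
$$(C\cdot E_k)_M=(C\cdot E_k)_{M^{(2)}}+a_k\,(E_k^2+2)\le 0,$$
since $(C\cdot E_k)_{M^{(2)}}\le 0$, $a_k\ge 0$ and $E_k^2+2\le 0$; hence $C\in\mathcal S$. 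Because $C_f^{(2)}\in\mathcal S^{(2)}\subseteq\mathcal S$ and $C_f=\min\mathcal S$, we conclude $C_f\le C_f^{(2)}$, so it suffices to bound $C_f^{(2)}$. For a graph all of whose curves have self-intersection $-2$, the condition $\sum a_iE_i\in\mathcal S^{(2)}$ reads exactly $2a_k\ge\sum_{j\sim k}a_j$ for all $k$ (with $a_i\in\Z_{\ge0}$, not all $0$), so $C_f^{(2)}$ is computed directly by Laufer's algorithm (Remark \ref{rem:laufer}) in each of the finitely many ADE types; equivalently, $C_f^{(2)}$ is the highest root of the corresponding root system. Its coefficients are $1$ throughout in type $A_n$, at most $2$ in type $D_n$, and at most $3$, $4$, $6$ in types $E_6$, $E_7$, $E_8$ respectively. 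Hence $C_f\le C_f^{(2)}\le 6E$, and the bound is sharp (attained by $E_8$).

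\textbf{Main obstacle.} The linear algebra in Step~2 is routine; the substantive point is Step~1, i.e. that $M^{(2)}$ is negative definite (equivalently, that the resolution graph is of ADE shape). This is precisely where the klt hypothesis is used: for a merely log canonical surface singularity the list of graphs also includes the simple elliptic and cusp graphs, on which the all-$(-2)$ matrix fails to be negative definite and the statement of the proposition is false. I would settle Step~1 by quoting Brieskorn's classification of quotient surface singularities; a more self-contained alternative is to run through the explicit finite subgroups $G\subset GL(2,\C)$ furnished by Lemma \ref{lem:kltquo} and read off the dual graph in each case.
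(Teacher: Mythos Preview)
Your proof is correct and follows essentially the same strategy as the paper. The paper reduces to the quotient case via Lemma~\ref{lem:kltquo}, proves exactly your monotonicity statement as Lemma~A.2, and then runs through Brieskorn's list of quotient surface singularities, in each family reducing to the largest self-intersections (e.g.\ $b=2$) and computing the fundamental cycle there by Laufer's algorithm. Your version differs only in the endgame: rather than tabulating each family separately, you reduce uniformly to the all-$(-2)$ matrix and invoke the highest root of the associated ADE system. This is a cleaner way to state the same computation, but it does not save the case analysis: the assertion in your Step~1 that the underlying unweighted graph is of ADE type is precisely what one extracts from the Brieskorn tables (cyclic $\leadsto A_r$; dihedral $\leadsto D_{r+2}$; tetrahedral $\leadsto E_6,D_5,D_4$; octahedral $\leadsto E_7,D_6,D_5,D_4$; icosahedral $\leadsto E_8,E_6,D_7,D_5,D_4$), and you correctly flag this as the step that requires the classification.
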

\begin{proof}
  Note that the fundamental cycle depends only on the analytic isomorphism class  of the singularity, as it is determined by the intersection matrix of components of the exceptional divisor. The proposition follows from Lemma \ref{lem:kltquo} and Proposition \ref{prop:quotient} in the appendix.
\end{proof}

\begin{lem}\label{lem:klt}
Let $(Z,\Delta)$ be a pair of dimension 2 with a closed point  $z\in Z$ such that $\mld(Z\ni z,\Delta)>0$. Then $K_Z$ is $\Q$-Cartier near $z$ and $Z$ is klt near $z$.
\end{lem}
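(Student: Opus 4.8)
The plan is to reduce the whole statement to the $\Q$-Cartier-ness of $K_Z$ near $z$, and to prove the latter by showing that $Z\ni z$ is a rational singularity. First I would shrink $Z$ around $z$ so that $z$ is the only possible singular point of $Z$; then $K_Z$ is automatically Cartier on $Z\setminus\{z\}$, so everything is local at $z$. Granting that $K_Z$ is $\Q$-Cartier near $z$, the klt assertion becomes soft: $\Delta=(K_Z+\Delta)-K_Z$ is then $\R$-Cartier and effective, so on any birational model $\pi:Y\to Z$ we have $\pi^*\Delta\geq 0$ and hence $a(E,Z,0)=a(E,Z,\Delta)+\mu_E\pi^*\Delta\geq a(E,Z,\Delta)$ for every prime divisor $E$ over $Z$. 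Taking the infimum over those $E$ with $\cent_Z E=z$ gives $\mld(Z\ni z,0)\geq\mld(Z\ni z,\Delta)>0$, and combined with the smoothness of $Z$ away from $z$ this shows that $(Z,0)$ is klt near $z$.

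So the crux is that $K_Z$ is $\Q$-Cartier near $z$, and for this it suffices to show that $Z\ni z$ is a rational singularity (a rational surface singularity is $\Q$-factorial, a classical fact, so every Weil divisor, in particular $K_Z$, is $\Q$-Cartier near such a point). Let $\pi:Y\to Z$ be the minimal resolution and $E=\sum_{i=1}^r E_i$ its exceptional divisor over $z$; recall that the intersection matrix $N=(E_i\cdot E_j)$ is negative definite. For an $\R$-Weil divisor $D$ on $Z$ I would work with the numerical (Mumford) pullback $\pi^{\natural}D=\widetilde D+\sum_i\phi_i(D)E_i$, characterized by $\pi^{\natural}D\cdot E_j=0$ for all $j$; it is $\R$-linear in $D$, agrees with the ordinary pullback when $D$ is $\R$-Cartier, and satisfies $\phi_i(D)\geq 0$ when $D\geq 0$ (since $\widetilde D\cdot E_j\geq 0$ for all $j$ and $N$ is negative definite). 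Writing $K_Y=\pi^{\natural}K_Z+\sum_i a_iE_i$, minimality of $\pi$ forces $K_Y\cdot E_j\geq 0$, hence $a_i\leq 0$ for all $i$; and since $K_Z+\Delta$ is $\R$-Cartier we have $\pi^*(K_Z+\Delta)=\pi^{\natural}K_Z+\pi^{\natural}\Delta$, so if $K_Y+\Delta_Y=\pi^*(K_Z+\Delta)$ then $\mu_{E_i}\Delta_Y=\phi_i(\Delta)-a_i$. The hypothesis $\mld(Z\ni z,\Delta)>0$ applied to $E_i$ gives $\phi_i(\Delta)-a_i<1$, i.e. $a_i>\phi_i(\Delta)-1\geq -1$. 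Thus $-1<a_i\leq 0$ for every $i$.

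Finally, let $C_f=\sum_i b_iE_i$ be the fundamental cycle of $Z\ni z$ on $Y$, so $b_i\geq 1$, $C_f\neq 0$ and $C_f\cdot E_j\leq 0$ for all $j$. Since $\pi^{\natural}K_Z\cdot C_f=0$, the adjunction formula gives
\[
2p_a(C_f)-2=C_f\cdot(C_f+K_Y)=\sum_i b_i(C_f\cdot E_i)+\sum_i a_i(C_f\cdot E_i).
\]
Using $b_i\geq 1$, $-1<a_i\leq 0$ and $C_f\cdot E_i\leq 0$, the first sum is $\leq\sum_i(C_f\cdot E_i)$ and the second is $<-\sum_i(C_f\cdot E_i)$, strictly, because $C_f^2<0$ forces $C_f\cdot E_i<0$ for some $i$. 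Hence $2p_a(C_f)-2<0$, so $p_a(C_f)<1$; since $p_a(C_f)$ is an integer, $p_a(C_f)\leq 0$, and therefore $Z\ni z$ is a rational singularity by Artin's criterion \cite{Ar66}. Combined with the first two paragraphs, this completes the argument.

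The step I expect to be the main obstacle is the bookkeeping around the numerical pullback: one has to establish the two-sided bound $-1<a_i\leq 0$ on the numerical discrepancies of $K_Z$ \emph{before} knowing $K_Z$ is $\Q$-Cartier, and then run the squeezing estimate on $p_a(C_f)$ accurately; one must also quote Artin's rationality criterion and the $\Q$-factoriality of rational surface singularities in exactly the forms needed. Once rationality of $Z\ni z$ is in hand, the remaining steps are routine.
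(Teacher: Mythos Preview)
Your argument is correct, and it takes a genuinely different route from the paper's proof. The paper works entirely inside the MMP framework: it takes the minimal resolution $\pi:W\to Z$, writes $K_W+\Delta_W+E_W=\pi^*(K_Z+\Delta)$, checks that $(W,E_W)$ is klt and $-(K_W+E_W)$ is nef over $Z$, and concludes that $W$ is of Fano type over $Z$. It then solves $(K_W+T_W)\cdot T_j=0$ using negative definiteness to get $K_W+T_W\equiv 0/Z$, and upgrades this to $K_W+T_W\sim_{\Q}0/Z$ using the Fano type property, whence $K_Z$ is $\Q$-Cartier; the klt conclusion is obtained by pushing down a klt pair $(W,\Gamma_W)$ with $K_W+\Gamma_W\sim_{\R}0/Z$, again furnished by Fano type. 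Your approach instead stays in classical surface theory: you use Mumford's numerical pullback to bound the numerical discrepancies by $-1<a_i\leq 0$, run an adjunction estimate on the fundamental cycle to force $p_a(C_f)\leq 0$, invoke Artin's rationality criterion, and then appeal to $\Q$-factoriality of rational surface singularities.

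Each approach has its virtues. Yours is more elementary (no Fano type machinery, no semi-ampleness over the base) and in fact proves slightly more along the way: you establish directly that $Z\ni z$ is rational, which the paper obtains only afterwards by combining this lemma with \cite[Theorem~5.22]{KM98}. The paper's route, on the other hand, produces the structural statement that the minimal resolution is of Fano type over $Z$, and its mechanism (numerical triviality $\Rightarrow$ $\Q$-linear triviality on Fano type varieties) is dimension-independent in spirit, whereas your squeezing estimate on $p_a(C_f)$ is specific to surfaces.
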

\begin{proof}
Let $\pi:W\to Z$ be the minimal resolution. Shrinking $Z$ we may assume that $\pi$ is an isomorphism outside $z$. Write 
$$K_W+\Delta_W+E_W=\pi^*(K_Z+\Delta)$$ 
where $\Delta_W$ is the birational transform of $\Delta$ and $E_W$ is exceptional over $Z$.

We claim that $(W,E_W)$ is klt over $z$. Indeed, for any prime divisor $D$ over $W$ with $\pi(\cent_W D)=z$, we have
$$a(D,W,E_W)\geq a(D,W,\Delta_W+E_W)=a(D,Z,\Delta)>0;$$
for any prime divisor $D$ over $W$ so that $\pi(\cent_W D)$ contains $z$ but is not $z$, we have $$a(D,W,E_W)=1$$
because $D$ is a prime divisor on $W$ which is not exceptional over $Z$. Thus $(W,E_W)$ is klt over $z$. Shrinking $Z$ around $z$, we may suppose that $(W,E_W)$ is klt everywhere.

For any exceptional$/Z$ prime divisor $T$ on $W$, we have 
$$-(K_W+E_W)\cdot T=\Delta_W\cdot T\geq 0.$$
So $-(K_W+E_W)$ is big and nef over $Z$. Therefore, $W$ is of Fano type over $Z$. 

Denote by $T_1,\cdots,T_k$ all the exceptional$/Z$ prime divisors on $W$. Then the intersection matrix $(T_i\cdot T_j)$ is negative definite. So there are rational numbers $a_1,\cdots,a_k$ such that
$$(K_W+T_W)\cdot T_j=0 \quad \text{ where } T_W=\sum_{i=1}^k a_iT_i,$$
for $j=1,\cdots,k$. Hence $K_W+T_W\equiv 0/Z$, which implies that $K_W+T_W\sim_{\Q} 0/Z$ as $W$ is of Fano type over $Z$. Thus $K_Z$ is $\Q$-Cartier as it is the pushdown of $K_W+T_W$ to $Z$.

Finally we show that $Z$ is klt. Since $W$ is of Fano type over $Z$, there is a klt pair $(W,\Gamma_W)$ such that $K_W+\Gamma_W\sim_{\R} 0/Z$. Let $\Gamma$ be the pushdown of $\Gamma_W$ to $Z$. Then $(Z,\Gamma)$ is klt, which implies that $Z$ is klt. 
\end{proof}

We need the following result in \cite{Ch22} on the lower bound of the lc threshold of a smooth curve with respect to a pair on a smooth surface.
\begin{lem}\label{lem:lct}\cite[Theorem 1.9]{Ch22}
Let $Z$ be a smooth surface with a closed point $z\in Z$ and let $(Z,\Delta)$ be a pair such that
$\mld(Z\ni z,\Delta)\geq \epsilon$ with $0<\epsilon\leq 1$. Let $H$ be a smooth curve on $Z$ passing $z$ such that $\mu_H \Delta \leq 1-\epsilon$ and $(\Delta'\cdot H)_z\leq 2$, where $\Delta'=\Delta-\mu_H \Delta \cdot H$. Then 
$(Z,\Delta+\frac{\epsilon^2}{2}H)$
is lc near $z$.
\end{lem}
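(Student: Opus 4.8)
The plan is to translate the statement into a uniform lower bound on log discrepancies and then analyse it by blowing up along $H$.

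Set $c:=\mu_H\Delta$ and $\Delta':=\Delta-cH$, so $c\le 1-\epsilon$ and $(\Delta'\cdot H)_z\le 2$. Since $\mld(Z\ni z,\Delta)\ge\epsilon>0$, every prime divisor on $Z$ through $z$ has coefficient $\le 1$ in $\Delta$ (a curvilinear valuation following such a component would otherwise force the mld to $-\infty$), and $c+\tfrac{\epsilon^2}{4}<1$; hence $\Delta+\tfrac{\epsilon^2}{4}H$ is a boundary near $z$ and the lemma is equivalent to $a(E,Z,\Delta+\tfrac{\epsilon^2}{4}H)\ge0$ for every prime divisor $E$ over $Z$ with $\cent_Z E=z$. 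Writing $N:=\mu_E H$ and using $a(E,Z,\Delta+\tfrac{\epsilon^2}{4}H)=a(E,Z,\Delta)-\tfrac{\epsilon^2}{4}N$, everything reduces to the inequality
\[
a(E,Z,\Delta)\ \ge\ \frac{\epsilon^2}{4}\,\mu_E H
\qquad\text{for all prime divisors }E\text{ over }Z\text{ with }\cent_Z E=z .
\]
Equivalently: if $\lambda$ is the lc threshold of $H$ with respect to $(Z,\Delta)$ near $z$, then either $\lambda\ge 1-c\ge\epsilon$, or $\lambda<1-c$ is computed by such an $E$ with $a(E,Z,\Delta)=\lambda\,\mu_E H$, and one must bound $\mu_E H\le 4/\epsilon$ for that $E$.

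Next, a dichotomy on $N=\mu_E H$. If $N\le 4/\epsilon$ the displayed inequality is immediate from $a(E,Z,\Delta)\ge\epsilon$ and $\tfrac{\epsilon^2}{4}N\le\epsilon$. If $N>4/\epsilon$, then $E$ is ``almost tangent to $H$'', and I would run the tower of point blow-ups realizing $E$ and induct on its length. Let $\pi_1:Z_1\to Z$ be the blow-up of $z$, with exceptional $E_1$; from $a(E_1,Z,\Delta)=2-\mult_z\Delta\ge\epsilon$ one gets $\mult_z\Delta\le 2-\epsilon$, so the coefficient $e_1=\mult_z\Delta+\tfrac{\epsilon^2}{4}-1$ of $E_1$ in $\pi_1^*(K_Z+\Delta+\tfrac{\epsilon^2}{4}H)$ is $<1$ and $a(E_1,Z,\Delta+\tfrac{\epsilon^2}{4}H)=2-\mult_z\Delta-\tfrac{\epsilon^2}{4}>0$; so $E_1$ is fine, and for $E\ne E_1$ the center of $E$ on $Z_1$ is a point $p\in E_1$. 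Three cases occur. (i) $p\notin H_1\cup\Supp\Delta'_1$: near $p$ only the curve $e_1E_1$ survives on the smooth surface $Z_1$, so $a(E,\cdot)>0$. (ii) $p\in\Supp\Delta'_1$, $p\notin H_1$: near $p$ the curve $H_1$ is absent, and the crepant transform $(Z_1,\Delta'_1+(\mult_z\Delta-1)E_1)$ of $(Z,\Delta)$ has $\mld\ge\epsilon$ at $p$, $E_1$-coefficient $\le 1-\epsilon$, and $(\Delta'_1\cdot E_1)_p\le\mult_z\Delta'\le 2$, so the lemma applied inductively to this pair with the curve $E_1$ yields exactly the desired $a(E,Z_1,\Delta'_1+e_1E_1)\ge0$. (iii) $p=H_1\cap E_1$: the crepant transform $(Z_1,cH_1+\Delta'_1+(\mult_z\Delta-1)E_1)$ has $\mld\ge\epsilon$ at $p$, $H_1$-coefficient $c\le 1-\epsilon$, and
\[
\bigl((\Delta'_1+(\mult_z\Delta-1)E_1)\cdot H_1\bigr)_p=(\Delta'\cdot H)_z-\mult_z\Delta'+(\mult_z\Delta-1)=(\Delta'\cdot H)_z+c-1\le 2 ,
\]
while $(\Delta'_1\cdot H_1)_p=(\Delta'\cdot H)_z-\mult_z\Delta'$ strictly decreases, so one may recurse following $H$.

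The main obstacle is case (iii) with a genuine tangency of $\Delta'$ and $H$ at $z$: the recursion then repeatedly re-centers at $H_k\cap E_k$, and since pulling back $\tfrac{\epsilon^2}{4}H$ places $\tfrac{\epsilon^2}{4}$ on $E_1$ too, the pair fed into the inductive step is no longer $\epsilon$-lc on the nose but carries a defect $\tfrac{\epsilon^2}{4}\mu_E E_1$ along the new exceptional curve, transverse to $H$. There are two routes to absorb this. The first is to induct on a stronger statement in which $H$ is allowed to be an snc union of at most two transverse smooth curves with coefficients summing to $\tfrac{\epsilon^2}{4}$ times a bounded factor, checking that the coefficient bookkeeping stays below $1$ precisely because $\mult_z\Delta\le 2-\epsilon$ — which is where the exact constant $\tfrac{\epsilon^2}{4}$ is consumed. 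The second, which I expect to be cleaner, is not to recurse in case (iii) but to estimate directly: for a valuation following $H$ to depth $N=\mu_E H>4/\epsilon$ one has the resolution estimate $a(E,Z,0)\ge\mu_E H+1$ and, since such an $E$ ``prefers $H$'', the Newton-polygon bound $\mu_E\Delta'\le(\Delta'\cdot H)_z\le 2$ — this is exactly the role of the hypothesis $(\Delta'\cdot H)_z\le 2$ — whence
\[
a(E,Z,\Delta)\ge a(E,Z,0)-c\,\mu_E H-\mu_E\Delta'\ge \mu_E H(1-c)-1\ge \epsilon\,\mu_E H-1>\frac{\epsilon^2}{4}\,\mu_E H
\]
once $\mu_E H>4/\epsilon$, while the finitely many divisors whose center leaves the chain along $H$ before depth $4/\epsilon$ are handled by the $\mu_E H\le 4/\epsilon$ case together with the exact log-discrepancy formula of the monomial-type valuation at the branch point. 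In either approach the genuine work is to make the inequalities $a(E,Z,0)\ge\mu_E H+1$ and $\mu_E\Delta'\le(\Delta'\cdot H)_z$ uniform over all — not merely monomial — divisorial valuations on the smooth surface, for which I would invoke the theory of key polynomials, or equivalently the factorization of $Z_1\to Z$ into blow-ups adapted to $H$.
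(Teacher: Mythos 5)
The paper does not prove this statement; it is quoted verbatim from \cite{Ch22} (Theorem 1.9 and Lemma 4.1 there) and never reproved here, so there is no in-paper argument to compare against. I therefore assess your proposal on its own.

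Your reduction to showing $a(E,Z,\Delta)\ge\tfrac{\epsilon^2}{4}\mu_E H$ for all $E$ over $Z$ with $\cent_Z E=z$, and the disposal of the range $\mu_E H\le 4/\epsilon$, are fine. The problem is in the range $\mu_E H>4/\epsilon$. The ``Newton-polygon bound'' $\mu_E\Delta'\le(\Delta'\cdot H)_z$ is simply false, and the chain of inequalities you display after ``whence'' does not hold. Take $\Delta=\tfrac12 H+\tfrac12 C$ with $H=\{y=0\}$, $C=\{y=x^3\}$, $\epsilon=\tfrac12$. Then $\mld(Z\ni z,\Delta)=1\ge\epsilon$, $\mu_H\Delta=\tfrac12=1-\epsilon$, $\Delta'=\tfrac12 C$, and $(\Delta'\cdot H)_z=\tfrac32\le 2$, so all hypotheses of the Lemma are met. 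For $E=\nu_{(5,17)}$ (the monomial valuation with $\mu_E(x)=5$, $\mu_E(y)=17$) one has $\mu_E H=17>4/\epsilon=8$, yet $\mu_E\Delta'=\tfrac12\min(15,17)=\tfrac{15}{2}\gg\tfrac32$, and
$$a(E,Z,\Delta)=22-\tfrac{17}{2}-\tfrac{15}{2}=6<\tfrac{15}{2}=\epsilon\,\mu_E H-1.$$
Thus both the intermediate estimate $a(E,Z,\Delta)\ge\epsilon\mu_E H-1$ and the bound on $\mu_E\Delta'$ fail; the lemma's conclusion ($a\ge\tfrac{17}{16}$) of course survives, but not via your inequalities. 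The root of the issue is that this $\nu_{(5,17)}$ leaves the chain of infinitely near points of $H$ already at step $4<4/\epsilon$, yet $\mu_E H=17$ accumulates further along satellite points to a value far exceeding $4/\epsilon$; so it is not in the class you set aside as ``handled by the $\mu_E H\le 4/\epsilon$ case,'' and it violates the Newton-polygon bound. Making the estimate work is not a matter of extending the two displayed inequalities ``uniformly over all divisorial valuations'': one of them is false, and a correct direct estimate has to track the full pair $(\mu_E(x),\mu_E(y))$ in coordinates adapted to $H$ and optimize over the two-variable Newton diagram, using the $\mld\ge\epsilon$ hypothesis and the constraint $(\Delta'\cdot H)_z\le 2$ jointly — this optimization is where $\epsilon^2/4$ actually comes from. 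Your first route (the recursion) founders on exactly the obstruction you yourself identify (the $\tfrac{\epsilon^2}{4}E_1$ term produced by pulling back $\tfrac{\epsilon^2}{4}H$, which compounds across steps of case (iii)); the proposed repair is only gestured at and, after $k\approx 4/\epsilon^2$ passes through case (iii), the accumulated coefficient on the deepest exceptional reaches $1$, so the strengthened induction hypothesis must degenerate precisely at the scale where the estimate becomes tight. As written, neither route is a proof.
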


\begin{proof}[Proof of Theorem \ref{thm:surface}]

\emph{Case 1:  $z\in Z$ is a smooth point.} Since $(Z,\Delta)$ is lc near $z$, the multiplicity of $\Delta$ at $z$ is at most 2. Hence there is a smooth curve $H$ on some neighbourhood of $z\in Z$ such that $z\in H$, $H \not\subset \Supp \Delta$ and $(\Delta\cdot H)_{z}\leq 2$. Applying Lemma \ref{lem:lct}, we deduce that $(Z,\Delta+\frac{\epsilon^2}{2}H)$ is lc near $z$.

\emph{Case 2: $z\in Z$ is not a smooth point.} Let $\pi:W\to Z$ be the   minimal resolution and denote by  $E$ the exceptional divisor on $W$ over $Z\ni z$. 
Let $K_W+\Delta_W$ be the pullback of $K_Z+\Delta$ to $W$. Then $\Delta_W\geq 0$ as $\pi$ is the minimal resolution.  

By Corollary \ref{cor:idealdiv}, it suffices to show that 
$(W, \Delta_W, \pi^*\mathfrak m_z^t)$
is lc over $z$, where $t=\epsilon^2/24$ and 
$\mathfrak m_z$ is the ideal sheaf of $z\in Z$. By Lemma \ref{lem:klt}, $Z\ni z$ is a klt germ and hence is a rational singularity by Lemma \ref{lem:rational}. It follows from Lemma \ref{lem:good} that $E$ is a simple normal crossing divisor on $W$. By Lemma \ref{lem:fundamental}, 
$$\pi^*\mathfrak m_{z}=\mathcal O_W(-C_f)$$
where $C_f$ is the fundamental cycle of the singularity $Z\ni z$ on $W$. Note that $C_f\leq 6E$ by Proposition \ref{prop:6E}, so it suffices to show that $(W,\Delta_W+\frac{\epsilon^2}{4}E)$ is lc near $E$.

Let $D$ be an irreducible component of $E$. Then $D$ is a smooth rational curve by Lemma \ref{lem:good}, which implies that $(K_W+D)\cdot D =-2$. Since $(K_W+\Delta_W)\cdot D=0$, we have 
$$(\Delta_W-D)\cdot D=2.$$
Denote $a=\mu_D \Delta_W$. Then $a\leq 1-\epsilon<1$ as $\mld(Z\ni z,\Delta)\geq \epsilon$. Note that $D\cdot D<0$, we have 
$$(\Delta_W-aD)\cdot D<2.$$
Applying Lemma \ref{lem:lct}, we deduce that 
$(W,\Delta_W+\frac{\epsilon^2}{2} D)$ is lc near $D$.

It suffices to show that $(W,\Delta_W+\frac{\epsilon^2}{4}E)$ is lc near $p$ for any closed point $p\in E$. If $p$ is not the intersection point of two components of $E$, by the above argument, $(W,\Delta_W+\frac{\epsilon^2}{2} E)$ is lc near $p$. Otherwise, $p$ is the intersection point of two components $D,F$ of $E$, then $E=D+F$ near $p$. By the above argument, $(W,\Delta_W+\frac{\epsilon^2}{2} D)$ and $(W,\Delta_W+\frac{\epsilon^2}{2} F)$ are lc near $p$, which implies that 
$$\big(W,\Delta_W+\frac{\epsilon^2}{4} (D+F)\big)$$ is lc near $p$. Therefore, $(W,\Delta_W+\frac{\epsilon^2}{4}E)$ is lc near $p$.
\end{proof}

\begin{proof}[Proof of Corollary \ref{cor:dim2}]
When $\dim Z=1$, Conjecture \ref{conj:birkar2} holds obviously. When $\dim Z=2$, Conjecture \ref{conj:birkar2} follows from Theorem \ref{thm:surface} directly. By Theorem \ref{thm:main}, Conjecture \ref{conj:shokurov} and \ref{conj:birkar1} hold when $\dim Z\leq 2$.
\end{proof}

\section{The toric case}
The goal of this section is to prove Conjecture \ref{conj:birkar1} and \ref{conj:birkar2} in the toric case. First  we recall a result of Ambro \cite[Theorem 3.12]{Am22}. Note that the condition in \cite[Theorem 3.12]{Am22} that $-(K_X+B)$ is semi-ample over $Z$ is equivalent to that   $-(K_X+B)$ is nef over $Z$, because $X$ is of Fano type over $Z$ as $X\to Z$ is a toric contraction. 
\begin{lem}\cite[Theorem 3.12]{Am22}\label{lem:Ambro}
Let $d$ be a natural number and $\mathfrak A\subset [0,1]$ be an ACC set. Then there is a positive real number $t$ depending only on $d,\mathfrak A$ satisfying the following. Assume $(X,B)$ is a toric pair, $Z$ is an affine toric variety with $\dim Z>0$, $f:X\to Z$ is a toric contraction and $z\in Z$ is a torus invariant closed point such that 
\begin{itemize}
  \item $\mld(X/Z\ni z,B)\geq \epsilon$ for some $\epsilon>0$, 
  \item $B=\sum_i(1-a_i)B_i$ with $\frac{a_i}{\epsilon}\in \mathfrak A$, where the $B_i$ are distinct toric prime divisors,
  \item $-(K_X+B)$ is nef over $Z$. 
\end{itemize}
Then there exists an effective toric Cartier divisor $H$ on $Z$ such that $z\in \Supp H$ and $(X,B+\gamma f^*H)$ is lc over $z$, where $\gamma=\epsilon\cdot t$.
\end{lem}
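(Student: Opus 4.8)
The plan is to recast the statement in terms of fans and support functions, reduce it to a combinatorial optimization, and solve that; this is the route of \cite[Theorem 3.12]{Am22}.

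\emph{Reduction and dictionary.} First I would pass to the affine case: replacing $Z$ by the affine toric open $U_\sigma$ attached to the maximal cone $\sigma\in\Sigma_Z$ with $z=V(\sigma)$, we may assume $Z=U_\sigma$ and $\dim\sigma=\dim Z$. Let $\bar f:N\to N'$ be the surjective lattice map inducing $f$, so $|\Sigma_X|=C:=\bar f^{-1}(\sigma)\subset N_{\mathbb R}$ is a full-dimensional cone containing the subspace $(\ker\bar f)_{\mathbb R}$. Every effective toric Cartier divisor on $U_\sigma$ has the form $H=\operatorname{div}(\chi^m)$ with $m\in\sigma^\vee\cap M'$; then $z\in\Supp H$ iff $m\neq 0$, and $f^*H=\operatorname{div}(\chi^{\bar f^*m})$ has coefficient $\langle m,\bar f(u_\rho)\rangle\geq 0$ along each torus-invariant prime divisor $D_\rho$. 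Since $\mld(X/Z\ni z,B)\geq\epsilon>0$, the toric pair $(X,B)$ is lc over a neighbourhood of $z$; its non-lc locus is a torus-invariant closed set, hence a union of orbit closures, and the image of any nonempty orbit closure of $X$ is an orbit closure of $U_\sigma$, which necessarily contains the fixed point $z$ — so if the non-lc locus were nonempty it would meet $f^{-1}(z)$, a contradiction. Thus $(X,B)$ is lc over all of $U_\sigma$. Write $\psi$ for the support function of $K_X+B$, so $\psi(u_\rho)=1$ unless $D_\rho$ is a component $B_i$ of $B$, in which case $\psi(u_\rho)=a_i=\epsilon\alpha_i$ with $\alpha_i\in\mathfrak A$. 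The translations are then: $\psi\geq 0$ on $C$ (lc over $U_\sigma$); $\psi(v)\geq\epsilon$ for every primitive $v\in N$ with $\bar f(v)$ in the interior of $\sigma$ (the mld hypothesis); $\psi$ is a \emph{convex} piecewise-linear function on $C$ (because $-(K_X+B)$ is nef over the affine base $Z$, so $-\psi$ is the support function of a nef toric divisor); and, by the same torus-invariance argument, for $H=\operatorname{div}(\chi^m)$ and $\gamma>0$ one has $(X,B+\gamma f^*H)$ lc over $z$ iff $\psi(v)\geq\gamma\langle m,\bar f(v)\rangle$ for all $v\in C$.

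\emph{The optimization.} So the task becomes: find $m\in\sigma^\vee\cap M'\setminus\{0\}$ and $t=t(d,\mathfrak A)>0$ such that
$$\psi(v)\ \geq\ \epsilon\,t\,\langle m,\bar f(v)\rangle\qquad\text{for all }v\in C;$$
then $H=\operatorname{div}(\chi^m)$ and $\gamma=\epsilon t$ do the job. By positive homogeneity this need only be checked on the cross-section $\{v\in C:\langle m,\bar f(v)\rangle=1\}$, where, $\psi$ being convex and $\geq 0$, the relevant quantity is $\min_\rho\psi(u_\rho)/\langle m,\bar f(u_\rho)\rangle$ over rays $\rho$ of $\Sigma_X$ with $\langle m,\bar f(u_\rho)\rangle>0$, together with a routine check of the behaviour at infinity (controlled by $\psi\geq 0$ on $(\ker\bar f)_{\mathbb R}$). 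To choose $m$ I would exploit that, by convexity of $\psi$ and the mld bound, the rays $\rho$ with $\psi(u_\rho)$ below a fixed multiple of $\epsilon$ all have their images $\bar f(u_\rho)$ in a common proper face $F$ of $\sigma$; taking $m$ with $m^\perp\cap\sigma=F$, those rays impose no constraint, while for every remaining ray $\psi(u_\rho)$ is at least a fixed multiple of $\epsilon$.

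\emph{Main obstacle.} The remaining, and hardest, point is to bound $\langle m,\bar f(u_\rho)\rangle$ from above \emph{uniformly} (in $d$ and $\mathfrak A$) over these remaining rays — equivalently, to show that $\lct_z(X,B;f^*H)\geq c(d,\mathfrak A)\cdot\mld(X/Z\ni z,B)$ for a suitable $H$. Both $\lct_z$ and $\mld$ can degenerate to $0$ (e.g. for $Z=\mathbb C^2$ and $X$ obtained by inserting the rays $(k,1)$ and $(1,k)$, both behave like $1/k$), so only their ratio can be controlled, and pinning it down requires a careful analysis of the fan $\Sigma_X$ using the convexity coming from the nefness of $-(K_X+B)$ over $Z$; it is here that the ACC property of $\mathfrak A$ enters, to keep the small-discrepancy rays under control. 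This is precisely the content of \cite[Theorem 3.12]{Am22}; granting that bound, one takes $t$ to be the resulting constant and the proof is complete.
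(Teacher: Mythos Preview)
The paper does not supply its own proof of this lemma: it is stated as a direct citation of \cite[Theorem 3.12]{Am22} and used as a black box (the only remark is the sentence preceding the statement, noting that ``$-(K_X+B)$ semi-ample over $Z$'' is equivalent to ``$-(K_X+B)$ nef over $Z$'' since toric contractions are of Fano type). So there is nothing in the paper to compare against beyond the citation itself.

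Your write-up correctly sets up the toric dictionary---reduction to the affine chart $U_\sigma$, interpretation of the mld hypothesis and the nefness of $-(K_X+B)$ as convexity of the support function $\psi$, and the reformulation of the desired conclusion as the inequality $\psi(v)\geq \epsilon t\langle m,\bar f(v)\rangle$ on the cone $C$. This is the right framework. However, your final paragraph is circular: you identify the ``main obstacle'' as bounding the ratio $\lct_z(X,B;f^*H)/\mld(X/Z\ni z,B)$ from below uniformly in $d$ and $\mathfrak A$, and then resolve it by invoking \cite[Theorem 3.12]{Am22}---which is exactly the statement you are trying to prove. So as written this is a reduction plus a restatement, not a proof; if your intent is simply to record that the lemma is Ambro's result (as the paper does), then the combinatorial preamble is unnecessary, and if your intent is to give an independent argument, then the actual content---the uniform bound exploiting the ACC hypothesis on $\mathfrak A$ together with convexity of $\psi$---is precisely what is missing.
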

The following lemma is a direct consequence of Lemma \ref{lem:Ambro}, which asserts that Conjecture \ref{conj:birkar1} holds in the toric case when the coefficients of $B$ belong to a fixed DCC set.  
\begin{lem}\label{lem:bu}
Let $d$ be a natural number, $\epsilon$ be a positive real number and $\mathfrak D\subset [0,1]$ be a DCC set. Then there is a positive number $t$ depending only on $d,\epsilon, \mathfrak D$ satisfying the following.  Assume $(X,B)$ is a toric pair, $Z$ is an affine toric variety with $\dim Z>0$, $f:X\to Z$ is a toric contraction and $z\in Z$ is a torus invariant closed point such that
\begin{itemize}
  \item $\mld(X/Z\ni z,B)\geq \epsilon$, 
  \item the coefficients of $B$ are in $\mathfrak D$, and
  \item $-(K_X+B)$ is nef over $Z$. 
\end{itemize}
Then there exists an effective toric Cartier divisor $H$ on $Z$ such that $z\in \Supp H$ and $(X,B+t f^*H)$ is lc over $z$.
\end{lem}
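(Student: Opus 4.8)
The plan is to deduce Lemma~\ref{lem:bu} directly from Ambro's Lemma~\ref{lem:Ambro} by choosing the appropriate ACC set. Write $B=\sum_i(1-a_i)B_i$ over the distinct toric prime divisors $B_i\subset\Supp B$; by hypothesis each coefficient $1-a_i$ lies in $\mathfrak D$, so $a_i$ lies in $1-\mathfrak D:=\{1-b:b\in\mathfrak D\}$. First I would set
$$\mathfrak A:=\tfrac1\epsilon(1-\mathfrak D)=\{(1-b)/\epsilon:b\in\mathfrak D\},$$
so that $a_i/\epsilon\in\mathfrak A$ for every $i$, and $\mathfrak A\subset[0,1/\epsilon]$ since $1-b\in[0,1]$.

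Next I would check that $\mathfrak A$ is an ACC set depending only on $\epsilon$ and $\mathfrak D$: an infinite strictly increasing sequence in $1-\mathfrak D$ would produce an infinite strictly decreasing sequence in the DCC set $\mathfrak D$, so $1-\mathfrak D$ is ACC, and rescaling by the positive constant $1/\epsilon$ preserves the ACC property and boundedness. With this $\mathfrak A$ in hand, all hypotheses of Lemma~\ref{lem:Ambro} are satisfied: $(X,B)$ is a toric pair, $f\colon X\to Z$ is a toric contraction with $\dim Z>0$, $z$ is a torus invariant closed point, $\mld(X/Z\ni z,B)\ge\epsilon$, the decomposition $B=\sum_i(1-a_i)B_i$ has $a_i/\epsilon\in\mathfrak A$, and $-(K_X+B)$ is nef over $Z$. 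Hence Lemma~\ref{lem:Ambro} produces a positive real number $t_0$ depending only on $d$ and $\mathfrak A$ — equivalently only on $d,\epsilon,\mathfrak D$ — together with an effective toric Cartier divisor $H$ on $Z$ such that $z\in\Supp H$ and $(X,B+\epsilon t_0\,f^*H)$ is lc over $z$. Taking $t:=\epsilon t_0$, which depends only on $d,\epsilon,\mathfrak D$, gives the assertion.

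Since this is essentially a translation of Ambro's result into the DCC language, there is no serious obstacle here; the only points requiring care are bookkeeping ones. Namely, one must verify that the output $t=\epsilon t_0$ depends on $\mathfrak D$ only through the ACC set $1-\mathfrak D$ (so that it is a legitimate constant for the statement), which is immediate from the construction, and one must make sure the normalization of the ACC set in Lemma~\ref{lem:Ambro} is compatible with $\mathfrak A$ — it is, since only the ACC property (hence boundedness) of $\mathfrak A$ is used in Ambro's theorem. If one prefers to invoke Lemma~\ref{lem:Ambro} strictly as stated with an ACC set inside $[0,1]$, one simply records that $1-\mathfrak D$ is itself an ACC subset of $[0,1]$ and feeds it in with the scaling parameter $\epsilon$ kept as is; the factor $1/\epsilon$ above is then absorbed into the conclusion $\gamma=\epsilon\cdot t$ of Lemma~\ref{lem:Ambro}.
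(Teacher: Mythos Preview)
Your proposal is correct and follows exactly the paper's approach: the paper's entire proof is the single line ``It follows from Lemma~\ref{lem:Ambro} by taking $\mathfrak A=(1-\mathfrak D)/\epsilon$,'' and you have simply unpacked this with appropriate care, including the verification that $(1-\mathfrak D)/\epsilon$ is ACC and the remark about the $[0,1]$ normalization.
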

\begin{proof}
It follows from Lemma \ref{lem:Ambro} by taking $\mathfrak A=(1-\mathfrak D)/\epsilon$.
\end{proof}

In the following theorem, we remove the condition on the coefficients of $B$ in Lemma \ref{lem:bu}, so Conjecture \ref{conj:birkar1} holds in the toric case. The argument of its proof is similar to that of Proposition \ref{prop:4to5} and the case that $B=0$ is known by \cite[Theorem 3.12]{Am22}.

\begin{thm}\label{thm:toric2}
Let $d$ be a natural number, $\epsilon$ be a positive real number. Then there is a positive number $t$ depending only on $d,\epsilon$ satisfying the following. Assume $(X,B)$ is a toric pair, $f:X\to Z$ is a toric contraction with $\dim Z>0$ and $z\in Z$ is a torus invariant closed point such that 
\begin{itemize}
    \item $\mld(X/Z\ni z,B)\geq \epsilon$, and
    \item $-(K_X+B)$ is nef over $Z$. 
\end{itemize}
Then there exists an effective Cartier divisor $H$ on a neighbourhood of $z\in Z$ such that $z\in \Supp H$ and $(X,B+t f^*H)$ is lc over $z$.
\end{thm}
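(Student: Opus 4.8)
The plan is to reduce Theorem \ref{thm:toric2} to Lemma \ref{lem:bu}, which already settles the case of boundary coefficients in a fixed DCC set; the only genuinely new point is \emph{uniformity} of $t$ in the coefficients of $B$. Two preliminary reductions: first, by taking a small $\Q$-factorialisation of $X$ (a toric modification that extracts no divisor, hence changes no log discrepancy and preserves $\mld(X/Z\ni z,B)$, the Fano type property and nefness of $-(K_X+B)$ over $Z$) I may assume $X$ is $\Q$-factorial. Second, by Corollary \ref{cor:idealdiv} it is enough to produce a number $t=t(d,\epsilon)>0$ such that the triple $(X,B,f^*\mathfrak m_z^t)$ is lc over $z$.

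\textbf{Plan.} I would construct a toric boundary $B'$ on $X$ with the following properties: (a) $B'\geq B$; (b) the coefficients of $B'$ lie in a finite set $\mathfrak D=\mathfrak D(\epsilon)$ depending only on $\epsilon$; (c) $\mld(X/Z\ni z,B')\geq \epsilon'$ for some $\epsilon'=\epsilon'(d,\epsilon)>0$; (d) $-(K_X+B')$ is nef over $Z$, possibly after replacing $X$ by the output of a toric MMP. Granting this, Lemma \ref{lem:bu} applied to $(X,B')\to Z$ with the DCC set $\mathfrak D$ yields an effective toric Cartier divisor $H$ on $Z$ with $z\in\Supp H$ and a number $t$ depending only on $d,\epsilon',\mathfrak D$ — hence only on $d,\epsilon$ — such that $(X,B'+tf^*H)$ is lc over $z$; since $B\leq B'$, the pair $(X,B+tf^*H)$ is then lc over $z$, which finishes the proof.

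\textbf{Construction of $B'$.} Here I would use that log discrepancies are affine in the boundary and that the total toric boundary $\partial X$ is lc, so that $a(E,X,\Theta)\geq a(E,X,\partial X)\geq 0$ for every toric prime divisor $E$ over $X$ and every toric boundary $\Theta\leq\partial X$. Split $B=B^v+B^h$ into the part supported on prime divisors vertical over $z$ (i.e.\ mapping onto $z$) and the rest: each component of $B^v$ is itself a prime divisor over $X/Z\ni z$, so its coefficient is $\leq 1-\epsilon$, and in particular the components of $B$ with coefficient close to $1$ are all horizontal over $z$. I would then define $B'$ by raising every coefficient of $B$ to the nearest point of a fixed fine grid $\{0,\delta,2\delta,\dots,1\}$ with $\delta=\delta(\epsilon)$ small (and to $1$ if it is already within $\delta$ of $1$); this makes (a), (b) immediate. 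For (c) one analyses, on the fan of $X$, the piecewise-linear support functions: a prime divisor $E$ over $X/Z\ni z$ corresponds to a primitive lattice point $v$ whose image in $N_Z$ lies in the relative interior of the maximal cone $\sigma_Z$ of $Z$ over $z$, and one must show that raising the coefficients has a controlled effect on $a(E,X,B)=\phi_B(v)$; the delicate contributions come precisely from horizontal divisors along which $v$ has large multiplicity, and here the interior constraint on the image of $v$ in $\sigma_Z$ (together with $\dim Z\le d$) is what bounds $\mu_E(\pi^*B^h)$ in terms of $a(E,X,B)$ and gives (c). Finally, for (d), run a toric MMP on $-(K_X+B')$ over $Z$: it terminates, the pushforward of $B'$ still has coefficients in $\mathfrak D$, and since $f^*H\equiv 0/Z$ this is also an MMP on $-(K_X+B'+tf^*H)/Z$, so the standard argument used in the proof of Proposition \ref{prop:4to5} transports the ``lc over $z$'' conclusion back to $X$ — one only has to check that the MMP does not destroy the bound in (c).

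\textbf{Main obstacle.} The crux is step (c) together with the MMP in (d): because there is no uniform control over arbitrary real coefficients, one is forced to round to a finite set depending only on $\epsilon$, yet the pullback multiplicities $\mu_E(\pi^*D_i)$ entering the log discrepancies are individually unbounded, so a crude perturbation estimate fails. One must instead carry out genuine toric combinatorics — an analysis of the piecewise-linear functions $\phi_{B'}$ and of the ideal $f^*\mathfrak m_z$ restricted to the subcone of $N_\R$ of valuations whose center maps to $z$ — in which the bound $\dim Z\le d$ enters essentially to show that $\mld(X/Z\ni z,B')$ stays bounded below by a positive constant depending only on $d$ and $\epsilon$; and one must ensure, or arrange by working on a suitable toric model from the start, that the MMP needed to restore nefness of $-(K_X+B')$ over $Z$ does not spoil this lower bound.
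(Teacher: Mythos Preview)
Your proposal has a genuine gap at exactly the point you yourself flag as the ``main obstacle'': step~(c). You never prove that rounding the coefficients of $B$ up to a grid depending only on $\epsilon$ preserves a uniform positive lower bound $\epsilon'=\epsilon'(d,\epsilon)$ on $\mld(X/Z\ni z,\,\cdot\,)$. The one-sentence justification (``the interior constraint on the image of $v$ in $\sigma_Z$ \dots\ is what bounds $\mu_E(\pi^*B^h)$ in terms of $a(E,X,B)$'') is a hope, not an argument: for horizontal prime divisors $D_i$ the multiplicities $\mu_E D_i$ really are unbounded as $E$ varies over valuations centred over $z$, and rounding their coefficients up to $1$ simply kills their (possibly dominant) contribution to $a(E,X,B)=\sum(1-b_i)\mu_E D_i$. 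Nothing you have written prevents the remaining vertical contribution from being arbitrarily small. The subsequent MMP in (d) then compounds the problem, since you would also need to show it does not further degrade the bound.

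The paper avoids this difficulty entirely by a different mechanism --- essentially the same trick you already cite from Proposition~\ref{prop:4to5}, transplanted to the relative toric setting. Rather than perturb $B$ into a DCC range, one lets $t$ be the threshold for which $\mld(X/Z\ni z,B,f^*\mathfrak m_z^t)=\epsilon/2$, picks a single toric divisor $T$ computing this mld, and reduces to bounding $\mu_T f^*\mathfrak m_z$ from above. One extracts $T$ on a toric model $Y\to X$, produces an auxiliary boundary $\Omega+R$ on $Y$ with $K_Y+\Omega+R\sim_\R 0/Z$ and $\mld(Y/Z\ni z,\Omega+R)=\epsilon/2$, and then runs an MMP on $-K_Y$ (no boundary) over $Z$. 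On the output $Y'$ one has $\mld(Y'/Z\ni z,0)\ge\mld(Y'/Z\ni z,\Omega'+R')=\epsilon/2$ and $-K_{Y'}$ nef over $Z$, so Lemma~\ref{lem:bu} applies with $B=0$ --- a coefficient set that is trivially DCC --- yielding a bounded $u>0$ and a divisor $F$ with $(Y',u f'^*F)$ lc over $z$. Pulling back along the $(-K_Y)$-MMP gives $\mu_T f^*\mathfrak m_z\le\mu_T f^*F\le 1/u$. In short: where you try to push $B$ into a fixed DCC set and get stuck, the paper throws $B$ away altogether and applies the DCC lemma with zero boundary.
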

\begin{proof}
Let $t$ be the largest number such that 
$$\mld(X/Z\ni z,B,f^*\mathfrak m_{z}^t)\geq \frac{\epsilon}{2},$$
where $\mathfrak m_{z}$ is the ideal sheaf of the closed point $z\in Z$. By Corollary \ref{cor:idealdiv}, it suffices to show that $t$ is bounded from below by a positive constant depending only on $d,\epsilon$. If $t> 1$, we are done. So we may suppose that $t\leq 1$.

We can find a toric prime divisor $T$ over $X/Z\ni z$ so that
$$a(T,X,B,f^*\mathfrak m_{z}^t)=\frac{\epsilon}{2}.$$
Since $\mld(X/Z\ni z,B)\geq \epsilon$, we have
$$t \cdot \mu_T f^*\mathfrak m_{z}\geq \frac{\epsilon}{2}.$$
So it suffices to show that $\mu_T f^*\mathfrak m_{z}$ is bounded from above by a constant depending only on $d,\epsilon$. 

Let $\pi:Y\to X$ be a toric $\Q$-factorialisation extracting $T$ but no other divisors (if $T$ is not exceptional over $X$, then $\pi$ is a small toric $\Q$-factorialisation). By Lemma \ref{lem:linearsystem} and Remark \ref{rem}, there exists an effective Cartier divisor $D$ on $Z$ such that $z\in \Supp D$,
$$a(T,X,B+tf^*D)=a(T,X,B,f^*\mathfrak m_z^t)=\frac{\epsilon}{2}$$
and 
$$\mld(X/Z\ni z,B+tf^*D)=\mld(X/Z\ni z,B,f^*\mathfrak m_{z}^t)=\frac{\epsilon}{2}.$$
Let $K_Y+\Omega$ be the pullback of $K_X+B+tf^*D$ to $Y$. Then $\Omega$ is effective as $\mu_T \Omega=\frac{\epsilon}{2}\geq 0$. Moreover, $-(K_Y+\Omega)$ is nef over $Z$ and  
$$\mld(Y/Z\ni z, \Omega)= \frac{\epsilon}{2}.$$
As $Y\to Z$ is a toric contraction, $Y$ is of Fano type over $Z$. So $-(K_Y+\Omega)$ is semi-ample over $Z$. It follows that there exists an effective $\R$-divisor $R$ on $Y$ such that $K_Y+\Omega+R\sim_{\R} 0/Z$ and
$$\mld(Y/Z\ni z, \Omega+R)= \frac{\epsilon}{2}.$$

Run an MMP on $-K_Y$ over $Z$ and let $Y'$ be the resulting model with the induced morphism $f':Y'\to Z$. Then $-K_{Y'}$ is nef over $Z$. Moreover, we have
$$\mld (Y'/Z\ni z, 0)\geq \mld(Y'/Z,\Omega'+R')=\frac{\epsilon}{2}, $$
where $\Omega',R'$ are the pushdowns of $\Omega,R$ to $Y'$.
Shrinking $Z$ to an affine toric neighbourhood of $z$, we can apply Lemma \ref{lem:bu} (by taking $X=Y'$ and $B=0$) to deduce that, there are a positive number $u$ depending on $d,\epsilon$ and an effective Cartier divisor $F$ on $Z$ such that $z\in \Supp F$ and $(Y',u\cdot f'^*F)$ is lc over $z$. Then $(Y,u\cdot \pi^*f^*F)$ is lc over $z$, as $Y\dashrightarrow Y'$ is an MMP on $-K_Y$. Thus 
$$\mu_T (\pi^*f^* \mathfrak m_{z})\leq \mu_T (\pi^*f^*F)\leq \frac{1}{u},$$
where $u$ only depends on $d,\epsilon$.
\end{proof}
\begin{proof}[Proof of Theorem \ref{thm:toric}]
It follows from Theorem \ref{thm:toric2} directly.
\end{proof}
\begin{proof}[Proof of Corollary \ref{cor:toric}]
It is a special case of Theorem \ref{thm:toric}.
\end{proof}

\section{Exceptional singularities}
In this section we will treat the case for exceptional singularities.

\begin{thm}\label{thm:exptech}
Let $s$ be a natural number  and $\epsilon,\delta$ be positive real numbers. Then there is a positive real number $t$ depending only on $s,\epsilon,\delta$ satisfying the following. Assume
\begin{itemize}
  \item $(Z \ni z,\Delta)$ is a klt germ of dimension $s$,
  \item $\pi:Y\to Z$ is a plt blow-up of $(Z\ni z,\Delta)$ with the exceptional divisor $E$, 
  \item $a(E,Z,\Delta)\geq \epsilon$,
  \item there is a boundary $\Omega$ on $Z$ such that $(Z\ni z,\Omega)$ is a klt germ and  $\pi:Y\to Z$ is a $\delta$-plt blow-up of $(Z\ni z,\Omega)$.
\end{itemize}
Then there exists an effective Cartier divisor $H$ on a neighbourhood of $z\in Z$ such that $z\in \Supp H$ and $(Z,\Delta+tH)$ is lc near $z$.
\end{thm}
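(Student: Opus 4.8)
\noindent\emph{Proof proposal.} The plan is to lift everything to the plt blow-up $\pi\colon Y\to Z$ and to reduce the statement to producing an effective Cartier divisor $H$ on $Z$ whose pullback has the shape $\pi^*H=G+nE$, where $n$ is bounded in terms of $s,\delta$ and $G$ is a general enough divisor so that $(Y,\Delta_Y+E+G)$ is lc over $z$. Write $K_Y+\widetilde\Delta=\pi^*(K_Z+\Delta)$. Since $a(E,Z,\Delta)\ge\epsilon$ and $\pi$ is an isomorphism over $Z\setminus z$ (by the definition of a plt blow-up, $\mathrm{Ex}(\pi)=\Supp E$), we have $\mu_E\widetilde\Delta=1-a(E,Z,\Delta)\le 1-\epsilon$ and $\widetilde\Delta-\mu_E\widetilde\Delta\cdot E=\Delta_Y\ge 0$. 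For any effective Cartier divisor $H$ with $z\in\Supp H$, the pair $(Z,\Delta+tH)$ is lc near $z$ if and only if $(Y,\widetilde\Delta+t\pi^*H)$ is sub-lc over $z$, since $K_Y+\widetilde\Delta+t\pi^*H=\pi^*(K_Z+\Delta+tH)$; so it suffices to find a bounded $t>0$ and such an $H$ with this property.

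Next I would extract the boundedness from the $\delta$-plt hypothesis. Since $\pi$ is a $\delta$-plt blow-up of $(Z\ni z,\Omega)$ in dimension $s$, by \cite[Proposition 7.4]{HLS19} there is $n_1\in\N$ depending only on $s,\delta$ such that $n_1E$ is Cartier (note no assumption on the coefficients of $\Delta$ or $\Omega$ is needed). Moreover $(Y,(1-a)E+\Omega_Y)$ is klt and $-(K_Y+(1-a)E+\Omega_Y)$ is ample over $Z$ for small $a>0$, so $Y$ is of Fano type over $Z$; fix a klt pair $(Y,\Theta)$ with $-(K_Y+\Theta)$ ample over $Z$. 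By a standard compactification argument (cf. Lemma \ref{lem:projective}), which does not affect the geometry over $z$, I may assume $Y,Z$ are projective. Choose a very ample divisor $A$ on $Z$ large enough that both $\pi^*A-n_1E$ and $\pi^*A-n_1E-(K_Y+\Theta)$ are ample on $Y$ (possible because $-E$ and $-(K_Y+\Theta)$ are ample over $Z$). Then $M:=\pi^*A-n_1E$ is Cartier with $M-(K_Y+\Theta)$ nef and big, so by the effective base point free theorem \cite{Ko93} there is $n_2\in\N$ depending only on $s$ such that $\mathcal L:=n_2M=\pi^*D-nE$ is base point free, where $D:=n_2A$ is (very ample) Cartier and $n:=n_1n_2$ depends only on $s,\delta$.

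Now let $H\in|D|$ be the zero divisor of a general section $s\in H^0(Y,\mathcal L)=H^0\big(Z,\mathcal O_Z(D)\otimes\pi_*\mathcal O_Y(-nE)\big)$. Then $H$ is Cartier, $z\in\Supp H$ (because $\pi_*\mathcal O_Y(-nE)\subseteq\mathfrak m_z$), and $\pi^*H=G+nE$ with $G:=\divi(s)$ on $Y$ effective. A Bertini argument on a log resolution $\psi$ of $(Y,\Delta_Y+E)$ over a neighbourhood of $E$ — using that $\psi^*\mathcal L$ is again base point free, that a general $\psi^*G$ is then smooth and transverse to the snc divisor $\psi^{-1}_*(\Delta_Y+E)+\mathrm{Ex}(\psi)$, and that $(Y,\Delta_Y+E)$ is plt, hence lc, near $E$ (so the exceptional coefficients in $\psi^*(K_Y+\Delta_Y+E)$ are $\le 1$ there) — shows that $(Y,\Delta_Y+E+G)$ is lc over $z$ for general $s$. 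Finally set $t:=\epsilon/n$ (we may assume $\epsilon\le 1$, so $t\le 1$). Then
\[
\widetilde\Delta+t\pi^*H=\Delta_Y+\big(1-a(E,Z,\Delta)+\epsilon\big)E+\tfrac{\epsilon}{n}G\ \le\ \Delta_Y+E+G,
\]
and the difference $(a(E,Z,\Delta)-\epsilon)E+(1-\tfrac{\epsilon}{n})G$ is effective and $\Q$-Cartier; hence $(Y,\widetilde\Delta+t\pi^*H)$ is sub-lc over $z$, i.e. $(Z,\Delta+tH)$ is lc near $z$, with $t$ depending only on $s,\epsilon,\delta$.

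The main obstacle is the boundedness input in the second step: one must know that the Cartier index of $E$ is bounded in terms of $s$ and $\delta$ alone, with no constraint on the coefficients of $\Delta$ or $\Omega$ — this is precisely where \cite[Proposition 7.4]{HLS19} and the $\delta$-plt assumption are essential, and it is also what forces $n$ (hence $t$) to be uniform. The remaining ingredients (the compactification, the effective base point free theorem, and the Bertini/snc verification that adding a general member of a base point free system to a plt pair keeps it lc) are routine.
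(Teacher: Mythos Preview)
Your proof is correct and follows essentially the same route as the paper's: bound the Cartier index of $E$ via the $\delta$-plt hypothesis (the paper cites \cite[Proposition 4.4 and Theorem A.1]{HLS19} rather than Proposition 7.4, and records the bound as depending on $s,\epsilon,\delta$, but this is immaterial), apply effective base point freeness to the Cartier divisor $\pi^*A-n_1E$ to obtain a base-point-free system with bounded $n$, take a general member $G$, set $H=\pi_*G$ and $t=\epsilon/n$, and conclude by the comparison $\widetilde\Delta+t\pi^*H\le\Delta_Y+E+G$. The only cosmetic differences are your choice of the auxiliary klt pair $\Theta$ (the paper uses $\Omega_Y+(1-a)E$ explicitly) and a factor of $2$ in the ample twist.
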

\begin{proof}
Shrinking $Z$ around $z$, we can assume that $(Z,\Omega)$ is klt everywhere. Applying Lemma \ref{lem:projective} (by taking $X=Z$ and $B=\Omega$), we may suppose that $Z$ is projective and $(Z,\Omega)$ is still klt everywhere. By assumption, we have
\begin{itemize}
  \item $-E$ is ample over $Z$, 
  \item $a(E,Z,\Delta)\geq \epsilon$,
  \item $(Y,\Delta_Y+E)$ is plt near $E$, where $\Delta_Y$ is the birational transform of $\Delta$, and
  \item $(Y,\Omega_Y+E)$ is $\delta$-plt, where $\Omega_Y$ is the birational transform of $\Omega$.
\end{itemize}
Then $-(K_Y+\Omega_Y+E)$ is ample over $Z$. 
Let $C=\Omega_Y+(1-a)E$ where $a$ is a sufficiently small positive number. Then  $(Y,C)$ is a klt pair and $-(K_Y+C)$ is ample over $Z$. 

By \cite[Proposition 4.4 and Theorem A.1]{HLS19}, there exists a natural number $n_1$ depending on $s,\epsilon,\delta$ such that $n_1E$ is Cartier. Let $A$ be a very ample divisor on $Z$ such that $\pi^*A-n_1E$, $\pi^*A-(K_Y+C)$ are ample. Then 
$$(2\pi^*A-n_1E)-(K_Y+C)$$
is ample. By the effective base point freeness \cite{Ko93}, there exists a bounded $n_2\in \N$ depending on $s$ such that $2n_2\pi^*A-n_1n_2E$ is base point free. Denote $n=n_1n_2$ and let $G$ be a general element of $|2n_2\pi^*A-nE|$. Then $G+nE\sim 2n_2\pi^*A$ and $(Y,\Delta_Y+E+G)$ is lc over $z$.  Let $H=\pi_*G$, then $H\sim 2n_2A$, which implies that $H$ is an effective Cartier divisor. Moreover, we have $\pi^*H=G+nE$, which implies that $z\in \Supp H$. 
  
Let $K_Y+\widetilde{\Delta}_Y$ be the pullback of $K_Z+\Delta$. Then $\mu_E \widetilde{\Delta}_Y \leq 1-\epsilon$. Thus 
$$\widetilde{\Delta}_Y+\frac{\epsilon}{n} \pi^*H=\widetilde{\Delta}_Y+\epsilon E+\frac{\epsilon}{n} G\leq \Delta_Y+E+G$$
(we may assume that $\epsilon\leq 1$). It follows that $(Y,\widetilde{\Delta}_Y+\frac{\epsilon}{n}\pi^*H)$ is lc over $z$. Take $t=\frac{\epsilon}{n}$, then $(Z,\Delta+tH)$ is lc near $z$ and $t$ only depends on $s,\epsilon,\delta$.
\end{proof}
  
\begin{proof}[Proof of Theorem \ref{thm:exceptional}]
It follows from Theorem \ref{thm:exptech} directly (by taking $\Omega=\Delta$).
\end{proof}

\begin{proof}[Proof of Theorem \ref{thm:exc2}]
When $Z=1$, Conjecture \ref{conj:birkar2} holds obviously. So we may suppose that $\dim Z\geq 2$. By Lemma \ref{lem:has}, $(Z\ni z,\Delta)$ has a plt blow-up $\pi:Y\to Z$. Then $\pi:Y\to Z$ is also a plt blow-up of $(Z\ni z,\Omega)$ since $\Omega\leq \Delta$. By Lemma \ref{lem:exc}, $\pi:Y\to Z$ is a $\delta$-plt blow-up of $(Z\ni z,\Omega)$, where $\delta$ only depends on $s,\mathfrak D$. Thus the theorem follows from Theorem \ref{thm:exptech}.
\end{proof}

\begin{proof}[Proof of Corollary \ref{cor:exc1}]
When $Z\ni z$ is an exceptional singularity, Conjecture \ref{conj:birkar2} follows from Theorem \ref{thm:exc2} (by taking $\Omega=0$) and then Conjecture \ref{conj:birkar1} holds by the proof of Proposition \ref{prop:5to3}.
\end{proof}

\begin{proof}[Proof of Corollary \ref{cor:exc2}]
The corollary follows from the argument in the proofs of Theorem \ref{thm:klt} and \ref{thm:tech}, by replacing the assumption that Conjecture \ref{conj:birkar1} holds when $\dim Z\leq \ell$
with Corollary \ref{cor:dim2} and \ref{cor:exc1}.
\end{proof}

\appendix
\section{Fundamental cycles of quotient surface singularities }
We call $Z\ni z$ a quotient surface singularity if $(Z\ni z)=(\C^2\ni 0)/G$ for some finite subgroup $G\subset GL(2,\C)$. In this appendix, we will show the following proposition.
\begin{prop}\label{prop:quotient}
Let $Z\ni z$ be a quotient surface singularity and let $\pi:Y\to Z$ be the minimal resolution with the exceptional divisor $E$. Denote by $C_f$ the fundamental cycle of $Z\ni z$ on $Y$. Then $C_f\leq 6E$.
\end{prop}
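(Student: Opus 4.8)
The plan is to combine Brieskorn's classification of quotient surface singularities \cite{Br68} by the dual graph of the minimal resolution with a combinatorial analysis of fundamental cycles. Write $E=\bigcup_iE_i$, a negative definite tree of smooth rational curves (Lemma~\ref{lem:good}), and $C_f=\sum_ia_iE_i$; recall that $C_f$ is the minimal element of the set of effective cycles $C$ supported on $E$ with $C\cdot E_k\le 0$ for all $k$, and that every $a_i\ge 1$ since $\Supp C_f=E$. By the classification, the dual graph of $\pi$ is either a Hirzebruch--Jung chain (when $Z\ni z$ is a cyclic quotient singularity) or is star-shaped: a single trivalent central vertex $E_0$ with three arms, each a Hirzebruch--Jung chain, the three arms carrying orbifold orders $(\alpha_1,\alpha_2,\alpha_3)$ equal to $(2,2,n)$, $(2,3,3)$, $(2,3,4)$ or $(2,3,5)$. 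In the chain case $E_1-\cdots-E_r$ with all $E_j^2=-b_j\le -2$, one checks $(\sum_jE_j)\cdot E_k=2-b_k\le 0$ at interior vertices and $<0$ at the two ends, so $\sum_jE_j$ lies in the defining set; by minimality and $a_i\ge 1$ this forces $C_f=\sum_jE_j\le 6E$, and we are done.

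It remains to treat the star-shaped case. First I would record the standard fact that the function $v\mapsto\mu_vC_f$ attains its maximum at the central vertex $E_0$ and is non-increasing along each arm as one moves away from $E_0$; this follows from Laufer's algorithm (Remark~\ref{rem:laufer}) run starting at an arm end, using $b_v\ge 2$ at every vertex. Consequently it suffices to prove $a_0:=\mu_{E_0}C_f\le 6$.

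To bound $a_0$ I would compare with the Du Val graph $\Gamma_0$ that has the \emph{same} orbifold orders but all self-intersections equal to $-2$, namely the Dynkin diagram of type $D_{n+2}$, $E_6$, $E_7$ or $E_8$ (the arm of order $\alpha_i$ being a chain of $\alpha_i-1$ curves of self-intersection $-2$); its fundamental cycle is the highest root of the corresponding root system, with maximal coefficient $2$, $3$, $4$, $6$ respectively. The comparison is done one arm at a time. For a single Hirzebruch--Jung arm $S$ of orbifold order $\alpha$ attached to a vertex whose coefficient is prescribed to be $m$, the minimal completion of the arm assigns to the first vertex of $S$ a value $\phi_S(m)$, non-decreasing in $m$; and $a_0$ is the least integer $a_0\ge 1$ satisfying $a_0=\lceil(\phi_{S_1}(a_0)+\phi_{S_2}(a_0)+\phi_{S_3}(a_0))/b_0\rceil$, which is non-decreasing in each $\phi_{S_i}$. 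The key point is then that $\phi_S(m)\le m-\lfloor m/\alpha\rfloor$, with equality when $S$ is the $(-2)$-chain of orbifold order $\alpha$; granting this, replacing each arm of $\Gamma$ by the corresponding $(-2)$-chain only increases $\mu_{E_0}C_f$, turning $\Gamma$ into $\Gamma_0$, so $a_0\le\mu_{E_0}C_f(\Gamma_0)\le 6$, whence $C_f\le 6E$.

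The main obstacle is the inequality $\phi_S(m)\le m-\lfloor m/\alpha\rfloor$ --- equivalently, that among Hirzebruch--Jung arms of a fixed orbifold order the $(-2)$-chain produces the largest fundamental-cycle coefficients. I would prove it by induction on the length of the continued fraction of $S$, using the standard recursion for Hirzebruch--Jung continued fractions together with the auxiliary observation that in a minimal arm completion every coefficient is at most the prescribed central value $m$; this is the one place where the fine structure of the resolution, not merely negative-definiteness, is used. If one prefers to bypass the combinatorics, an alternative is to read off $C_f$ from Brieskorn's explicit tables of resolution graphs --- one uniform computation for the dihedral family $(2,2,n)$ and finitely many graph shapes for each of $(2,3,3)$, $(2,3,4)$, $(2,3,5)$ --- which is longer but entirely elementary; either way the bound $6$ is sharp only in the $E_8$ case.
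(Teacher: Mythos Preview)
Your strategy is sound and genuinely different from the paper's. The paper proves a one-line monotonicity lemma (Lemma~\ref{lem:-2}): if two graphs have the \emph{same} vertex set and $E_i\cdot E_j\ge E_i'\cdot E_j'$ for all $i,j$, then $C_f\ge C_f'$. This lets one replace every self-intersection by $-2$ without changing the graph topology, after which the fundamental cycle is read off directly---a uniform pattern for the cyclic and dihedral chains, and a short finite list (Tables~\ref{tab:tet}--\ref{tab:icos}) for the tetrahedral, octahedral and icosahedral shapes. Your approach is more conceptual: you change not only the weights but also the arm lengths, reducing every star-shaped graph to the Du Val graph with the same orbifold triple and invoking the highest root. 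The cost is that the comparison no longer follows from a trivial intersection-matrix inequality; it hinges on your ``main obstacle'' $\phi_S(m)\le m-\lfloor m/\alpha\rfloor$.

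That inequality is the one genuine gap. Your induction sketch is plausible, but the recursion for $\phi_S$ along a Hirzebruch--Jung arm is a coupled system (each $c_j$ depends on both neighbours), and reducing to a one-variable induction on the continued-fraction length needs a clean statement about how $\phi_S$ behaves when you peel off the first vertex---this is not written down, and it is where any subtlety would live. A second minor point: your ``fixed-point'' description of $a_0$ should be an inequality, and the monotonicity of the least solution in the $\phi_{S_i}$ is most cleanly argued by exhibiting an explicit test cycle on $\Gamma$ with central value $a_0(\Gamma_0)$ and minimal arm completions, then checking the central condition using $\phi_{S_i}\le\phi_{S_i^0}$ and $b_0\ge 2$. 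Finally, your proposed fallback to tables is essentially the paper's proof, but note that even there you must handle the parameter $b\ge 2$ at the centre; the paper does this via Lemma~\ref{lem:-2}, and you would need at least that special case.
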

\begin{lem}\label{lem:-2}
  Let $Z\ni z$ and $Z'\ni z'$ be two surface singularities. Let $Y\to Z$ and $Y'\to Z'$ be the minimal resolutions with the exceptional divisors $E=\sum_{i} E_i$ over $Z\ni z$ and $E'=\sum_{i} E'_i$  over $Z'\ni z'$ respectively. Assume that 
  \begin{itemize}
    \item the number of components of $E$ is the same as that of $E'$, and
    \item $E_i\cdot E_j\geq E'_i \cdot E'_j$ for any $i,j$. 
  \end{itemize} 
  Let $C_f=\sum_i c_i E_i$ and $C'_f=\sum_i c'_i E'_i$ be the fundamental cycles on $Y$ and $Y'$ respectively.
  Then $c_i\geq c'_i$ for any $i$.
  \end{lem}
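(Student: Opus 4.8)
The plan is to transport Laufer's algorithm (Remark \ref{rem:laufer}) from $Y'$ to $Y$ step by step and then compare the two cycles. Identify the component sets of $E=\sum_{i=1}^r E_i$ and $E'=\sum_{i=1}^r E'_i$ via the common index set $\{1,\dots,r\}$, so that the hypothesis reads $E_i\cdot E_j\ge E'_i\cdot E'_j$ for all $i,j$.

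First I would run Laufer's algorithm on $Y'$: starting from some component, it produces a sequence $C'_1=E'_{i_0},\ C'_2,\dots,\ C'_N=C'_f$, where at the $l$-th step $C'_{l+1}=C'_l+E'_{i_l}$ for an index $i_l$ with $C'_l\cdot E'_{i_l}>0$. Now I would build effective cycles on $Y$ mimicking these choices: set $D_1:=E_{i_0}$ and $D_{l+1}:=D_l+E_{i_l}$. By construction $D_l$ and $C'_l$ have the same coefficient vector; write $D_l=\sum_i d^{(l)}_i E_i$ and $C'_l=\sum_i d^{(l)}_i E'_i$ with $d^{(l)}_i\in\Z^{\ge 0}$, so that in particular $D_N$ has coefficients $c'_i:=d^{(N)}_i$.

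The key point is that $D_1,\dots,D_N$ is the beginning of a genuine run of Laufer's algorithm on $Y$, for which one must verify $D_l\cdot E_{i_l}>0$ for each $l<N$. This follows from the hypothesis together with the nonnegativity of the coefficients:
\[
D_l\cdot E_{i_l}=\sum_i d^{(l)}_i\,(E_i\cdot E_{i_l})\ \ge\ \sum_i d^{(l)}_i\,(E'_i\cdot E'_{i_l})=C'_l\cdot E'_{i_l}>0.
\]
Hence the algorithm on $Y$ may be continued from $D_N$ by arbitrary valid choices, and by Laufer's theorem (Remark \ref{rem:laufer}, \cite[Proposition 4.1]{La72}) it terminates after finitely many further steps at the fundamental cycle $C_f$ of $Z\ni z$ on $Y$; write the completed run as $D_1\le D_2\le\cdots\le D_N\le\cdots\le D_M=C_f$. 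Since every step of the algorithm only adds an effective divisor, this sequence is non-decreasing, so $D_N\le C_f$. Comparing coefficients in $D_N\le C_f$ and recalling that $D_N$ has coefficients $c'_i$ gives $c_i\ge c'_i$ for all $i$, as claimed.

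The only step requiring real care is the displayed inequality verifying that the mimicked steps are legal Laufer steps on $Y$; once this is in hand, monotonicity of the cycles produced by the algorithm delivers the conclusion immediately, and no input beyond $d^{(l)}_i\ge 0$ and the entrywise comparison of intersection matrices is used.
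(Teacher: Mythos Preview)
Your proof is correct. It takes a genuinely different route from the paper's own argument, though.

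The paper argues in the opposite direction: it transports the coefficients $c_i$ of $C_f$ from $Y$ to $Y'$, setting $\widetilde{C}_f=\sum_i c_i E'_i$, and checks directly from the hypothesis $E'_i\cdot E'_j\le E_i\cdot E_j$ and $c_i\ge 0$ that
\[
\widetilde{C}_f\cdot E'_k=\sum_i c_i\,(E'_i\cdot E'_k)\le \sum_i c_i\,(E_i\cdot E_k)=C_f\cdot E_k\le 0
\]
for every $k$. Thus $\widetilde{C}_f$ lies in the set $\mathcal S$ whose minimum is $C'_f$, and the conclusion $c_i\ge c'_i$ follows at once from the Definition-Lemma characterizing the fundamental cycle. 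Your approach instead transports Laufer's computation sequence from $Y'$ to $Y$ and then completes it there. Both arguments hinge on the same one-line inequality (with the roles of the two surfaces swapped), but the paper's version is shorter because it appeals only to the minimality definition of $C_f'$ rather than to Laufer's algorithm and its independence-of-choices property. Your approach has the mild advantage of being constructive: it exhibits $C'_f$ explicitly as an intermediate cycle in a Laufer run on $Y$.
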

  \begin{proof}
  Denote $\widetilde{C}_f=\sum_i c_i E'_i$. For any component $E'_k$ of $E'$ we have
$$
  \widetilde{C}_f\cdot E'_k=\sum_i c_i E'_i\cdot E'_k\leq \sum_i c_i E_i\cdot E_k=C_f\cdot E_k\leq 0.
$$
  By the definition of the fundamental cycle, $\widetilde{C}_f\geq C'_f$, that is, $c_i\geq c'_i$ for any $i$.
\end{proof}

Let $Z\ni z$ be a quotient surface singularity and let $\pi:Y\to Z$ be the minimal resolution with the exceptional divisor $E$. By Lemma \ref{lem:good}, $E$ is a simple normal crossing divisor on $Y$. The dual graph $\Gamma$ associated to the minimal resolution of  $Z\ni z$ is defined as follows. 
\begin{itemize}
  \item A vertex $v$ of the $\Gamma$ corresponds to  a component $E_v$ of $E$.
  \item Two vertices $v,w$ are connected by $k$ edges if the corresponding components $E_v,E_w$ meet each other in $k$ points.
  \item Each vertex $v$ is decorated with a weight: the self-intersection number of $E_v$.
\end{itemize}
It is clear that the intersection matrix of components of $E$ is determined by the dual graph. Hence the fundamental cycle is determined by the dual graph.

A complete list of quotient surface singularities and the dual graphs associated to their minimal resolutions can be found in \cite[pp.11-16]{BO12}. Using Laufer's algorithm (see Remark \ref{rem:laufer}), we calculate the fundamental cycle $C_f$ on the minimal resolution of each singularity in the list and we conclude that $C_f\leq 6E$.

\medskip

\emph{1. Cyclic quotient singularities $A_{n,q}$, where
$0<q<n$ and $(n,q)=1$}. The dual graph associated to the minimal resolution
of $A_{n,q}$ is
\begin{center}
  \begin{tikzpicture}[scale=1.2] 
    \filldraw[black] (0,0) circle (2pt);
    \filldraw[black] (1,0) circle (2pt);
    \filldraw[black] (2,0) circle (2pt);
    \filldraw[black] (3,0) circle (2pt);
    \filldraw[white] (0,0.5) circle (2pt);
    \draw[thin,-] (0,0)--(1,0);
    \filldraw[black] (1.25,0) circle (0.7pt);
    \filldraw[black] (1.5,0) circle (0.7pt);
    \filldraw[black] (1.75,0) circle (0.7pt);
    \draw[thin,-] (2,0)--(3,0);
    \node at (0,-0.5) {$-b_1$};
    \node at (1,-0.5) {$-b_2$};
    \node at (2,-0.5) {$-b_{r-1}$};
    \node at (3,-0.5) {$-b_r$};
  \end{tikzpicture}
  \end{center}
where the $b_i$ are defined by the Hirzebruch--Jung continued
fraction:
$$ \frac nq = [b_1,b_2,\cdots,b_r] := b_1-
\cfrac{1}{b_2- \cfrac{1}{\ddots- \cfrac{1}{b_r}}}, \quad b_i\geq 2
\text{ for $i=1,\cdots r$}. $$ 
By Lemma \ref{lem:-2}, we only need to consider the case that $b_1=\cdots=b_r=2$. In this case, the fundamental cycle on the minimal resolution is given by

\begin{center}
\begin{tikzpicture}[scale=1.2] 
  \filldraw[black] (0,0) circle (2pt);
  \filldraw[black] (1,0) circle (2pt);
  \filldraw[black] (2,0) circle (2pt);
  \filldraw[black] (3,0) circle (2pt);
  \filldraw[white] (0,0.5) circle (2pt);
  \draw[thin,-] (0,0)--(1,0);
  \filldraw[black] (1.25,0) circle (0.7pt);
  \filldraw[black] (1.5,0) circle (0.7pt);
  \filldraw[black] (1.75,0) circle (0.7pt);
  \draw[thin,-] (2,0)--(3,0);
  \node at (0,-0.5) {1};
  \node at (1,-0.5) {1};
  \node at (2,-0.5) {1};
  \node at (3,-0.5) {1};
\end{tikzpicture}
\end{center}
where the number near each vertex is the coefficient of the corresponding component in the fundamental cycle.

\medskip

\emph{2. Dihedral singularities $D_{n,q}$, where $1<q<n$
and $(n,q)=1$}. The dual graph associated to the minimal resolution of $D_{n,q}$ is
\begin{center}
  \begin{tikzpicture}[scale=1.2] 
    \filldraw[black] (0,0) circle (2pt);
    \filldraw[black] (1,0) circle (2pt);
    \filldraw[black] (2,0) circle (2pt);
    \filldraw[black] (3,0) circle (2pt);
    \filldraw[black] (4,0) circle (2pt);
    \filldraw[black] (1,1) circle (2pt);
    \filldraw[white] (1,1.5) circle (2pt);
    \draw[thin,-] (0,0)--(1,0)--(2,0);
    \filldraw[black] (2.25,0) circle (0.7pt);
    \filldraw[black] (2.5,0) circle (0.7pt);
    \filldraw[black] (2.75,0) circle (0.7pt);
    \draw[thin,-] (3,0)--(4,0);
    \draw[thin,-] (1,0)--(1,1);
    \node at (0,-0.5) {$-2$};
    \node at (1,-0.5) {$-b$};
    \node at (2,-0.5) {$-b_1$};
    \node at (3,-0.5) {$-b_{r-1}$};
    \node at (4,-0.5) {$-b_r$};
    \node at (0.6,1) {$-2$};
  \end{tikzpicture}
  \end{center}
where $b,b_i,i=1,\cdots,r$ are defined by
$$ \frac nq = [b,b_1,\cdots,b_r],\quad b\geq2,\ b_i\geq 2 \text{ for $i=1,\cdots r$}. $$
By Lemma \ref{lem:-2}, we only need to consider the case that $b=b_1=\cdots=b_r=2$. In this case, the fundamental cycle on the minimal resolution is given by

\begin{center}
  \begin{tikzpicture}[scale=1.2] 
    \filldraw[black] (0,0) circle (2pt);
    \filldraw[black] (1,0) circle (2pt);
    \filldraw[black] (2,0) circle (2pt);
    \filldraw[black] (3,0) circle (2pt);
    \filldraw[black] (4,0) circle (2pt);
    \filldraw[black] (1,1) circle (2pt);
    \filldraw[white] (1,1.5) circle (2pt);
    \draw[thin,-] (0,0)--(1,0)--(2,0);
    \filldraw[black] (2.25,0) circle (0.7pt);
    \filldraw[black] (2.5,0) circle (0.7pt);
    \filldraw[black] (2.75,0) circle (0.7pt);
    \draw[thin,-] (3,0)--(4,0);
    \draw[thin,-] (1,0)--(1,1);
    \node at (0,-0.5) {1};
    \node at (1,-0.5) {2};
    \node at (2,-0.5) {2};
    \node at (3,-0.5) {2};
    \node at (4,-0.5) {1};
    \node at (0.7,1) {1};
  \end{tikzpicture}
  \end{center}

\medskip

\emph{3. Tetrahedral singularities $T_m$, where $m=1,3,5
\mod 6$}. The dual graphs associated to their minimal resolutions are given in Table~\ref{tab:tet}. By Lemma \ref{lem:-2}, we only need to consider the case that $b=2$. The fundamental cycles for $b=2$  are also given in Table~\ref{tab:tet}.

\medskip

\emph{\it 4. Octahedral singularities $O_m$, where
$(m,6)=1$}. The dual graphs associated to their minimal resolutions
are given in Table~\ref{tab:oct}. By Lemma \ref{lem:-2}, we only need to consider the case that $b=2$. The fundamental cycles for $b=2$  are also given in Table~\ref{tab:oct}.

\medskip

\emph{5. Icosahedral singularities $I_m$, where
$(m,30)=1$}. The dual graphs associated to their minimal resolutions are given in Table \ref{tab:icos}. By Lemma \ref{lem:-2}, we only need to consider the case that $b=2$. The fundamental cycles for $b=2$ are also given in Tables \ref{tab:icos}.

\begin{table}[htbp]
  \caption{Dual graphs and fundamental cycles for
  tetrahedral singularities.} \label{tab:tet}
  \begin{tabular}{|c|c|c|}
  \hline
  $b\geq 2$ & Dual graph & Fundamental cycle for $b=2$ \\
  \hline $m=6(b-2)+1$ &
  {\begin{tabular}{c}

    \begin{tikzpicture}[scale=1] 
      \filldraw[black] (0,0) circle (2pt);
      \filldraw[black] (1,0) circle (2pt);
      \filldraw[black] (2,0) circle (2pt);
      \filldraw[black] (3,0) circle (2pt);
      \filldraw[black] (4,0) circle (2pt);
      \filldraw[black] (2,1) circle (2pt);
      \draw[thin,-] (0,0)--(1,0)--(2,0)--(3,0);
      \draw[thin,-] (3,0)--(4,0);
      \draw[thin,-] (2,0)--(2,1);
      \node at (0,-0.5) {\scriptsize  $-2$};
      \node at (1,-0.5) {\scriptsize  $-2$};
      \node at (2,-0.5) {\scriptsize  $-b$};
      \node at (3,-0.5) {\scriptsize  $-2$};
      \node at (4,-0.5) {\scriptsize  $-2$};
      \node at (1.6,1) {\scriptsize $-2$};
      \node at (0.7,1.3) {};
    \end{tikzpicture}

  \end{tabular}}
   &
  {\begin{tabular}{c}
    
    \begin{tikzpicture}[scale=1] 
      \filldraw[black] (0,0) circle (2pt);
      \filldraw[black] (1,0) circle (2pt);
      \filldraw[black] (2,0) circle (2pt);
      \filldraw[black] (3,0) circle (2pt);
      \filldraw[black] (4,0) circle (2pt);
      \filldraw[black] (2,1) circle (2pt);
      \draw[thin,-] (0,0)--(1,0)--(2,0)--(3,0);
      \draw[thin,-] (3,0)--(4,0);
      \draw[thin,-] (2,0)--(2,1);
      \node at (0,-0.5) {\scriptsize  1};
      \node at (1,-0.5) {\scriptsize  2};
      \node at (2,-0.5) {\scriptsize  3};
      \node at (3,-0.5) {\scriptsize  2};
      \node at (4,-0.5) {\scriptsize  1};
      \node at (1.7,1) {\scriptsize 2};
      \node at (0.7,1.3) {};
    \end{tikzpicture}

  \end{tabular}} \\
  \hline $m=6(b-2)+3$ &
  {\begin{tabular}{c}

    \begin{tikzpicture}[scale=1] 
      \filldraw[black] (0,0) circle (2pt);
      \filldraw[black] (1,0) circle (2pt);
      \filldraw[black] (2,0) circle (2pt);
      \filldraw[black] (3,0) circle (2pt);
      \filldraw[black] (2,1) circle (2pt);
      \draw[thin,-] (0,0)--(1,0)--(2,0)--(3,0);
      \draw[thin,-] (2,0)--(2,1);
      \node at (0,-0.5) {\scriptsize  $-2$};
      \node at (1,-0.5) {\scriptsize  $-2$};
      \node at (2,-0.5) {\scriptsize  $-b$};
      \node at (3,-0.5) {\scriptsize  $-3$};
      \node at (1.6,1) {\scriptsize $-2$};
      \node at (0.7,1.3) {};
    \end{tikzpicture}

  \end{tabular}} &
  {\begin{tabular}{c}
    
    \begin{tikzpicture}[scale=1] 
      \filldraw[black] (0,0) circle (2pt);
      \filldraw[black] (1,0) circle (2pt);
      \filldraw[black] (2,0) circle (2pt);
      \filldraw[black] (3,0) circle (2pt);
      \filldraw[black] (2,1) circle (2pt);
      \draw[thin,-] (0,0)--(1,0)--(2,0)--(3,0);
      \draw[thin,-] (2,0)--(2,1);
      \node at (0,-0.5) {\scriptsize  1};
      \node at (1,-0.5) {\scriptsize  2};
      \node at (2,-0.5) {\scriptsize  2};
      \node at (3,-0.5) {\scriptsize  1};
      \node at (1.7,1) {\scriptsize 1};
      \node at (0.7,1.3) {};
    \end{tikzpicture}

  \end{tabular}}
  \\
  \hline $m=6(b-2)+5$ &
  {\begin{tabular}{c}
    \begin{tikzpicture}[scale=1] 
      \filldraw[black] (0,0) circle (2pt);
      \filldraw[black] (1,0) circle (2pt);
      \filldraw[black] (2,0) circle (2pt);
      \filldraw[black] (1,1) circle (2pt);
      \draw[thin,-] (0,0)--(1,0)--(2,0);
      \draw[thin,-] (1,0)--(1,1);
      \node at (0,-0.5) {\scriptsize  $-3$};
      \node at (1,-0.5) {\scriptsize  $-b$};
      \node at (2,-0.5) {\scriptsize  $-3$};
      \node at (0.6,1) {\scriptsize $-2$};
      \node at (0.7,1.3) {};
    \end{tikzpicture}
  \end{tabular}}
  &
  {\begin{tabular}{c}
    
    \begin{tikzpicture}[scale=1] 
      \filldraw[black] (0,0) circle (2pt);
      \filldraw[black] (1,0) circle (2pt);
      \filldraw[black] (2,0) circle (2pt);
      \filldraw[black] (1,1) circle (2pt);
      \draw[thin,-] (0,0)--(1,0)--(2,0);
      \draw[thin,-] (1,0)--(1,1);
      \node at (0,-0.5) {\scriptsize  1};
      \node at (1,-0.5) {\scriptsize  2};
      \node at (2,-0.5) {\scriptsize  1};
      \node at (0.7,1) {\scriptsize 1};
      \node at (0.7,1.3) {};
    \end{tikzpicture}

  \end{tabular}}
  \\
  \hline
  \end{tabular}
  \end{table}

  \begin{table}[htbp]
    \caption{Dual graphs and fundamental cycles for
    octahedral singularities.} \label{tab:oct}
    \begin{tabular}{|c|c|c|}
    \hline
    $b\geq 2$ & Dual graph & Fundamental cycle for $b=2$ \\
    \hline $m=12(b-2)+1$ &
    {\begin{tabular}{c}

      \begin{tikzpicture}[scale=0.8] 
        \filldraw[black] (0,0) circle (2pt);
        \filldraw[black] (1,0) circle (2pt);
        \filldraw[black] (2,0) circle (2pt);
        \filldraw[black] (3,0) circle (2pt);
        \filldraw[black] (4,0) circle (2pt);
        \filldraw[black] (5,0) circle (2pt);
        \filldraw[black] (2,1) circle (2pt);
        \draw[thin,-] (0,0)--(1,0)--(2,0)--(3,0);
        \draw[thin,-] (3,0)--(4,0)--(5,0);
        \draw[thin,-] (2,0)--(2,1);
        \node at (0,-0.5) {\scriptsize  $-2$};
        \node at (1,-0.5) {\scriptsize  $-2$};
        \node at (2,-0.5) {\scriptsize  $-b$};
        \node at (3,-0.5) {\scriptsize  $-2$};
        \node at (4,-0.5) {\scriptsize  $-2$};
        \node at (5,-0.5) {\scriptsize  $-2$};
        \node at (1.6,1) {\scriptsize $-2$};
        \node at (0.7,1.3) {};
      \end{tikzpicture}

    \end{tabular}} &
    {\begin{tabular}{c}
    
      \begin{tikzpicture}[scale=0.8] 
        \filldraw[black] (0,0) circle (2pt);
        \filldraw[black] (1,0) circle (2pt);
        \filldraw[black] (2,0) circle (2pt);
        \filldraw[black] (3,0) circle (2pt);
        \filldraw[black] (4,0) circle (2pt);
        \filldraw[black] (5,0) circle (2pt);
        \filldraw[black] (2,1) circle (2pt);
        \draw[thin,-] (0,0)--(1,0)--(2,0)--(3,0);
        \draw[thin,-] (3,0)--(4,0)--(5,0);
        \draw[thin,-] (2,0)--(2,1);
        \node at (0,-0.5) {\scriptsize  2};
        \node at (1,-0.5) {\scriptsize  3};
        \node at (2,-0.5) {\scriptsize  4};
        \node at (3,-0.5) {\scriptsize  3};
        \node at (4,-0.5) {\scriptsize  2};
        \node at (5,-0.5) {\scriptsize  1};
        \node at (1.7,1) {\scriptsize 2};
        \node at (0.7,1.3) {};
      \end{tikzpicture}

    \end{tabular}}\\
    \hline $m=12(b-2)+5$ &
    {\begin{tabular}{c}

      \begin{tikzpicture}[scale=1] 
        \filldraw[black] (0,0) circle (2pt);
        \filldraw[black] (1,0) circle (2pt);
        \filldraw[black] (2,0) circle (2pt);
        \filldraw[black] (3,0) circle (2pt);
        \filldraw[black] (4,0) circle (2pt);
        \filldraw[black] (1,1) circle (2pt);
        \draw[thin,-] (0,0)--(1,0)--(2,0)--(3,0)--(4,0);
        \draw[thin,-] (1,0)--(1,1);
        \node at (0,-0.5) {\scriptsize  $-3$};
        \node at (1,-0.5) {\scriptsize  $-b$};
        \node at (2,-0.5) {\scriptsize  $-2$};
        \node at (3,-0.5) {\scriptsize  $-2$};
        \node at (4,-0.5) {\scriptsize  $-2$};
        \node at (0.6,1) {\scriptsize $-2$};
        \node at (0.7,1.3) {};
      \end{tikzpicture}
    
    \end{tabular}} &
    {\begin{tabular}{c}
    
      \begin{tikzpicture}[scale=1] 
        \filldraw[black] (0,0) circle (2pt);
        \filldraw[black] (1,0) circle (2pt);
        \filldraw[black] (2,0) circle (2pt);
        \filldraw[black] (3,0) circle (2pt);
        \filldraw[black] (4,0) circle (2pt);
        \filldraw[black] (1,1) circle (2pt);
        \draw[thin,-] (0,0)--(1,0)--(2,0)--(3,0)--(4,0);
        \draw[thin,-] (1,0)--(1,1);
        \node at (0,-0.5) {\scriptsize  1};
        \node at (1,-0.5) {\scriptsize  2};
        \node at (2,-0.5) {\scriptsize  2};
        \node at (3,-0.5) {\scriptsize  2};
        \node at (4,-0.5) {\scriptsize  1};
        \node at (0.7,1) {\scriptsize 1};
        \node at (0.7,1.3) {};
      \end{tikzpicture}
    
    \end{tabular}} \\
    \hline $m=12(b-2)+7$ &
    {\begin{tabular}{c}

      \begin{tikzpicture}[scale=1] 
        \filldraw[black] (0,0) circle (2pt);
        \filldraw[black] (1,0) circle (2pt);
        \filldraw[black] (2,0) circle (2pt);
        \filldraw[black] (3,0) circle (2pt);
        \filldraw[black] (2,1) circle (2pt);
        \draw[thin,-] (0,0)--(1,0)--(2,0)--(3,0);
        \draw[thin,-] (2,0)--(2,1);
        \node at (0,-0.5) {\scriptsize  $-2$};
        \node at (1,-0.5) {\scriptsize  $-2$};
        \node at (2,-0.5) {\scriptsize  $-b$};
        \node at (3,-0.5) {\scriptsize  $-4$};
        \node at (1.6,1) {\scriptsize $-2$};
        \node at (0.7,1.3) {};
      \end{tikzpicture}
    
    \end{tabular}}
     &
    {\begin{tabular}{c}
      
      \begin{tikzpicture}[scale=1] 
        \filldraw[black] (0,0) circle (2pt);
        \filldraw[black] (1,0) circle (2pt);
        \filldraw[black] (2,0) circle (2pt);
        \filldraw[black] (3,0) circle (2pt);
        \filldraw[black] (2,1) circle (2pt);
        \draw[thin,-] (0,0)--(1,0)--(2,0)--(3,0);
        \draw[thin,-] (2,0)--(2,1);
        \node at (0,-0.5) {\scriptsize  1};
        \node at (1,-0.5) {\scriptsize  2};
        \node at (2,-0.5) {\scriptsize  2};
        \node at (3,-0.5) {\scriptsize  1};
        \node at (1.7,1) {\scriptsize 1};
        \node at (0.7,1.3) {};
      \end{tikzpicture}

    \end{tabular}} \\
    \hline $m=12(b-2)+11$ &
    {\begin{tabular}{c}

      \begin{tikzpicture}[scale=1] 
        \filldraw[black] (0,0) circle (2pt);
        \filldraw[black] (1,0) circle (2pt);
        \filldraw[black] (2,0) circle (2pt);
        \filldraw[black] (1,1) circle (2pt);
        \draw[thin,-] (0,0)--(1,0)--(2,0);
        \draw[thin,-] (1,0)--(1,1);
        \node at (0,-0.5) {\scriptsize  $-3$};
        \node at (1,-0.5) {\scriptsize  $-b$};
        \node at (2,-0.5) {\scriptsize  $-4$};
        \node at (0.6,1) {\scriptsize $-2$};
        \node at (0.7,1.3) {};
      \end{tikzpicture}

    \end{tabular}}
     &
    {\begin{tabular}{c}
    
      \begin{tikzpicture}[scale=1] 
        \filldraw[black] (0,0) circle (2pt);
        \filldraw[black] (1,0) circle (2pt);
        \filldraw[black] (2,0) circle (2pt);
        \filldraw[black] (1,1) circle (2pt);
        \draw[thin,-] (0,0)--(1,0)--(2,0);
        \draw[thin,-] (1,0)--(1,1);
        \node at (0,-0.5) {\scriptsize  1};
        \node at (1,-0.5) {\scriptsize  2};
        \node at (2,-0.5) {\scriptsize  1};
        \node at (0.7,1) {\scriptsize 1};
        \node at (0.7,1.3) {};
      \end{tikzpicture}
    
    \end{tabular}} \\
    \hline
    \end{tabular}
    \end{table}

\begin{table}[htbp]
\caption{Dual graphs and fundamental cycles for
icosahedral singularities.} \label{tab:icos}
\begin{tabular}{|c|c|c|}
\hline
$b \geq 2$ & Dual graph & Fundamental cycle for $b=2$ \\
\hline $m=30(b-2)+1$ &
{\begin{tabular}{c}
  \begin{tikzpicture}[scale=0.6] 
    \filldraw[black] (0,0) circle (2pt);
    \filldraw[black] (1,0) circle (2pt);
    \filldraw[black] (2,0) circle (2pt);
    \filldraw[black] (3,0) circle (2pt);
    \filldraw[black] (4,0) circle (2pt);
    \filldraw[black] (5,0) circle (2pt);
    \filldraw[black] (6,0) circle (2pt);
    \filldraw[black] (2,1) circle (2pt);
    \draw[thin,-] (0,0)--(1,0)--(2,0)--(3,0);
    \draw[thin,-] (3,0)--(4,0)--(5,0)--(6,0);
    \draw[thin,-] (2,0)--(2,1);
    \node at (0,-0.5) {\scriptsize  $-2$};
    \node at (1,-0.5) {\scriptsize  $-2$};
    \node at (2,-0.5) {\scriptsize  $-b$};
    \node at (3,-0.5) {\scriptsize  $-2$};
    \node at (4,-0.5) {\scriptsize  $-2$};
    \node at (5,-0.5) {\scriptsize  $-2$};
    \node at (6,-0.5) {\scriptsize  $-2$};
    \node at (1.5,1) {\scriptsize $-2$};
    \node at (0.7,1.3) {};
  \end{tikzpicture}

\end{tabular}}
 &
{\begin{tabular}{c}

  \begin{tikzpicture}[scale=0.6] 
    \filldraw[black] (0,0) circle (2pt);
    \filldraw[black] (1,0) circle (2pt);
    \filldraw[black] (2,0) circle (2pt);
    \filldraw[black] (3,0) circle (2pt);
    \filldraw[black] (4,0) circle (2pt);
    \filldraw[black] (5,0) circle (2pt);
    \filldraw[black] (6,0) circle (2pt);
    \filldraw[black] (2,1) circle (2pt);
    \draw[thin,-] (0,0)--(1,0)--(2,0)--(3,0);
    \draw[thin,-] (3,0)--(4,0)--(5,0)--(6,0);
    \draw[thin,-] (2,0)--(2,1);
    \node at (0,-0.5) {\scriptsize  2};
    \node at (1,-0.5) {\scriptsize  4};
    \node at (2,-0.5) {\scriptsize  6};
    \node at (3,-0.5) {\scriptsize  5};
    \node at (4,-0.5) {\scriptsize  4};
    \node at (5,-0.5) {\scriptsize  3};
    \node at (6,-0.5) {\scriptsize  2};
    \node at (1.7,1) {\scriptsize 3};
    \node at (0.7,1.3) {};
  \end{tikzpicture}

\end{tabular}}
 \\
\hline $m=30(b-2)+7$ &
{\begin{tabular}{c}

  \begin{tikzpicture}[scale=1] 
    \filldraw[black] (0,0) circle (2pt);
    \filldraw[black] (1,0) circle (2pt);
    \filldraw[black] (2,0) circle (2pt);
    \filldraw[black] (3,0) circle (2pt);
    \filldraw[black] (4,0) circle (2pt);
    \filldraw[black] (2,1) circle (2pt);
    \draw[thin,-] (0,0)--(1,0)--(2,0)--(3,0);
    \draw[thin,-] (3,0)--(4,0);
    \draw[thin,-] (2,0)--(2,1);
    \node at (0,-0.5) {\scriptsize  $-2$};
    \node at (1,-0.5) {\scriptsize  $-2$};
    \node at (2,-0.5) {\scriptsize  $-b$};
    \node at (3,-0.5) {\scriptsize  $-2$};
    \node at (4,-0.5) {\scriptsize  $-3$};
    \node at (1.6,1) {\scriptsize $-2$};
    \node at (0.7,1.3) {};
  \end{tikzpicture}

\end{tabular}}
 &
{\begin{tabular}{c}

  \begin{tikzpicture}[scale=1] 
    \filldraw[black] (0,0) circle (2pt);
    \filldraw[black] (1,0) circle (2pt);
    \filldraw[black] (2,0) circle (2pt);
    \filldraw[black] (3,0) circle (2pt);
    \filldraw[black] (4,0) circle (2pt);
    \filldraw[black] (2,1) circle (2pt);
    \draw[thin,-] (0,0)--(1,0)--(2,0)--(3,0);
    \draw[thin,-] (3,0)--(4,0);
    \draw[thin,-] (2,0)--(2,1);
    \node at (0,-0.5) {\scriptsize  1};
    \node at (1,-0.5) {\scriptsize  2};
    \node at (2,-0.5) {\scriptsize  3};
    \node at (3,-0.5) {\scriptsize  2};
    \node at (4,-0.5) {\scriptsize  1};
    \node at (1.7,1) {\scriptsize 2};
    \node at (0.7,1.3) {};
  \end{tikzpicture}

\end{tabular}}
 \\
\hline $m=30(b-2)+11$ &
{\begin{tabular}{c}
  \begin{tikzpicture}[scale=0.8] 
    \filldraw[black] (0,0) circle (2pt);
    \filldraw[black] (1,0) circle (2pt);
    \filldraw[black] (2,0) circle (2pt);
    \filldraw[black] (3,0) circle (2pt);
    \filldraw[black] (4,0) circle (2pt);
    \filldraw[black] (5,0) circle (2pt);
    \filldraw[black] (1,1) circle (2pt);
    \draw[thin,-] (0,0)--(1,0)--(2,0)--(3,0)--(4,0)--(5,0);
    \draw[thin,-] (1,0)--(1,1);
    \node at (0,-0.5) {\scriptsize  $-3$};
    \node at (1,-0.5) {\scriptsize  $-b$};
    \node at (2,-0.5) {\scriptsize  $-2$};
    \node at (3,-0.5) {\scriptsize  $-2$};
    \node at (4,-0.5) {\scriptsize  $-2$};
    \node at (5,-0.5) {\scriptsize  $-2$};
    \node at (0.6,1) {\scriptsize $-2$};
    \node at (0.7,1.3) {};
  \end{tikzpicture}

\end{tabular}}
 &
{\begin{tabular}{c}

  \begin{tikzpicture}[scale=0.8] 
    \filldraw[black] (0,0) circle (2pt);
    \filldraw[black] (1,0) circle (2pt);
    \filldraw[black] (2,0) circle (2pt);
    \filldraw[black] (3,0) circle (2pt);
    \filldraw[black] (4,0) circle (2pt);
    \filldraw[black] (5,0) circle (2pt);
    \filldraw[black] (1,1) circle (2pt);
    \draw[thin,-] (0,0)--(1,0)--(2,0)--(3,0)--(4,0)--(5,0);
    \draw[thin,-] (1,0)--(1,1);
    \node at (0,-0.5) {\scriptsize  1};
    \node at (1,-0.5) {\scriptsize  2};
    \node at (2,-0.5) {\scriptsize  2};
    \node at (3,-0.5) {\scriptsize  2};
    \node at (4,-0.5) {\scriptsize  2};
    \node at (5,-0.5) {\scriptsize  1};
    \node at (0.7,1) {\scriptsize 1};
    \node at (0.7,1.3) {};
  \end{tikzpicture}

\end{tabular}}
 \\
\hline $m=30(b-2)+13$ &
{\begin{tabular}{c}

  \begin{tikzpicture}[scale=1] 
    \filldraw[black] (0,0) circle (2pt);
    \filldraw[black] (1,0) circle (2pt);
    \filldraw[black] (2,0) circle (2pt);
    \filldraw[black] (3,0) circle (2pt);
    \filldraw[black] (4,0) circle (2pt);
    \filldraw[black] (2,1) circle (2pt);
    \draw[thin,-] (0,0)--(1,0)--(2,0)--(3,0);
    \draw[thin,-] (3,0)--(4,0);
    \draw[thin,-] (2,0)--(2,1);
    \node at (0,-0.5) {\scriptsize  $-2$};
    \node at (1,-0.5) {\scriptsize  $-2$};
    \node at (2,-0.5) {\scriptsize  $-b$};
    \node at (3,-0.5) {\scriptsize  $-3$};
    \node at (4,-0.5) {\scriptsize  $-2$};
    \node at (1.6,1) {\scriptsize $-2$};
    \node at (0.7,1.3) {};
  \end{tikzpicture}

\end{tabular}}
 &
{\begin{tabular}{c}

  \begin{tikzpicture}[scale=1] 
    \filldraw[black] (0,0) circle (2pt);
    \filldraw[black] (1,0) circle (2pt);
    \filldraw[black] (2,0) circle (2pt);
    \filldraw[black] (3,0) circle (2pt);
    \filldraw[black] (4,0) circle (2pt);
    \filldraw[black] (2,1) circle (2pt);
    \draw[thin,-] (0,0)--(1,0)--(2,0)--(3,0);
    \draw[thin,-] (3,0)--(4,0);
    \draw[thin,-] (2,0)--(2,1);
    \node at (0,-0.5) {\scriptsize  1};
    \node at (1,-0.5) {\scriptsize  2};
    \node at (2,-0.5) {\scriptsize  2};
    \node at (3,-0.5) {\scriptsize  1};
    \node at (4,-0.5) {\scriptsize  1};
    \node at (1.7,1) {\scriptsize 1};
    \node at (0.7,1.3) {};
  \end{tikzpicture}

\end{tabular}}
 \\
\hline $m=30(b-2)+17$ &
{\begin{tabular}{c}

  \begin{tikzpicture}[scale=1] 
    \filldraw[black] (0,0) circle (2pt);
    \filldraw[black] (1,0) circle (2pt);
    \filldraw[black] (2,0) circle (2pt);
    \filldraw[black] (3,0) circle (2pt);
    \filldraw[black] (1,1) circle (2pt);
    \draw[thin,-] (0,0)--(1,0)--(2,0)--(3,0);
    \draw[thin,-] (1,0)--(1,1);
    \node at (0,-0.5) {\scriptsize  $-3$};
    \node at (1,-0.5) {\scriptsize  $-b$};
    \node at (2,-0.5) {\scriptsize  $-2$};
    \node at (3,-0.5) {\scriptsize  $-3$};
    \node at (0.6,1) {\scriptsize $-2$};
    \node at (0.7,1.3) {};
  \end{tikzpicture}

\end{tabular}}
&
{\begin{tabular}{c}

  \begin{tikzpicture}[scale=1] 
    \filldraw[black] (0,0) circle (2pt);
    \filldraw[black] (1,0) circle (2pt);
    \filldraw[black] (2,0) circle (2pt);
    \filldraw[black] (3,0) circle (2pt);
    \filldraw[black] (1,1) circle (2pt);
    \draw[thin,-] (0,0)--(1,0)--(2,0)--(3,0);
    \draw[thin,-] (1,0)--(1,1);
    \node at (0,-0.5) {\scriptsize  1};
    \node at (1,-0.5) {\scriptsize  2};
    \node at (2,-0.5) {\scriptsize  2};
    \node at (3,-0.5) {\scriptsize  1};
    \node at (0.7,1) {\scriptsize 1};
    \node at (0.7,1.3) {};
  \end{tikzpicture}

\end{tabular}} \\
\hline $m=30(b-2)+19$ &
{\begin{tabular}{c}

  \begin{tikzpicture}[scale=1] 
    \filldraw[black] (0,0) circle (2pt);
    \filldraw[black] (1,0) circle (2pt);
    \filldraw[black] (2,0) circle (2pt);
    \filldraw[black] (3,0) circle (2pt);
    \filldraw[black] (2,1) circle (2pt);
    \draw[thin,-] (0,0)--(1,0)--(2,0)--(3,0);
    \draw[thin,-] (2,0)--(2,1);
    \node at (0,-0.5) {\scriptsize  $-2$};
    \node at (1,-0.5) {\scriptsize  $-2$};
    \node at (2,-0.5) {\scriptsize  $-b$};
    \node at (3,-0.5) {\scriptsize  $-5$};
    \node at (1.6,1) {\scriptsize $-2$};
    \node at (0.7,1.3) {};
  \end{tikzpicture}

\end{tabular}}
 &
{\begin{tabular}{c}

  \begin{tikzpicture}[scale=1] 
    \filldraw[black] (0,0) circle (2pt);
    \filldraw[black] (1,0) circle (2pt);
    \filldraw[black] (2,0) circle (2pt);
    \filldraw[black] (3,0) circle (2pt);
    \filldraw[black] (2,1) circle (2pt);
    \draw[thin,-] (0,0)--(1,0)--(2,0)--(3,0);
    \draw[thin,-] (2,0)--(2,1);
    \node at (0,-0.5) {\scriptsize  1};
    \node at (1,-0.5) {\scriptsize  2};
    \node at (2,-0.5) {\scriptsize  2};
    \node at (3,-0.5) {\scriptsize  1};
    \node at (1.7,1) {\scriptsize 1};
    \node at (0.7,1.3) {};
  \end{tikzpicture}

\end{tabular}} \\
\hline $m=30(b-2)+23$ &
{\begin{tabular}{c}

  \begin{tikzpicture}[scale=1] 
    \filldraw[black] (0,0) circle (2pt);
    \filldraw[black] (1,0) circle (2pt);
    \filldraw[black] (2,0) circle (2pt);
    \filldraw[black] (3,0) circle (2pt);
    \filldraw[black] (1,1) circle (2pt);
    \draw[thin,-] (0,0)--(1,0)--(2,0)--(3,0);
    \draw[thin,-] (1,0)--(1,1);
    \node at (0,-0.5) {\scriptsize  $-3$};
    \node at (1,-0.5) {\scriptsize  $-b$};
    \node at (2,-0.5) {\scriptsize  $-3$};
    \node at (3,-0.5) {\scriptsize  $-2$};
    \node at (0.6,1) {\scriptsize $-2$};
    \node at (0.7,1.3) {};
  \end{tikzpicture}

\end{tabular}}
 &
{\begin{tabular}{c}

  \begin{tikzpicture}[scale=1] 
    \filldraw[black] (0,0) circle (2pt);
    \filldraw[black] (1,0) circle (2pt);
    \filldraw[black] (2,0) circle (2pt);
    \filldraw[black] (3,0) circle (2pt);
    \filldraw[black] (1,1) circle (2pt);
    \draw[thin,-] (0,0)--(1,0)--(2,0)--(3,0);
    \draw[thin,-] (1,0)--(1,1);
    \node at (0,-0.5) {\scriptsize  1};
    \node at (1,-0.5) {\scriptsize  2};
    \node at (2,-0.5) {\scriptsize  1};
    \node at (3,-0.5) {\scriptsize  1};
    \node at (0.7,1) {\scriptsize 1};
    \node at (0.7,1.3) {};
  \end{tikzpicture}

\end{tabular}}
 \\
\hline $m=30(b-2)+29$ &
{\begin{tabular}{c}

  \begin{tikzpicture}[scale=1] 
    \filldraw[black] (0,0) circle (2pt);
    \filldraw[black] (1,0) circle (2pt);
    \filldraw[black] (2,0) circle (2pt);
    \filldraw[black] (1,1) circle (2pt);
    \draw[thin,-] (0,0)--(1,0)--(2,0);
    \draw[thin,-] (1,0)--(1,1);
    \node at (0,-0.5) {\scriptsize  $-3$};
    \node at (1,-0.5) {\scriptsize  $-b$};
    \node at (2,-0.5) {\scriptsize  $-5$};
    \node at (0.6,1) {\scriptsize $-2$};
    \node at (0.7,1.3) {};
  \end{tikzpicture}

\end{tabular}}
&
{\begin{tabular}{c}

  \begin{tikzpicture}[scale=1] 
    \filldraw[black] (0,0) circle (2pt);
    \filldraw[black] (1,0) circle (2pt);
    \filldraw[black] (2,0) circle (2pt);
    \filldraw[black] (1,1) circle (2pt);
    \draw[thin,-] (0,0)--(1,0)--(2,0);
    \draw[thin,-] (1,0)--(1,1);
    \node at (0,-0.5) {\scriptsize  1};
    \node at (1,-0.5) {\scriptsize  2};
    \node at (2,-0.5) {\scriptsize  1};
    \node at (0.7,1) {\scriptsize 1};
    \node at (0.7,1.3) {};
  \end{tikzpicture}

\end{tabular}}
 \\
\hline
\end{tabular}
\end{table}


\end{document}